\setlist[enumerate]{listparindent=\parindent}
\renewcommand{\thesubfigure}{(\roman{subfigure})}
\makeatletter \renewcommand{\@thesubfigure}{\thesubfigure \space}
\renewcommand{\p@subfigure}{} \makeatother
\theoremstyle{plain}
\newtheorem{thm}{Theorem}[subsection]
\newtheorem{apt}[thm]{Assumption}
\newtheorem{prop}[thm]{Proposition}
\newtheorem{cor}[thm]{Corollary}
\newtheorem{lma}[thm]{Lemma}
\theoremstyle{definition}
\newtheorem{dfn}[thm]{Definition}
\newtheorem{ex}[thm]{Example}
\theoremstyle{remark} 
\newtheorem*{note}{Note}
\newcommand{\RR}{\mathbb R}
\newcommand{\NN}{\mathbb N}
\newcommand{\ZZ}{\mathbb Z}
\newcommand{\DC}{\mathcal{C}}
\newcommand{\DA}{\mathcal{A}}
\newcommand{\DP}{\mathcal{P}}
\newcommand{\DQ}{\mathcal{Q}}
\newcommand{\DT}{\mathcal{T}}
\newcommand{\DM}{\mathcal{M}}
\newcommand{\DI}{\mathcal{I}}
\newcommand{\DDJ}{\mathcal{J}}
\newcommand{\GJ}{\mathfrak{J}}
\newcommand{\Gf}{\mathfrak{f}}
\newcommand{\GC}{\mathfrak{C}}
\newcommand{\GM}{\mathfrak{M}}
\newcommand{\cc}{\ol{c}}
\newcommand{\eps}{\epsilon}
\newcommand{\veps}{\varepsilon}
\newcommand{\lam}{\lambda}
\newcommand{\Lam}{\Lambda}
\newcommand{\inte}{\mathrm{int}}
\newcommand{\cl}{\mathrm{cl}}
\newcommand{\OO}{\varnothing}
\newcommand{\ol}[1]{\overline{#1}}
\newcommand{\lz}{{\lam_{0}}}
\newcommand{\pspa}{N\times [0,1]}
\newcommand{\MD}[1]{\DM(S^{#1})}
\newcommand{\mset}[2]{\{{#1}\mid{#2}\}}
\newcommand{\sset}[1]{\{{#1}\}}
\newcommand{\wt}[1]{\widetilde{#1}}
\newcommand{\FO}[1]{(\DP^{\Gf}_{#1},<^{\Gf}_{#1})}
\newcommand{\for}[1]{{{#1} \in \NN}}
\newcommand{\gam}[0]{\gamma}
\newcommand{\CH}{C\!H}
\newcommand{\LTD}[2]{\{(#1_i,#1_i^{'})\}_{i\in #2}}
\newcommand{\HHL}[1]{\text{the Hausdorff limit }\lim_{\lam_0} M_{#1}^{\lam_n}}
\newcommand{\HL}[1]{\lim_{\lam_0} M_{#1}^{\lam_n}}
\newcommand{\HLL}[2]{\lim_{{\lam_0}^{#2}} M_{#1}^{\lam_n}}
\newcommand{\h}[1]{\hat{#1}}
\newcommand{\hse}[0]{\h{S}_\eps}
\newcommand{\MDp}[0]{(\DM(\hse),\h{\DP})}
\newcommand{\ti}[1]{\tilde{#1}}
\newcommand{\bi}[1]{\textbf{\textit{#1}}}
\newcommand{\inv}[0]{\mathrm{Inv}}
\newcommand{\GO}{\mathfrak{O}}
\newcommand{\GMD}[0]{(\DM(S),\DP)}
\newcommand{\SOI}[0]{\DI(\DP)}
\newcommand{\pa}[0]{\partial}
\newcommand{\SH}[0]{\mathscr{H}}
\newcommand{\GK}{\mathfrak{K}}
\title{Transition Matrix without Continuation in the Conley Index Theory}
\author{YANGHONG YU}
\date{}
\begin{document}

\maketitle

\begin{abstract}
Given a one-parameter family of flows over a parameter interval $\Lam$, assuming there is a continuation of Morse decompositions over $\Lam$, Reineck \cite{Reineck1998connection} defined a singular transition matrix to show the existence of a connection orbit between some Morse sets at some parameter points in $\Lam$. This paper aims to extend the definition of a singular transition matrix in cases where there is no continuation of Morse decompositions over the parameter interval. This extension will help study the bifurcation associated with the change of Morse decomposition from a topological dynamics viewpoint.
\end{abstract}

\section{Introduction}\label{Introduction}
\par In Conley index theory, a transition matrix is used to detect the existence of a non-robust connecting orbit between Morse sets in a Morse decomposition. Suppose there is a parameterized flow $\phi_t^\lambda$ with the parameter $\lambda$ in an open interval. In order to obtain the existence of a connecting orbit that occurs by bifurcation at some parameter point, it is often helpful to introduce a slow parameter drift of speed $\eps$ on the parameter interval and to consider a connection matrix for the combined slow-fast system on the product of the phase and parameter spaces with $\eps$ tending to $0$. Such an idea was proposed by Reineck \cite{Reineck1998connection}, introducing the notion of a singular transition matrix.
Reineck \cite{Reineck1998connection} assumed there is a continuation of Morse decompositions over the parameter interval and constructed the singular transition matrix. Suppose a Morse set undergoes bifurcation at some point in the parameter interval. In that case, the continuation of Morse decompositions may break down, and the conventional singular transition matrix can not be defined.
\par In this paper, we assume there is a breakdown of the continuation of Morse decompositions in the parameter interval and generalize the results of Reineck \cite{Reineck1998connection} by extending the singular transition matrix under this situation. The information on the existence of connecting orbits obtained from the extended singular transition matrix can be used to infer the bifurcation information over the parameter interval.
\par Instead of adding a slow parameter drift, a transition matrix between two parameter points has been defined axiomatically by Franzosa-Mischaikow \cite{FranzosaMischai1998}, Kokubu \cite{Kokubu2000ontransitionmatrices} and others. This definition still assumes the continuation of Morse decompositions over the parameter interval between these two parameter points. In this paper, we also extend the axiomatic definition under the situation that the continuation of Morse decompositions breaks down at a point in the parameter interval.
\par The rest of the paper begins with Section \ref{Dynamicalsystem} when basic background definitions of a dynamical system are given. 
In Section \ref{Conleyindex} and Section \ref{Connectionmatrix}, we very briefly review the Conley index and the connection matrix. In Section \ref{Transition Matrix without Continuation}, we define the breakdown of the continuation of Morse decompositions in Subsection \ref{The Finest Decomposition of a Continuable Interval Pair}. In order to obtain information on connection orbits at some parameter, we study connecting orbits in the extended slow-fast flow $\Phi_t^\epsilon$ when $\epsilon$ limit to $0$ in Subsection \ref{limit} and Subsection \ref{CO}. A singular transition matrix without continuation is constructed in Subsection \ref{STM}, and the axiomatic one is defined in Subsection \ref{An AxiomaticExtensionApproachtotheSingularity}.



\section{Dynamical System}\label{Dynamicalsystem}
\par In this section, we introduce basic concepts for dynamical systems, especially the notions of attractor-repeller decomposition and Morse decomposition, which describe the gradient-like information in dynamics, and then introduce their robustness.

\subsection{Basic Notations}
\par The dynamical system concerns long-term behaviors in ODEs or iterated maps. In the case of continuous time, it is formulated as a flow on a phase space.
Let $(X,d)$ be a locally compact metric space with metric $d$. 
\begin{dfn}[Flow]\label{Flow}
	A continuous map $\phi:\RR \times X\to X$ is a \textbf{\textit{flow}} on $X$ if it satisfies the following:
	\begin{enumerate}
		\item $\phi(0,x)=x$.
		\item $\phi(t_1,\phi(t_2,x))=\phi(t_1+t_2,x)$ for any $t_1,t_2\in \RR$.
	\end{enumerate}
\end{dfn}
\par  Usually, $t$ in a flow $\phi(t,x)$ is viewed as time. Let $\phi_t(x):=\phi(t,x)$ and
$\phi(I,Y):=\mset{\phi(x,t)}{t\in I \text{ and } x\in Y}$
for any subset $Y\subset X$ and any interval of time $I\subset \RR$. 
\begin{dfn}[Invariant set]\label{InvariantSet}
A subset $S\subset X$ is an \textbf{\textit{invariant set}} under the flow $\phi_t$ if it satisfies:
$\phi(\RR,S)=S$. In particular, if an invariant set consists of a single point $x\in X$, we call the $x$ a \bi{fixed point}.
\end{dfn}
\par For a point $x\in X$, the \bi{orbit} of $x$ is $\GO(x):=\phi(\RR,x)$. We write $x\cdot t:=\phi(t,x)$ and $x\cdot I:=\phi(I,x)$ with $I\subset \RR$ for simplicity.
\par For a subset $Y\subset X$, in order to study behaviors of orbits from $Y$ when the time $t\to \pm\infty$, we define the \textbf{\textit{alpha limit set}} $\alpha(Y)$ and the \textbf{\textit{omega limit set}}  $\omega(Y)$.
\begin{dfn}\label{alphaomegasets} For a subset $Y\subset X$, the \textbf{\textit{alpha limit set}} of $Y$ is defined as:
	$$\alpha(Y):=\bigcap_{t>0}\cl(\phi((-\infty,-t],Y));$$
similarly, the \textbf{\textit{omega limit set}} of $Y$ is defined as:
 $$\omega(Y):=\bigcap_{t>0}\cl(\phi([t,\infty),Y)).$$	
\end{dfn}
\par These limit sets $\alpha(Y)$ and $\omega(Y)$ are closed and invariant under the flow $\phi_t$. Moreover, if $Y$ is connected in $X$, $\alpha(Y)$ and $\omega(Y)$ are also connected in $X$.
\par For any subset $Y\subset X$, let $\inv(Y,\phi_t):=\mset{x\in Y}{\phi(\RR,x)\subset Y}$ denotes the maximal invariant set contained in $Y$ under the flow $\phi_t$. When there is no confusion, we omit the dependence on $\phi_t$ and denote it as $\inv(Y)$. 
\begin{dfn}[Isolating neighborhood]\label{IsolatingNeighborhood}
	A compact subset $N\subset X$ is called an \bi{isolating neighborhood} if $\inv(N)\subset \inte(N)$. We call $S=\inv(N)$ the \bi{isolated invariant set} isolated by $N$.
\end{dfn}
\par An isolated invariant set is robust when the flow is perturbed. More precisely, let $\phi_t^\lambda:\RR\times X \to X$ be a parameterized flow on $X$ with a parameter $\lam\in \RR$. Then, we have the following proposition.
 \begin{prop}[Robustness of isolated invariant sets; \cite{Mischaikow2002ConleyIndex}, Proposition 1.1]
 	Let $N$ be an isolated neighborhood for the flow $\phi_t^{\lam_*}$ at some parameter value $\lam_*\in \RR$. Then there is an open interval $O\subset \RR$ such that $\lam_*\in O$ and $N$ is an isolating neighborhood of $\phi^\lam_t$ for all $\lam \in O$.
 \end{prop}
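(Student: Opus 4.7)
The plan is a standard compactness/contradiction argument exploiting the joint continuity of the parameterized flow $\phi_t^\lambda$ in the triple $(t,x,\lambda)$, which is implicit in the setup.

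First I would negate the conclusion: assume there is no such open interval $O$. Then one can find a sequence of parameter values $\lambda_n \to \lambda_*$ such that $N$ fails to isolate $\inv(N,\phi_t^{\lambda_n})$, i.e.\ for each $n$ there is a point $x_n \in \inv(N,\phi_t^{\lambda_n}) \cap \partial N$. (Here I use that $\inv(N) \subset \inte(N)$ fails precisely when the maximal invariant set meets $\partial N$.) Since $\partial N$ is compact, I would extract a subsequence (still denoted $x_n$) with $x_n \to x_* \in \partial N$.

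Next I would upgrade this to a bi-infinite invariant orbit in $N$ at the limit parameter. Fix any $t \in \RR$. By joint continuity of the map $(\lambda,x) \mapsto \phi_t^\lambda(x)$, we have $\phi_t^{\lambda_n}(x_n) \to \phi_t^{\lambda_*}(x_*)$. Because $x_n \in \inv(N,\phi_t^{\lambda_n})$, the whole orbit $\phi^{\lambda_n}(\RR, x_n)$ lies in $N$, so in particular $\phi_t^{\lambda_n}(x_n) \in N$. Closedness of $N$ then forces $\phi_t^{\lambda_*}(x_*) \in N$. Since $t$ was arbitrary, $\phi^{\lambda_*}(\RR,x_*) \subset N$, i.e.\ $x_* \in \inv(N,\phi_t^{\lambda_*})$. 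But $x_* \in \partial N$, contradicting the hypothesis $\inv(N,\phi_t^{\lambda_*}) \subset \inte(N)$.

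The only place that requires any real care is the justification that one can pick such $x_n$ and that joint continuity suffices to pass to the limit orbit-wise; this hinges on $N$ being compact (so $\partial N$ is compact, providing the convergent subsequence) and on the parameterized flow being continuous in $(t,x,\lambda)$ jointly, which is the standing assumption for the family $\phi_t^\lambda$. No uniform estimates on orbit length are needed because the argument proceeds pointwise in $t$. I do not expect any subtle obstacle; the essential content is simply that ``invariant set inside the interior of a compact set'' is an open condition in the parameter, which is the upper semicontinuity of $\lambda \mapsto \inv(N,\phi_t^\lambda)$ in the Hausdorff sense.
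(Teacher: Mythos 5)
The paper states this result by citation (Mischaikow's lecture notes, Proposition~1.1) and provides no proof, so there is nothing in the text to compare against. Your compactness/contradiction argument is the standard proof of this fact and is correct: the negation yields $\lambda_n \to \lambda_*$ and $x_n \in \inv(N,\phi_t^{\lambda_n})\cap\partial N$; compactness of $\partial N$ gives $x_n \to x_* \in \partial N$; joint continuity in $(\lambda,x)$ for each fixed $t$ plus closedness of $N$ shows the full $\phi^{\lambda_*}$-orbit of $x_*$ stays in $N$, so $x_* \in \inv(N,\phi_t^{\lambda_*})\cap\partial N$, contradicting $\inv(N,\phi_t^{\lambda_*}) \subset \inte(N)$. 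The only hypotheses you invoke — compactness of $N$ (built into the definition of isolating neighborhood) and joint continuity of the parameterized flow (automatic here since $\phi_t^\lambda$ is generated by an ODE with Lipschitz right-hand side) — are indeed available in the paper's setting, and your observation that no uniform-in-$t$ estimate is needed because the argument is pointwise in $t$ is exactly right.
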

\begin{dfn}
Let $N\subset X$ be a compact set, and let $S^\lam:=\inv(N,\phi^\lam_t)$ with $\lam \in [\lam_1,\lam_2]$. Assume that at $\lam_1$ and $\lam_2$, $N$ is an isolating neighborhood, then the two isolated invariant sets $S^{\lam_1}$ and $S^{\lam_2}$ are \bi{related by continuation} or \bi{$S^{\lam_1}$ continues to $S^{\lam_2}$} if $N$ is an isolating neighborhood of $\phi^\lam_t$ for all $\lam\in [\lam_1,\lam_2]$.
\end{dfn}
\par In order to study internal structures of isolated invariant sets, we introduce, in the following subsections, decompositions of an isolated invariant set into several isolated invariant subsets called an attractor-repeller decomposition or, more generally, a Morse decomposition.

\subsection{Attractor-Repeller Decomposition}\label{Attractor-Repeller Decomposition}
\begin{dfn}[Attractor-repeller decomposition]
Let a compact set $S\subset X$ be an isolated invariant set under the flow $\phi_t$. We say $A\subset S$ is an \bi{attractor} in $S$ if there is an open neighborhood $U\subset X$ of $A$ with 
$$ \omega(U \cap S) = A.$$
The \bi{dual repeller} of $A$ in $S$ is  defined by:
$$R:=\mset{x\in S}{\omega(x)\cap A=\OO}.$$
We call the pair $(A, R)$ the \bi{attractor-repeller decomposition} of the invariant set $S$.
\end{dfn}
\par From the closedness of omega and alpha limit sets, we have the following proposition.
\begin{prop}[\cite{Salamon1985ConnectedSimpleSys}, Lemma $3.2$]
Let $(A, R)$ be an attractor-repeller pair of an isolated invariant set $S$. Then,
\begin{enumerate}
	\item $A$ and $R$ are isolated invariant subsets of $S$.	
	\item $R=\alpha(S-A)$ and $A=\omega(S-R)$.
	\item $A\cap R =\OO$.
\end{enumerate} 
\end{prop}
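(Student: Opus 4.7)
The plan is to prove the three conclusions in the order (3), (1), (2). The central tool is the standard attractor-block construction: a compact, positively invariant neighborhood $N$ of $A$ inside $U$ satisfying $\omega(N) = A$, together with its dual repeller block for $R$. Once this auxiliary object is in hand, the rest of the argument reduces to formal manipulation of $\alpha$- and $\omega$-limits, using the invariance and closedness properties already established above.

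I start with (3). For $x \in A$, invariance of $A$ as an $\omega$-limit set gives $\omega(x) \subset A$, and $\omega(x) \neq \varnothing$ because the forward orbit of $x$ lies in the compact invariant set $A$. Hence $\omega(x) \cap A = \omega(x) \neq \varnothing$, so $x \notin R$, proving $A \cap R = \varnothing$.

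For (1), invariance of $A$ is automatic, and invariance of $R$ follows from $\omega(x \cdot t) = \omega(x)$ combined with invariance of $S$; closedness of $R$ comes from the upper semicontinuity of $\omega$. To isolate $A$, I fix a compact $V \subset U$ with $A \subset \inte(V)$, use $\omega(U \cap S) = A \subset \inte(V)$ to choose $T$ with $\phi([T,\infty), V \cap S) \subset \inte(V)$, and set $N := \cl(\phi([T,\infty), V \cap S)) \subset V$, which is compact, positively invariant, and satisfies $\omega(N) = A$. For any $y \in \inv(N)$, the orbit of $y$ lies in $N \subset U$, so $y \cdot s \in U \cap S$ for every $s \in \RR$; for any open $W \supset A$, the identity $\omega(U \cap S) = A$ supplies $T_W$ with $\phi([T_W,\infty), U \cap S) \subset W$, and applying this at each translate forces $y \cdot r \in W$ for every $r$. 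Since $W$ is arbitrary and $A$ is closed, the orbit of $y$ lies in $A$, so $\inv(N) = A$ and $N$ isolates $A$. A symmetric repeller-block construction isolates $R$.

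Finally, for (2), the key observation for $A = \omega(S - R)$ is: if $x \in S$ and $\omega(x) \cap A \neq \varnothing$, then $\omega(x) \subset A$. Indeed, pick $y \in \omega(x) \cap A \subset U$; then $x \cdot t_0 \in U \cap S$ for some $t_0$, so $\omega(x) = \omega(x \cdot t_0) \subset \omega(U \cap S) = A$. Thus $\omega(S - R) \subset A$; conversely, by (3) we have $A \subset S - R$, and invariance of $A$ gives $A = \omega(A) \subset \omega(S - R)$. The identity $R = \alpha(S - A)$ follows by the dual argument, exploiting the repeller block to view $R$ as an attractor of the time-reversed flow with $A$ as its dual repeller, and then transcribing the proof of the first equality. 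The main obstacle is the attractor-block construction in (1) and the verification that it isolates $A$; the remainder of the proposition is essentially formal once that is in hand.
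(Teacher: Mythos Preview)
The paper does not prove this proposition; it simply cites Salamon, so there is no in-paper argument to compare against. Your proofs of (3) and (1) are essentially sound, modulo one technical point in (1): the block $N = \cl(\phi([T,\infty), V \cap S))$ lies entirely inside $S$ and therefore has empty interior in the ambient space $X$, so it cannot serve as an isolating neighborhood in the paper's sense. This is easily repaired by thickening $N$ inside an isolating neighborhood of $S$, or simply by taking any compact neighborhood of $A$ in $X$ that is contained in an isolating neighborhood of $S$ and disjoint from $R$.

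The substantive gap is in (2). You show that for each individual point $x \in S \setminus R$ one has $\omega(x) \subset A$, and then conclude $\omega(S\setminus R) \subset A$. But with the paper's definition $\omega(Y) = \bigcap_{t>0} \cl(\phi([t,\infty),Y))$ this inference fails: the $\omega$-limit of a set is \emph{not} the union of the pointwise $\omega$-limits. Indeed the identity $A = \omega(S\setminus R)$ is false in general under that definition. Take $S = [-1,1]$ with $\dot x = x^2 - 1$, so that $A = \{-1\}$, $R = \{1\}$, and $S \setminus R = [-1,1)$; since each time-$s$ map is a self-bijection of $[-1,1)$, one has $\phi([t,\infty),S\setminus R) = [-1,1)$ for every $t>0$, hence $\omega(S\setminus R) = [-1,1] = S \neq A$. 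The dual identity $R = \alpha(S\setminus A)$ fails for the same reason. What you have actually established is the pointwise statement --- for $x \in S \setminus (A \cup R)$ one has $\omega(x) \subset A$ and $\alpha(x) \subset R$ --- which is the form in which Salamon's lemma is normally stated and used; the set-level identity as transcribed in the paper appears to be inaccurate, and no argument will close that gap.
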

\begin{dfn}[Connecting orbits]
	The \bi{set of connecting orbits} for an attractor-repeller pair $(A,R)$ of $S$ is defined by:
	$$
	\DC(A,R; S):=\mset{x\in S}{\omega(x)\subset A \text{ and } \alpha(x)\subset R}.
	$$
	For any $x\in \DC(A,R; S)$, we call the orbit $\GO(x)$ a \bi{connecting orbit} from $R$ to $A$ for the pair $(A,R)$ of $S$.
\end{dfn}
\begin{thm}[Attractor-repeller decomposition;\cite{Mischaikow2002ConleyIndex}, Theorem $2.4$]
	Let the pair $(A, R)$ be an attractor-repeller pair of an isolated invariant set $S$. Then,
	$$
	S=A\sqcup R\sqcup \DC(A,R; S).
	$$
\end{thm}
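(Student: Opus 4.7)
The plan is to establish the decomposition $S = A \sqcup R \sqcup \DC(A,R;S)$ by checking that the three sets are pairwise disjoint and that their union exhausts $S$. Item (3) of the preceding proposition already gives $A \cap R = \OO$, so the two remaining disjointness facts, $A \cap \DC(A,R;S) = \OO$ and $R \cap \DC(A,R;S) = \OO$, together with the covering statement, are what need attention.

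For $A \cap \DC(A,R;S) = \OO$ I would argue by contradiction: if $x$ lay in the intersection, the definition of $\DC$ would give $\alpha(x) \subset R$, while invariance of $A$ together with $x \in A$ would force $\GO(x) \subset A$ and hence $\alpha(x) \subset A$. Since $S$ is compact and invariant, $\alpha(x)$ is nonempty, so this produces a point of $A \cap R$, contradicting item (3). The verification that $R \cap \DC(A,R;S) = \OO$ is symmetric, with $\omega(x)$ in place of $\alpha(x)$.

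For the covering step I would take an arbitrary $x \in S \setminus (A \cup R)$ and check the two conditions defining $\DC(A,R;S)$. Since $x \notin R$, the definition of the dual repeller gives $\omega(x) \cap A \neq \OO$, and the critical step is upgrading this to $\omega(x) \subset A$: choosing an open neighborhood $U$ of $A$ with $\omega(U \cap S) = A$, every $y \in U \cap S$ satisfies $\omega(y) \subset \omega(U \cap S) = A$; picking $z \in \omega(x) \cap A \subset U$ and a sequence $t_n \to \infty$ with $x \cdot t_n \to z$, one has $x \cdot t_N \in U \cap S$ for some large $N$, so $\omega(x) = \omega(x \cdot t_N) \subset A$. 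For the alpha condition I would invoke item (2) of the proposition: since $x \notin A$, $\sset{x} \subset S - A$ gives $\alpha(x) \subset \alpha(S - A) = R$. Hence $x \in \DC(A,R;S)$.

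The main obstacle is the upgrade from $\omega(x) \cap A \neq \OO$ to $\omega(x) \subset A$, since this is the one place where the full attractor hypothesis, rather than mere invariance of $A$, is genuinely needed; the remaining steps are essentially direct consequences of the definitions, the compactness of $S$, and the prior proposition.
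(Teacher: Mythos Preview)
Your argument is correct and complete: the disjointness verifications are straightforward, and the covering step---particularly the upgrade from $\omega(x)\cap A\neq\OO$ to $\omega(x)\subset A$ via the attracting neighborhood $U$, together with the use of $\alpha(S-A)=R$ from the preceding proposition---is exactly the right way to proceed.

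However, there is nothing to compare against: the paper does not supply its own proof of this theorem. It is stated as a background result with a citation to \cite{Mischaikow2002ConleyIndex}, Theorem~2.4, and is used without proof. So your proposal is not matching or diverging from the paper's approach; it is simply filling in a proof that the paper chose to quote from the literature.
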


\par In general, the attractor-repeller pair $(A,R)$ is robust under perturbation because of the robustness of isolated invariant sets. However, the set of connecting orbits may change discontinuously under perturbation, which will be discussed in the section \ref{Transition Matrix without Continuation}.

\begin{prop}[Robustness of attractor-repeller decomposition; \cite{Mischaikow2002ConleyIndex}, Theorem 2.5] For a parameterized flow $\phi_t^\lambda$ with a parameter $\lam \in \RR$, let $N$, $N_A\subset N$ and $N_R\subset N$ be isolating neighborhoods under the flow $\phi_t^{\lam_*}$ for some $\lam_*\in \RR$ and let $S_\lam:=\inv(N,\phi_t^\lambda), A_\lambda:=\inv(N_A,\phi_t^\lambda),$ and $R_\lambda:=\inv(N_R,\phi_t^\lambda)$ for $\lam\in \RR$. 
\par If $(A_{\lam_*}, R_{\lam_*})$ is an attractor-repeller decomposition for $S_{\lam_*}$, then there is an open interval $O\subset \RR$ such that $\lam_*\in O$ and the pair $(A_\lam ,R_\lam)$ is an attractor-repeller decomposition for $S_\lambda$ under the flow $\phi_t^\lambda$ for all $\lam \in O$.
\end{prop}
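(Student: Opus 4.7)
The plan is to apply the robustness of isolated invariant sets in tandem with a trapping-region argument based on the joint continuity of the parameterized flow.

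First I would apply the preceding robustness proposition (for isolated invariant sets) to each of $N$, $N_A$, $N_R$ individually. This yields an open interval $O_1 \ni \lam_*$ over which all three sets are simultaneously isolating, so that $S_\lam$, $A_\lam$, $R_\lam$ are well-defined isolated invariant sets for $\lam\in O_1$ with $A_\lam \cup R_\lam \subset S_\lam$. Since $(A_{\lam_*},R_{\lam_*})$ is an attractor-repeller pair we have $A_{\lam_*}\cap R_{\lam_*}=\OO$; together with the fact that $R_{\lam_*}$ is invariant and $N_A$ isolates $A_{\lam_*}$, this forces $R_{\lam_*}\cap N_A = \OO$. A symmetric remark plus joint continuity then gives $A_\lam\cap R_\lam=\OO$ on some open subinterval of $O_1$.

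The heart of the argument is upgrading the attractor property of $A_{\lam_*}$ to nearby $\lam$. Using the hypothesis that $A_{\lam_*}$ is an attractor in $S_{\lam_*}$, I would construct a compact neighborhood $B$ of $A_{\lam_*}$ with $B\subset \inte(N_A)$ and a time $T>0$ such that $\phi^{\lam_*}_T(B)\subset \inte(B)$ (a ``trapping block''), obtained from an open neighborhood $U$ satisfying $\omega(U\cap S_{\lam_*})=A_{\lam_*}$ together with standard compactness arguments. Because the condition $\phi^{\lam}_T(B)\subset \inte(B)$ is open in $\lam$ by compactness of $B$ and the joint continuity of $(\lam,t,x)\mapsto \phi^{\lam}_t(x)$, there is an open subinterval $O_2\ni \lam_*$ on which this inclusion persists. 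For each such $\lam$ the compact set $K_\lam := \bigcap_{k\geq 0}\phi^{\lam}_{kT}(B)$ is positively invariant and contained in the isolating neighborhood $N_A$; hence $K_\lam\subset \inv(N_A,\phi^{\lam}_t)=A_\lam$, and conversely $A_\lam\subset K_\lam$ by invariance. This identifies $K_\lam=A_\lam$ and, after shrinking $B$ to a suitable open $U'$, yields $\omega(U'\cap S_\lam)=A_\lam$.

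Reversing time in the same construction produces an open subinterval $O_3\ni \lam_*$ over which a repeller block inside $\inte(N_R)$ provides the analogous negatively invariant structure, identifying $R_\lam$ as the dual repeller of $A_\lam$ in $S_\lam$: any $x\in S_\lam\setminus A_\lam$ must eventually leave $B$ in backward time, and the repeller block then forces $\omega(x)\cap A_\lam =\OO$ to be equivalent to $\alpha(x)\subset R_\lam$. Taking $O:=O_2\cap O_3$ completes the proof. The main obstacle will be producing the trapping block $B$ at $\lam_*$ satisfying all required properties simultaneously (compact, $B\subset \inte(N_A)$, strict forward invariance after finite time, and trapping all of the attractor's basin within $N_A$) and then confirming that $\bigcap_k \phi^{\lam}_{kT}(B)$ really captures $A_\lam$ rather than a spurious invariant subset introduced by perturbation; this is controlled precisely because $N_A$ remains isolating for every $\lam\in O_1$, pinning down the maximal invariant content of $N_A$ to exactly $A_\lam$.
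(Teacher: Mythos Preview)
The paper does not give its own proof of this proposition: it is quoted verbatim as a background result from the literature (Mischaikow, \emph{Conley Index Theory}, Theorem~2.5) and no argument is supplied. So there is nothing in the paper to compare your attempt against.

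That said, a remark on your sketch. The overall trapping-block strategy is the standard one, but as written there is a genuine gap at the key step: you assert the existence of a compact $B\subset\inte(N_A)$ and $T>0$ with $\phi^{\lam_*}_T(B)\subset\inte(B)$ in the \emph{ambient} flow. The hypothesis, however, only says $A_{\lam_*}$ is an attractor \emph{in} $S_{\lam_*}$; it may well be, say, of saddle type in the full phase space, in which case no such absolutely positively invariant $B$ exists. The attractor-neighborhood condition $\omega(U\cap S_{\lam_*})=A_{\lam_*}$ controls only orbits that stay in $S_{\lam_*}$, not arbitrary points of $B$. The fix is to make the trapping relative to $N$ (or to $S_\lam$): one works with an attractor block $B$ for which points remaining in $N$ are eventually trapped in $B$, and then argues that for $\lam$ near $\lam_*$ any $x\in S_\lam$ entering $B$ has $\omega(x)\subset A_\lam$ while any $x\in S_\lam$ never entering $B$ has $\alpha(x)\subset R_\lam$. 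Your identification $K_\lam=A_\lam$ via the isolating property of $N_A$ is the right idea once the trapping is set up relative to $N$; just be careful that $\bigcap_{k\ge 0}\phi^\lam_{kT}(B)$ only makes sense for points whose forward orbit stays where the flow is being considered.
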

\subsection{Morse Decomposition}\label{Morse Decomposition}

\par Generalizing an attractor-repeller decomposition of an isolated invariant set $S$, we now study decomposing $S$ into finitely many isolated invariant subsets and connecting orbits between them.
\begin{dfn}[Morse decomposition]
A \bi{Morse decomposition} of an isolated invariant set $S$ is a finite collection of mutually disjoint isolated invariant sets $\DM(S):=\mset{M_p\subset S}{p\in (\DP,<)}$ which are indexed by a partially ordered set $(\DP,<)$ satisfying that
	 for any $$x\in S-\bigcup_{p\in \DP}M_p$$ there exist $p,q\in (\DP,<)$ with $p<q$ such that $$ \omega(x)\subset M_p \quad \text{and} \quad \alpha(x)\subset M_q.$$
We denote a Morse decomposition as $(\DM(S),(\DP,<))$, and call each isolated invariant set $M_p\in \DM(S)$ a \bi{Morse set}.
\end{dfn}
\par Because $\DM(S)$ is a finite collection, $\DP$ is a finite set. A partial order $<^\Gf$ on $\DP$ is defined as: $p<^\Gf q$ if and only if there exists a point $x\in S$ such that $\omega(x)\subset M_p$ and $\alpha(x)\subset M_q$. Then $(\DM(S),(\DP,<^\Gf))$ is a Morse decomposition of $S$. We call the order $<^\Gf$ the \bi{flow-defined order} for $\DM(S)$. If we take any extension $<^{'}$ of the flow-defined order $<^\Gf$, then $(\DM(S),(\DP,<{'}))$ is also a Morse decomposition of $S$. Any extension of the flow-defined order is called an \bi{admissible order}.

\par By making Morse sets and the connecting orbits between them to a new isolated invariant set, a coarser Morse decomposition can be created. In order to make such construction precise, we define an interval in a partially ordered set.
\begin{dfn}[Interval]\label{Interval}
	For a partially ordered set $\DP$, a subset $I \subset \DP$ is called an \bi{interval} in $\DP$ if $p,q\in I$, $p<r<q$ implies $r\in I$. The collection of all intervals in $\DP$ is denoted as $\DI(\DP)$.
\end{dfn}
\begin{dfn}[Attracting interval]\label{attractinginterval}
	$A\in \DI(\DP)$ is called an \bi{attracting interval} if $p<a$ implies $p\in A$, for all $a\in A$ and $p\in \DP.$ Let $\DA(\DP)$ be the set of attracting intervals in $\DP$.
\end{dfn}

\begin{dfn}[Adjacent pair] For a partially ordered set $\DP$, the ordered pair $(I,J)$ of mutually disjoint intervals in $\DP$ is called an \bi{adjacent pair} if it satisfies:
\begin{enumerate}
	\item $I\cup J\in \SOI$.
	\item $p\in I, q\in J$ implies $p \ngtr q$.
\end{enumerate}
	The collection of all adjacent pairs in $\DP$ is denoted as $\DI_2(\DP)$, and we write $I\cup J$ as $I\!J$, if $(I,J)\in \DI_2(\DP)$.
\end{dfn}

\par For a Morse decomposition $\GMD$ and $I\in \DI(\DP)$, we define:
$$
M_I:=\left(\bigcup_{p\in I} M_p\right)\cup \left(\bigcup_{p,q\in I}\DC(M_q,M_p) \right).
$$
\begin{prop}[\cite{Mischaikow2002ConleyIndex}, Proposition $2.12$]
$M_I$, as defined above, is a compact isolated invariant set.	
\end{prop}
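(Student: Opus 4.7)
The plan is to reduce the claim to the attractor--repeller machinery developed in Section~\ref{Attractor-Repeller Decomposition}. The key observation is that $M_I$ should arise as the dual repeller of one attractor sitting inside a larger attractor of $S$, provided we can write $I$ as the set-theoretic difference of two attracting intervals in $\DP$.

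First I would perform the necessary order-theoretic surgery on $\DP$. Set
\[
A := I \cup \{\, p \in \DP : p < q \text{ for some } q \in I \,\}, \qquad A' := A \setminus I,
\]
so that $A \setminus A' = I$, and then check that $A, A' \in \DA(\DP)$. That $A$ is an interval and is downward closed under $<$ follows directly from the interval property of $I$ combined with transitivity. The mildly subtle point is that $A'$ is an interval: if $p, r, q \in \DP$ with $p < r < q$ and $p, q \in A \setminus I$, one has $q < q_0$ for some $q_0 \in I$; assuming for contradiction that $r \in I$, the chain $r < q < q_0$ with $r, q_0 \in I$ forces $q \in I$ by the interval property of $I$, contradicting $q \in A'$.

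Next I would appeal to the standard fact (a direct consequence of the definition of a Morse decomposition) that for any $B \in \DA(\DP)$, the set $M_B$ is an attractor in $S$, with dual repeller $M_{\DP \setminus B}$. Applied to $B = A$, this gives that $M_A$ is a compact isolated invariant set carrying an inherited Morse decomposition indexed by $A$. Within this inherited decomposition $A'$ is again an attracting interval, so $M_{A'}$ is an attractor in $M_A$. Its dual repeller in $M_A$ is, by definition, the set of $x \in M_A$ with $\omega(x) \cap M_{A'} = \OO$; using that $\omega$-limits inside $M_A$ lie in Morse sets indexed by elements of $A$, and that the repeller is invariant (so $\alpha$-limits of its points also avoid $M_{A'}$), this dual repeller equals exactly $\bigcup_{p \in I} M_p \cup \bigcup_{p,q \in I} \DC(M_q, M_p) = M_I$. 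Since a dual repeller inside an isolated invariant set is itself compact, invariant, and isolated, the claim follows.

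The main obstacle is the black box invoked in the previous paragraph: that $M_B$ is an attractor in $S$ whenever $B \in \DA(\DP)$. Although this is well known in the Conley-theoretic literature, a self-contained verification requires building an isolating (in fact attracting) neighborhood of $M_B$ out of the Morse decomposition data and verifying the $\omega$-limit characterization of an attractor. This is where one genuinely uses the admissibility of the order (to rule out connecting orbits escaping from $M_B$) together with the finiteness and pairwise disjointness of the Morse sets. Once that ingredient is in hand, the order-theoretic construction above and the identification of the dual repeller of $M_{A'}$ in $M_A$ with $M_I$ are essentially bookkeeping.
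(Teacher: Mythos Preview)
The paper does not supply its own proof of this proposition; it is quoted as a known result from \cite{Mischaikow2002ConleyIndex}. So there is no proof in the paper to compare against, and the relevant question is simply whether your argument stands on its own.

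Your strategy---writing $I$ as the difference $A \setminus A'$ of two attracting intervals and then identifying $M_I$ as the dual repeller of $M_{A'}$ inside the attractor $M_A$---is exactly the standard route (it is, in fact, the content of Proposition~\ref{indexfiltrationprop}(1) applied with $K = A'$ and $J = I$). The identification of the dual repeller with $M_I$ is correct, and your acknowledgment of the one genuine input (that $M_B$ is an attractor in $S$ for every $B \in \DA(\DP)$) is honest and appropriate.

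One small correction: what you actually need for $A'$ is not merely the interval property but the \emph{attracting} (downward-closed) property, since you are claiming $A' \in \DA(\DP)$. You verify that $p < r < q$ with $p,q \in A'$ forces $r \in A'$, but the required statement is that $p < a$ with $a \in A'$ forces $p \in A'$. Fortunately the same trick works: $a \in A \setminus I$ gives $a < q_0$ for some $q_0 \in I$, and if $p \in I$ then $p < a < q_0$ with $p, q_0 \in I$ would force $a \in I$. (Note also that downward-closedness already implies the interval property, so once you have this, the interval check is redundant.) With that adjustment your argument is complete.
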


\par Morse decomposition decomposes an isolated invariant set into finitely many Morse sets and connecting orbits between them. However, we still lack information on each Morse set in a Morse decomposition, and we are unable to determine the existence or the structure of connecting orbits between these Morse sets, which will be discussed in the following sections. 

\section{Conley Index}\label{Conleyindex}
\par In this section, we introduce the notion of the Conley index for isolated invariant sets, which can tell us the existence, stability, and other topological properties of invariant sets in dynamical systems. The presentation of this section is based on the works of Conley \cite{Conley1978isolated} and Salamon \cite{Salamon1985ConnectedSimpleSys}.

\subsection{Index Pairs}\label{IndexPairs}
\par In this subsection, we introduce index pairs 
to define the Conley index. 
 A \bi{pointed space} $(Y,y)$ is a pair of a topological space $Y$ and a distinguished point $y\in Y$. 
\begin{dfn}[Quotient space]
	Given a pair of subspaces $(N,L)$ with $\OO\neq L\subset N\subset Y$, define a pointed space as follows:
	$$
	(N/L,[L]):=((N-L)\cup [L],[L])
	$$
	in which we collapse the subspace $L$ into a point, denoted as $[L]$. $N/L$ is equipped with the quotient topology. 
	
When $L=\OO$, $N/L=N/\OO:=N\cup \sset{*}$, $(N/\OO,[\OO])=(N\cup \sset{*},\sset{*})$ in which a new point $\sset{*}$ is added to the space $N$. The topology of $N/\OO$ is given by that a subset $O\subset N/\OO $ is open if it satisfies one of the following:
\begin{enumerate}
		\item $O$ is open in $N$ with $\sset{*} \notin O$.
		\item The set $(O - \sset{*})$ is open in $N$ with $\sset{*} \in O$.
	\end{enumerate}

\end{dfn}

\begin{dfn}[Index pairs]\label{indexpairsdef}
	Let $\phi_t$ be a flow on $X$, and let $S$ be an isolated invariant set. Then a pair $(N,L)$ of compact sets in $X$ is an \bi{index pair} for $S$ with respect to the flow $\phi_t$, if it satisfies the following:
	\begin{enumerate}
		\item $L\subset N$.
		\item $N-L$ is a neighborhood of $S$ and $S=\inv(\cl(N-L))$.
		\item $L$ is positively invariant in $N$, namely, for any $l\in L$ and any $t\geq 0$, $l\cdot[0,t]\subset N$ implies $l\cdot t\in L$.
		\item If there is a point $l\in N$ that $l\cdot[0,\infty)\not\subset N$, then there exist a $t_0\geq 0$ such that $l\cdot[0,t_0]\subset N$ and $l\cdot t_0 \in L$.
	\end{enumerate}
\end{dfn}
\begin{note} The statement $4$ in Definition \ref{indexpairsdef} indicates that every point will enter $L$ before leaving $N$. Therefore, the set $L$ in an index pair $(N,L)$ is also called an \bi{exit set} for $N$.	
\end{note}

\begin{thm}[Existence of index pairs; \cite{Salamon1985ConnectedSimpleSys}, Theorem $4.3$]
	Let $N\subset X$ be an isolating neighborhood of the isolated invariant set $S$ with respect to the flow $\phi_t$, and let $U$ be any neighborhood of $S$. Then, there exists an index pair $(N_1, N_0)$ for $S$ with respect to $\phi_t$, such that $N_1,N_0$ are positively invariant in $N$ and $\cl(N_1-N_0)\subset U$.
\end{thm}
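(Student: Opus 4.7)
The plan is to construct $(N_1, N_0)$ in two stages: first shrink the isolating neighborhood $N$ to a smaller compact isolating neighborhood of $S$ sitting inside $U$, and then carve out a positively invariant neighborhood $N_1$ of $S$ together with its exit set $N_0$ using the forward and backward residence of orbits inside $N$.

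First, I would use the isolation condition $\inv(N) = S \subset \inte(N)$ together with $S \subset U$ to produce a compact neighborhood $B$ of $S$ with $B \subset \inte(N) \cap U$ and $\inv(B) = S$. Existence follows from a standard compactness argument: any sufficiently small compact neighborhood of $S$ inside $\inte(N) \cap U$ is still isolating, since otherwise a diagonal sequence of points whose full orbits stay in ever-smaller neighborhoods would, by compactness of $N$, converge to an orbit in $\inv(N) \setminus S$, contradicting isolation.

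Next, I would construct $N_1$ as a positively invariant neighborhood of $S$ in $N$ contained in $U$, and $N_0$ as its exit set, via auxiliary continuous functions $\gam^\pm : N \to [0,\infty]$ measuring the forward and backward residence of orbits inside $N$ relative to $B$. Setting $N_1 := \mset{x \in N}{\gam^-(x) \leq \eps}$ and $N_0 := \mset{x \in N_1}{\gam^+(x) \geq \eps'}$ for suitably small thresholds $\eps, \eps' > 0$, the positive invariance of $N_1$ and $N_0$ in $N$ would follow from monotonicity of $\gam^\pm$ along orbits; the exit-set axiom (4) of Definition \ref{indexpairsdef} would follow because any orbit leaving $N$ must first cross the threshold in $\gam^+$; and the containment $\cl(N_1 - N_0) \subset U$ would follow from $B \subset U$ together with the shrinking from the previous step. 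Verifying that $S = \inv(\cl(N_1 - N_0))$ reduces to the already established equality $\inv(B) = S$.

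The main obstacle is the construction of the continuous auxiliary functions $\gam^\pm$. The pointwise exit-time $\tau^+(x) := \sup\{t \geq 0 : x\cdot[0,t] \subset N\}$ is in general only upper semicontinuous, so naive definitions of $\gam^\pm$ by integrating a distance function out to the exit time fail to yield continuous functions on $N$. Salamon \cite{Salamon1985ConnectedSimpleSys} resolves this by a weighted integration procedure that averages out the discontinuity along the flow, producing genuinely continuous $\gam^\pm$ at the cost of some auxiliary smoothing; this construction is the technical core of the theorem, and the rest of the argument assembled above is essentially bookkeeping around it.
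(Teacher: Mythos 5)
The paper does not prove this theorem; it is stated as a citation to Salamon (1985, Theorem 4.3), so there is no internal proof to compare against. Your sketch follows the right general strategy for the cited result: shrink $N$ to a small compact isolating neighborhood $B \subset \inte(N) \cap U$ (your diagonal-sequence compactness argument for this shrinking step is correct), and then assemble $N_1$ and its exit set $N_0$ from sub/superlevel sets of continuous auxiliary functions that quantify forward and backward residence relative to $B$. You also correctly locate the technical heart of the matter: the pointwise exit time $\tau^+(x) = \sup\{t \ge 0 : x\cdot[0,t]\subset N\}$ from a closed $N$ is upper semicontinuous but not continuous, and the construction of genuinely continuous surrogates $\gam^\pm$ (which then make the monotonicity and threshold arguments go through cleanly) is exactly where the real work lies.

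One phrase should be corrected. You write that $N_1$ can be taken to be ``a positively invariant neighborhood of $S$ in $N$ contained in $U$.'' That is not achievable in general: any set that is positively invariant in $N$ and contains a neighborhood of $S$ must also contain the forward saturation in $N$ of that neighborhood, and as soon as $S$ has nontrivial unstable behavior inside $N$ (e.g., a saddle), this saturation escapes any sufficiently small $U$. The theorem accordingly claims only $\cl(N_1 - N_0)\subset U$, not $N_1\subset U$, and your closing sentence does verify the correct, weaker containment; the earlier phrase should just be adjusted to match. With that fix, and granting the (explicitly acknowledged) black box of Salamon's construction of the continuous $\gam^\pm$, your outline is a faithful sketch of the standard proof.
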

\par In general, the index pair $(N_1,N_0)$ for an isolated invariant set $S$ may not be unique, but the homotopy type of $(N_1/N_0, [N_0])$ for any index pair $(N_1,N_0)$ of $S$ keeps the same.
\begin{prop}[Equivalence of index pairs; \cite{Salamon1985ConnectedSimpleSys} Lemma $4.6,4.7,4.8$]\label{Equviance of Index Pairs}
	Let $(N,L)$ and $(N^{'},L^{'})$ be index pairs for an isolated invariant set $S$. Then $(N/L,[L])$ and $(N^{'}/L^{'},[L^{'}])$ are homopoty equivalent.
\end{prop}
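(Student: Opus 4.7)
The plan is to use the flow $\phi_t$ itself to transport points from one index pair to the other, thereby constructing an explicit homotopy equivalence. This is the classical flow-induced map construction of Conley and Salamon.

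First I would fix a time $T > 0$, to be chosen sufficiently large later, and define a candidate map
$$
f_T \colon (N/L,[L]) \to (N'/L',[L'])
$$
by the rule
$$
f_T([x]) := \begin{cases} [x\cdot T] & \text{if } x\cdot [0,T]\subset \cl(N-L) \text{ and } x\cdot T\in N'-L',\\ [L'] & \text{otherwise.}\end{cases}
$$
Well-definedness at the basepoint is immediate: if $x\in L$, positive invariance of $L$ in $N$ (condition $3$) forces $x\cdot[0,T]$ to lie in $L$ so long as it lies in $N$, so the ``otherwise'' branch applies and sends $[x]=[L]$ to $[L']$.

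Next I would show $f_T$ is continuous. The delicate locus is the set of $[x]$ at which one switches between branches, namely points whose forward orbit just barely stays in $\cl(N-L)$ up to time $T$, or just barely enters $N'-L'$ at time $T$. The exit-set condition $4$ in Definition \ref{indexpairsdef} guarantees that any point whose forward orbit leaves $N$ first passes through $L$, so the preimage of the basepoint $[L']$ is closed; combined with continuity of $\phi_T$ elsewhere, this yields a continuous map into the quotient. As for the choice of $T$: since $S=\inv(\cl(N-L))$ and $S\subset \inte(N'-L')$, a standard compactness argument (applied to points staying in $\cl(N-L)\setminus U$ for $U$ a small open neighborhood of $S$) shows that for all sufficiently large $T$, every $x$ with $x\cdot[0,T]\subset \cl(N-L)$ satisfies $x\cdot T\in \inte(N'-L')$, lying safely in the ``good'' branch and away from any boundary where a discontinuity could arise.

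The symmetric construction yields $g_{T'}\colon (N'/L',[L'])\to (N/L,[L])$. To finish, I would show $g_{T'}\circ f_T \simeq \mathrm{id}_{N/L}$ by interpolating the flight time from $T+T'$ down to $0$: the homotopy
$$
H(s,[x]) := \begin{cases} [x\cdot s(T+T')] & \text{if } x\cdot [0,s(T+T')]\subset \cl(N-L),\\ [L] & \text{otherwise,}\end{cases}
$$
deforms $g_{T'}\circ f_T$ at $s=1$ to $\mathrm{id}_{N/L}$ at $s=0$, and the analogous homotopy works on the other side.

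The main obstacle is establishing continuity of $f_T$ and of $H$, which is not a formal consequence of continuity of the flow: it requires the full force of the index pair axioms (especially the exit condition) to see that the preimage of the basepoint is closed, together with a quantitative compactness argument to choose $T$ uniformly so that no orbit lingers near the boundary of $N'-L'$ at time $T$. Once these continuity issues are handled cleanly, the homotopy to the identity follows from the straightforward time-rescaling construction above.
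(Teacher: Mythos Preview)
The paper gives no proof of this proposition; it simply cites Salamon's Lemmas 4.6--4.8. Your outline is precisely the flow-induced map argument of Salamon, so in that sense you are following the same route as the cited source.

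There is, however, one genuine gap in your homotopy step. Your map $H(1,\cdot)$ is the time-$(T+T')$ flow self-map of $N/L$: it sends $[x]$ to $[x\cdot(T+T')]$ exactly when $x\cdot[0,T+T']\subset\cl(N-L)$. But the composite $g_{T'}\circ f_T$ sends $[x]$ to $[x\cdot(T+T')]$ under a \emph{different} condition, namely that $x\cdot[0,T]\subset\cl(N-L)$, then $x\cdot[T,T+T']\subset\cl(N'-L')$, and finally $x\cdot(T+T')\in N-L$. These two conditions are not the same, so $H(1,\cdot)$ is not literally $g_{T'}\circ f_T$, and your homotopy as written connects the identity to the wrong map. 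What is missing is the verification that, for $T,T'$ sufficiently large, whenever the two conditions disagree both maps already send the point to the basepoint, so that the maps coincide as maps of pointed spaces. This is exactly the content that Salamon isolates as a separate lemma (his Lemma~4.8), and it again uses a compactness argument together with the exit-set axiom. Once you supply that step, the rest of your outline goes through.
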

\par Let $(N,L)$ be an index pair for $S$, then we know that the homotopy type of the quotient space $(N/L,[L])$ is a topological invariant of the isolated invariant set $S$. We will use this to define the Conley index in the next subsection.
\subsection{Conley Index}\label{ConleyIndexSubsection}
\par Let $(N,L)$ be an index pair for an isolated invaiant set $S$ with respect to the flow $\phi_t$. We define the \bi{homotopy Conley index} of $S$ in the following way.
\begin{dfn}[Conley index]\label{Homotopy Conley Index}
	The \bi{homotopy Conley index} of $S$ with respect to the flow $\phi_t$ is the homotopy type of $(N/L,[L])$, denoted as $h(S,\phi_t)$.
\end{dfn}
\begin{note}
By Proposition \ref{Equviance of Index Pairs}, the Conly index of $S$ is well-defined. In practice, we usually consider the \bi{homological Conley index}, defined as the homology of $h(S,\phi_t)$:
$$
C\!H_{*}(S,\phi_t):=H_{*}(h(S,\phi_t))=H_{*}(H/L,[L]).
$$
If the flow $\phi_t$ is clear, we denote it as $C\!H_{*}(S)$. It is known that there is always a good index pair $(H,L)$ for $S$ such that 
$$
C\!H_{*}(S) \approx H_{*}(N,L).
$$
From now on, we always consider the homological Conley index, and for simplicity, we consider a finite field coefficient, such as $\ZZ_2$.
\end{note}

\par Taking the contrapositive of the preceding example, we have the \bi{Wa\.zewski property}.
\begin{thm}[Wa\.zewski property]\label{Wazewski property}
	Let $N$ be an isolating neighborhood in the flow $\phi_t$, if $\CH_*(\inv(N))\not\approx 0$ then $\inv(N)\neq \OO$.
\end{thm}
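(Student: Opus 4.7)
The plan is to prove the contrapositive: if $\inv(N) = \OO$, then $\CH_*(\inv(N)) \approx 0$. Equivalently, I would show that the homological Conley index of the empty invariant set is trivial, and then the statement follows by contraposition.

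The key step is to exhibit an index pair $(N_1, N_0)$ for $S = \OO$ with $N_1 = N_0$. I would apply the existence theorem for index pairs with the given isolating neighborhood $N$ (the hypothesis $\inv(N) \subset \inte(N)$ is vacuously satisfied since $\inv(N) = \OO$) and with the empty neighborhood $U := \OO$ of $S$. The theorem then delivers an index pair with $\cl(N_1 - N_0) \subset \OO$, forcing $N_1 \subset N_0$; combined with $N_0 \subset N_1$ from clause $(1)$ of Definition \ref{indexpairsdef}, this gives $N_1 = N_0$.

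For such an index pair the pointed quotient $(N_1/N_0, [N_0])$ collapses to a single point, so the homotopy Conley index $h(\OO, \phi_t)$ is the homotopy type of a one-point pointed space. Passing to reduced homology, or equivalently using the good-index-pair identification $\CH_*(S) \approx H_*(N_1, N_0) = H_*(N_1, N_1) = 0$ recorded in Subsection \ref{ConleyIndexSubsection}, I would conclude $\CH_*(\OO) \approx 0$. Independence of the choice of index pair is guaranteed by Proposition \ref{Equviance of Index Pairs}, so this value is a genuine invariant.

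The main obstacle is being comfortable with taking $U = \OO$ in the existence theorem, since some expositions tacitly assume $U$ is a nonempty neighborhood of a nonempty $S$. A clean workaround is a direct Wa\.zewski-style compactness argument: since no full orbit lies in $N$, compactness of $N$ yields a uniform time $T > 0$ after which every trajectory in $N$ exits $N$ in forward time. One can then construct a positively invariant exit set $N_0$ that coincides with $N_1$, producing the trivial quotient without invoking the borderline case. Either route reduces the theorem to the unambiguous computation $\CH_*(\OO) = 0$.
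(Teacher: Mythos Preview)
Your proposal is correct and matches the paper's approach. The paper does not give an explicit proof of this theorem; it simply introduces it with the phrase ``Taking the contrapositive of the preceding example,'' where the implied example is the computation $\CH_*(\OO)\approx 0$. Your argument fills in exactly this computation via an index pair with $N_1=N_0$, which is the standard justification the paper leaves implicit.
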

\par Conley index remains the same if two invariant sets are related by continuation.
\begin{thm}[Continuation property; cf. \cite{Mischaikow2002ConleyIndex}, Theorem $3.10$]\label{Continuation Property}
	For a parameterized flow $\phi_t^\lambda$ with a parameter $\lam \in \RR$,
	let $S^\lz$ and $S^{\lam_1}$ be isolated invariant sets under the flows $\phi_t^\lz$ and $\phi_t^{\lam_1}$ respectively. Let $S^\lz$ and $S^{\lam_1}$ be related by continuation. Then $\CH_*(S^\lz) \approx \CH_*(S^{\lam_1})$.
\end{thm}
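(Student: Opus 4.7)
My plan is to reduce the statement to a local constancy assertion on the parameter interval and then chain isomorphisms together using compactness. Since $S^{\lz}$ continues to $S^{\lam_1}$, there is a compact $N \subset X$ that is an isolating neighborhood of $\phi_t^\lambda$ for every $\lambda \in [\lz,\lam_1]$, with $S^\lambda := \inv(N,\phi_t^\lambda)$. I would show the map $\lambda \mapsto \CH_*(S^\lambda)$ is locally constant, and then cover $[\lz,\lam_1]$ by finitely many neighborhoods on which it is constant.

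For the local step, fix $\mu \in [\lz, \lam_1]$. Using the existence theorem for index pairs, I would choose an index pair $(N_1,N_0)$ for $S^\mu$ with $(N_1,N_0) \subset N$ and with $S^\mu \subset \inte(N_1 \setminus N_0)$. The crucial claim is that $(N_1,N_0)$ remains an index pair for $S^\lambda$ with respect to $\phi_t^\lambda$ for all $\lambda$ in some open neighborhood $O_\mu$ of $\mu$ in $[\lz,\lam_1]$. Granted this, Proposition 2.2 (equivalence of index pairs) yields
\[
\CH_*(S^\lambda) \approx H_*(N_1/N_0,[N_0]) \approx \CH_*(S^\mu)
\]
for every $\lambda \in O_\mu$, giving local constancy. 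By compactness of $[\lz,\lam_1]$, finitely many such neighborhoods $O_{\mu_0},\ldots,O_{\mu_k}$ cover it; arranging them so that $\mu_0=\lz$, $\mu_k=\lam_1$, and consecutive ones overlap, and picking $\nu_i \in O_{\mu_i} \cap O_{\mu_{i+1}}$, I would chain
\[
\CH_*(S^{\lz}) \approx \CH_*(S^{\nu_0}) \approx \CH_*(S^{\mu_1}) \approx \cdots \approx \CH_*(S^{\lam_1}).
\]

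The main obstacle is verifying that each of the four defining conditions of an index pair (Definition 2.5) is stable under small perturbation of the parameter. Condition 1 is parameter-free. Condition 2 uses the robustness of isolated invariant sets (Proposition 2.1) together with the hypothesis that $N$ isolates throughout $[\lz,\lam_1]$, plus the initial choice that $S^\mu$ lies in the interior of $N_1\setminus N_0$. The subtle points are conditions 3 and 4: positive invariance of $N_0$ in $N_1$, and the exit-set property. For a generic $(N_1,N_0)$ these could fail under arbitrarily small perturbation, so I would insist that the constructed $(N_1,N_0)$ be \emph{robustly} positively invariant, in the sense that orbit segments under $\phi_t^\mu$ leave $N_1$ only after having entered a neighborhood interior to $N_0$, with a uniform time bound. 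Such a robust index pair can be produced from the standard Salamon construction (e.g.\ via suitable sublevel sets of a time-of-exit function), and a compactness argument on $N_1$ together with uniform continuity of $(t,\lambda,x) \mapsto \phi_t^\lambda(x)$ on compact sets then yields a common $O_\mu$ on which all four index-pair conditions persist. Once this technical lemma is in place, Steps 1 and 2 assemble immediately into the continuation of the Conley index.
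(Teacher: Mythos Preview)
The paper does not prove this theorem; it simply cites it from \cite{Mischaikow2002ConleyIndex}, Theorem~3.10, so there is no proof in the paper to compare against.

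Your outline is the standard route to the continuation property and is sound in structure: local constancy of $\lambda \mapsto \CH_*(S^\lambda)$ via a common index pair on a small parameter neighborhood, followed by a compactness/chaining argument on $[\lz,\lam_1]$. You correctly isolate the only nontrivial point, namely that conditions~3 and~4 in Definition~\ref{indexpairsdef} (positive invariance of the exit set and the exit property) need not persist for an arbitrary index pair. The fix you propose---working with a specially constructed pair coming from Salamon's time-of-exit/Lyapunov-type functions---is exactly how this is handled in the literature (see \cite{Salamon1985ConnectedSimpleSys}, \S5, or the treatment in \cite{Mischaikow2002ConleyIndex}), and once such a robust pair is in hand the rest of your argument goes through. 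The only caveat is that your sketch of why such a robust pair exists and why its defining inequalities are open in $\lambda$ is brief; in a full write-up you would want to spell out the uniform-continuity/compactness estimate that guarantees the exit-time bounds survive a small change of flow.
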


\par The Conley index provides information about the isolated invariant set, but to fully understand the dynamics in the system, we also need to know about connecting orbits. In the next section, we will use an algebraic tool to relate the Conley indices of Morse sets to know more about connecting orbits.


\section{Connection Matrix}\label{Connectionmatrix}
\par This section presents the notion of the connection matrix for a Morse decomposition, which can tell the existence of a connecting orbit between some pair of the Morse sets. We begin with the case of an attractor-repeller pair and then extend it to a more general Morse decomposition. Presentation of this section relies on Franzosa \cite{Franzosa1989connectionmatrixtheory}.
\subsection{The Structure of Attractor Repeller Pairs}\label{TheStructure ofAttractorRepellerpairs}
\par In this subsection, we study the existence of connecting orbits
between the attractor and repeller in an attractor-repeller decomposition. It can be measured by a homomorphism from the homological Conley index of the repeller to that of the attractor, which is called the connecting homomorphism for the attractor-repeller decomposition. Firstly, we begin with the \bi{index triple}. 
\begin{dfn}[Index triple]\label{IndexTriple}
	Let $(A,R)$ be an attractor-repeller decomposition for an isolated invariant set $S$ with respect to the flow $\phi_t$. An \bi{index triple} for the attractor-repeller decomposition $(A,R)$ is a collection of compact sets $(N_2,N_1,N_0)$ with $N_0\subset N_1 \subset N_2$ that satisfies the following:
	\begin{enumerate}
		\item $(N_2,N_0)$ is an index pair for $S$;
		\item $(N_2,N_1)$ is an index pair for $R$;
		\item $(N_1,N_0)$ is an index pair for $A$.
	\end{enumerate}
\end{dfn}

\begin{thm}[Existence of index triple; \cite{Mischaikow2002ConleyIndex}, Theorem 4.2]\label{ExistenceofIndexTriple}
	Let $(A,R)$ be an attractor-repeller decomposition for an isolated invariant set $S$ with respect to the flow $\phi_t$. Then there exists an index triple $(N_2,N_1,N_0)$ for $(A,R)$.
\end{thm}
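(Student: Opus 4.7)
The plan is to bootstrap from the existence theorem for index pairs by first producing an index pair for $S$ and then carving out a middle set $N_1$ using the attractor-repeller structure. By the existence theorem for index pairs, there is an index pair $(N_2,N_0)$ for $S$ with respect to $\phi_t$, both sets positively invariant in a given isolating neighborhood of $S$. This provides the outer and innermost members of the desired triple, so the task reduces to constructing the intermediate set $N_1$ with $N_0\subset N_1\subset N_2$ such that the three nesting conditions of Definition \ref{IndexTriple} hold simultaneously.

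The guiding idea is that $N_1$ should be the ``attractor half'' of $N_2$ relative to $N_0$. Concretely, I would define
$$N_1 := N_0 \cup \mset{x\in N_2}{\omega(x)\subset A \text{ or } x\cdot t\in N_0 \text{ for some } t\geq 0}.$$
Equivalently, one may first choose a small compact isolating neighborhood $V_R\subset N_2-N_0$ for $R$ and then let $N_2-N_1$ be the saturation of $V_R$ under the flow inside $N_2$; the two descriptions agree because the attractor-repeller decomposition theorem gives $S = A\sqcup R\sqcup \DC(A,R;S)$, so every full orbit in $N_2$ lies in exactly one of the three pieces, and the construction assigns $A$ and the connecting orbits to $N_1$ while $R$ is placed in $N_2\setminus N_1$.

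With $N_1$ in hand, I would then verify the three conditions. For $(N_1,N_0)$ as an index pair for $A$: the set $N_1-N_0$ is a neighborhood of $A$ by construction; if $x\in N_1-N_0$ has a full orbit in $N_1-N_0$, then $x\in S$ by the $(N_2,N_0)$ index-pair property and $\omega(x)\subset A$, while its backward orbit cannot approach $R$ without leaving $N_1$, so by the attractor-repeller decomposition $x\in A$; positive invariance and the exit property of $N_0$ are inherited from $(N_2,N_0)$. For $(N_2,N_1)$ as an index pair for $R$: the complement $N_2-N_1$ is by construction a neighborhood of $R$ whose maximal invariant set cannot meet $A$ or the connecting orbits, hence equals $R$; positive invariance of $N_1$ in $N_2$ follows because both defining conditions ($\omega(x)\subset A$ and eventual entry into $N_0$) are preserved by the forward flow.

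The main obstacle is verifying that the candidate $N_1$ is closed, hence compact, in $N_2$. The condition $\omega(x)\subset A$ is not automatically closed in $x$: along a sequence $x_n\to x$ the sets $\omega(x_n)$ only satisfy upper semicontinuity toward $\omega(x)$, and the exit times of the $x_n$ through $N_0$ may tend to infinity. The standard remedy is to replace the pointwise limit condition by a short-time version using attractor and repeller blocks of Conley: choose compact sets $B_A$ and $B_R$ with $A\subset \inte(B_A)$, $R\subset \inte(B_R)$, $\phi_{[0,t_0]}(B_A)\subset \inte(B_A)$, and $\phi_{[-t_0,0]}(B_R)\subset \inte(B_R)$ for some $t_0>0$, and then define $N_1$ as $N_0$ together with all points of $N_2$ whose forward orbit enters $B_A$ before leaving $N_2$. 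Closedness then becomes a transparent consequence of continuity of the flow on the compact set $N_2$, and the previous verifications carry through with only cosmetic changes.
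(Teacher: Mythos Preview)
The paper does not give its own proof of this theorem: it is stated with a citation to \cite{Mischaikow2002ConleyIndex}, Theorem~4.2, and the text proceeds immediately to the long exact sequence without any argument. There is therefore nothing in the paper to compare your proposal against.

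Regarding your sketch on its own merits: the architecture---start from an index pair $(N_2,N_0)$ for $S$ and insert an intermediate $N_1$ built from the attractor structure---is the standard one, and you correctly isolate closedness of $N_1$ as the essential difficulty. However, your proposed repair via an attractor block $B_A$ does not close the gap. Take points $x_n$ on connecting orbits with $x_n\to x\in R$. Each $x_n$ lies in $S\subset N_2$ for all time and has $\omega(x_n)\subset A$, so its forward orbit enters $B_A$ without ever leaving $N_2$; hence $x_n\in N_1$. But the entry times tend to infinity, and the limit point $x\in R$ has forward orbit contained in $R$, which is disjoint from $B_A$; since also $x\notin N_0$, you get $x\notin N_1$. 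Thus $N_1$ is still not closed, for exactly the same reason as in your first attempt: the time-to-reach-$B_A$ function blows up near $R$. The continuity of the flow on $N_2$ does not help when the relevant time is unbounded.

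The standard constructions (Conley~\cite{Conley1978isolated}, Salamon~\cite{Salamon1985ConnectedSimpleSys}) avoid this by a more delicate choice, for instance by first producing a compact \emph{positively invariant} neighborhood of $A$ relative to $N_2$ and combining it with the exit structure of $N_0$, or by building the filtration from a Lyapunov function for the pair $(A,R)$. Your outline would become a proof once this step is handled, but as written the closedness of $N_1$ remains unproved.
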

\par Let $C_*(X):=\sset{(C_n(X),\pa_n)}_\for{n}$ denote a chain complex of topological space $X$, and $C_n(X,Y)$ denote the quotient group $C_n(X,Y):=C_n(X)/C_n(Y)$ for topological spaces $X$ and $Y\subset X$. 
It naturally defines a boundary homomorphism $\pa_n(X,Y):C_n(X,Y)\to C_{n-1}(X,Y)$, which makes $C_*(X,Y)=\sset{(C_n(X,Y),\pa_n(X,Y))}_\for{n}$ a chain complex.
\par For an index triple $(N_2,N_1,N_0)$, we have the following short exact sequence:
\begin{equation*}
\begin{CD}
	0    @>>> C_*(N_1,N_0)   @>i>>   C_*(N_2,N_0)   @>j>>   C_*(N_2,N_1)   @>>>   0
\end{CD}
\end{equation*}
of chain complexes, more precisely, presented by the following commutative diagrams:
\begin{equation*}
\begin{CD}
     @.   \vdots           @.         \vdots             @.      \vdots           @.       \\
@.        @VVV                       @VVV                       @VVV                   \\
0    @>>> C_{n+1}(N_1,N_0)   @>i>>   C_{n+1}(N_2,N_0)   @>j>>   C_{n+1}(N_2,N_1)   @>>>   0\\
@.        @V\pa_{n+1}VV                       @V\pa_{n+1}VV                        @V\pa_{n+1}VV                   \\
0    @>>> C_{n}(N_1,N_0)   @>i>>   C_{n}(N_2,N_0)   @>j>>   C_{n}(N_2,N_1)   @>>>   0\\
@.        @V\pa_{n}VV                      @V\pa_{n}VV                       @V\pa_{n}VV                   \\
0    @>>> C_{n-1}(N_1,N_0)   @>i>>   C_{n-1}(N_2,N_0)   @>j>>   C_{n-1}(N_2,N_1)   @>>>   0\\
@.        @VVV                        @VVV                      @VVV                   \\
     @.   \vdots            @.         \vdots            @.      \vdots            @.       
\end{CD}
\end{equation*}
This gives rise to the long exact sequence of corresponding homology groups:
\begin{equation*}
	\begin{CD}
\cdots @>>> H_{n}(N_1,N_0) @>i^{\sharp}_{n}>> H_{n}(N_2,N_0) @>j^\sharp_{n}>>  H_{n}(N_2,N_1) \\
 @. @. @>{\ti{\partial}_{n}}>> H_{n-1}(N_1,N_0) @>>> \cdots
\end{CD}
\end{equation*}
which gives a long exact sequence of the homological Conley indices of the attractor-repeller pair of the isolated invariant set $S$
\begin{equation*}
	\begin{CD}
\cdots @>>> \CH _{n}(A) @>i^\sharp_{n}>> \CH _{n}(S) @>j^\sharp_{n}>>  \CH _{n}(R) \\
 @. @. @>{\ti{\partial}_{n}}>> \CH _{n-1}(A) @>>> \cdots
\end{CD}
\end{equation*}

\par By the excision theorem, if there are no connecting orbits, then $\ti{\partial}_*=0$ as in the next proposition.
\begin{prop}[\cite{Mischaikow2002ConleyIndex}, Theorem 4.3] Let $(A,R)$ be an attractor-repeller decomposition for the isolated invariant set $S$ with respect to the flow $\phi_t$. If $\DC(A,R)=\OO$, i.e., $S=A\cup R$, then $\ti{\pa}_*=0$.
\end{prop}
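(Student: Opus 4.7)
The plan is to exploit the disjointness $A \cap R = \OO$ together with the hypothesis $S = A \cup R$ to construct an index triple whose three constituent pairs decouple along $A$ and $R$. With such a triple in hand, the excision theorem produces a splitting of the long exact sequence for the pair and forces $\ti{\pa}_* = 0$.

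First I would pick disjoint compact isolating neighborhoods $N_A$ of $A$ and $N_R$ of $R$; these exist because $A$ and $R$ are disjoint compact invariant subsets of the locally compact metric space $X$. Inside these I would choose index pairs $(P_A, Q_A)$ for $A$ within $N_A$ and $(P_R, Q_R)$ for $R$ within $N_R$, so that all four sets are pairwise disjoint across sides. I would then set
\begin{equation*}
N_0 := Q_A \cup Q_R, \qquad N_1 := P_A \cup Q_R, \qquad N_2 := P_A \cup P_R,
\end{equation*}
and check that $(N_2, N_1, N_0)$ is an index triple for $(A, R)$ in the sense of Definition \ref{IndexTriple}. This reduces to verifying the four conditions of Definition \ref{indexpairsdef} for each of the three pairs, using the individual index-pair properties of $(P_A, Q_A)$ and $(P_R, Q_R)$ together with the fact that $\DC(A, R) = \OO$ prevents any orbit in $S$ from traveling between the two sides.

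Next I would apply excision. Since $P_A$ is a compact subset of $N_2$ disjoint from $P_R$ and $Q_R$, it lies in the relative interior of $N_1$ inside $N_2$; hence the inclusion $(P_R, Q_R) \hookrightarrow (N_2, N_1) = (P_A \cup P_R,\, P_A \cup Q_R)$ induces an isomorphism $H_*(P_R, Q_R) \xrightarrow{\approx} H_*(N_2, N_1)$. Composing its inverse with the inclusion-induced map $H_*(P_R, Q_R) \to H_*(N_2, N_0)$ yields a right inverse to $j^\sharp_*: \CH_*(S) \to \CH_*(R)$. Thus $j^\sharp_*$ is surjective, and by exactness of the long sequence for the attractor-repeller decomposition this forces $\ti{\pa}_* = 0$.

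The main obstacle is the verification that the constructed triple is actually an index triple, specifically the exit-set condition (item 4 of Definition \ref{indexpairsdef}) for the three pairs. The delicate point is that an orbit starting in $P_R$ which is destined to leave $N_2$ must first enter $N_0 = Q_A \cup Q_R$ without crossing through $P_A$ en route; this is exactly where $\DC(A, R) = \OO$ does its work, since an orbit starting near $R$ that drifted into a neighborhood of $A$ before exiting would, by a compactness argument on its limit sets, produce a connecting orbit from $R$ to $A$ in $S$, contradicting the hypothesis. Once this verification is complete, the excision/splitting step above is entirely formal.
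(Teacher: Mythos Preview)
Your approach is correct and matches the paper's own treatment, which gives no detailed proof but only the one-line hint ``By the excision theorem, if there are no connecting orbits, then $\ti{\partial}_*=0$.'' Your construction of the split index triple $(N_2,N_1,N_0)=(P_A\cup P_R,\,P_A\cup Q_R,\,Q_A\cup Q_R)$ and the excision argument giving a section of $j^\sharp_*$ is exactly the intended elaboration.

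One small correction to your diagnosis of where the hypothesis is used. You locate the ``delicate point'' in the exit-set condition (item~4 of Definition~\ref{indexpairsdef}), worrying that an orbit from $P_R$ might wander into $P_A$ before leaving $N_2$. But this cannot happen for a purely topological reason: $P_A$ and $P_R$ are disjoint compact sets, so $N_2=P_A\sqcup P_R$ is a topological disjoint union, and a connected orbit segment starting in $P_R$ that remains in $N_2$ must remain in $P_R$. Conditions~3 and~4 for all three pairs therefore follow directly from the corresponding properties of $(P_A,Q_A)$ and $(P_R,Q_R)$, with no appeal to $\DC(A,R)=\OO$. The hypothesis $S=A\cup R$ does its real work in condition~2 for the pair $(N_2,N_0)$: you need $N_2-N_0=(P_A-Q_A)\cup(P_R-Q_R)$ to be a neighborhood of $S$, and this holds precisely because $S$ contains nothing beyond $A\cup R$. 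If there were connecting orbits, they would lie outside $N_2$ entirely, and $(N_2,N_0)$ would fail to be an index pair for $S$.
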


\par Take the contrapositive of the preceding theorem, we can infer the existence of connecting orbits from the map $\ti{\pa}_*$.
\begin{thm}[Existence of connecting orbits]\label{ExistenceofconnectingorbitsARPair}
Let $(A,R)$ be an attractor-repeller decomposition for the isolated invariant set $S$ with respect to the flow $\phi_t$. If there is an $n\in \NN$ such that $\ti{\pa}_n\neq 0$ then $\DC(A,R)\neq \OO$.
\end{thm}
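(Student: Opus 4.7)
The plan is to derive this theorem as the direct contrapositive of the immediately preceding proposition, which states that if $\DC(A,R) = \OO$ then $\ti{\pa}_* = 0$. So the entire argument reduces to a single logical move: I assume, toward contradiction, that the hypothesis $\ti{\pa}_n \neq 0$ for some $n \in \NN$ holds, but that $\DC(A,R) = \OO$. Under the latter equality, the preceding proposition yields $\ti{\pa}_m = 0$ for every $m \in \NN$, contradicting $\ti{\pa}_n \neq 0$. Hence $\DC(A,R) \neq \OO$.

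Since the theorem is purely a contrapositive restatement, there is no separate obstacle to clear here; the real work lies in the preceding proposition. For completeness I would sketch why that proposition is the load-bearing step: given $\DC(A,R) = \OO$, one has $S = A \sqcup R$ by the attractor-repeller decomposition theorem, so an index triple $(N_2,N_1,N_0)$ for $(A,R)$ can be chosen so that $\cl(N_1 - N_0)$ and $\cl(N_2 - N_1)$ are disjoint isolating neighborhoods of $A$ and $R$ respectively. By excision the inclusion-induced map $H_*(N_1,N_0) \to H_*(N_2,N_0)$ becomes a direct summand inclusion, the long exact sequence of the triple splits, and the connecting map $\ti{\pa}_*$ is therefore zero.

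The only place where care is needed is making sure the hypothesis of the theorem as stated, namely $\ti{\pa}_n \neq 0$ for some fixed $n$, is genuinely enough. But since the preceding proposition gives $\ti{\pa}_m = 0$ for all $m$, a single nonzero degree suffices to force the negation of its hypothesis, namely $\DC(A,R) \neq \OO$. No further assumptions on the flow, the ambient space, or the specific choice of index triple are required, because the index pair / index triple data used to define $\ti{\pa}_*$ is independent of these choices up to the equivalences recorded in Proposition \ref{Equviance of Index Pairs}. Thus the proof is essentially a one-line contrapositive, and I would present it as such without expanding the excision argument of the preceding proposition.
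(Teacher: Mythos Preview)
Your proof is correct and matches the paper's approach exactly: the paper introduces this theorem with the sentence ``Take the contrapositive of the preceding theorem,'' and gives no further argument. Your additional sketch of the excision argument behind the preceding proposition is accurate but goes beyond what the paper itself records.
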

\begin{note}
It should be noted that if $\ti{\pa}_n=0$, there may also exist a connecting orbit. An example is given by a sink and a source in a vector field on the sphere $S^2$, where all remaining orbits connect $R$ and $A$.
Considering the degree of homological Conley indices for the sink and source, we have $\ti{\pa}_*=0$.
\end{note}
\par The map $\ti{\pa}_*$ can be used to restore $\CH_*(S)$ from the direct sum $\CH_*(A)\oplus \CH_*(R)$ in the following way.
\par We define the map:
$$
\Delta_n= 
\begin{bmatrix}
0 &\ti{\pa}_n(A,R) \\
0 &0 \\
\end{bmatrix}: \CH_n(A)\oplus \CH_n(R) \to \CH_{n-1}(A)\oplus \CH_{n-1}(R)
$$
Obviously, it follows that  $\Delta_n\circ \Delta_{n+1}=0$, which makes $\Delta:=\sset{\Delta_n}_{\for{n}}$ a boundary operator such that $(\CH_*(A)\oplus\CH_*(R),\Delta)$ forms a chain complex. The homology groups $H_n\Delta$ of the chain complex are given by:
$$
H_n\Delta:=\frac{\ker(\Delta_n)}{\mathrm{Im}(\Delta_{n+1})}, \qquad \text{for all } \for{n}
$$

\begin{prop}[\cite{Mischaikow2002ConleyIndex}; Propostion $4.6$]
	$H_*\Delta \approx \CH_*(S).$
\end{prop}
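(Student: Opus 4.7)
The plan is to compute $H_*\Delta$ directly from the matrix description of $\Delta$ and then identify the two resulting summands as pieces of $\CH_*(S)$ read off from the long exact sequence of the pair, using the field coefficient assumption at the very end to splice them back together.

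First I would unwind the definitions. Since $\Delta_n$ has the block form $\left[\begin{smallmatrix} 0 & \ti{\pa}_n \\ 0 & 0 \end{smallmatrix}\right]$, an element $(a,r)\in \CH_n(A)\oplus \CH_n(R)$ lies in $\ker \Delta_n$ iff $\ti{\pa}_n(r)=0$, so $\ker \Delta_n = \CH_n(A)\oplus \ker \ti{\pa}_n$. Likewise the image of $\Delta_{n+1}$ is $\mathrm{Im}(\ti{\pa}_{n+1})\oplus 0$. Therefore
\begin{equation*}
H_n\Delta \;\cong\; \bigl(\CH_n(A)/\mathrm{Im}\,\ti{\pa}_{n+1}\bigr) \;\oplus\; \ker \ti{\pa}_n.
\end{equation*}

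Next I would read off both factors from the long exact sequence
\begin{equation*}
\cdots \to \CH_{n+1}(R) \xrightarrow{\ti{\pa}_{n+1}} \CH_n(A) \xrightarrow{i^\sharp_n} \CH_n(S) \xrightarrow{j^\sharp_n} \CH_n(R) \xrightarrow{\ti{\pa}_n} \CH_{n-1}(A) \to \cdots
\end{equation*}
Exactness at $\CH_n(A)$ gives $\mathrm{Im}\,\ti{\pa}_{n+1} = \ker i^\sharp_n$, hence $\CH_n(A)/\mathrm{Im}\,\ti{\pa}_{n+1} \cong \mathrm{Im}\, i^\sharp_n$. Exactness at $\CH_n(R)$ gives $\ker \ti{\pa}_n = \mathrm{Im}\, j^\sharp_n$, and exactness at $\CH_n(S)$ identifies the latter with $\CH_n(S)/\ker j^\sharp_n = \CH_n(S)/\mathrm{Im}\, i^\sharp_n$. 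Combining,
\begin{equation*}
H_n\Delta \;\cong\; \mathrm{Im}\, i^\sharp_n \;\oplus\; \CH_n(S)/\mathrm{Im}\, i^\sharp_n.
\end{equation*}

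Finally I would invoke the field coefficient hypothesis (the note fixes coefficients in a field such as $\ZZ_2$). Over a field the short exact sequence
\begin{equation*}
0 \to \mathrm{Im}\, i^\sharp_n \to \CH_n(S) \to \CH_n(S)/\mathrm{Im}\, i^\sharp_n \to 0
\end{equation*}
of vector spaces splits, so $\CH_n(S) \cong \mathrm{Im}\, i^\sharp_n \oplus \CH_n(S)/\mathrm{Im}\, i^\sharp_n \cong H_n\Delta$, which is the desired conclusion.

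The main obstacle is really just conceptual bookkeeping: the isomorphism is not canonical in general, it relies on the splitting afforded by field coefficients, and one should be careful that without that assumption the cleanest statement would only be an extension rather than a direct sum. If one wanted a coefficient-free statement, the hard part would be to produce a natural splitting from index pair data, but within the stated setting the splitting comes for free.
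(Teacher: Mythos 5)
Your argument is correct and is, as far as I can tell, the standard proof: the paper itself does not supply one but simply cites Mischaikow's Proposition~4.6, and the computation you give — reading off $\ker\Delta_n$ and $\mathrm{Im}\,\Delta_{n+1}$ from the $2\times 2$ block form, identifying the two factors with $\mathrm{Im}\,i^\sharp_n$ and $\CH_n(S)/\mathrm{Im}\,i^\sharp_n$ via exactness, and then splitting over a field — is precisely the intended one. Your closing caveat is also well placed: the statement as written does rely on the field-coefficient convention fixed earlier in the paper, and without it one only obtains $H_n\Delta$ as an extension of $\CH_n(S)/\mathrm{Im}\,i^\sharp_n$ by $\mathrm{Im}\,i^\sharp_n$ rather than a (non-canonical) isomorphism.
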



\par The map $\Delta_*$ defined above for an attractor-repeller pair is the simplest case of the connection matrix, and we will extend the connection matrix for a general Morse decomposition in the next subsection.

\subsection{Connection Matrix}\label{ConnectionMatrixSubsection}
\par In this subsection, we define the connection matrix $\Delta_*$ for a Morse decomposition $(\MD{},\DP)$, in which $\MD{}=\mset{M_p}{p\in\DP}$. 
\begin{dfn}[Index filtration]\label{Indexfiltration}
	Let $(\MD{},\DP)$ be a Morse decomposition for $S$.  A collection of compact subsets $\mset{N(J)}{J\in \DA(\DP)}$ indexed by the attracting intervals in $\DP$ is called an \bi{index filtration} for $(\MD{},\DP)$ if it satisfies the following:
	\begin{enumerate}
		\item $(N(J),N(\OO))$ is an index pair for $M_J$, for all $J\in \DA(\DP)$.
		\item $N(J_1\cap J_2)=N(J_1)\cap N(J_2)$, for all $J_1,J_2\in \DA(\DP)$.
		\item $N(J_1\cup J_2)=N(J_1)\cup N(J_2)$, for all $J_1,J_2\in \DA(\DP)$.
	\end{enumerate}
\end{dfn}
\par For any Morse decomposition $(\MD{},\DP)$, Franzosa showed the existence of an index filtration for it in Theorem $3.8$ in \cite{Franzosa1986IndexFiltration}.
\begin{prop}[\cite{Franzosa1986IndexFiltration}, Proposition $3.5$]\label{indexfiltrationprop}\
\begin{enumerate}
	\item For any $J\in \DI(\DP)$, there is an interval $K\in \DA(\DP)$ such that $(K,J)\in \DI_2(\DP)$, $K\!J\in \DA(\DP)$. 
	\item For any $J\in \DI(\DP)$, take the interval $K$ above then $(N(K\!J),N(K)))$ is an index pair for $M_J$. 
	\item For any $J\in \DI(\DP)$, if there are two intervals $K_1$, $K_2$ satisfy the statement $1$ for $J$ then the spaces $N(K_1\!J)/N(K_1)$ and $N(K_2\!J)/N(K_2)$ are homeomorphic.
\end{enumerate}
\end{prop}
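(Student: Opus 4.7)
The plan is to handle the three assertions in order: (1) is a purely order-theoretic construction, (2) follows by recognizing an attractor--repeller structure inside $M_{KJ}$, and (3) is an excision argument that exploits the two algebraic properties of an index filtration. The main technical subtlety I anticipate is in (3), where one must upgrade a set-theoretic bijection to a genuine homeomorphism.

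For (1), I would take $K:=(\downarrow J)\setminus J$, where $\downarrow J:=\mset{p\in\DP}{p\leq q\text{ for some }q\in J}$ is the order ideal generated by $J$. Then $K\cap J=\OO$ and $K\cup J=\downarrow J$ is a lower set, hence an attracting interval. To see that $K$ itself is an attracting interval one checks that $K$ is downward closed: if $p\in K$ and $r<p$, then $r\in\downarrow J$, and $r\in J$ would force $p\in J$ via $r<p<q'\in J$ by the interval property of $J$, contradicting $p\in K$. The adjacency $(K,J)\in\DI_2(\DP)$ reduces to ruling out $p>q$ with $p\in K$ and $q\in J$; such a configuration together with $p\leq q'\in J$ would once again put $p$ in $J$ by the same interval property.

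For (2), since $K$ is an attracting subinterval of $KJ$, the pair $(M_K,M_J)$ is an attractor--repeller decomposition of $M_{KJ}$. The index filtration already supplies $(N(KJ),N(\OO))$ as an index pair for $M_{KJ}$ and $(N(K),N(\OO))$ as an index pair for $M_K$, and the plan is to check the four axioms of Definition \ref{indexpairsdef} for $(N(KJ),N(K))$ with respect to $M_J$. The positive invariance of $N(K)$ in $N(KJ)$ is the crucial point: an orbit starting in $N(K)$ that attempts to leave $N(K)$ must first pass through $N(\OO)$ by the exit property of $(N(K),N(\OO))$, and $N(\OO)$ is positively invariant in $N(KJ)$ by the corresponding axiom for $(N(KJ),N(\OO))$, so the orbit is trapped inside $N(\OO)\subset N(K)$ for as long as it remains in $N(KJ)$. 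The exit condition for $(N(KJ),N(K))$ follows at once from that of $(N(KJ),N(\OO))$, and the identity $M_J=\inv(\cl(N(KJ)-N(K)))$ is precisely the repeller part of the attractor--repeller decomposition.

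For (3), given $K_1,K_2$ both satisfying (1), the key observation is that $K_1\cap K_2$ and $K_1\cup K_2$ are again attracting intervals, and because $K_i\cap J=\OO$ one has $K_1J\cap K_2J=(K_1\cap K_2)J$ and $K_1J\cup K_2J=(K_1\cup K_2)J$. Properties (2) and (3) of Definition \ref{Indexfiltration} then translate these set identities into identities for the corresponding $N(\cdot)$. The plan is to exhibit a chain of homeomorphisms
\[
N(K_1J)/N(K_1)\;\cong\;\bigl(N(K_1J)\cup N(K_2)\bigr)\big/\bigl(N(K_1)\cup N(K_2)\bigr)\;\cong\;N(K_2J)/N(K_2),
\]
the middle term being visibly symmetric in $K_1$ and $K_2$. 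The first map is induced by the inclusion $N(K_1J)\hookrightarrow N(K_1J)\cup N(K_2)$; it is injective because $N(K_1J)\cap\bigl(N(K_1)\cup N(K_2)\bigr)=N(K_1)\cup N(K_1\cap K_2)=N(K_1)$ by the filtration identities together with $K_1\cap K_2\subset K_1$, and surjective because $N(K_2)$ is crushed in the denominator. The resulting continuous bijection is a homeomorphism since each $N(\cdot)$ is compact Hausdorff, and the symmetric argument delivers the second isomorphism.
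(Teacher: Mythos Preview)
The paper does not supply its own proof of this proposition; it is quoted directly from Franzosa \cite{Franzosa1986IndexFiltration} as background material, so there is no in-paper argument to compare against. Your proposal is therefore being assessed on its own merits, and it is essentially correct and in line with Franzosa's original treatment.

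A few remarks on the individual parts. Your construction in (1) with $K=(\downarrow J)\setminus J$ is the canonical choice and the verifications you outline go through; note that in the downward-closure check the case $p=q'$ is excluded a priori since $p\notin J$, which is what lets you write the strict chain $r<p<q'$. In (2) your sketch is sound, though the assertion $M_J=\inv(\cl(N(KJ)\setminus N(K)))$ deserves one more line: once positive invariance of $N(K)$ in $N(KJ)$ is established, any $x\in M_J\cap N(K)$ would have $\omega(x)\subset N(K)\setminus N(\OO)$ and hence $\omega(x)\subset M_K$, contradicting $M_J\cap M_K=\OO$; this gives $M_J\subset N(KJ)\setminus N(K)$ and the reverse inclusion follows from $\text{Inv}(\cl(N(KJ)\setminus N(K)))\subset M_{KJ}$ together with the attractor--repeller structure. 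In (3) your excision argument is clean; the one point worth making explicit is that the middle space is genuinely symmetric because $K_1J\cup K_2=K_1\cup J\cup K_2=K_2J\cup K_1$ as attracting intervals, so the filtration axiom gives $N(K_1J)\cup N(K_2)=N(K_2J)\cup N(K_1)$ on the nose, not merely up to homeomorphism.
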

\par Therefore, we have index pairs for all $J\in \DI(\DP)$. For any $J\in \DI(\DP)$, we define the chains $C_*(J):=C_*(N(K\!J)/N(K))$ for any $K$ that satisfies statement $1$ in Proposition \ref{indexfiltrationprop}. Then for any adjacent pairs $(I,J)\in \DI_2(\DP)$, we have the following short exact sequence:
$$
\begin{CD}
	0 @>>> C_*(I) @>i>> C_*(I\!J) @>j>> C_*(J) @>>> 0
\end{CD}
$$
This gives rise to the following long exact sequence of corresponding homology groups:
\begin{equation} \label{homology braid}
\begin{CD}
\cdots @>>> \CH_n(M_I) @>{i^\sharp_n}>> \CH_n(M_{I\!J}) @>{j^\sharp_n}>> \CH_n(M_J) \\
@. @. @>{\ti{\pa}_n}>> \CH_{n-1}(M_I) @>>> \cdots	
\end{CD}
\end{equation}

\begin{dfn}[Homology braid]\label{HomologyBraid}
	Let $(\MD{},\DP)$ be a Morse decomposition for $S$. A collection 
	$$
	\mset{(\CH_*(M_I),i_*^\sharp(I,I\!J),j_*^\sharp(I\!J,J),\ti{\pa}_*(I,J))}{I\in \DI(\DP) \text{ and } (I,J)\in \DI_2(\DP)}
	$$
	of homological Conley indices $\CH_*(M_I)$ and degree $0$ maps $i^\sharp_*(I,I\!J):\CH_*(M_I)\to \CH_*(M_{I\!J}), j^\sharp_*(I\!J,J): \CH_*(M_{I\!J})\to \CH_*(M_J)$ and degree $-1$ map $\ti{\pa}_*(I,J):\CH_*(M_J)\to \CH_*(M_I)$ is a \bi{homology braid} for the Morse decomposition $(\MD{},\DP)$ if the maps such that the sequence (\ref{homology braid}) is exact for all adjacent pairs $(I,J)\in \DI_2(\DP)$. The homology braid for $(\MD{},\DP)$ is denoted as $\SH(\MD{},\DP)$, and as $\SH$ for short if it causes no confusion.
\end{dfn}
\par Let $C_*\Delta(I):=\oplus_{p\in I}\CH_*(M_p)$ for $I\in\DI(\DP)$. In particular, $C_*\Delta(\DP):=\oplus_{p\in \DP}\CH_*(M_p)$ when $I=\DP$.

\begin{dfn}[Upper triangular boundary map] A homomorphism $\Delta_*:C_*\Delta(\DP)\to C_*\Delta(\DP)$ is an \bi{upper triangular boundary map} if it satisfies the following:
\begin{enumerate}
	\item $\Delta_*$ has degree $-1$.
	\item $\Delta_{n}\circ\Delta_{n+1}=0$.
	\item Let $\Delta_n(p,q)$ be the restriction of $\Delta_n$ on the element $(p,q)$, $\Delta_n(p,q):\CH_{n}(M_q) \to \CH_{n-1}(M_p)$, then $\Delta(p,q)=0$ if $q\not<p$.
\end{enumerate}
\end{dfn}
\par Let $\Delta_*(I)$ be the restriction of $\Delta_*$ on an interval $I\in \DI(\DP)$, $\Delta_*(I):C_*\Delta(I)\to C_*\Delta(I)$, if $\Delta_*$ is an upper triangular boundary map, then the restriction $\Delta_*(I)$ is also an upper triangular boundary map. Thus if such boundary map $\Delta_*$ exists, for each $I\in \DI(\DP)$, letting $\Delta(I):=\sset{\Delta_n (I)}_{\for{n}}$ we have a chain complex $(C_*\Delta(I),\Delta(I))$ and the homology of the chain complex is denoted by $H_*\Delta(I)$. In particular, $H_*\Delta(p)=\CH_*(M_p)$ for $p\in \DP$.
\par For any adjacent pair $(I,J)\in \DI_2(\DP)$, we have the following short exact sequence:
$$
\begin{CD}
	0 @>>> C_*\Delta(I) @>i>> C_*\Delta(I\!J) @>j>> C_*\Delta(J) @>>> 0
\end{CD}
$$
This gives rise to the long exact sequence as follows:
\begin{equation}\label{longexactseqconnection}
	\begin{CD}
	\cdots @>>> H_n\Delta(I) @>{i^\sharp_n}>> H_n\Delta(I\!J) @>{j^\sharp_n}>> H_n\Delta(J) \\
@. @.	@>{\ti{\Delta}_n}>> H_{n-1}\Delta(I) @>>> \cdots	\end{CD}
\end{equation}

\par Compare the long exact sequence (\ref{homology braid}) and (\ref{longexactseqconnection}) and we define a \bi{connection matrix}.

\begin{dfn}[Connection matrix]\label{dfnofConnectionMatrix}
Let $(\MD{},\DP)$ be a Morse decomposition, and the homology braid for the Morse decomposition is given by $\SH(\MD{},\DP)$. Then an upper triangular boundary map $\Delta_*$ is a \bi{connection matrix} for the Morse decomposition $(\MD{},\DP)$ if for each $I\in \DI(\DP)$ there is an isomorphism $\GJ_*(I):H_*\Delta(I)\to H_*(I)$ such that:
\begin{enumerate}
	\item $\GJ_*(p):H_*\Delta(p)\to \CH_*(M_p)$ is the indentity map of $\CH_*(M_p)$ for all $p\in \DP$.
	\item For all adjacent pairs $(I,J)\in \DI_2(\DP)$, the following diagram commutes:
\end{enumerate} 
$$
\begin{CD}
 @>>> H_n\Delta(I) @>{i^\sharp_n}>> H_n\Delta(I\!J) @>{j^\sharp_n}>> H_n\Delta(J) @>{\ti{\Delta}_n}>> H_{n-1}\Delta(I) @>>>	\\
  @.    @V{\GJ_n(I)}VV                 @V{\GJ_n(I\!J)}VV                @V{\GJ_n(J)}VV                 @V{\GJ_{n-1}(I)}VV  @.    \\
 @>>> \CH_n(M_I) @>{i^\sharp_n}>> \CH_n(M_{I\!J}) @>{j^\sharp_n}>> \CH_n(M_J) @>{\ti{\pa}_n}>> \CH_{n-1}(M_I) @>>> 
\end{CD}
$$
\end{dfn}
\begin{note}
When $(p,q)\in \DI_2(\DP)$, then the $\ti{\Delta}_*(p,q)$ is the operator $\ti{\pa}_*$ defined in the last section for the attractor-repeller decomposition $(M_p, M_q)$ of the isolated invariant set $M_{\sset{p,q}}$.
\end{note}
\begin{thm}[\cite{Franzosa1989connectionmatrixtheory}, Theorem $3.8$]\label{existenceofconnectionmatrix}
For any isolated invariant set $S$ with a Morse decomposition $(\MD{},\DP)$, the set of connection matrices for the Morse decomposition is not empty.
\end{thm}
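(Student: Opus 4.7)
The plan is to build $\Delta_*$ inductively along an admissible total order of $\DP$, adding one Morse set at a time. Fix any admissible extension of the flow-defined order $<^{\Gf}$ so that $\DP = \{p_1,\dots,p_n\}$ with $p_i <^{\Gf} p_j$ forcing $i<j$, and set $I_k := \{p_1,\dots,p_k\}$. Each $I_k$ is an attracting interval for the chosen total order, so Definition~\ref{Indexfiltration} together with Proposition~\ref{indexfiltrationprop} supplies index pairs for every $M_I$ with $I \in \DI(\DP)$, and hence the full homology braid $\SH$ of Definition~\ref{HomologyBraid}.

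The base case is trivial: on $C_*\Delta(\{p_1\}) = \CH_*(M_{p_1})$ set $\Delta_* = 0$ and $\GJ_*(\{p_1\}) = \mathrm{id}$. For the inductive step, assume $\Delta_*$ has been produced as an upper-triangular boundary map on $C_*\Delta(I_k)$ together with isomorphisms $\GJ_*(I)$ fitting the ladder of Definition~\ref{dfnofConnectionMatrix} for every subinterval $I \subseteq I_k$. To extend to $I_{k+1}$ I only need a new column
\[
\Delta(\,\cdot\,,\, p_{k+1}) : \CH_*(M_{p_{k+1}}) \longrightarrow C_{*-1}\Delta(I_k),
\]
landing in $\ker(\Delta_{*-1})$ so that $\Delta_{*-1}\circ\Delta_* = 0$ is preserved; the admissibility of the total order makes the upper-triangular condition $\Delta(p,p_{k+1})=0$ for $p \not< p_{k+1}$ automatic. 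To define this column, take the braid boundary $\ti{\pa}_*(I_k,\{p_{k+1}\}) : \CH_*(M_{p_{k+1}}) \to \CH_{*-1}(M_{I_k})$ and post-compose with $\GJ_{*-1}(I_k)^{-1}$ to land in $H_{*-1}\Delta(I_k)$. Since we work over a field, $\CH_*(M_{p_{k+1}})$ is free, so this homology-valued map lifts to a cycle-valued one; that lift is the new column.

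With the column installed, the short exact sequence
\begin{equation*}
\begin{CD}
0 @>>> C_*\Delta(I_k) @>i>> C_*\Delta(I_{k+1}) @>j>> \CH_*(M_{p_{k+1}}) @>>> 0
\end{CD}
\end{equation*}
yields a long exact sequence on $H_*\Delta$ whose connecting homomorphism corresponds, via $\GJ_*(I_k)$, to the braid map $\ti{\pa}_*(I_k,\{p_{k+1}\})$ by construction. The five lemma applied to the resulting ladder, whose outer rungs $\GJ_*(I_k)$, $\GJ_{*-1}(I_k)$ and the identity on $\CH_*(M_{p_{k+1}})$ are known isomorphisms, manufactures the new isomorphism $\GJ_*(I_{k+1})$. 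After $n$ steps $\Delta_*$ is defined on $C_*\Delta(\DP)$ with $\GJ_*(I_k)$ in hand for every initial segment.

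The main obstacle is promoting $\GJ_*$ to \emph{every} interval $I \in \DI(\DP)$ and verifying commutativity of Definition~\ref{dfnofConnectionMatrix} for \emph{every} adjacent pair $(I,J) \in \DI_2(\DP)$, not merely the nested chain of initial segments that drove the construction. The strategy is: for an arbitrary $I$, invoke Proposition~\ref{indexfiltrationprop} to choose an attracting $K$ with $(K,I)\in \DI_2(\DP)$ and $K\!I \in \DA(\DP)$, so that $M_I$ inherits a canonical index pair from the filtration, and replay the five-lemma argument inside the sub-poset $K\!I$ to define $\GJ_*(I)$. The delicate point is showing that $\GJ_*(I)$ is independent of the choice of $K$ and that the induced square commutes simultaneously for every adjacent-pair splitting of every interval. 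This is handled by the naturality of the braid's connecting homomorphisms combined with a uniqueness-up-to-chain-homotopy statement for lifts—two lifts of the same homology class over a field differ by a $\Delta$-boundary, which produces no change on the ladder at the level of homology. This global coherence bookkeeping, rather than the inductive construction itself, is the technical heart of Franzosa's argument.
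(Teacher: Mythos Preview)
The paper does not supply its own proof of this theorem; it is stated with a citation to Franzosa \cite{Franzosa1989connectionmatrixtheory}, Theorem~3.8, and no argument is given. So there is no in-paper proof to compare against, and your sketch goes well beyond what the present paper does.

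As a sketch of Franzosa's argument, your inductive outline is broadly correct in spirit, but there is one genuine gap. You begin by replacing the given admissible order $(\DP,<)$ by a linear extension and then claim the upper-triangular condition ``$\Delta(p,p_{k+1})=0$ for $p\not< p_{k+1}$'' is automatic. It is automatic only for the \emph{total} order you chose; Definition~\ref{dfnofConnectionMatrix} demands upper-triangularity with respect to the original partial order on $\DP$. Your lift of $\GJ_{*-1}(I_k)^{-1}\circ\ti\pa_*(I_k,\{p_{k+1}\})$ to a cycle in $C_{*-1}\Delta(I_k)$ can, a priori, have nonzero components at indices $p_i$ with $p_i\not< p_{k+1}$ in $(\DP,<)$ even though $i<k+1$. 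Franzosa handles this by working directly with the partial order and controlling where the lift lands (using that the braid maps themselves respect the order structure); your sketch does not address this, and without it you have only produced a connection matrix for the linear extension, not for $(\DM(S),\DP)$ as stated.

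Your final paragraph correctly identifies the second hard point, extending $\GJ_*$ coherently to every $I\in\DI(\DP)$ and every adjacent pair, but the proposed resolution (``replay the five-lemma argument inside $K\!I$'' and ``uniqueness-up-to-chain-homotopy'') is too vague to count as a proof; this is precisely where Franzosa's argument does substantial work, and a reader could not reconstruct it from your description.
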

\par For $(p,q)\in \DI_2(\DP)$, by Theorem \ref{ExistenceofconnectingorbitsARPair} if $\Delta_n(p,q)\neq 0$ for some $n$, then there is a connecting orbit from $M_q$ to $M_p$. We conclude it as the following proposition.
\begin{prop}[Existence of connecting orbits]\label{Existenceofaconnectingorbit}
Let $(\MD{},\DP)$ be a Morse decomposition of $S$ and let $\Delta_*$ be a connection matrix for the Morse decomposition. For any $(p,q)\in \DI_2(\DP)$, if there is a $n\in \NN$ such that $\Delta_n(p,q)\neq 0$, then $\DC(M_p,M_q)\neq \OO$.
\end{prop}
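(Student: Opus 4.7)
The plan is to reduce the claim to the attractor-repeller case that was handled in Theorem \ref{ExistenceofconnectingorbitsARPair}. Since $(p,q)\in \DI_2(\DP)$, the set $\sset{p,q}$ is itself an interval, and the remark immediately following Definition \ref{dfnofConnectionMatrix} tells us that $(M_p, M_q)$ is an attractor-repeller decomposition of the isolated invariant set $M_{\sset{p,q}}$, whose connecting homomorphism is precisely $\ti{\pa}_n$. The strategy is to show that $\Delta_n(p,q)$ is exactly this connecting homomorphism (up to the isomorphisms $\GJ_*$) and then invoke Theorem \ref{ExistenceofconnectingorbitsARPair}.

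First I would restrict the upper triangular boundary map $\Delta_*$ to the interval $\sset{p,q}$. On $C_*\Delta(\sset{p,q}) = \CH_*(M_p)\oplus\CH_*(M_q)$ the upper-triangular axiom forces the diagonal entries $\Delta_*(p,p)$, $\Delta_*(q,q)$ and the order-violating off-diagonal entry all to vanish, leaving $\Delta_*(p,q)$ as the only possibly nonzero block. In particular, the chain complexes supported on $\sset{p}$ and $\sset{q}$ have zero differential, so $H_*\Delta(\sset{p})=\CH_*(M_p)$ and $H_*\Delta(\sset{q})=\CH_*(M_q)$, with the $\GJ$-isomorphisms at singletons being identities by clause (1) of Definition \ref{dfnofConnectionMatrix}. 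Applying the snake lemma to the short exact sequence
$$0\to C_*\Delta(\sset{p})\to C_*\Delta(\sset{p,q})\to C_*\Delta(\sset{q})\to 0$$
then shows that the connecting homomorphism $\ti{\Delta}_n(\sset{p},\sset{q})\colon H_n\Delta(\sset{q})\to H_{n-1}\Delta(\sset{p})$ of the long exact sequence (\ref{longexactseqconnection}) is literally $\Delta_n(p,q)$, because the middle differential reduces to that single block.

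Finally, I would use the commutative square in Definition \ref{dfnofConnectionMatrix} for the adjacent pair $(\sset{p},\sset{q})$ to read off $\ti{\pa}_n \circ \GJ_n(\sset{q}) = \GJ_{n-1}(\sset{p}) \circ \ti{\Delta}_n(\sset{p},\sset{q})$, and since the vertical $\GJ$-maps are identities this yields $\ti{\pa}_n = \Delta_n(p,q)$. Consequently $\Delta_n(p,q)\neq 0$ implies $\ti{\pa}_n\neq 0$, and Theorem \ref{ExistenceofconnectingorbitsARPair} delivers $\DC(M_p,M_q)\neq\OO$. The only genuinely substantive step is the snake-lemma identification $\ti{\Delta}_n(\sset{p},\sset{q}) = \Delta_n(p,q)$ together with verifying its compatibility with the $\GJ$ isomorphisms; everything else is bookkeeping. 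I expect this to be the main (and essentially only) obstacle, but because both singleton factors are acyclic in the trivial sense the unwinding should be almost formal.
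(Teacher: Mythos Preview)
Your proposal is correct and follows exactly the approach the paper takes: the paper simply remarks, immediately before stating the proposition, that for $(p,q)\in\DI_2(\DP)$ one applies Theorem~\ref{ExistenceofconnectingorbitsARPair}, using the note after Definition~\ref{dfnofConnectionMatrix} that identifies $\ti{\Delta}_*(p,q)$ with the attractor-repeller connecting homomorphism $\ti{\pa}_*$. You have spelled out in full the identification $\Delta_n(p,q)=\ti{\Delta}_n(\sset{p},\sset{q})=\ti{\pa}_n$ that the paper leaves implicit, but the underlying argument is the same.
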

\par At last, we introduce the definition of an isomorphism between homology braids. This will be used in the next section to link the homological Conley indices of Morse sets under the fast-slow flow with the homological Conley indices of Morse sets under the original parameterized flow.

\begin{dfn}[Homology braid isomorphism]\label{HomologyBraid}Let $(\MD{},\DP)$ and $(\MD{'},{\DP}^{'})$ be two Morse decompositions, and let $\SH=\mset{(\CH_*(M_I),i_*^\sharp,j_*^\sharp,\ti{\pa}_*)}{I\in \DI(\DP)}$ and $\SH^{'}=\mset{(\CH_*(M_J^{'}),(i^\sharp_*)^{'},(j^\sharp_*)^{'},\ti{\pa}_*^{'})}{J\in\DI(\DP^{'})}$ be two homology braids for the two Morse decompositions respectively. We say $\SH$ and $\SH^{'}$ are isomorphic if $\DP=\DP^{'}$ and there are isomorphisms $\GK_*(I):\CH_*(M_I)\to \CH_*(M_I^{'})$ for all $I\in \DI(\DP)$ such that for all $(I,J)\in \DI_2(\DP)$ the following diagram commutes:
$$
\begin{CD}
@>>> \CH_n(M_I) @>{i^\sharp_n}>> \CH_n(M_{I\!J}) @>{j^\sharp_n}>> \CH_n(M_J) @>{\ti{\pa}_n}>> \CH_{n-1}(M_I) @>>>  	\\
@.       @V{\GK_n(I)}VV        @V{\GK_n(I\!J)}VV           @V{\GK_n(J)}VV         @V{\GK_{n-1}(I)}VV  @.      \\
 @>>> \CH_n(M_I^{'}) @>{(i^\sharp_n)^{'}}>> \CH_n(M_{I\!J}^{'}) @>{(j^\sharp_n)^{'}}>> \CH_n(M_J^{'}) @>{\ti{\pa}_n^{'}}>> \CH_{n-1}(M_I^{'}) @>>> 
\end{CD}
$$
\end{dfn}

\section{Transition Matrix without Continuation}\label{Transition Matrix without Continuation}

\par In this section, we deal with a parameterized family of vector fields (ODEs) and study how its connecting orbit structure changes when a parameter varies. For such problems, the notion of transition matrix has been used (see Reineck \cite{Reineck1998connection} for the singular transition matrix, and McCord-Mischaikow \cite{C.McCordK.Mischaikow1992Connectedsimple} for the topological transition matrix) when a Morse decomposition continues over a parameter interval of interest.

\par However, if a Morse set undergoes a bifurcation at some point in the parameter interval, the continuation of Morse decompositions may break down at some parameter point because of the bifurcation, in which case, the conventional definition of the transition matrix does not apply.

\par As the main result of this paper, we extend the notion of the transition matrix in cases where the continuation of Morse decompositions breaks down at a parameter point. We mainly take the approach of Reineck \cite{Reineck1998connection} by studying a parameterized family of ODEs with a slow parameter drift and extend the notion of the singular transition matrix without assuming the continuation of Morse decompositions.

\subsection{The Basic Setting: Parameterized Family of Vector Fields (ODEs)}\label{setting}
\par We consider the following continuous-time dynamical system generated by the following systems of ordinary differential equations with a parameter $\lambda$:
\begin{equation}\label{ODEn}\tag{$*_\eps$}
\left\{
	\begin{aligned}
		&\dot{x}=f(x,\lambda) \\
		&\dot{\lambda}=g_{\eps}(\lambda)=\eps \lambda (\lambda-1),
	\end{aligned}
\qquad x\in \RR^d,\quad \lambda \in \Lambda:=(-2\delta, 1+2\delta),\quad\eps\geq0.	
\right.
\end{equation}
In (\ref{ODEn}), $f(x,\lam)$ is a Lipschitz continuous function on $\RR^d \times \Lam$ and $\delta$ is a small positive number.
\par There are two kinds of dynamical systems related to (\ref{ODEn}). 
\par Let the flow $\phi_t^{\lambda}$ be the continuous-time dynamical system defined on $\RR^d$ generated by the ODEs $\dot{x}=f(x,\lambda)$ with parameter $\lambda$. When we consider the family of flows $\sset{\phi_t^\lambda}$ with $\lam\in \Lam$, we call it a \bi{parameterized flow}.
\par We then add the slow parameter drift $\dot{\lambda}=g_{\eps}(\lambda)=\eps \lambda (\lambda-1)$ to the parameterized family of ODEs as in (\ref{ODEn}) above and consider it as an ODE system on $\RR^d\times \Lam$. Let the flow $\Phi_t^\eps$ be the continuous-time dynamical system defined on $\RR^d \times \Lambda$ generated by (\ref{ODEn}), which depends on the choice of $\eps$. We call $\Phi_t^\eps$ the \bi{extended slow-fast flow} for the parameterized family of ODEs $\dot{x}=f(x,\lambda)$ at $\eps$.

\par In particular, when $\eps=0$, we have another continuous-time dynamical system defined on the product space $\RR^d \times \Lam$. Let the flow $\Phi_t^{0}$ on $\RR^d \times \Lambda$ be generated by ($*_0$), namely $\Phi_t^{0}(x,\lambda)=(\phi_t^{\lambda}(x),\lambda)$.
\par For a subset $V \subset \RR^d \times \Lambda$, we denote the intersection of $V$ with the slice $\RR^d \times \{\lambda\}$ be ${V}^\lambda := {V} \cap (\RR^d \times \{\lambda\})$. When there is no risk of confusion, we regard $V^\lambda$ as the corresponding subset in $\RR^d$.
\par Now, we assume $N\subset \RR^d$ is an isolating neighborhood of the flow $\phi_t^{\lambda}$ for all $\lambda\in \Lambda$. Let $S^\lambda := \mathrm{Inv}(N,\phi_t^{\lambda})$ denote the maximal invariant set in $N$ under the flow $\phi_t^{\lambda}$. Let $(\DM(S^\lambda ),\DP_{\lambda})$ be a Morse decomposition for the invariant set $S^{\lambda}$ of the flow $\phi_t^{\lambda}$, in which the set of Morse sets at each parameter $\lam$ is given by $\DM(S^\lam)=\mset{M_p^\lambda}{p\in \DP_\lambda}$ with $\DP_\lam$ an order at slice $\lam$. We use $\DP^{\Gf}_\lam$ to denote the flow-defined order and use $\DP_\lam$ to denote an admissible order, namely, $\DP_\lambda$ is an extension of $\DP^\Gf_\lambda$.

\subsection{The Finest Decomposition of a Continuable Interval Pair}\label{The Finest Decomposition of a Continuable Interval Pair}
\par In this subsection, we discuss how the collection of Morse decompositions $\{(\MD{\lam},$ $\DP_\lambda)\}_{\lam\in \Lam}$ may be related with respect to the parameter $\lam$. Firstly, we define the \textbf{\textit{continuation of Morse decompositions over an interval}}. 
\begin{dfn}[Continuation]\label{continuation}
For an interval $I \subset \Lam$, any $\lam \in I$ and a collection of Morse decompositions $\{(\DM(S^\lam),\DP_\lam)\}_{\lam\in I}$ for each flow $\phi_t^\lambda$ 
where $\MD{\lambda}:=\mset{M_p^\lambda}{p\in\DP_\lambda}$, we say \textit{\textbf{the Morse decomposition continues over the interval}} $I\subset \Lam$ \textit{\textbf{for the flow}} $\phi_t^\lam$, if the following two statements hold.
\begin{enumerate}[label=\textnormal{(\arabic*).}]
     \item There is a partially ordered set $\DP$, such that $\DP_\lam=\DP$ for all $\lam \in I$.
      \item For every $p\in \DP$ and for any $\gamma \in I$, there is a relative open interval $O_\gamma \subset I$ (with respect to the relative topology of $I$) such that there exists a neighborhood $N_p(O_\gamma)$ which is an isolating neighborhood for $M_p^\lam$ under the flow $\phi_t^\lambda$ for all $\lam\in O_\gamma$.   
\end{enumerate}
\end{dfn}

\begin{apt}[Breakdown of continuation at one parameter point]\label{breakdown}
Let $N\subset \RR^d$ be an isolating neighborhood for each $\lam\in \Lam$ under the flow $\phi_t^\lam$. Let $S^\lam :=\mathrm{Inv}(N,\phi_t^\lam)$ be the maximal invariant set in $N$ under the flow $\phi_t^\lam$. Assume there is a Morse decomposition $(\MD{\lambda},\DP_\lambda)$ for each $\lam\in\Lam$ and assume there is a $\lam_0\in (0,1)$, such that Morse decompostions continue under the flow $\phi_t^\lam$ over $[0,\lam_0)$ and $(\lam_0,1]$, but not over $[0,1]$.
\end{apt}

\par Under Assumption \ref{breakdown}, we have two admissible order sets $\DP_{[0,\lam_0)}$ and $\DP_{(\lam_0,1]}$ with the fact $\DP_0=\DP_{[0,\lam_0)}$ and $\DP_1=\DP_{(\lam_0,1]}$. For simplicity, we write $\DP_{[0,\lam_0)}$ and $\DP_{(\lam_0,1]}$ as $\DP_0$ and $\DP_1$. Let $\DI(\DP)$ be the set of intervals in a partially ordered set $\DP$, i.e., $\DI(\DP):=\{J\subset \DP \mid J$  is an interval in  $\DP \}$.

\begin{dfn}[Local continuation]
Under Assumption \ref{breakdown}, for an interval pair $(J,J^{'})$, we say there is a \textbf{\textit{local continuation between}} $\DP_0$ and $\DP_1$ \textbf{\textit{from}} $J$ \textbf{\textit{to}} $J^{'}$, if it satisfies the following:
\begin{enumerate}[label=\text{(\arabic*).}]
    \item $J\in \DI(\DP_0), J^{'}\in \DI(\DP_1)$, with either interval $J$ is nonempty or interval $J^{'}$ is nonempty.

	\item There is $a\in [0,\lam_0)$, $b\in (\lam_0,1]$, and an isolating neighborhood $N(J,J^{'})\subset \RR^d$, such that $N(J,J^{'})$ is an isolating neighborhood under the flow $\phi_t^\lam$ for all $\lam\in [a,b]$, $N(J,J^{'})$ isolates the invariant set $M_J^{\lam}$ under the flow $\phi_t^\lam$ over the interval $[a, \lam_0)$, and $N(J,J^{'})$ isolates the invariant set $M_{J^{'}}^\lam$ for all $\lam\in (\lam_0, b]$.
\end{enumerate}

\par If there is a {local continuation between} $\DP_0$ and $\DP_1$ from $J$ to $J^{'}$, we call the pair $(J,J^{'})$ a \textbf{\textit{continuable interval pair}}.

\par Since we assume $N\subset \RR^d$ is an isolating neighborhood of the flow $\phi_t^\lambda$ for all $\lam\in \Lam$, the pair $(\DP_0,\DP_1)$ is obviously a continuable interval pair; hence it is called the \bi{trivial continuable interval pair}. 
 
\end{dfn}

\par Then, we decompose Morse decompositions into several continuable interval pairs.
\begin{dfn}[Decomposition]\label{decomposition}
Under Assumption \ref{breakdown}, we call the two collections of intervals $\DDJ_0:=\{J_i\in \DI(\DP_0)\mid i=1,\cdots,k \}$ and $\DDJ_1:=\{J_i^{'}\in \DI(\DP_1)\mid i=1,\cdots,k \}$ a \textbf{\textit{decomposition}} of the continuable interval pair $(\DP_0, \DP_1)$, if they satisfy the following conditions.
\begin{enumerate}[label=\textnormal{(\arabic*).}]
	\item  $J_u\cap J_v=\OO$ for all $J_u,J_v\in\DDJ_0$, $J_u^{'}\cap J_v^{'}=\OO$ for all $J_u^{'},J_v^{'}\in\DDJ_1$, and $$
        \sqcup_{i=1}^k J_i = \DP_0 \text{  and  } \sqcup_{i=1}^k J_i^{'} = \DP_1.
        $$
     \item $(J_i,J^{'}_i)$ is a continuable interval pair for all $i\in \sset{1,2,\cdots,k}$.
\end{enumerate}
We denote the \textbf{\textit{decomposition}} of the continuable interval pair $(\DP_0, \DP_1)$ as $\sset{(J_i,J_i^{'})}_{i\in A}$ with $A=\{1,2,\cdots,k\}$.
\end{dfn}

\par Suppose a decomposition $\sset{(J_i,J_i^{'})}_{i\in A}$ is given, we consider the homological Conley index of $\h{S}_\eps:=\text{Inv}(N\times \Lam,\Phi_t^\epsilon)$ and link it to the decomposition $\sset{(J_i,J_i^{'})}_{i\in A}$. From Definition \ref{decomposition}, we have the isolating neighborhoods $\h{N}(J_i,J_i^{'})\subset \RR^d\times \Lam$ that isolates $\mset{M_i^0\times\sset{0}}{i\in J}\cup \mset{M_j^1\times\sset{1}}{j\in J^{'}}$ under the flow $\Phi_t^\epsilon$ for all continuable interval pairs in the decomposition $\sset{(J_i,J_i^{'})}_{i\in A}$. In other words, $\mset{M_i^0\times\sset{0}}{i\in J}\cup \mset{M_j^1\times\sset{1}}{j\in J^{'}} \subset \text{Inv}(\h{N}(J_i,J_i^{'}),\Phi_t^\epsilon)$. We let $\h{S}_\epsilon(J_i,J_i^{'}):=\text{Inv}(\h{N}(J_i,J_i^{'}),\Phi_t^\epsilon)$, for all $i\in A$.
\begin{prop}[\cite{Mischaikow1988existenceofgeneralized}, Proposition $2.3$]
	The homological Conley index of the invariant set $\h{S}_\eps$ for the extended slow-fast flow $\Phi_t^\epsilon$, denoted as $C\!H_*(\h{S}_\eps)$, is trivial, namely $C\!H_*(\h{S}_\eps)=0$.
\end{prop}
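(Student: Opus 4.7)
The plan is to exploit the attractor-repeller structure of $\hat{S}_\eps$ imposed by the slow $\lam$-drift, and then to identify the resulting connecting homomorphism with the continuation isomorphism between $\CH_*(S^0)$ and $\CH_*(S^1)$ that is guaranteed by the fact that $N$ isolates $S^\lam$ for every $\lam\in\Lam$ (a property that survives even though Morse decompositions do not continue through $\lam_0$).

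First, I would take the compact block $\hat{N} := N\times[-\delta,1+\delta]$ and verify it is an isolating neighborhood for $\Phi_t^\eps$ when $\eps>0$ is small. Since $g_\eps(\lam)=\eps\lam(\lam-1)$ vanishes only at $\lam\in\{0,1\}$, drives $\lam\in(0,1)$ monotonically from $1$ toward $0$, pushes $\lam\in(1,1+\delta)$ out of $\hat{N}$ forward in time, and symmetrically ejects $\lam\in(-\delta,0)$ backward in time, every full orbit of $\Phi_t^\eps$ contained in $\hat{N}$ must have $\lam(t)\in[0,1]$ for all $t$. Hence $\hat{S}_\eps\subset \inte(N)\times[0,1]\subset \inte(\hat{N})$ and $\hat{S}_\eps$ admits the attractor-repeller decomposition $(A,R):=(S^0\times\{0\},\,S^1\times\{1\})$, the connecting orbits being precisely the slow drifts from $\lam=1$ to $\lam=0$.

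Second, I would compute $\CH_*(A,\Phi_t^\eps)$ and $\CH_*(R,\Phi_t^\eps)$ by localizing in $\lam$. On a thin slice $N\times[-\delta',\delta']$, the $\lam$-flow has $\{0\}$ as an attracting fixed point with empty exit set, so its Conley index is the pointed $0$-sphere. A continuation deforming $f(x,\lam)$ to $f(x,0)$ in a small neighborhood of $\lam=0$ decouples the $x$- and $\lam$-dynamics without perturbing $S^0\times\{0\}$, and the product formula then gives $\CH_n(A)\cong \CH_n(S^0)$. An analogous localization near $\lam=1$, where $\{1\}$ is a repelling fixed point of the $\lam$-flow with pointed $1$-sphere Conley index, yields the degree-shifted identification $\CH_n(R)\cong \CH_{n-1}(S^1)$. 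The attractor-repeller long exact sequence for $(A,R)$ then reads
\begin{equation*}
\cdots\to \CH_n(S^0)\to \CH_n(\hat{S}_\eps)\to \CH_{n-1}(S^1)\xrightarrow{\tilde{\pa}_n}\CH_{n-1}(S^0)\to\cdots.
\end{equation*}
Because $N$ is an isolating neighborhood for $\phi_t^\lam$ for every $\lam\in\Lam$, Theorem \ref{Continuation Property} furnishes a canonical isomorphism $\Psi:\CH_*(S^1)\to \CH_*(S^0)$. Once $\tilde{\pa}_n$ is identified with $\Psi$ under the degree shift, it is an isomorphism in every degree, and exactness immediately forces $\CH_n(\hat{S}_\eps)=0$ for all $n$.

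The main obstacle is the identification $\tilde{\pa}=\Psi$. I expect to handle this by constructing an explicit index triple $(N_2,N_1,N_0)$ adapted to the slow drift: $N_1/N_0$ realizes $h(A)$, $N_2/N_1$ realizes $h(R)$, and the filtration interpolates between an index pair for $S^1$ at $\lam=1$ and one for $S^0$ at $\lam=0$ via the drift through $\lam\in(0,1)$. Tracking a representative cycle in $N_2/N_1$ along the drift and taking its image under the connecting map realizes geometrically the continuation of index pairs for $S^\lam$ across $\Lam$, which is exactly $\Psi$ by the definition of the continuation isomorphism; this closes the argument.
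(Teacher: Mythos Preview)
The paper does not give its own proof of this proposition; it simply cites Mischaikow's 1988 paper.  Your outline is correct in principle, but you are working much harder than necessary, and the step you flag as the ``main obstacle'' --- identifying the connecting homomorphism $\tilde\partial$ with the continuation isomorphism $\Psi$ --- is genuinely nontrivial.  That identification is essentially the content of the topological transition matrix theory of McCord--Mischaikow, and proving it carefully from an explicit index triple is a substantial piece of work in its own right, not a routine verification.  So while the strategy would eventually succeed, you have replaced a short argument by a long one whose hardest step you have only sketched.

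The standard short proof uses the same decoupling trick you already invoke locally near $\lam=0$ and $\lam=1$, but applies it \emph{globally}.  Since $N$ isolates $S^\lam$ for every $\lam\in\Lam$, you may continue the $x$-equation from $\dot x=f(x,\lam)$ to $\dot x=f(x,\lam_*)$ for a fixed $\lam_*\in[0,1]$, via $f_s(x,\lam)=f(x,(1-s)\lam+s\lam_*)$; throughout this homotopy $\hat N=N\times[-\delta,1+\delta]$ remains an isolating neighborhood for $\Phi_t^\eps$.  At $s=1$ the system is a product, so by the product formula $h(\hat S_\eps)\simeq h(S^{\lam_*},\phi_t^{\lam_*})\wedge h(\inv[-\delta,1+\delta],g_\eps)$.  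But the one-dimensional $\lam$-flow on $[-\delta,1+\delta]$ has exit set $\{1+\delta\}$, so its index pair is an interval modulo an endpoint, which is contractible; hence the second factor is a point and $\CH_*(\hat S_\eps)=0$.  This bypasses the attractor--repeller exact sequence and the delicate identification of $\tilde\partial$ entirely.
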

\par This can also be applied to $\h{S}_\epsilon(J_i,J_i^{'})$ for all $i\in A$. 
\begin{prop}[Trivial homological Conley index of each continuable interval pair]\label{TrivialConleyIndexofEachContinuableIntervalPair}
	The homological Conley index of the invariant set $\h{S}_\epsilon(J_i,J_i^{'})$ for the extended slow-fast flow $\Phi_t^\epsilon$, denoted as $C\!H_*(\h{S}_\epsilon(J_i,J_i^{'}))$, is trivial, namely $C\!H_*(\h{S}_\epsilon(J_i,J_i^{'}))=0$ for all $i\in A$.
\end{prop}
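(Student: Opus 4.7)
The plan is to apply, locally to each continuable interval pair, the same continuation argument that underlies the preceding proposition of Mischaikow \cite{Mischaikow1988existenceofgeneralized}. The idea is to deform the slow-fast flow $\Phi_t^\eps$ through a homotopy of vector fields on $\RR^d\times\Lam$ for which $\h{N}(J_i,J_i^{'})$ remains an isolating neighborhood at every step, and which ends at a flow whose maximal invariant set inside $\h{N}(J_i,J_i^{'})$ is empty. The continuation property of the Conley index (Theorem \ref{Continuation Property}) will then yield $\CH_*(\h{S}_\eps(J_i,J_i^{'}))\cong \CH_*(\OO)=0$.

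Concretely, I would deform the slow drift by $g_s(\lam):=(1-s)g_\eps(\lam)+s\mu$ for $s\in[0,1]$ and a sufficiently large constant $\mu>0$, leaving the fast field $f(x,\lam)$ unchanged. At $s=0$ this recovers $\Phi_t^\eps$; at $s=1$ the slow flow pushes every trajectory out through $\lam=1+2\delta$ in finite positive time, so the maximal invariant set inside $\h{N}(J_i,J_i^{'})$ is empty. Throughout the homotopy the fast boundary of $\h{N}(J_i,J_i^{'})$ remains safe because the fast field is unchanged and $N(J_i,J_i^{'})$ is, by the definition of a continuable interval pair, an isolating neighborhood of $\phi_t^\lam$ for every $\lam$ in the relevant range.

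The principal obstacle is to keep $\h{N}(J_i,J_i^{'})$ isolating throughout the entire homotopy, since the zeroes of $g_s$ migrate from $\lam=0,1$ (at $s=0$) and eventually disappear as $s\to1$. I would address this by first choosing $\mu$ large enough that $g_s$ has no zero in $\Lam$ for $s\geq s_0$, for some $s_0\in(0,1)$, so that for those $s$ the slow drift is uniformly outward; for $s<s_0$ the zeroes of $g_s$ stay in a small neighborhood of $\lam=0$ and $\lam=1$, where the fast invariant sets $M_{J_i}^0$ and $M_{J_i^{'}}^1$ already lie in the interior of the appropriate fast slice of $\h{N}(J_i,J_i^{'})$, so the isolating property persists. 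This is precisely Mischaikow's argument applied locally, and it transports to $\h{S}_\eps(J_i,J_i^{'})$ without essential modification.
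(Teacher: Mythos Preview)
Your approach is exactly what the paper intends: it gives no proof, merely remarking that Mischaikow's continuation argument ``can also be applied to $\hat S_\eps(J_i,J_i')$''. So the strategy of deforming the slow drift until the maximal invariant set becomes empty, while keeping $\hat N(J_i,J_i')$ isolating, is the right one.

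There is, however, a genuine error in your analysis of the homotopy. With $g_s(\lam)=(1-s)\eps\lam(\lam-1)+s\mu$, the zeroes of $g_s$ are at
\[
\lam=\tfrac12\Bigl(1\pm\sqrt{1-\tfrac{4s\mu}{(1-s)\eps}}\Bigr),
\]
so as $s$ increases from $0$ to $s_0$ they slide \emph{continuously from $\{0,1\}$ to $\{1/2\}$} and then vanish. They do \emph{not} stay in a small neighborhood of $0$ and $1$, no matter how large $\mu$ is; enlarging $\mu$ only shrinks $s_0$, it does not change the track of the zeroes. Hence your justification ``for $s<s_0$ the zeroes of $g_s$ stay near $\lam=0,1$'' fails, and with it the argument that isolation persists because the fast invariant sets at those nearby slices lie in the interior.

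The repair is to argue \emph{slicewise over all of $\Lambda$}, not just near $0$ and $1$. The point is that $\hat N(J_i,J_i')$ can be chosen so that its $\lam$-slice is an isolating neighborhood for $\phi_t^\lam$ for \emph{every} $\lam\in\Lambda$: this uses the local continuation across $\lam_0$ (giving $N(J_i,J_i')$ on $[a_i,b_i]$) together with the assumed continuation of Morse decompositions over $[0,\lam_0)$ and $(\lam_0,1]$ (giving isolating neighborhoods of $M_{J_i}^\lam$ and $M_{J_i'}^\lam$ on the remaining pieces), patched by compactness. With this in hand, wherever the zeroes of $g_s$ happen to sit in $(0,1)$, the corresponding slice invariant sets lie in $\inte\hat N(J_i,J_i')$; and one then checks (e.g.\ by the standard limit argument letting the slow drift tend to zero, exactly as in the proof that $N\times\Lambda$ isolates $\hat S_\eps$) that $\hat N(J_i,J_i')$ remains isolating throughout the homotopy for $\eps$ small. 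This is the local version of Mischaikow's argument and is what the paper is invoking.
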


\par Now we assume there is more than one decomposition of $(\DP_0, \DP_1)$ and show that their intersection is again a decomposition.
\begin{prop}[Intersection of two decompositions]\label{Reduced}
	Assume there are two decompositions of the continuable interval pair $(\DP_0, \DP_1)$ which are denoted as $\sset{(J_i,J_i^{'})}_{i\in A}$ and $\{(K_j,K_j^{'})\}_{j\in B}$. Then, we define the intersection of these two decompositions as the following:
$$
		\sset{(J_i,J_i^{'})}_{i\in A} \cap \{(K_j,K_j^{'})\}_{j\in B}:= \sset{(J_i\cap K_j, J^{'}_i\cap K_j^{'})}_{i\in A, j\in B}
$$
Futhermore, we take away these sets that are $J_i\cap K_j = J_i^{'}\cap K_j^{'} =\OO$, for all $i\in A, j\in B$. Taking away all such empty intersections, the set of remaining interval pairs denoted as $\sset{(Q_l,Q_l^{'})}_{l\in C}$, then $\sset{(Q_l,Q_l^{'})}_{l\in C}$ is also a decomposition of the continuable interval pair $(\DP_0, \DP_1)$.
\par We call $\sset{(Q_l,Q_l^{'})}_{l\in C}$ the \textbf{\textit{reduced intersection of decompositions $\sset{(J_i,J_i^{'})}_{i\in A}$ and $\sset{(K_j,K_j^{'})}_{j\in B}$}}.
\end{prop}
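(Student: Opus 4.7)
The plan is to verify the two conditions of Definition~\ref{decomposition} for the reduced collection $\sset{(Q_l, Q_l')}_{l \in C}$. Condition (1), the partition property, is essentially immediate: both $\sset{J_i}_{i \in A}$ and $\sset{K_j}_{j \in B}$ partition $\DP_0$, so the pairwise intersections $\sset{J_i \cap K_j}_{i \in A,\, j \in B}$ again partition $\DP_0$, and dually for $\DP_1$; discarding those index pairs for which \emph{both} $J_i \cap K_j = \OO$ and $J_i' \cap K_j' = \OO$ does not remove any element of $\DP_0$ or $\DP_1$, so $\sqcup_l Q_l = \DP_0$ and $\sqcup_l Q_l' = \DP_1$. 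The real content lies in condition (2), the continuability of each surviving pair.

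For continuability, first note that the intersection of two intervals in a partially ordered set is again an interval, so $J_i \cap K_j \in \DI(\DP_0)$ and $J_i' \cap K_j' \in \DI(\DP_1)$, with at least one nonempty by construction. Let $a_1, b_1$ and $a_2, b_2$ denote the parameter endpoints provided by the continuability of $(J_i, J_i')$ and $(K_j, K_j')$, and set $a = \max(a_1, a_2) \in [0, \lambda_0)$, $b = \min(b_1, b_2) \in (\lambda_0, 1]$. My candidate witness is
$$
N(J_i \cap K_j,\, J_i' \cap K_j') := N(J_i, J_i') \cap N(K_j, K_j').
$$
Justifying this rests on two ingredients. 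First, a point-set lemma: the intersection $N_1 \cap N_2$ of two isolating neighborhoods under a common flow is itself an isolating neighborhood, and $\inv(N_1 \cap N_2) = \inv(N_1) \cap \inv(N_2)$; this follows from $\inte(N_1) \cap \inte(N_2) = \inte(N_1 \cap N_2)$ together with $\inv(N_i) \subset \inte(N_i)$. Second, a Morse-theoretic identity: for any intervals $J, K$ in a Morse decomposition one has $M_J \cap M_K = M_{J \cap K}$, proved by chasing $\alpha$- and $\omega$-limits, since every point in $M_J \cap M_K$ has its limit sets contained in Morse sets whose indices lie simultaneously in $J$ and in $K$, hence in $J \cap K$.

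Combining these ingredients at every $\lambda \in [a, b]$ shows the candidate is an isolating neighborhood under $\phi_t^\lambda$ throughout this parameter interval, and evaluating at $\lambda \in [a, \lambda_0)$ yields $\inv\bigl(N(J_i, J_i') \cap N(K_j, K_j'),\, \phi_t^\lambda\bigr) = M_{J_i}^\lambda \cap M_{K_j}^\lambda = M_{J_i \cap K_j}^\lambda$, with the symmetric conclusion on $(\lambda_0, b]$. The main obstacle I anticipate is the degenerate case in which exactly one of $J_i \cap K_j$, $J_i' \cap K_j'$ is empty while the pair is still retained; here $M_\OO^\lambda = \OO$ on that side of $\lambda_0$, which is consistent with the intersection formula (the empty set is trivially isolated by the same compact neighborhood), and the definition of continuable interval pair explicitly allows a single empty component. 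Once this book-keeping is dispatched, both conditions of Definition~\ref{decomposition} hold for the reduced intersection.
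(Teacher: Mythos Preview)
Your proposal is correct and follows essentially the same approach as the paper: both take $N(J_i,J_i')\cap N(K_j,K_j')$ as the candidate isolating neighborhood over the parameter interval $[\max(a_1,a_2),\min(b_1,b_2)]$, and both verify the partition property directly. The only difference is packaging: you isolate the two general facts $\inv(N_1\cap N_2)=\inv(N_1)\cap\inv(N_2)$ and $M_J\cap M_K=M_{J\cap K}$ as clean lemmas and then compose them, whereas the paper argues the identity $\inv\bigl(N(J_i)\cap N(K_j)\bigr)=M^{\lambda}_{J_i\cap K_j}$ by hand via a two-sided inclusion (with a contradiction step for the reverse inclusion) and then separately remarks that the same reasoning handles the slice $\lambda=\lambda_0$. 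Your abstraction is slightly tidier and covers $\lambda_0$ automatically, but the underlying argument is identical.
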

\begin{proof}
\par Take a pair $(V_{ij},V_{ij}^{'})$ with $V_{ij}=J_i\cap K_j$ and $V_{ij}^{'}=J_i ^{'}\cap K_j ^{'}$, in the reduced intersection of these two decompositions. Then either $V_{ij}\neq \OO$ or $V_{ij}^{'}\neq \OO $, and both $V_{ij}$ and $V_{ij}^{'}$ are intervals.
\par We first check that there is a local continuation from $V_{ij}$ to $V_{ij}^{'}$. Because there is a local continuation from $J_i$ to $J_i^{'}$, and from $K_j$ to $K_j^{'}$, there are two isolating neighborhood $N(J_i)$ and $N(K_j)$, and two intervals $[a_J,b_J],[a_K,b_K]\subset [0,1]$, with $a_J<\lz <b_J; a_K<\lz <b_K$, letting $a_{J\!K}:=\max{(a_J,a_K)}; b_{J\!K}:=\min{(b_J,b_k)}$, satisfy the following:
\begin{enumerate}[label=\textnormal{\arabic*).}]
      \item $N(J_i)$ isolates $M^\lam_{J_{i}}$ under the flow $\phi_t^\lam$ for all $\lam\in [a_J,\lam_0)$, and $N(J_i)$ isolates $M^\lam_{J_{i}^{'}}$ under the flow $\phi_t^\lam$ for all $\lam\in (\lam_0, b_J]$;
      \item $N(J_i)$  is an isolating neighborhood under the flow $\phi_t^\lam$ for all $\lam\in [a_J, b_J]$;
      \item $N(K_j)$ isolates $M^\lam_{K_{j}}$ under the flow $\phi_t^\lam$ for all $\lam\in [a_K,\lam_0)$, and $N(K_j)$ isolates $M^\lam_{K_{j}^{'}}$ under the flow $\phi_t^\lam$ for all $\lam\in (\lam_0, b_K]$;
      \item $N(K_j)$  is an isolating neighborhood under the flow $\phi_t^\lam$ for all $\lam\in [a_K, b_K]$.
\end{enumerate}
\par We let $N(V_{ij}):=N(J_i)\cap N(K_j)$, and we check that $N(V_{ij})$ isolates $M^\lam_{V_{ij}}$ for $\lam \in [a_{J\!K},\lam_0)$  and isolates $M^\lam_{V^{'}_{ij}}$ for $\lam \in (\lam_0, b_{J\!K}] $ under the flow $\phi_t^\lam$.
\par Because $M^\lam_{J_{i}}=\mathrm{Inv}({N(J_i))}$ and $M^\lam_{K_{j}}=\mathrm{Inv}( N(K_j))$ for all $\lam \in [a_{J\!K},\lam_0)$, we have ${M^{\lam}_{V_{ij}}} \subset \inte{(N(J_i))}$ and ${M^{\lam}_{V_{ij}}} \subset \inte{(N(K_j))}$ for $\lam \in [a_{J\!K},\lam_0)$. Thus, we have ${M^{\lam}_{V_{ij}}}\subset \mathrm{Inv}(N(V_{ij}))$ for all $\lam \in [a_{J\!K},\lam_0)$. Now we prove $ \mathrm{Inv}(N(V_{ij})) \subset {M^{\lam}_{V_{ij}}}$ for all $\lam \in [a_{J\!K},\lam_0)$. Suppose it does not hold, then there is a $p\in \DP_0-V_{ij}$, such that $M_p^\lam \subset \mathrm{Inv}(N(V_{ij}))$. But this is contradicted to $M^\lam_{J_{i}}$ and $M^\lam_{K_{j}}$ are the maximal invariant sets in $N(J_i)$ and $N(K_j)$ respectively. Therefore, we have the fact that $\mathrm{Inv}(N(V_{ij})) = {M^{\lam}_{V_{ij}}}$ for all $\lam \in [a_{J\!K},\lam_0)$. Similarly, we have the result $M^\lam_{V^{'}_{ij}}=\mathrm{Inv}(N(V_{ij}))$ under the flow $\phi_t^\lam$ for all $\lam \in ({\lam_0,b_{J\!K}}]$. 
\par At the slice $\lam= \lam_0$, $N(J_i)$ and $N(K_j)$ isolate some invariant sets and the connecting orbits between them under the flow $\phi_t^{\lam_0}$. Thus, we can also use a similar way to prove that $N(V_{ij})$ is an isolating neighborhood for $\lam= \lam_0$ under the flow $\phi_t^{\lam_0}$. As a result, we have the result that there is a local continuation from $V_{ij}$ to $V_{ij}^{'}$.
\par We check:
$\sqcup_{i\in A, j\in B}V_{ij}=\DP_0$ and $\sqcup_{i\in A, j\in B}V^{'}_{ij}=\DP_1$. 
Firstly, for $i_1\neq i_2$ or $j_1\neq j_2$, we have $V_{i_1j_1}\cap V_{i_2j_2}= \OO$ by $J_{i_{1}}\cap J_{i_{2}}= \OO$ and $K_{j_{1}}\cap K_{j_{2}}= \OO$. Appearently, $\cup_{i\in A, j\in B}V_{ij}=\DP_0$ and $\cup_{i\in A, j\in B}V^{'}_{ij}=\DP_1$. Thus, $\sqcup_{i\in A, j\in B}V_{ij}=\DP_0$ and $\sqcup_{i\in A, j\in B}V^{'}_{ij}=\DP_1$.
\par Finally, we consider the reduced intersection. Because we only take away empty sets for both sides, the result above also holds. Therefore, the reduced intersection of two decompositions is also a decomposition of the continuable interval pair $(\DP_0, \DP_1)$.
\end{proof}
\par By the definition of local continuation, we can decompose Morse decompositions into several continuable interval pairs that have a local continuation in each pair. In order to find the finest such decomposition, we define the \textbf{\textit{indecomposability}} of continuable interval pairs.

\begin{dfn}[Indecomposability]\label{LTD}
Under Assumption \ref{breakdown}, a continuable interval pair $(J,J^{'})$ is called \bi{indecomposable} if there are no strict subintervals $K\subsetneqq J$ and $K^{'}\subsetneqq J^{'}$ such that $(K, K^{'})$ is a continuable interval pair.
\end{dfn}


\begin{thm}[Existence and uniqueness of the finest decomposition]\label{finest}
	Under Assumption \ref{breakdown}, there is a unique decomposition of the continuable interval pair $(\DP_0, \DP_1)$ denoted as $\sset{(J_i,J_i^{'})}_{i\in A}$, such that the continuable pair $(J_i, J_i^{'})$ is indecomposable for all $i\in A$. We call the decomposition $\sset{(J_i,J_i^{'})}_{i\in A}$ as the \textit{\textbf{finest decomposition}} of the continuable interval pair $(\DP_0, \DP_1)$.
\end{thm}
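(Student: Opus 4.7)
The plan is to exploit finiteness. Since $\DP_0$ and $\DP_1$ are finite, the collection $\mathcal{D}$ of all decompositions of the continuable interval pair $(\DP_0, \DP_1)$ is itself a finite, nonempty set (it contains at least the trivial decomposition $\sset{(\DP_0, \DP_1)}$), and by Proposition \ref{Reduced} it is closed under reduced intersection. Taking the iterated reduced intersection $\DT^* := \bigcap_{\DT \in \mathcal{D}} \DT$ therefore gives a well-defined decomposition that refines every element of $\mathcal{D}$: every pair in any $\DT \in \mathcal{D}$ is a disjoint union of pairs from $\DT^*$. This $\DT^*$ is my candidate for the finest decomposition.

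Next I would verify that every pair $(J_i, J_i') \in \DT^*$ is indecomposable. Suppose for contradiction that some $(J_i, J_i')$ admits a continuable strict subpair $(K, K')$ with $K \subsetneqq J_i$ and $K' \subsetneqq J_i'$. The key step is to build another decomposition $\tilde\DT \in \mathcal{D}$ whose restriction to $(J_i, J_i')$ contains $(K, K')$ as one of its pairs; then $\DT^* \cap \tilde\DT$ would strictly refine $\DT^*$, contradicting minimality. To construct $\tilde\DT$ I would use the isolating neighborhoods $N(J_i, J_i')$ and $N(K, K')$ on a common parameter subinterval $[a, b]$ containing $\lam_0$, and combine them by intersections and complements exactly as in the proof of Proposition \ref{Reduced} to isolate the complementary invariant sets $M_{J_i \setminus K}^\lam$ for $\lam \in [a, \lam_0)$ and $M_{J_i' \setminus K'}^\lam$ for $\lam \in (\lam_0, b]$; when $J_i \setminus K$ (or $J_i' \setminus K'$) is a union of several intervals in the flow-defined order, I would further split along those components, a process that terminates by finiteness of $|\DP_0| + |\DP_1|$.

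For uniqueness, suppose $\DT^1$ and $\DT^2$ are two decompositions whose pairs are all indecomposable. Their reduced intersection $\DT^1 \cap \DT^2$ is a decomposition refining both. Any pair of the intersection is a continuable subpair of some $(J_i, J_i') \in \DT^1$; if it is strict in both coordinates, indecomposability of $(J_i, J_i')$ is immediately violated. When only one coordinate is strictly smaller, the partition property of decompositions forces a companion pair of the intersection inside $(J_i, J_i')$ that is strict in both coordinates, again contradicting indecomposability. Hence $\DT^1 \cap \DT^2 = \DT^1 = \DT^2$. The main obstacle I anticipate is the constructive step above: producing the complementary continuable pairs when $J_i \setminus K$ is a union of several intervals in $\DP_0$, since the isolating neighborhoods provided by the hypotheses are guaranteed only for the full $K$ and $J_i$, not automatically for each interval-component of $J_i \setminus K$, so the recursion on components must be justified carefully using the robustness of isolated invariant sets over the two half-intervals of $\lam_0$.
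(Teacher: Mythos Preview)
Your approach is sound and takes a somewhat different route from the paper's. For existence the paper argues by \emph{iterative refinement}: start from any decomposition, and whenever a pair $(J_i,J_i')$ is not indecomposable, split it into $(I,I')$ and $(J_i\setminus I,\,J_i'\setminus I')$, repeating until finiteness forces termination. You instead form the \emph{global} reduced intersection $\DT^*$ of all decompositions at once and show its pairs are already indecomposable. Both strategies rest on the same constructive lemma---that the complement of a continuable subpair inside a continuable pair is again continuable---and you rightly flag this as the delicate point; the paper handles it in one line (``$J_i-I$ and $J_i'-I'$ are also intervals'' plus an appeal to Proposition~\ref{Reduced}) without addressing the multi-component issue you raise. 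For uniqueness your argument via the reduced intersection is essentially the paper's, though you are more careful about the case where only one coordinate is strictly smaller.

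One small slip in your existence argument: since $\tilde\DT$ is itself a decomposition, it already belongs to $\mathcal D$, so $\DT^*\cap\tilde\DT=\DT^*$ and no strict refinement appears that way. The contradiction you want is rather that $\DT^*$ already refines $\tilde\DT$: the pair $(K,K')\in\tilde\DT$ is therefore a disjoint union of pairs of $\DT^*$, at least one of which is nonempty and sits strictly inside $(J_i,J_i')$, contradicting that distinct members of the decomposition $\DT^*$ are pairwise disjoint. With this adjustment your argument goes through.
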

\begin{proof}
	\par We first prove the existence, which is to prove that there is a decomposition of the continuable interval pair $(\DP_0, \DP_1)$, denoted as $\sset{(J_i,J_i^{'})}_{i\in A}$, such that the continuable pair $(J_i, J_i^{'})$ is indecomposable for all $i\in A$.
	\par Suppose for any $i\in A$, if the continuable pair $(J_i, J_i^{'})$ is not indecomposable, then there is a $I\subset J_i$ and $I^{'}\subset J_i^{'}$, with either $I\neq \OO$ or $I^{'}\neq \OO$ and either $I\neq J_i$ or $I^{'}\neq J_i^{'}$, such that there is a local continuation from $I$ to $I^{'}$.
	\par By the fact that $J_i,J_i^{'},I$ and $I^{'}$ are intervals, we have $J_i-I$ and $J_i^{'}-I^{'}$ are also intervals. We use the same discussion as in Proposition \ref{Reduced}, there is a local continuation from $J_i-I$ to $J_i^{'}-I^{'}$. Therefore, we have a finer decomposition of the original one. By the finiteness of a Morse decomposition, we can get a decomposition of $(\DP_0, \DP_1)$, denoted as $\sset{(K_j,K_j^{'})}_{j\in B}$, such that the continuable pair $(K_j, K_j^{'})$ is indecomposable for all $j\in B$.
	\par Now we check the uniqueness of the finest decomposition by contradiction. Suppose that we have two finest decompositions are $\sset{(U_i,U_i^{'})}_{i\in A}$ and $\sset{(V_j,V_j^{'})}_{j\in B}$. Then, the reduced intersection of these two decompositions is also a decomposition. From the proof of Proposition \ref{Reduced}, there is a $i_0\in A$ such that there are two intervals $L\subset U_{i_{0}}$ and $L^{'}\subset U_{i_{0}}^{'}$ with with either $L\neq \OO$ or $L^{'}\neq \OO$ and either $L\neq U_{i_0}$ or $L^{'}\neq U_{i_0}^{'}$, satisfying that there is a local continuation from $L$ to $L^{'}$ which contradicted to the indecomposability of continuable pairs in $\sset{(U_i,U_i^{'})}_{i\in A}$.
\end{proof}
\begin{ex}
\par Here, we present an example of the preceding theorem. Figure \ref{theFinestDecomposition} is a bifurcation diagram of a one-parameter family of one-dimensional ODEs over the parameter interval $\Lam=(-2\delta, 1+2\delta)\subset \RR$, in which stable fixed points are denoted by solid lines and the unstable ones are denoted by the dotted lines. Here $0<\lz<1$ is the unique bifurcation point in $\Lam$, and hence Assumption \ref{breakdown} is satisfied. The arrows in the light grey denote the phase portrait of each slice. Let $\DP_0=\sset{1^0,2^0,3^0,4^0,5^0}$ and $\DP_1=\sset{1^1,2^1,3^1,4^1,5^1}$ be the index sets of corresponding Morse decomposition as shown in Figure \ref{theFinestDecomposition}. Then finest decomposition of the pair $(\DP_0, \DP_1)$ is given by 

$$\sset{(\sset{1^0},\sset{1^1}),(\sset{2^0},\sset{2^1}),(\sset{3^0,4^0},\OO),(\sset{5^0},\sset{3^1, 4^1, 5^1})}
$$

\begin{figure}[H] 
\centering 
\includegraphics[width=1 \textwidth]{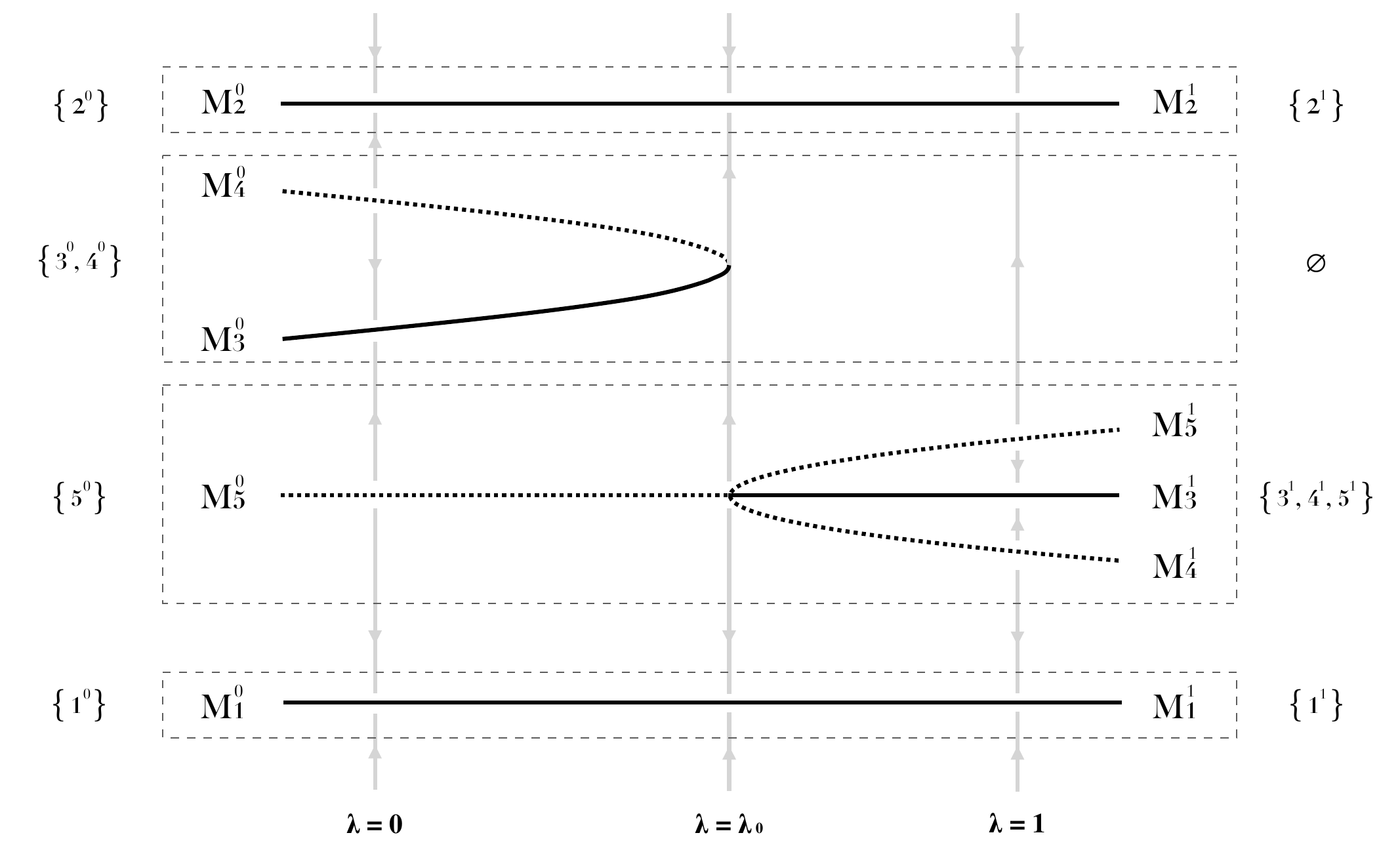} 
\caption{An example of the finest decomposition} 
\label{theFinestDecomposition} 
\end{figure}
\end{ex}
\par Back to the theorem above, we have the finest decomposition $\sset{(J_i,J_i^{'})}_{i\in A}$ of the continuable interval pair $(\DP_0, \DP_1)$ under Assumption \ref{breakdown}. We note here that the collections of invariant sets with admissible orders $(\DM(S^{\lam};\{J_i\}_{i\in A}\}),\DP(\{J_i\}_{i\in A}\})):=(\{M^{\lam}_{I} \mid I\in \{J_i\}_{i\in A}\},\DP(\{J_i\}_{i\in A}\}))$, with $\DP(\{J_i\}_{i\in A}\})$ an admissible order of $\{M^{\lam}_{I} \mid I\in \{J_i\}_{i\in A}\}$ is also a Morse decomposition for all $\lambda \in [0,\lam_0)$. At the slice $\lam = \lam_0$, if we denote the Morse decomposition as $(\DM(S^{\lam_0}),\DP_{\lam_0})$ and consider the isolating neighborhoods  $\{N(J_i)\}_{i\in A}$ crossing $\lam _0$. Let $\DP_{\lam_0}(N(J_i)):=\{\pi \in \DP_{\lam_0} \mid M_{\pi}^{\lam_0} \cap N(J_i)\neq \OO \}$ then each $\DP_{\lam_0}(N(J_i))$ may not always be an interval in $\DP_{\lam_0}$. As a result, the collection of invariant sets $\{M_{ \DP_{\lam_0}(N(J_i)) }^{\lam_0} \}_{i\in A}$ may not always be a Morse decomposition for $\lam = \lam_0$. 
\par If there is a partially ordered set $\DP$ which $\DP$ is an extension for $\DP_0$ and $\DP_1$, and if $\{M_{ \DP(N(p)) }^{\lam_0} \}_{p \in \DP_0}$ is a Morse decomposition for $\lam = \lam_0$, then the Morse decomposition conintues over $[0,1]$, thus we can directly apply the result of Reineck \cite{Reineck1998connection}. If there is not such a $\DP$ nor $\{M_{ \DP(N(p)) }^{\lam_0} \}_{p \in \DP_0}$ is a Morse decomposition for $\lam = \lam_0$, we can still use the finest decomposition to extend the result into continuable interval pairs, which we will see in Section \ref{CO}.

\subsection{The Hausdorff limit of Morse Decompositions}\label{limit}
\par In this section, we use the Hausdorff metric to define the Hausdorff limit of a Morse set under the flow $\phi_t^\lambda$ when $\lambda$ limits to some point in the parameter space. 

\par Let $(X,d)$ be a compact metric space with the metric $d(\cdot , \cdot):X\times X \to \RR$, and $\GC(X):=\{C\subset X\mid C  \text{ is closed } \}$ be the set of all closed subsets in $X$. Let $\rho(\cdot , \cdot):\GC(X) \times \GC(X) \to \RR$ be the \textbf{\textit{Hausdorff metric}} on $\GC(X)$. We have the following well-known facts of the Hausdorff metric.

\begin{prop}[cf. \cite{Reineck1998connection}, Lamma $1.1$]\label{HausdorffLimit}
	Suppose the sequence of closed sets $\{C_n\}_{n\in \NN}\subset \GC(X)$ limit to $C$ in the Hausdorff metric, then:
	$$
	\DC=\{ x\in X
	\mid
	\text{there is a sequence $x_n \in C_n$ such that $x_n \to x$, as $n \to \infty$}
	\}.
	$$
\end{prop}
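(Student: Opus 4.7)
The plan is to prove the two set inclusions separately, using only the definition of the Hausdorff metric $\rho(A,B)=\max\{\sup_{a\in A}d(a,B),\ \sup_{b\in B}d(b,A)\}$ together with the fact that each $C_n$ and the limit $C$ are closed (and that $X$ is compact, so the infima defining $d(\cdot, A)$ are attained).

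For the inclusion $C\subseteq \{x\in X\mid \exists x_n\in C_n,\ x_n\to x\}$, I take any $x\in C$. Since $\rho(C_n,C)\to 0$, the definition gives $d(x,C_n)\le \rho(C_n,C)\to 0$. Because each $C_n$ is a closed subset of the compact space $X$, I can choose $x_n\in C_n$ with $d(x,x_n)=d(x,C_n)$, and then $x_n\to x$ by construction.

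For the reverse inclusion, I start from $x\in X$ together with a sequence $x_n\in C_n$ satisfying $x_n\to x$, and I want to show $x\in C$. Since $C$ is closed, it suffices to verify $d(x,C)=0$. By the triangle inequality
\[
d(x,C)\le d(x,x_n)+d(x_n,C)\le d(x,x_n)+\rho(C_n,C),
\]
and both terms on the right tend to $0$ as $n\to\infty$: the first by the assumption $x_n\to x$, and the second by Hausdorff convergence $C_n\to C$. Hence $d(x,C)=0$, so $x\in C$.

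There is no real obstacle here; the statement is essentially a direct unpacking of the definition of the Hausdorff metric, and the only subtlety is to make sure a point $x_n\in C_n$ achieving (or approximating) the distance $d(x,C_n)$ can be chosen — which is guaranteed by the closedness of $C_n$ inside the compact space $X$. If compactness of $X$ were not assumed in the ambient setup, I would instead pick $x_n\in C_n$ with $d(x,x_n)\le d(x,C_n)+1/n$, which still forces $x_n\to x$; the rest of the argument is unchanged.
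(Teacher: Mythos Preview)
Your proof is correct; both inclusions follow directly from the definition of the Hausdorff metric exactly as you argue. The paper does not supply its own proof of this proposition---it is stated as a known fact with a citation to Reineck---so there is nothing to compare against.
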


\begin{prop}[cf. \cite{Reineck1998connection}, Lamma $1.2$]\label{connectedness}
For a connected sequence of subsets $\{C_n\}_{n\in\NN}\subset \GC(X)$, and $\{C_n\}_{n\in\NN}$ limits to $C$ in Hausdorff metric, then $C$ is also connected.
\end{prop}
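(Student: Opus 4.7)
The plan is to argue by contradiction, using the Hausdorff limit characterization from Proposition \ref{HausdorffLimit} together with the compactness of $X$ to manufacture a separation of some $C_n$ from an assumed separation of the limit $C$.

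Suppose for contradiction that $C$ is disconnected. Since $C$ is closed in the compact metric space $X$, it is itself compact, so I can write $C=A\sqcup B$ with $A,B$ nonempty, closed, and disjoint; their compactness yields $\eta:=d(A,B)>0$. Set $\delta:=\eta/3$ and define the open sets
\begin{equation*}
U:=\mset{x\in X}{d(x,A)<\delta},\qquad V:=\mset{x\in X}{d(x,B)<\delta}.
\end{equation*}
By construction $U\cap V=\OO$, $A\subset U$, $B\subset V$, and the $\delta$-neighborhood of $C$ is contained in $U\cup V$.

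Next I would use Hausdorff convergence in two directions. First, since $\rho(C_n,C)\to 0$, for all sufficiently large $n$ we have $C_n$ contained in the $\delta$-neighborhood of $C$, hence $C_n\subset U\cup V$. Second, I would pick any points $a\in A$ and $b\in B$; by Proposition \ref{HausdorffLimit} applied to the Hausdorff limit, there exist sequences $a_n,b_n\in C_n$ with $a_n\to a$ and $b_n\to b$. Therefore, for all sufficiently large $n$, $a_n\in U$ and $b_n\in V$, so both $C_n\cap U$ and $C_n\cap V$ are nonempty.

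Combining these two observations, for $n$ large enough
\begin{equation*}
C_n=(C_n\cap U)\sqcup(C_n\cap V)
\end{equation*}
is a decomposition of $C_n$ into two nonempty disjoint relatively open subsets, contradicting the connectedness of $C_n$. Hence $C$ must be connected. The argument is essentially routine once the characterization in Proposition \ref{HausdorffLimit} is in hand; the only subtle point is to choose $\delta$ small enough that the $\delta$-neighborhoods of $A$ and $B$ remain disjoint, which is precisely why one needs the strict positivity $d(A,B)>0$ guaranteed by the compactness of $X$ (and hence of the closed sets $A,B$).
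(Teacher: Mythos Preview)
Your argument is correct and is the standard proof of this well-known fact. Note, however, that the paper does not actually supply its own proof of this proposition: it is stated with the citation ``cf.\ \cite{Reineck1998connection}, Lemma~1.2'' and left unproved. So there is nothing to compare against; your write-up simply fills in the omitted details, and does so cleanly.
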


\par Now we consider the Morse decomposition $(\DM(S^\lam),\DP_\lam)=(\{M_p^{\lam} \mid p\in \DP_\lam \},\DP_\lam)$ of $S^\lam$ for all $\lam\in [0,1]$. Under Assumption \ref{breakdown}, we consider the admissible orders $\DP_0$ and $\DP_1$, and consider the Morse decomposition $(\DM(S^\lz),\DP_\lz)$ at the slice $\lam = \lz$.
We take any sequence $\{\lam_{n,-}\}_{n\in \NN}\subset [0,\lam_0)$ with $\lam_{n,-} \to \lam_{0}$, or $\{\lam_{n,+}\}_{n\in \NN}\subset (\lam_0, 1]$ with $\lam_{n,+} \to \lam_{0}$, and consider the sequence of Morse sets $\{M_p^{\lam_{n,-}}\}_{n\in \NN}$ with $p\in \DP_0$ and $\{M_p^{\lam_{n,+}}\}_{n\in \NN}$ with $p\in \DP_1$. Because the space $\GC(N\times [0,1])$ with topology generated by Hausdorff metric is still compact, there is a convergent subsequence of $\{M_p^{\lam_{n,-}}\}_{n\in \NN}$ with $p\in \DP_0$ and $\{M_p^{\lam_{n,+}} \}_{n\in \NN}$ with $p\in \DP_1$ in Hausdorff metric.
\par The following statements hold for any choice of sequence $\{\lam_{n,-}\}_{n\in \NN}\subset [0,\lam_0)$ with $\lam_{n,-} \to \lam_{0}$, or $\{\lam_{n,+}\}_{n\in \NN}\subset (\lam_0, 1]$ with $\lam_{n,+} \to \lam_{0}$ and any choice of convergent subsequences of $\{M_p^{\lam_{n,-}}\}_{n\in \NN}$ with $p\in \DP_0$ and $\{M_p^{\lam_{n,+}}\}_{n\in \NN}$ with $p\in \DP_1$ respectively. For simplicity, we denote the Hausdorff limit of any convergent subsequence of $\sset{M_p^{\lam_n}}$ as $\lim_{\lam_0^{-}} M_p^{\lambda_n} $ for $p\in \DP_0$ and as $\lim_{\lam_0^{+}}M_p^{\lambda_n} $ for $p\in \DP_1$. If the set that the index $p$ belongs to is clear, we denote the Hausdorff limit as $\lim_{\lam_0} M_p^{\lambda_n}$ for short.

\begin{thm}	\label{limitofset}
	For any $p\in \DP_0$ or $\DP_1$, the Hausdorff limit $\lim_{\lam_0} M_p^{\lam_n}$ is compact in $N$ and invariant under the flow $\phi_t^{\lam_0}$.
\end{thm}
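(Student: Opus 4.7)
The plan is to combine the pointwise characterization of Hausdorff limits recorded in Proposition \ref{HausdorffLimit} with the continuous dependence of the flow $\phi_t^\lam(x)$ on $(t,x,\lam)$, which holds because $f(x,\lam)$ is Lipschitz continuous in $x$.

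For compactness, I would observe that every $M_p^{\lam_n}$ is a closed subset of the compact isolating neighborhood $N$, so the sequence lives in $\GC(N)$. A Hausdorff limit of closed sets is closed and is still contained in $N$, so $\lim_{\lam_0} M_p^{\lam_n}$ is a closed subset of a compact set, hence compact. This is the easy half.

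For invariance, set $M^* := \lim_{\lam_0} M_p^{\lam_n}$. Fix $x\in M^*$ and $t\in \RR$. By Proposition \ref{HausdorffLimit} there exist points $x_n \in M_p^{\lam_n}$ with $x_n \to x$. Since each Morse set is invariant under its own flow, $\phi_t^{\lam_n}(x_n) \in M_p^{\lam_n}$ for every $n$. Continuous dependence of the flow on the initial condition and on the parameter gives $\phi_t^{\lam_n}(x_n) \to \phi_t^{\lam_0}(x)$ as $n\to\infty$. Invoking Proposition \ref{HausdorffLimit} in the other direction, $\phi_t^{\lam_0}(x) \in M^*$. Thus $\phi_t^{\lam_0}(M^*)\subset M^*$ for every $t\in\RR$; applying the inclusion with $-t$ in place of $t$ and then composing with $\phi_t^{\lam_0}$ gives the reverse inclusion, so $\phi_t^{\lam_0}(M^*)=M^*$ for every $t\in\RR$, which is exactly invariance under $\phi_t^{\lam_0}$.

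The main obstacle is not conceptually deep; it is the one line asserting $\phi_t^{\lam_n}(x_n) \to \phi_t^{\lam_0}(x)$. To justify it cleanly I would appeal to the fact that, for each fixed $t$, the map $(x,\lam)\mapsto \phi_t^\lam(x)$ is jointly continuous on $N\times \Lam$, which follows from Gr\"onwall's inequality applied to the Lipschitz family $f(\cdot,\lam)$ and the convergence $\lam_n\to \lam_0$. With that in hand, the combination $x_n\to x$ and $\lam_n\to \lam_0$ forces the desired convergence of the flowed points, and the rest of the argument is essentially bookkeeping with Proposition \ref{HausdorffLimit}.
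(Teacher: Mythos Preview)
Your proof is correct and uses essentially the same ingredients as the paper: compactness from closedness inside the compact $N$, and invariance from the characterization in Proposition~\ref{HausdorffLimit} together with joint continuity of $(x,\lam)\mapsto \phi_t^\lam(x)$. The only difference is packaging---the paper argues by contradiction with an explicit $\delta/3$ triangle-inequality estimate, while you invoke continuous dependence directly; the underlying mechanism is identical.
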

\begin{proof} The compactness is obtained by the closedness of the Hausdorff limit $\lim_{\lam_0} M_p^{\lam_n}$ in the compact set $N$.
\par We show the property of invariance by contradiction. Suppose the limit set $\lim_{\lam_0} M_p^{\lam_n}$ is not invariant, then there is a point $x\in \lim_{\lam_0} M_p^{\lam_n}$, a real number $\delta >0$, and a time $t_0>0$, such that $d(\phi_{t_0}^{\lam_0}(x), \lim_{\lam_0} M_p^{\lam_n})=\delta$. By the uniformly continuity of $\phi_t^{\lam_0}$, there is a $\eta >0$ such that $d(\phi_{t_0}^{\lam_0}(x), \phi_{t_0}^{\lam_0}(y))<\frac{\delta}{3}$ for any $y\in X$ with $d(x, y)<\eta$. 
\par For sufficiently large $n$, we have $d(\phi_{t_0}^{\lam_0}(x), \phi_{t_0}^{\lam_n}(x))< \frac{\delta}{3}$ for any $x\in X$. Also by $M_p^{\lam_n}\to \lim_{\lam_0} M_p^{\lam_n}$, we have $\rho(M_p^{\lam_n}, \lim_{\lam_0} M_p^{\lam_n})< \min \{\frac{\delta}{3}, \eta \}$, if $n$ is sufficiently large.
\par Now we choose a sufficiently large $n$, and by $M_p^{\lam_n}\to \lim_{\lam_0} M_p^{\lam_n}$, we can choose $y\in M_p^{\lam_n}$ with $d(x,y)< \eta$.
\par Then, we have:
\begin{align*}
d(\phi_{t_0}^{\lam_0}(x), \lim_{\lam_0} M_p^{\lam_n}) &\leq	d(\phi_{t_0}^{\lam_0}(x), \phi_{t_0}^{\lam_0}(y)) + d(\phi_{t_0}^{\lam_0}(y), \phi_{t_0}^{\lam_n}(y)) + d(\phi_{t_0}^{\lam_n}(y), M_p^{\lam_n})\\
&\quad +\rho(M_p^{\lam_n}, \lim_{\lam_0} M_p^{\lam_n})\\
&<\frac{\delta}{3} + \frac{\delta}{3} + \frac{\delta}{3} = \delta.
\end{align*}
But this contradicts to $d(\phi_{t_0}^{\lam_0}(x), \lim_{\lam_0} M_p^{\lam_n})=\delta$. Therefore, the Hausdorff limit set $\lim_{\lam_0} M_p^{\lam_n}$ is compact and invariant under the flow $\phi_t^{\lam_0}$.
\end{proof}

\begin{cor}
	For any interval $J\in \DI(\DP_0)$ or $\DI(\DP_1)$, the Hausdorff limit of the invariant set $M_J^{\lam_n}$, denoted as $\lim _{\lam_0}M_J^{\lam_n}$, is also invariant under $\phi_t^{\lam_0}$.
\end{cor}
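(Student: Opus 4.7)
The plan is to observe that the proof of Theorem \ref{limitofset} never actually used that $M_p^{\lam_n}$ is a single Morse set: it exploited only three structural facts, namely that each $M_p^{\lam_n}$ is (i) contained in the common compact region $N$, (ii) invariant under the flow $\phi_t^{\lam_n}$, and (iii) that the flow $\phi_t^\lam$ depends continuously on $\lam$ so that $\phi_{t_0}^{\lam_n} \to \phi_{t_0}^{\lam_0}$ uniformly on $N$ as $\lam_n \to \lam_0$. For the interval version, the proposition cited from \cite{Mischaikow2002ConleyIndex} (stating that $M_J$ is a compact isolated invariant set) supplies (i) and (ii) with $M_J^{\lam_n}$ in place of $M_p^{\lam_n}$, while (iii) is an intrinsic property of the parameterized family in the basic setting of Subsection \ref{setting}. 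So the original argument transports verbatim.

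Concretely, I would first note that $\lim_{\lam_0} M_J^{\lam_n}$ is automatically compact as a closed subset of the compact set $N$. For invariance under $\phi_t^{\lam_0}$, I would argue by contradiction in the same style as in Theorem \ref{limitofset}: assume there exist $x \in \lim_{\lam_0} M_J^{\lam_n}$, $t_0 > 0$ and $\delta > 0$ with $d(\phi_{t_0}^{\lam_0}(x), \lim_{\lam_0} M_J^{\lam_n}) = \delta$, then choose $\eta > 0$ from uniform continuity of $\phi_{t_0}^{\lam_0}$, choose $n$ large enough so that $d(\phi_{t_0}^{\lam_0}(\cdot), \phi_{t_0}^{\lam_n}(\cdot)) < \delta/3$ on $N$ and $\rho(M_J^{\lam_n}, \lim_{\lam_0} M_J^{\lam_n}) < \min\{\delta/3, \eta\}$, pick $y \in M_J^{\lam_n}$ with $d(x,y) < \eta$ via Proposition \ref{HausdorffLimit}, use $\phi_{t_0}^{\lam_n}(y) \in M_J^{\lam_n}$ from invariance, and apply the triangle inequality to contradict the assumption $d(\phi_{t_0}^{\lam_0}(x), \lim_{\lam_0} M_J^{\lam_n}) = \delta$.

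There is essentially no new obstacle; the only subtlety worth stating explicitly is the reason $M_J^{\lam_n}$ enjoys property (ii), which is the cited proposition on $M_I$ being a compact isolated invariant set. In fact, the neatest presentation is to extract from Theorem \ref{limitofset} a stand-alone lemma, to the effect that if $\{A^{\lam_n}\}$ is any sequence of compact sets in $N$ with $A^{\lam_n}$ invariant under $\phi_t^{\lam_n}$ and $A^{\lam_n} \to A$ in the Hausdorff metric, then $A$ is invariant under $\phi_t^{\lam_0}$; the corollary is then immediate upon setting $A^{\lam_n} := M_J^{\lam_n}$. Since the excerpt already contains the single-Morse-set instance in full detail, the corollary can reasonably be proved by the one-line remark that the same argument applies with $M_p^{\lam_n}$ replaced by $M_J^{\lam_n}$.
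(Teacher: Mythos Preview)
Your proposal is correct and matches the paper's approach: the paper states the corollary immediately after Theorem \ref{limitofset} with no separate proof, treating it as the direct consequence you describe, namely that the argument of Theorem \ref{limitofset} uses only compactness, containment in $N$, and invariance of the sets $M_p^{\lam_n}$ under $\phi_t^{\lam_n}$, all of which hold for $M_J^{\lam_n}$ as well.
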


\par Let $(\DM(S^{\lam_0}),(\DP_{\lam_0}, <_{\lam_0}))$ be a Morse decomposition at the slice $\lam=\lam_0$ with the admissible order $(\DP_{\lam_0}, <_{\lam_0})$. Since the Hausdorff limit $\lim_{\lam_0} M_p^{\lam_n}$ is invariant under the flow $\phi_t^{\lam_0}$, it is contained in some Morse sets and the connecting orbits between them.

\begin{cor}\label{limitint}
For any $p\in \DP_0$ or $\DP_1$, there is an interval $J_p \in \DI(\DP_{\lam_0})$ such that $\lim_{\lam_0} M_p^{\lam_n}\subset M_{J_p}^{\lam_0}$.	
\end{cor}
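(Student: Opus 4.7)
By Theorem \ref{limitofset}, the Hausdorff limit $L := \lim_{\lam_0} M_p^{\lam_n}$ is a compact subset of $N$ that is invariant under $\phi_t^{\lam_0}$. Since $S^{\lam_0} = \inv(N, \phi_t^{\lam_0})$ is the maximal $\phi_t^{\lam_0}$-invariant set in $N$, we immediately obtain $L \subset S^{\lam_0}$. The plan is to extract from the Morse decomposition $(\DM(S^{\lam_0}), \DP_{\lam_0})$ the set of indices whose Morse sets are visited by $L$, and then enlarge this index set into an interval of $(\DP_{\lam_0}, <_{\lam_0})$.

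First I would set
$$Q_p := \mset{q \in \DP_{\lam_0}}{M_q^{\lam_0} \cap L \neq \OO}$$
and argue that $L$ is in fact contained in the union of the Morse sets $M_q^{\lam_0}$ for $q \in Q_p$ together with the connecting orbits in $S^{\lam_0}$ running between them. Indeed, for each $x \in L$ the defining property of a Morse decomposition applied at $\lam_0$ gives two cases: either $x$ already lies in some Morse set $M_q^{\lam_0}$, in which case $q \in Q_p$ tautologically, or there exist $q_1 <_{\lam_0} q_2$ in $\DP_{\lam_0}$ with $\omega(x) \subset M_{q_1}^{\lam_0}$ and $\alpha(x) \subset M_{q_2}^{\lam_0}$. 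In the latter case, closedness and invariance of $L$ force $\omega(x), \alpha(x) \subset L$, so both $q_1$ and $q_2$ belong to $Q_p$. Consequently
$$L \subset \Bigl(\bigcup_{q \in Q_p} M_q^{\lam_0}\Bigr) \cup \Bigl(\bigcup_{q_1, q_2 \in Q_p} \DC(M_{q_1}^{\lam_0}, M_{q_2}^{\lam_0}; S^{\lam_0})\Bigr).$$

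The set $Q_p$ need not itself be an interval of $(\DP_{\lam_0}, <_{\lam_0})$: the Hausdorff limit can split into disjoint pieces landing in Morse sets whose indices are separated in the order by some intermediate $r$ with $M_r^{\lam_0} \cap L = \OO$. To remedy this I would let $J_p$ be the intersection of all intervals of $\DP_{\lam_0}$ that contain $Q_p$. Since the intersection of intervals is again an interval and $\DP_{\lam_0}$ itself is an interval containing $Q_p$, this defines the smallest interval $J_p \in \DI(\DP_{\lam_0})$ with $Q_p \subset J_p$. Replacing $Q_p$ by the larger set $J_p$ only enlarges the right-hand side of the display above, so $L \subset M_{J_p}^{\lam_0}$, which is the required conclusion.

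The step that needs genuine attention is the use of invariance and closedness of $L$ to guarantee that any connecting orbit contained in $L$ has both of its $\alpha$- and $\omega$-limit sets inside Morse sets already met by $L$; without this observation, $L$ could a priori contain only part of an orbit trailing off toward a Morse set it does not otherwise touch, and the inclusion into $M_{Q_p}^{\lam_0}$ would fail. Once this point is secured, the passage from $Q_p$ to the enclosing interval $J_p$ is a purely combinatorial step in the finite partial order $(\DP_{\lam_0}, <_{\lam_0})$ and introduces no further difficulty.
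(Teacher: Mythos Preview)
Your argument is correct. The paper states this corollary without proof, treating it as an immediate consequence of Theorem \ref{limitofset}: once $L=\lim_{\lam_0}M_p^{\lam_n}$ is known to be compact and $\phi_t^{\lam_0}$-invariant inside $N$, it lies in $S^{\lam_0}=M_{\DP_{\lam_0}}^{\lam_0}$, so the trivial choice $J_p=\DP_{\lam_0}$ already works. The paper even remarks afterward that ``the interval $J_p$ may be too coarse'', which is consistent with this trivial reading.

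Your proof goes further and produces the smallest interval containing the set $Q_p=\mset{q\in\DP_{\lam_0}}{M_q^{\lam_0}\cap L\neq\OO}$. This is not required for the bare statement, but it is exactly the refinement the paper pursues immediately after the corollary, where it introduces $\DP_{\lam_0}(\lim_{\lam_0}M_p^{\lam_n})$ (your $Q_p$) and observes that it need not be an interval. So your approach is a correct, more informative version of what the paper leaves implicit, and your use of compactness and invariance to force $\alpha(x),\omega(x)\subset L$ is the right justification for the one nontrivial step.
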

\par However, the interval $J_p \in \DI(\DP_{\lam_0})$ may be too coarse to measure the Hausdorff limit  $\lim_{\lam_0} M_p^{\lam_n}$ in an efficient way, hence we present the following notations.

\begin{dfn}
We denote the intersection of the Hausdorff limit $\lim_{\lam_0} M_p^{\lam_n}$ and the admissible order set $\DP_{\lam_0}$ as,
$$
\DP_{\lam_0}(\lim_{\lam_0} M_p^{\lam_n}):=\{ \pi \in \DP_{\lam_0} \mid \lim_{\lam_0} M_p^{\lam_n} \cap M_\pi^{\lam_0} \neq \OO \}. 
$$
\end{dfn}


\par The set $\DP_{\lam_0}(\lim_{\lam_0} M_p^{\lam_n})$ may not always be an interval in $\DI(\DP_{\lam_0})$, even under a stricter condition as follows.

\begin{apt}\label{oneside}
For a Morse decomposition $(\DM(S^{\lam_0}), \DP_{\lam_0})$ at $\lam = \lam_0$ and for any $p\in \DP_{\lam_0}$, there is a $q\in \DP_0$ or $q\in \DP_1$, such that $\lim_{\lam_0} M_q^{\lam_n} \cap M_p^{\lam_0} \neq \OO$.
\end{apt}
\par The assumption above requires that each Morse set in the Morse decomposition at slice $\lam_0$ either contains a Hausdorff limit of some Morse set from the left side or from the right side.


For a subset $\DQ\subset \DP$, with $\DP$ an admissible order for a Morse decomposition $(\DM(S),\DP)$, we define:
$$
\GM_\DQ:=\{ M_\pi 
\mid
\pi \in \DQ 
\} \quad
\cup \quad 
\cup_{\pi,\theta \in \DQ } 
\DC(M_\pi, M_\theta).
$$
with $\DC(M_\pi, M_\theta):=\{x\in X \mid \alpha(x)\subset M_\theta \text{ and } \omega (x)\in M_\pi \}$, the set of connections from $M_\theta$ to $M_\pi$.
We denote the set $\GM_{\DP_{\lam_0}(\lim_{\lam_0} M_p^{\lam_n})}^{\lam_0}$ as $\GM^{\lam_0}_{\DP(\lim M_p)}$ for simplicity.

\begin{note}
$\GM_\DQ$ defined above is invariant but since $\DQ$ may not be an interval in $\DP$, $\GM_\DQ$ is not always closed.	
\end{note}

\begin{prop}
	$\lim_{\lam_0} M_p^{\lam_n} \subset \GM^{\lam_0}_{\DP(\lim M_p)}$.
\end{prop}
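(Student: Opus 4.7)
The plan is to take an arbitrary point $x\in \lim_{\lam_0} M_p^{\lam_n}$ and use the structure of the Morse decomposition $(\DM(S^{\lam_0}),\DP_{\lam_0})$ at $\lam_0$ to show that $x$ either lies in some Morse set $M_\pi^{\lam_0}$ with $\pi\in \DP_{\lam_0}(\lim_{\lam_0}M_p^{\lam_n})$, or on a connecting orbit between two such Morse sets. The two key ingredients I would lean on are that $\lim_{\lam_0} M_p^{\lam_n}$ is closed (automatic, as a Hausdorff limit of closed sets) and invariant under $\phi_t^{\lam_0}$ (Theorem \ref{limitofset}).

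First I would verify that $\lim_{\lam_0} M_p^{\lam_n} \subset S^{\lam_0}$. Since every $M_p^{\lam_n}$ sits inside the compact set $N$ and Hausdorff convergence preserves containment in a closed ambient set, the limit is contained in $N$; combined with its $\phi_t^{\lam_0}$-invariance, it must lie inside $\inv(N,\phi_t^{\lam_0})=S^{\lam_0}$. Thus every $x \in \lim_{\lam_0}M_p^{\lam_n}$ has a definite role in the Morse decomposition of $S^{\lam_0}$.

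Next I would split into two cases. If $x \in M_\pi^{\lam_0}$ for some $\pi \in \DP_{\lam_0}$, then $M_\pi^{\lam_0}\cap \lim_{\lam_0}M_p^{\lam_n}\ni x$, so by definition $\pi \in \DP_{\lam_0}(\lim_{\lam_0}M_p^{\lam_n})$ and $x \in M_\pi^{\lam_0} \subset \GM^{\lam_0}_{\DP(\lim M_p)}$. Otherwise $x$ lies on a connecting orbit, so there exist $\pi,\theta \in \DP_{\lam_0}$ with $\pi<\theta$, $\omega(x)\subset M_\pi^{\lam_0}$, and $\alpha(x)\subset M_\theta^{\lam_0}$. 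Because $\lim_{\lam_0}M_p^{\lam_n}$ is closed and $\phi_t^{\lam_0}$-invariant, it contains the entire orbit $\GO(x)$ together with all of its $\omega$- and $\alpha$-limit points. Hence $\omega(x)\subset M_\pi^{\lam_0}\cap \lim_{\lam_0}M_p^{\lam_n}$ and $\alpha(x)\subset M_\theta^{\lam_0}\cap \lim_{\lam_0}M_p^{\lam_n}$ are both nonempty, placing both $\pi$ and $\theta$ in $\DP_{\lam_0}(\lim_{\lam_0}M_p^{\lam_n})$, and therefore $x \in \DC(M_\pi^{\lam_0},M_\theta^{\lam_0}) \subset \GM^{\lam_0}_{\DP(\lim M_p)}$.

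Once Theorem \ref{limitofset} is in hand the argument is quite routine; there is no serious obstacle. The only mild subtlety is verifying that $\omega(x)$ and $\alpha(x)$ are nonempty and genuinely lie inside the Hausdorff limit, which follows from the compactness of $S^{\lam_0}$ together with the closedness and $\phi_t^{\lam_0}$-invariance of $\lim_{\lam_0}M_p^{\lam_n}$.
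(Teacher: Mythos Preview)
Your argument is correct. The paper states this proposition without proof, treating it as an immediate consequence of the definitions together with Theorem \ref{limitofset}; your write-up spells out precisely the routine verification the paper leaves implicit.
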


\par Now, we consider Assumption \ref{breakdown}, and by Theorem \ref{finest},  there is a finest decomposition of $(\DP_0, \DP_1)$ denoted as $\sset{(J_i,J_i^{'})}_{i\in A}$. Then we have a collection of mutually disjoint isolating neighborhoods $\{ N({J_i},{J_i^{'}})\}_{i\in A}$ such that $N({J_i},{J_i^{'}})$ is an isolating neighborhood under the flow $\phi_t^\lam$ over some interval $\lam \in [a_i,b_i]$ for all $i\in A$. $ N({J_i},{J_i^{'}}) $ isolates $M_{J_i}^\lam $ under the flow $\phi_t^\lam$ over the interval $\lam \in [a_i,\lam_0)$, and isolates $M_{J_i^{'}}^\lam $ under the flow $\phi_t^\lam$ over the interval $\lam \in (\lam_0,b_i]$. We let $\DP_{\lam_0}(N({J_i},{J_i^{'}})):=\{\pi \in  \DP_{\lam_0} \mid M_\pi^{\lam_0} \subset N({J_i},{J_i^{'}}) \}$ for all $i\in A$. Then $\GM^{\lam_0}_{\DP_{\lam_0}(N({J_i},{J_i^{'}}))}=\inv (N({J_i},{J_i^{'}}),\phi_t^\lz)$ for all $i\in A$.
\begin{prop}\label{decompositionlimit}
 Under Assumption \ref{breakdown}, we have for all $i \in A$:
 \begin{align*}
 	 &\lim_{\lam_0} M_{J_i}^{\lam_n} \subset \GM^{\lam_0}_{\DP_{\lam_0}(N({J_i},{J_i^{'}}))},\\
 	 &\lim_{\lam_0} M_{J_i^{'}}^{\lam_n} \subset \GM^{\lam_0}_{\DP_{\lam_0}(N({J_i},{J_i^{'}}))}.
 \end{align*}	
\end{prop}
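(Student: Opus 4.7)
The plan is to combine the closedness of $N(J_i, J_i')$, which eventually contains each $M_{J_i}^{\lam_n}$, with the invariance result of Theorem \ref{limitofset}, and then invoke the usual maximal-invariant-set property of an isolating neighborhood. First I would dispose of the trivial case $J_i = \OO$, in which $M_{J_i}^{\lam_n} = \OO$ for all $n$ and the first inclusion is immediate; thereafter I assume $J_i \neq \OO$, and symmetrically $J_i' \neq \OO$ for the second inclusion.

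For the first inclusion, the definition of a continuable interval pair supplies $a_i \in [0, \lz)$ and $b_i \in (\lz, 1]$ such that $N(J_i, J_i')$ is an isolating neighborhood of $\phi_t^\lam$ for every $\lam \in [a_i, b_i]$, with $M_{J_i}^\lam = \inv(N(J_i, J_i'), \phi_t^\lam)$ whenever $\lam \in [a_i, \lz)$. Choosing any sequence $\lam_n \to \lz^-$ inside $[a_i, \lz)$ gives $M_{J_i}^{\lam_n} \subset N(J_i, J_i')$ for all large $n$. Because $N(J_i, J_i')$ is closed, Proposition \ref{HausdorffLimit} applied to any convergent subsequence of $\{M_{J_i}^{\lam_n}\}$ yields $\lim_{\lz^-} M_{J_i}^{\lam_n} \subset N(J_i, J_i')$.

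By Theorem \ref{limitofset}, together with its corollary extending invariance to Hausdorff limits of Morse sets indexed by intervals, the set $\lim_{\lz^-} M_{J_i}^{\lam_n}$ is invariant under $\phi_t^\lz$. Since $\lz \in [a_i, b_i]$, $N(J_i, J_i')$ remains an isolating neighborhood of $\phi_t^\lz$, so its maximal invariant set is $\inv(N(J_i, J_i'), \phi_t^\lz)$, which the remark preceding the proposition identifies with $\GM^{\lz}_{\DP_{\lz}(N(J_i, J_i'))}$. Combining these two facts delivers the desired inclusion $\lim_{\lz^-} M_{J_i}^{\lam_n} \subset \GM^{\lz}_{\DP_{\lz}(N(J_i, J_i'))}$.

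The second inclusion is proved by the symmetric argument: pick $\lam_n \to \lz^+$ in $(\lz, b_i]$ and use that $N(J_i, J_i')$ isolates $M_{J_i'}^\lam$ on this half-interval, so that $M_{J_i'}^{\lam_n} \subset N(J_i, J_i')$ and the same closedness/invariance/maximality chain applies. I do not foresee a genuine obstacle; the only delicate point is that, as set up in Section \ref{limit}, the Hausdorff limit is taken along a convergent subsequence, but each link in the chain of inclusions above is stable under passage to subsequences, so the conclusion is independent of that choice.
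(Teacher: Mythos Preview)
Your argument is correct and follows essentially the same route as the paper: use the containment $M_{J_i}^{\lam_n}\subset N(J_i,J_i')$ together with closedness of $N(J_i,J_i')$ and Proposition~\ref{HausdorffLimit} to trap the Hausdorff limit inside $N(J_i,J_i')$, then invoke the invariance from Theorem~\ref{limitofset} (and its corollary for intervals) to conclude the limit lies in $\inv(N(J_i,J_i'),\phi_t^{\lz}) = \GM^{\lz}_{\DP_{\lz}(N(J_i,J_i'))}$. Your added treatment of the empty case and the remark on subsequences are harmless refinements of what the paper leaves implicit.
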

\begin{proof}
By Corollary \ref{limitint}, $\lim_{\lam_0} M_{J_i}^{\lam_n}$ and $\lim_{\lam_0} M_{J_i^{'}}^{\lam_n}$ are invariant under the flow $\phi_t^{\lam_0}$ for all $i\in A$. Because $M_{J_i}^{\lam}\subset N({J_i},{J_i^{'}})$ over $\lam \in [a,\lam_0)$, for some $a<\lam_0$, and by Proposition \ref{HausdorffLimit} and the closedness of $N({J_i},{J_i^{'}})$, we have $\lim_{\lam_0} M_{J_i}^{\lam_n} \subset N({J_i},{J_i^{'}})$. The compact set $N({J_i},{J_i^{'}})$ is an isolating neighborhood under the flow $\phi_t^{\lam_0}$ and $\lim_{\lam_0} M_{J_i}^{\lam_n}$ is invariant under the flow $\phi_t^{\lam_0}$, we have $\lim_{\lam_0} M_{J_i}^{\lam_n}\subset \inv (N({J_i},{J_i^{'}}),\phi_t^\lz) = \GM^{\lam_0}_{\DP_{\lam_0}(N({J_i},{J_i^{'}}))}$. Similarly for $\lim_{\lam_0} M_{J_i^{'}}^{\lam_n}$, we have $\lim_{\lam_0} M_{J_i^{'}}^{\lam_n} \subset \GM^{\lam_0}_{\DP_{\lam_0}(N({J_i},{J_i^{'}}))}$.
\end{proof}

\par We should note here that $\cup_{i\in A}	\DP_{\lam_0}(N({J_i},{J_i^{'}}))$ may not always be $ \DP_{\lam_0}$, but the following holds under Assumption \ref{oneside}.
\begin{prop}
Under Assumption \ref{breakdown} and Assumption \ref{oneside}, we have
$$ \cup_{i\in A} \DP_{\lam_0}(N({J_i},{J_i^{'}})) = \DP_{\lam_0}.$$ 
\end{prop}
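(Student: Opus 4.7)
The plan is to prove the nontrivial inclusion $\DP_{\lam_0} \subset \cup_{i\in A} \DP_{\lam_0}(N(J_i,J_i^{'}))$; the reverse inclusion is immediate since by definition each $\DP_{\lam_0}(N(J_i,J_i^{'}))$ is a subset of $\DP_{\lam_0}$. Fix $p \in \DP_{\lam_0}$. Assumption \ref{oneside} produces some $q \in \DP_0 \cup \DP_1$ with $\lim_{\lam_0} M_q^{\lam_n} \cap M_p^{\lam_0} \neq \OO$; assume without loss of generality $q \in \DP_0$, and let $i \in A$ be the unique index with $q \in J_i$, which exists because by Theorem \ref{finest} the family $\sset{J_i}_{i\in A}$ partitions $\DP_0$. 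The objective is then to show $M_p^{\lam_0} \subset N(J_i,J_i^{'})$, equivalently $p \in \DP_{\lam_0}(N(J_i,J_i^{'}))$.

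The first step confines $\lim_{\lam_0} M_q^{\lam_n}$ to the right maximal invariant set. Since $q \in J_i$, we have $M_q^{\lam} \subset M_{J_i}^{\lam} \subset N(J_i,J_i^{'})$ for all $\lam \in [a_i,\lam_0)$, so Proposition \ref{HausdorffLimit} and closedness of $N(J_i,J_i^{'})$ give $\lim_{\lam_0} M_q^{\lam_n} \subset N(J_i,J_i^{'})$. Invariance of this limit under $\phi_t^{\lam_0}$ (Theorem \ref{limitofset}) together with $N(J_i,J_i^{'})$ being an isolating neighborhood at $\lam_0$ then yields $\lim_{\lam_0} M_q^{\lam_n} \subset \inv(N(J_i,J_i^{'}),\phi_t^{\lam_0}) = \GM^{\lam_0}_{\DP_{\lam_0}(N(J_i,J_i^{'}))}$, exactly as in the proof of Proposition \ref{decompositionlimit}. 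Combined with Assumption \ref{oneside}, this produces a common point $x \in M_p^{\lam_0} \cap \GM^{\lam_0}_{\DP_{\lam_0}(N(J_i,J_i^{'}))}$.

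The decisive step, and the place I expect the real subtlety to lie, is to upgrade this single common point into the conclusion $p \in \DP_{\lam_0}(N(J_i,J_i^{'}))$. By definition, $\GM^{\lam_0}_{\DP_{\lam_0}(N(J_i,J_i^{'}))}$ is the union of the Morse sets $M_\pi^{\lam_0}$ with $\pi \in \DP_{\lam_0}(N(J_i,J_i^{'}))$ together with the connecting orbits between them. If $x \in M_\pi^{\lam_0}$ for some such $\pi$, then pairwise disjointness of the Morse sets in $(\DM(S^{\lam_0}),\DP_{\lam_0})$ forces $\pi = p$. If instead $x$ lies on a connecting orbit $\DC(M_\pi^{\lam_0},M_\theta^{\lam_0})$, then invariance and closedness of $M_p^{\lam_0}$ imply $\alpha(x),\omega(x) \subset M_p^{\lam_0}$, while by construction $\alpha(x) \subset M_\theta^{\lam_0}$ and $\omega(x) \subset M_\pi^{\lam_0}$; Morse-set disjointness then forces $\theta = \pi = p$. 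Either way $p \in \DP_{\lam_0}(N(J_i,J_i^{'}))$, which completes the argument. The main obstacle is precisely this ``no-leak'' step, which leans on the identification $\GM^{\lam_0}_{\DP_{\lam_0}(N(J_i,J_i^{'}))} = \inv(N(J_i,J_i^{'}),\phi_t^{\lam_0})$ established in the previous subsection: one must rule out that $x$ sits in some invariant fragment of $N(J_i,J_i^{'})$ corresponding to a Morse set of $\DP_{\lam_0}$ only partially contained in $N(J_i,J_i^{'})$.
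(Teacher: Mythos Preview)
Your argument is correct, and it differs from the paper's.  The paper proceeds by contradiction: assuming $p\notin\cup_{i\in A}\DP_{\lam_0}(N(J_i,J_i^{'}))$, it asserts one can choose an isolating neighborhood $N(M_p^{\lam_0})$ of $M_p^{\lam_0}$ which isolates the empty set under $\phi_t^\lam$ on short one-sided intervals $[a,\lam_0)$ and $(\lam_0,b]$; this is then claimed to contradict Assumption~\ref{oneside} directly.  Your route is instead constructive: you start from the $q$ supplied by Assumption~\ref{oneside}, place $q$ in the appropriate $J_i$, push the Hausdorff limit $\lim_{\lam_0}M_q^{\lam_n}$ into $\GM^{\lam_0}_{\DP_{\lam_0}(N(J_i,J_i^{'}))}$ via the argument of Proposition~\ref{decompositionlimit}, and then use the decomposition of $\GM^{\lam_0}_{\DP_{\lam_0}(N(J_i,J_i^{'}))}$ into Morse sets and connecting orbits, together with disjointness of Morse sets, to force $p\in\DP_{\lam_0}(N(J_i,J_i^{'}))$.  (Incidentally, your second case is vacuous for a simpler reason than you give: a point of a Morse set can never lie on a connecting orbit between two \emph{distinct} Morse sets, since these form a disjoint decomposition of $S^{\lam_0}$.)  The paper's version is terser but leaves the existence of the ``empty-isolating'' neighborhood unjustified; your version is more explicit but, as you correctly flag, leans on the identification $\GM^{\lam_0}_{\DP_{\lam_0}(N(J_i,J_i^{'}))}=\inv(N(J_i,J_i^{'}),\phi_t^{\lam_0})$ which the paper records just before Proposition~\ref{decompositionlimit} without proof.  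Both approaches therefore rest on a similar unspoken lemma about how $N(J_i,J_i^{'})$ interacts with the Morse sets at $\lam_0$.
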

\begin{proof} By definition, $\cup_{i\in A} \DP_{\lam_0}(N({J_i},{J_i^{'}})) \subset \DP_{\lam_0}$ is evident. We prove the inverse inclusion. If there is a $p \in  \DP_{\lam_0}$ such that $p \notin \cup_{i\in A} \DP_{\lam_0}(N({J_i},{J_i^{'}}))$, then there is an isolating neighborhood $N(M_p^{\lam_0})$ isolating $M_p^{\lam_0}$ under the flow $\phi_t^{\lam_0}$, satisfying $N(M_p^{\lam_0})$ isolates $\OO$ under the flow $\phi_t^{\lam}$ over intervals $[a,\lam_0)$ and $(\lam_0, b]$, for some $a$ and $b$ with $a<\lam_0 <b$. But this contradicts Assumption \ref{oneside}, and we get the inverse inclusion.
\end{proof}
\begin{note}
The collection of mutually disjoint invariant sets $\{\GM^{\lam_0}_{\DP_{\lam_0}(N({J_i},{J_i^{'}}))} \mid i\in A\}$ may not always be a Morse decomposition for $S^{\lam_0}$.
\end{note}

\par In the next section, we can see that the Theorem $3.13$ in Reineck \cite{Reineck1998connection} could be directly extended between continuable interval pairs, even if there is no partially ordered set $\DP$ that extends $\DP_0$ and $\DP_1$, and even under the situation  $\{\GM^{\lam_0}_{\DP_{\lam_0}(N({J_i},{J_i^{'}}))} \mid i\in A\}$ is not a Morse decomposition for $S^{\lam_0}$.


\subsection{Connecting Orbits}\label{CO}
\par In this section, we consider the continuous-time dynamical system $\Phi_t^{\epsilon}$ generated by (\ref{ODEn}) with sufficiently small $\epsilon>0$. We suppose the existence of a connecting orbit between two specified Morse sets $M_\rho^1$ and $M_\pi^0$ of a Morse decomposition of $\Phi_t^\epsilon$ for all $\epsilon$, as $\epsilon \to 0$. We consider Hausdorff limits of these connecting orbits as $\epsilon \to 0$ and extend Theorem $3.13$ in Reineck \cite{Reineck1998connection} in the situation that the continuation of Morse decompositions over $[0,1]$ does not exist. We first extend it for continuable interval pairs and then extend it to a more general situation under Assumption \ref{breakdown}. Firstly, we shall assume the existence of a connecting orbit from $M_\rho^1$ to $M_\pi^0$ for all $\epsilon$ with $\eps \to 0$.
\begin{apt}\label{connexist}
Assume there exist $\eps_0\in (0,1)$, $\rho\in \DP_1$ and $\pi\in \DP_0$ such that for all positive $\eps < \eps_0$, there is a connecting orbit $c_\eps$ from $M_\rho^1$ to $M_\pi^0$ under the flow $\Phi_t^{\epsilon}$.	
\end{apt}

\par Under Assumption \ref{connexist}, we let: $\ol{c}_\epsilon:=\cl(c_\epsilon)=c_\epsilon \cup \omega(c_\epsilon) \cup \alpha(c_\epsilon)$. For any sequence $\{\eps_n\}_{n\in \NN}$ with $\eps_n\to 0$ as $n\to \infty$, by the compactness of $\GC(N\times [0,1])$, there is a convergent subsequence of $\{ \ol{c}_{\eps_n} \}$ in Hausdorff metric. We denote the Hausdorff limit of the subsequence $\{\ol{c}_{\epsilon_{n_k}}\}_{k\in \NN}$ as $\ol{c}^{\{\eps_{n_k}\}}$. Because the following results hold for any choice of $\{\eps_n\}_{n\in \NN}$ and any convergent subsequence of $\{\ol{c}_{\eps_n}\}$, we denote a convergent subsequence also as $\cc_n$, and the Hausdorff limit as $\ol{c}$, with $\cc_n \to \cc$ as $n\to \infty$ for simplicity.

\par Similarly to the proof of Theorem \ref{limitofset} and by Lemma \ref{connectedness}, we have the invariance and the connectedness of the limit set $\cc$.

\begin{lma}\label{invariantofconnection}
$\cc \subset N\times [0,1] $ is compact, connected, and invariant under the flow $\Phi_t^0$.
\end{lma}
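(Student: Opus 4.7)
The plan is to prove compactness and connectedness by soft general arguments, and then tackle invariance by an argument modeled on the proof of Theorem \ref{limitofset}, with the additional ingredient of continuous dependence of $\Phi_t^\epsilon$ on the parameter $\epsilon$.

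First, compactness is immediate: $\cc$ is a Hausdorff limit of subsets of the compact space $N\times[0,1]$, hence closed and therefore compact. For connectedness, note that each $\cc_n=\cl(c_{\eps_n})$ is the closure of the single orbit $c_{\eps_n}$, which is connected as the continuous image of $\RR$, so $\cc_n$ is connected. Applying Proposition \ref{connectedness} (with $X=N\times[0,1]$) to the sequence $\{\cc_n\}$ then yields that the Hausdorff limit $\cc$ is connected.

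The substantive step is invariance under $\Phi_t^{0}$. The subtlety absent in Theorem \ref{limitofset} is that each $\cc_n$ is invariant under a \emph{different} flow $\Phi_t^{\eps_n}$, not under the limit flow $\Phi_t^{0}$. The plan is: given $x\in\cc$ and $t_0\in\RR$, use Proposition \ref{HausdorffLimit} to choose a sequence $x_n\in\cc_n$ with $x_n\to x$. Since $c_{\eps_n}$ is an orbit and $\alpha(c_{\eps_n}),\omega(c_{\eps_n})$ are invariant under $\Phi_t^{\eps_n}$, the closure $\cc_n$ is invariant under $\Phi_t^{\eps_n}$, so $\Phi_{t_0}^{\eps_n}(x_n)\in\cc_n$ for every $n$. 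If one can show $\Phi_{t_0}^{\eps_n}(x_n)\to\Phi_{t_0}^{0}(x)$, then by Proposition \ref{HausdorffLimit} applied to $\cc_n\to\cc$, the limit $\Phi_{t_0}^{0}(x)$ lies in $\cc$, proving invariance.

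The main (though routine) obstacle is therefore the convergence $\Phi_{t_0}^{\eps_n}(x_n)\to\Phi_{t_0}^{0}(x)$. I would split this via the triangle inequality as
$$d\bigl(\Phi_{t_0}^{0}(x),\Phi_{t_0}^{\eps_n}(x_n)\bigr)\le d\bigl(\Phi_{t_0}^{0}(x),\Phi_{t_0}^{0}(x_n)\bigr)+d\bigl(\Phi_{t_0}^{0}(x_n),\Phi_{t_0}^{\eps_n}(x_n)\bigr).$$
The first term tends to $0$ by continuity of $\Phi_{t_0}^{0}$ at $x$ together with $x_n\to x$, exactly as in the proof of Theorem \ref{limitofset}. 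The second term tends to $0$ by standard continuous dependence on parameters for the Lipschitz ODE system (\ref{ODEn}) on the compact time interval bounded by $|t_0|$: since $g_{\eps}(\lam)=\eps\lam(\lam-1)\to 0$ uniformly in $\lam\in\Lam$ as $\eps\to 0$, and $f$ itself is $\eps$-independent, a Gronwall estimate shows $\Phi_{t}^{\eps_n}\to\Phi_{t}^{0}$ uniformly on the compact set $N\times[0,1]$ for $t$ in any compact interval. This upgrades the invariance argument of Theorem \ref{limitofset} to the case where the flow itself varies with $n$, which is the only new ingredient required.
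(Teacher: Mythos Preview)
Your proposal is correct and follows essentially the same approach the paper indicates: the paper simply says ``Similarly to the proof of Theorem \ref{limitofset} and by Lemma \ref{connectedness}'', and you have spelled out exactly those details. One minor point of exposition: the ``subtlety'' you flag as absent from Theorem \ref{limitofset}---that each approximating set is invariant under a \emph{different} flow---is in fact already present there (each $M_p^{\lam_n}$ is invariant under $\phi_t^{\lam_n}$, not $\phi_t^{\lam_0}$), and the estimate $d(\phi_{t_0}^{\lam_0}(x),\phi_{t_0}^{\lam_n}(x))<\delta/3$ in that proof is precisely the continuous-dependence step you invoke; so no genuinely new ingredient is needed beyond replacing $\lam_n\to\lam_0$ by $\eps_n\to 0$.
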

\newcommand{\ccl}[0]{\cc ^\lam}
\par We now take the Hausdorff limit of the connecting orbit $\cc$ and intersect it with the slice $\lambda$, denoted as $\ccl :=\cc \cap (\RR^d\times \{\lam\}).$ 
\begin{prop}
	For $\lam\in [0,1]$, $\ccl$ is a non-empty set which is compact, connected, and invariant under the flow $\Phi_t^0$.
\end{prop}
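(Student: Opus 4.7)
The plan is to verify the four properties of $\cc^\lam$ in sequence; non-emptiness, compactness, and invariance are fairly direct, while connectedness is the substantive step.

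For non-emptiness at an interior slice $\lam \in (0,1)$, I would use that $\dot{\lam} = \eps_n \lam(\lam - 1) < 0$ along $c_{\eps_n}$, so the parameter coordinate descends strictly monotonically from $1$ to $0$ along the connecting orbit. By the intermediate value theorem, $c_{\eps_n}$ crosses the slice at a unique point $(q_n(\lam), \lam)$; a convergent subsequence $q_{n_k}(\lam) \to x$, guaranteed by compactness of $N \times \{\lam\}$, places $(x, \lam)$ in $\cc$ via Proposition \ref{HausdorffLimit}. At the endpoint slices $\lam \in \{0, 1\}$, use instead the nonempty alpha and omega limit sets of $c_{\eps_n}$ in $M_\rho^1 \times \{1\}$ and $M_\pi^0 \times \{0\}$. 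Compactness of $\cc^\lam$ is immediate as the intersection of the compact set $\cc$ (Lemma \ref{invariantofconnection}) with a closed slice. For invariance, the equation reduces to $\dot{\lam} = 0$ when $\eps = 0$, so $\Phi_t^0$ preserves the parameter coordinate and each slice $\RR^d \times \{\lam\}$ is itself $\Phi_t^0$-invariant; its intersection with the $\Phi_t^0$-invariant set $\cc$ then remains invariant.

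The core step is connectedness, which I would argue by showing that any two points of $\cc^\lam$ lie in a common connected subset of $\cc^\lam$. Fix $(x_0, \lam), (x_1, \lam) \in \cc^\lam$ and use Proposition \ref{HausdorffLimit} to choose $(\tilde{x}_i^n, \mu_i^n) \in \cc_n$ with $(\tilde{x}_i^n, \mu_i^n) \to (x_i, \lam)$ for $i = 0, 1$. For $\lam \in (0, 1)$, the convergence $\mu_i^n \to \lam$ eventually forces these approximating points to sit on the orbit $c_{\eps_n}$ itself rather than in its alpha or omega limit sets at $\lam = 1$ and $\lam = 0$. The sub-arc $\gamma_n \subset c_{\eps_n}$ joining them is connected as a continuous image of a time interval, and strict monotonicity of the parameter coordinate confines $\gamma_n$ to the slab $N \times [\min(\mu_0^n, \mu_1^n), \max(\mu_0^n, \mu_1^n)]$. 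Extracting a Hausdorff-convergent subsequence of $\{\gamma_n\}$ in $\GC(N \times [0,1])$ and invoking Proposition \ref{connectedness}, the limit $\gamma_\infty$ is compact and connected; since the slabs shrink to $N \times \{\lam\}$, $\gamma_\infty \subset \cc \cap (N \times \{\lam\}) = \cc^\lam$ and contains both $(x_0, \lam)$ and $(x_1, \lam)$, by Proposition \ref{HausdorffLimit} applied along the inclusions $\gamma_n \subset \cc_n$.

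At the endpoint slices $\lam = 0$ (and symmetrically $\lam = 1$), the approximating points may lie in $\omega(c_{\eps_n}) \subset M_\pi^0 \times \{0\}$ rather than on the orbit, so the pure sub-arc construction fails. I would instead use the slab set $\cc_n \cap (\RR^d \times [0, \delta_n])$ with $\delta_n := \max(\mu_0^n, \mu_1^n) \to 0$, which consists of the forward-time tail of $c_{\eps_n}$ at slices $(0, \delta_n]$ together with $\omega(c_{\eps_n})$. This set is connected: the forward tail is a continuous orbit image accumulating on $\omega(c_{\eps_n})$, and $\omega(c_{\eps_n})$ itself is connected as a nested intersection of closed connected forward-orbit closures. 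It contains both approximating points and lies in a slab that shrinks to slice $0$, so the same Hausdorff-limit argument produces a connected subset of $\cc^0$ containing $(x_0, 0)$ and $(x_1, 0)$. The main obstacle is precisely this endpoint case, where one must choose the connected approximating sets carefully so that they simultaneously collapse onto the target slice and capture both limit points.
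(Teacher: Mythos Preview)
Your proof is correct and considerably more careful than the paper's own argument. The paper dispatches the entire proposition in two sentences, asserting that compactness, connectedness, and invariance of $\cc^\lam$ ``follow directly'' from the corresponding properties of $\cc$ established in Lemma \ref{invariantofconnection}, and that non-emptiness follows from $\cc$ being nonempty and connected. For compactness and invariance this is indeed immediate, and for non-emptiness the implicit argument is that the projection of the connected set $\cc$ onto $[0,1]$ is a connected set containing both $0$ and $1$, hence all of $[0,1]$; your intermediate-value argument via monotonicity of the $\lam$-coordinate along $c_{\eps_n}$ reaches the same conclusion by a different path.

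The substantive divergence is connectedness. The paper's claim that connectedness of $\cc^\lam$ follows ``directly'' from connectedness of $\cc$ is not a general fact about slices of connected sets, and the paper offers no further justification. Your approach---taking sub-arcs $\gamma_n$ of $c_{\eps_n}$ (or orbit tails together with $\omega$-limit sets at the endpoint slices) confined to shrinking $\lam$-slabs, extracting Hausdorff-convergent subsequences, and invoking Proposition \ref{connectedness}---supplies exactly the argument the paper omits. What your route buys is an honest proof; what the paper's route buys is brevity, at the cost of leaving the real work to the reader. Your careful separation of the interior case from the endpoint cases $\lam \in \{0,1\}$, where approximating points may lie in $\alpha(c_{\eps_n})$ or $\omega(c_{\eps_n})$ rather than on the orbit, is a detail the paper does not acknowledge at all.
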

\begin{proof}
Because $\cc$ is not an empty set and $\cc$ is connected, $\ccl$ is also non-empty. The compactness, connectedness, and invariance under the flow $\Phi_t^0$ of $\ccl$ follow directly from the compactness, connectedness, and invariance under the flow $\Phi_t^0$ of $\cc$ in $\pspa$.
\end{proof}

\par Therefore, like $\HHL{p}$, let $(\MD{\lambda},\DP_\lambda)$ denote the Morse decomposition for $S^\lam$, then $\ccl$ either intersects only one Morse set or multiple Morse sets. In the former case, $\ccl$ will be contained in a single Morse set, and in the latter case, it will be contained in these multiple Morse sets and a connecting orbit between them. Let $$I^\lambda:=\{p\in \DP_\lam \mid \ccl \cap M_p^\lam \neq \OO \}$$ for all $\lam\in [0,1]$. Under Assumption \ref{breakdown}, we only need to consider $\DP_\lam=\DP_0$, $\DP_\lam=\DP_1$ and $\DP_\lam=\DP_\lz$. 
\par Because $\ccl$ is compact and invariant under the flow $\phi_t^\lam$, $\ccl$ must intersect with a Morse set in $\MD{\lam}$. As a result, $I^\lam \neq \OO$. Moreover, $I^\lam$ is shown to be totally ordered under the flow-defined order $\FO{\lambda}$.

\begin{lma}\label{orderpreserve} 
For $\for{n}$, let $\cc_n$ be a connecting orbit under the flow $\Phi_t^{\eps_n}$. Let $\cc_n\to \cc$ as $\eps_n \to 0$.
Suppose there is a sequence  $\sset{(x_n, \lam_n)}_{\for{n}}$ with $(x_n, \lam_n)\in \cc_n$ and $(x_n, \lam_n)\to (x,\lam)\in M_p^\lam$ for some $p\in \DP_\lam$, and suppose there is a sequence of time $\sset{t_n}_{\for{n}}$ with $t_n \geq 0$ such that $\Phi_{t_n}^{\eps_n}(x_n, \lam_n) = (y_n, \mu_n)$, and $(y_n, \mu_n)$ converges to a point $(y,\lam)$ in the same slice $\lam$, namely, $(y_n, \mu_n)\to (y, \lam)\in M_{q}^\lam$ for some $q\in \DP_\lam$. Then we have $q \leq_\lam^{\Gf} p$ under the flow-defined order $\FO{\lam}$. In particular, if $q\neq p$, then $q <_\lam^{\Gf} p$.
\end{lma}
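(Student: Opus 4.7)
I would first control the slow parameter along the orbit $\cc_n$. Writing $\Phi_t^{\eps_n}(x_n,\lam_n)=(\zeta_n(t),\xi_n(t))$, note that the scalar ODE $\dot\lam=\eps_n\lam(\lam-1)$ has only $\lam=0,1$ as fixed points and is monotone elsewhere, so $\xi_n(t)$ stays inside the closed interval joining $\lam_n$ to $\mu_n$ for every $t\in[0,t_n]$; in particular
\[\sup_{t\in[0,t_n]}|\xi_n(t)-\lam|\leq\max\{|\lam_n-\lam|,|\mu_n-\lam|\}\to 0.\]
Combined with the standard continuous dependence of ODE solutions on parameters and initial data, this shows that on every fixed compact time window $\Phi^{\eps_n}$ converges uniformly to the fiberwise flow $(u,\lam)\mapsto(\phi_t^\lam(u),\lam)$. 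If some subsequence of $\{t_n\}$ is bounded, say $t_n\to t^*<\infty$, passing to the limit in $\Phi_{t_n}^{\eps_n}(x_n,\lam_n)=(y_n,\mu_n)$ gives $y=\phi_{t^*}^\lam(x)$; invariance of $M_p^\lam$ then forces $y\in M_p^\lam$, and the pairwise disjointness of Morse sets yields $p=q$, i.e., the desired conclusion with equality.

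Henceforth assume $t_n\to\infty$ and, aiming at the strict inequality, $p\neq q$. Choose disjoint open isolating neighborhoods $U_p,U_q$ of $M_p^\lam,M_q^\lam$ in $N$ with $\cl(U_p)\cap\cl(U_q)=\OO$, and define the first exit time
\[\sigma_n:=\sup\{s\in[0,t_n]\mid\zeta_n([0,s])\subset\cl(U_p)\}.\]
I would rule out both subsequential boundedness of $\sigma_n$ and of $t_n-\sigma_n$: if $\sigma_n\to\sigma^*<\infty$ along a subsequence, then $\zeta_n(\sigma_n)\to\phi_{\sigma^*}^\lam(x)\in M_p^\lam$ by invariance, contradicting $\zeta_n(\sigma_n)\in\pa\cl(U_p)$ which is disjoint from $M_p^\lam$; a symmetric argument using $(y_n,\mu_n)$ and $M_q^\lam$ (together with $\cl(U_p)\cap\cl(U_q)=\OO$) handles $t_n-\sigma_n$. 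Extract a convergent subsequence $z_1:=\lim\zeta_n(\sigma_n)\in\pa\cl(U_p)$. Since $\zeta_n(\sigma_n-s)\in\cl(U_p)$ for all $s\in[0,\sigma_n]$, the first paragraph lets us pass to the limit and obtain $\phi_{-s}^\lam(z_1)\in\cl(U_p)$ for every $s\geq 0$; the isolating property then yields $\alpha(z_1)\subset M_p^\lam$ under $\phi^\lam_t$. Because $z_1\notin M_p^\lam$, the defining property of a Morse decomposition produces some $p_1\in\DP_\lam$ with $\omega(z_1)\subset M_{p_1}^\lam$ and $p_1<^\Gf_\lam p$.

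To close the argument, I would iterate — either stepwise, by repeating the exit-time construction inside successive isolating neighborhoods of $M_{p_1}^\lam,M_{p_2}^\lam,\ldots$, or dually via a last-entry-time analysis starting from the terminal point $(y_n,\mu_n)$ — thereby producing a descending chain $p>^\Gf_\lam p_1>^\Gf_\lam\cdots>^\Gf_\lam q$ of direct flow-defined relations, from which transitivity of the partial order $<^\Gf_\lam$ gives $q<^\Gf_\lam p$. The hardest point is guaranteeing that the chain terminates at $q$ rather than at some other Morse set: one must coordinate the intermediate exit/entry data so that the descending chain built from $p$ and the ascending chain built from $q$ meet at a common Morse set. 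This relies crucially on the finiteness of $\DP_\lam$ together with the pairwise disjointness of the Morse sets, which together exclude cycles and force the two chains to synchronize after finitely many steps.
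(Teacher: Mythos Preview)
Your first two paragraphs are essentially correct and parallel the paper's treatment of the bounded-time case and the setup for the unbounded case. The real divergence is in the third paragraph, and there you have a genuine gap. You obtain $p_1<^\Gf_\lam p$ from the exit point $z_1$, but to iterate you would need a new sequence of points on $\cc_n$ converging to a point \emph{inside} $M_{p_1}^\lam$, together with remaining times to $(y_n,\mu_n)$ that are still unbounded. What you actually have is $z_1\in\pa\cl(U_p)$ with $\omega(z_1)\subset M_{p_1}^\lam$; the forward orbit of $z_1$ under $\phi_t^\lam$ only accumulates on $M_{p_1}^\lam$ without reaching it, so no diagonal choice immediately hands you a limit point in $M_{p_1}^\lam$. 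Your proposed remedy---building a dual ascending chain from $q$ and asserting that the two chains ``synchronize''---is not justified: finiteness of $\DP_\lam$ and absence of cycles rule out infinite descent, but they do not force the descending chain from $p$ and the ascending chain from $q$ to meet at a common index. Nothing so far prevents the descending chain from stalling at some $p_k$ incomparable to every $q_j$ in the ascending chain.

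The paper sidesteps the whole iteration by working not with the single Morse set $M_p^\lam$ but with the attracting interval $I_p:=\{\pi\in\DP_\lam^\Gf\mid\pi\leq^\Gf_\lam p\}$ and an isolating neighborhood $U$ of the attractor $M_{I_p}^\lam$. Assuming for contradiction that $q\not\leq^\Gf_\lam p$, one has $(x,\lam)\in U$ and $(y,\lam)\notin\cl(U)$, so there is a first exit point $(z,\lam)\in\pa U$ with $s_n\to\infty$; exactly as in your argument one gets $\phi^\lam_{-t}(z)\in\cl(U)$ for all $t\ge 0$, hence $\alpha(z)\subset M_{I_p}^\lam$. Now look at $\omega(z)\subset M_{p'}^\lam$: if $p'\in I_p$ then the full orbit of $z$ lies in $M_{I_p}^\lam\subset\inte(U)$, contradicting $z\in\pa U$; if $p'\notin I_p$ then there is a connecting orbit from some $\pi\in I_p$ to $M_{p'}^\lam$, contradicting that $I_p$ is attracting. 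Either way the contradiction is reached in one step, with no chain to coordinate. If you want to repair your argument rather than replace it, the cleanest fix is precisely this: enlarge $U_p$ to an isolating neighborhood of $M_{I_p}^\lam$ and rerun your exit-time analysis once.
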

\begin{proof}
 Firstly, if the time sequence $\sset{t_n}_{n\in \NN}$ is bounded, there is a convergent subsequence $t_{n_k}\to t_0$,  then we take the limit as $k\to \infty$ and get the result $\Phi_{t_0}^0(x,\lambda)=(y,\lambda)$. From this, we know that $p =_\lam^{\Gf} q$ in the flow-defined order $\FO{\lambda}$.
 \par In the situation that the time sequence $\sset{t_n}_{n\in \NN}$ is not bounded, we show it by contradiction. When $t_n \to \infty $, we suppose that $ p \nleq_\lambda^\Gf q $. We collect all Morse sets that are less than $p$ in the flow-defined order, letting:
 $$
 I_{p}:=\mset{\pi \in \FO{\lam}}{\pi \leq_\lambda^\Gf p}.
 $$
 Then $I_p$ is an attracting interval in the flow-defined order $\FO{\lam}$, and the invariant set $M_{I_p}^\lambda$ is an attractor under the flow $\phi_t^\lambda$. Let $U\subset \RR^d$ is an isolating neighborhood under the flow $\phi_t^\lam$ isolating $M_{I_p}^\lambda$, and let $\hat{U}:=U\times [0,1]$. Then for sufficiently large $n$, we have $(x_n, \lam_n)\in \hat{U}$, but $(y_n, \mu_n)\notin \cl(\hat{U})$, by our supposition. As a result, we can find a sequence of time $s_n\in (0, t_n)$ such that:
 $$
 \Phi_{s_n}^{\epsilon_n}(x_n, \lambda_n) = (z_n, \nu_n) \in \partial \hat{U} \quad \text{and} \quad \Phi_{t}^{\epsilon_n}(x_n, \lam_n)\in \hat{U} \text{ for } t\in [0, s_n]. 
 $$
 \par Then, we let $(z,\nu)$ be a limit point of the sequence $(z_n, \nu_n)$ in the compact space $\pspa$. By $\lam_n \to \lam$, $\mu_n \to \lam$, as $n \to \infty$, and $\lam_n  \leq \nu_n \leq \mu_n$ for all $n$, we have $\nu_n \to \lambda$ as $n \to \infty$, and therefore $\nu = \lambda$. 
 \par Then we claim that the time sequence $\sset{s_n}_{n \in \NN}$ is unbounded. Suppose if $\sset{s_n}_{n \in \NN}$ is bounded, and we get a limit point of the sequence $\sset{s_n}_{n \in \NN}$, denoted as $s$. We have that $\Phi^{0}_{s}(x, \lam) \in \partial \hat{U}$, which makes $\hat{U}^\lam=U$ not an isolating neighburhood. Because this contradicts the definition of $U$, we know that $\sset{s_n}_{n\in \NN}$ is unbounded.
 \par We claim that for any time $\tau>0$, $\Phi^0_{-\tau}(z,\nu)=\Phi^0_{-\tau}(z,\lam)\in \cl (U)$. If this is not true, then there is a $\tau_0 > 0$ such that $\Phi^{0}_{-\tau_0}(z, \nu) \notin \cl (U)$. Then for sufficiently large $n$, we have that $\Phi_{-\tau_0}^{\eps}(z_n, \nu_n)\notin \cl (U)$. However, we have $\Phi_{-\tau_0}^{\eps_n}(z_n, \nu_n) = \Phi_{s_n-\tau_0}^{\eps_n}(x_n, \lam_n)$ and if $n$ is sufficiently large, we have $0<s_n-\tau_0 <s_n$, and $\Phi_{-\tau_0}^{\eps_n}(z_n, \nu_n) = \Phi_{s_n-\tau_0}^{\eps_n}(x_n, \lam_n)\in \hat{U}$. And then we take the limit as $n\to \infty$, we get that $\Phi_{-\tau_0}^{0}(z, \nu) \in \cl (U)$, which is a contradiction.
 \par Since for any time $t>0$, $\Phi^0_{-t}(z,\nu)=\Phi^0_{-t}(z,\lam)\in \cl (U)$, we have under the flow $\Phi^0_t$ the alpha limit set $\alpha(z,\nu) \subset M_{I_p}^\lam \times \sset{\lam}$. We consider the omega limit set $\omega(z, \nu)$ under the flow $\Phi^0_t$, then we have there is a $p^{'}$ such that $\omega(z, \nu)\subset M_{p^{'}}^\lam$. If $p^{'} \in I_p$, we have $U$ is not an isolating neighborhood for $M_{I_p}^{\lam}$, because the point $(z, \nu)\in \partial \hat{U}$. If $p^{'} \not\in I_p$, we have that $I_p$ is not an attracting interval. In each case, we get a contradiction; thus, the lemma is proved.
\end{proof}

\par Similar holds for the backward flow, that is, if $t_n\leq0$ for all $n\in \NN$, we have $p \leq_\lam^{\Gf} q$.
\begin{lma}\label{totallyordered}
	For any $\lam \in [0,1]$, $I^\lam$ is a totally ordered subset of the flow-defined order set $\FO{\lam}$.
\end{lma}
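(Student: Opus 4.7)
The plan is to reduce to Lemma \ref{orderpreserve} (together with its backward-time analogue stated immediately after it): given any two indices $p, q \in I^\lam$, I will produce sequences of points on $c_n$ converging to chosen targets in $M_p^\lam$ and $M_q^\lam$, and I will connect them along $c_n$ by a flow time $t_n$ whose sign then dictates which of $p \leq_\lam^{\Gf} q$ or $q \leq_\lam^{\Gf} p$ holds.

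Fix $p, q \in I^\lam$ and pick $(x, \lam) \in \ccl \cap M_p^\lam$, $(y,\lam) \in \ccl \cap M_q^\lam$. By Proposition \ref{HausdorffLimit}, the Hausdorff convergence $\cc_n \to \cc$ yields sequences $(x_n, \lam_n), (y_n, \mu_n) \in \cc_n = \cl(c_n)$ converging to $(x, \lam)$ and $(y, \lam)$ respectively. Since $c_n$ is dense in $\cc_n$, after replacing each approximant by a point on $c_n$ within $1/n$ of the original I may assume $(x_n, \lam_n), (y_n, \mu_n) \in c_n$ with convergence to the same targets preserved. On $c_n$ the drift equation $\dot\lam = \eps_n \lam(\lam - 1)$ is strictly negative (since $c_n \subset \RR^d \times (0,1)$), so $\lam$ is a strict monotone parameter along the orbit; hence there is a unique $t_n \in \RR$ with $\Phi^{\eps_n}_{t_n}(x_n, \lam_n) = (y_n, \mu_n)$, and $\mathrm{sgn}(t_n) = \mathrm{sgn}(\lam_n - \mu_n)$.

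Passing to a subsequence so that $t_n$ has a fixed sign, I invoke Lemma \ref{orderpreserve} when $t_n \geq 0$ for all $n$ to conclude $q \leq_\lam^{\Gf} p$, and its backward-time counterpart when $t_n \leq 0$ for all $n$ to conclude $p \leq_\lam^{\Gf} q$. In either case $p$ and $q$ are comparable in $\FO{\lam}$, so $I^\lam$ is totally ordered.

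The step that I expect to require the most care is the endpoint slices $\lam \in \{0,1\}$: there $\alpha(c_n) \subset M_\rho^1$ and $\omega(c_n) \subset M_\pi^0$ sit in the very slice under consideration, so the original approximants $(x_n, \lam_n), (y_n, \mu_n)$ may genuinely lie in those limit sets rather than on the open orbit. The density substitution is still available (since $\cc_n = \cl(c_n)$), but one must verify that replacement by a $1/n$-close point on $c_n$ preserves convergence to the target in the Morse set at the endpoint slice --- which it does, simply because the perturbation size tends to zero. For interior $\lam \in (0,1)$ the substitution is in fact superfluous, since $\lam_n, \mu_n \to \lam \in (0,1)$ forces the approximants onto $c_n$ for all large $n$, so there the argument is clean.
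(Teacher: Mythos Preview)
Your proof is correct and follows essentially the same approach as the paper: pick approximating sequences on $\cc_n$ converging to points of $M_p^\lam$ and $M_q^\lam$, connect them by a flow time $t_n$, and apply Lemma~\ref{orderpreserve} (or its backward analogue) after passing to a subsequence of fixed sign. Your density substitution to pass from $\cc_n = \cl(c_n)$ to $c_n$ is in fact a small refinement over the paper, which simply asserts the existence of $t_n$ without addressing the possibility that an approximant lies in $\alpha(c_n)$ or $\omega(c_n)$.
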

\begin{proof}
Suppose there are $p,q\in I^\lam$, and let $(x,\lam)\in M_p^{\lam}$ and $(y, \lam)\in M_q^\lambda	$. Because $\cc_n \to \cc$ as $n\to \infty$, we have two seqences $(x_n, \lam_n)\in \cc_n$ and $(y_n, \mu_n)\in \cc_n$, such that $(x_n, \lam_n)\to (x, \lam)$ and $(y_n, \mu_n)\to (y, \lam)$. Because $\cc_n$ is a connecting orbit under the flow $\Phi^{\eps_n}_t$, there are a sequence of time $\sset{t_n}_{n\in \NN}$ such that $\Phi_{t_n}^{\eps_n}(x_n, \lam_n) = (y_n, \mu_n)$ in $\cc_n$ for all $n\in \NN$. Therefore, there are three cases that could happen for the time sequence $\sset{t_n}_{n\in \NN}$:
\begin{enumerate}
	\item There are infinitely many positive times $(t_i)$s and finitely many negative times $(t_j)$s, and by Lemma \ref{orderpreserve}, we have $q\leq_\lam^\Gf p$, in the flow-defined order.
	\item There are infinitely many negative times $(t_i)$s and finitely many positive times $(t_j)$s, and by Lemma \ref{orderpreserve}, we have $p\leq_\lam^\Gf q$, in the flow-defined order.
	\item There are infinitely many positive times $(t_i)$s and infinitely many negative $(t_j)$s, and by Lemma \ref{orderpreserve}, we have $q\leq_\lam^\Gf p$ and $p\leq_\lam^\Gf q$, which means $p =_\lam^\Gf q$ in the flow-defined order.
\end{enumerate}
Therefore, in each case, we know that for any $p,q\in I^\lam$, $p$ and $q$ are comparable. As a result, $I^\lam$ is a totally ordered set.
\end{proof}

\begin{note}
	We should note here that Lemma \ref{orderpreserve} and Lemma \ref{totallyordered} hold no matter whether there is a continuation or not over the interval $[0,1]$ for the Morse decomposition. Therefore, under Assumption \ref{breakdown}, for the slice $\lam = \lam_0$, we have that $I^\lz$ is also a totally ordered set in $\FO{\lz}$.
	\end{note}

\par Now we assume the Morse decomposition continues over an interval $L\subset [0,1]$, and we denote the Morse decomposition as $(\MD{}, \DP_L)=(\mset{M_\pi}{\pi\in \DP_L},\DP_L)$, with an admissible order $\DP_L$. For each slice $\lam \in L$, the Morse decomposition is given by $(\MD{\lambda},\DP_L):=(\mset{M_p^\lambda}{p\in \DP_L},\DP_L)$
\par In order to determine whether the limit connecting orbit $\cc$ is entirely contained in some Morse set or not, we define the following for a $\pi \in \DP_L$:
$$
\Lam^L_\pi := \mset{\lam\in L}{\cc^\lam \subset M_\pi } \quad \text{ and, } \quad \wt{\Lam}_\pi^L := \mset{\lam\in L}{\cc^\lam \cap M_\pi \neq \OO }.
$$
Obviously, we have $\Lam^L_\pi \subset \wt{\Lam}^L_\pi$.
\begin{lma}\label{openclosed}
For any $\pi\in \DP_L$, the set $\Lam_\pi^L$ is open and the set $\wt{\Lam}_\pi^L$ is closed in the relative topology of $L$.	
\end{lma}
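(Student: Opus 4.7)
The argument rests on two facts I would make explicit at the start. First, since $\cc$ is a Hausdorff-metric limit of closed sets inside the compact space $\pspa$, it is itself closed in $\pspa$; equivalently, the assignment $\lam\mapsto \cc^\lam$ is upper semicontinuous in the sense that whenever $(x_n,\lam_n)\in\cc$ with $(x_n,\lam_n)\to(x,\lam)$, one has $x\in\cc^\lam$. Second, continuation of $M_\pi$ over $L$ provides a compact isolating neighborhood $N_\pi\subset N$ with $M_\pi^\lam=\inv(N_\pi,\phi_t^\lam)$ for every $\lam\in L$; since $N_\pi$ is closed and the flow depends continuously on $(t,\lam,x)$, the graph $\bigcup_{\lam\in L} M_\pi^\lam\times\sset{\lam}$ is closed in $N\times L$ (a limit of orbits staying in $N_\pi$ is again an orbit staying in $N_\pi$).

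To prove that $\wt{\Lam}_\pi^L$ is closed, I would take a sequence $\lam_n\to\lam_\infty$ in $\wt{\Lam}_\pi^L$, choose points $y_n\in\cc^{\lam_n}\cap M_\pi^{\lam_n}$, and extract a subsequential limit $y_\infty\in N$ by compactness. The first fact gives $y_\infty\in\cc^{\lam_\infty}$, and the closed-graph property of the continuation gives $y_\infty\in M_\pi^{\lam_\infty}$, so $\lam_\infty\in\wt{\Lam}_\pi^L$.

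To prove that $\Lam_\pi^L$ is open, I would fix $\lam_0\in\Lam_\pi^L$, so that $\cc^{\lam_0}\subset M_\pi^{\lam_0}\subset\inte(N_\pi)$. Since $\cc^{\lam_0}$ is compact and contained in $\inte(N_\pi)$, I can choose an open set $U\subset\RR^d$ with $\cc^{\lam_0}\subset U$ and $\cl(U)\subset\inte(N_\pi)$. The crucial step is to show that $\cc^\lam\subset U$ for every $\lam$ in some relative open neighborhood of $\lam_0$ in $L$: otherwise a sequence $\lam_n\to\lam_0$ would supply points $x_n\in\cc^{\lam_n}\setminus U$, and a subsequential limit $x_\infty$ would belong to $\cc^{\lam_0}\subset U$ by upper semicontinuity while also lying in the closed set $N\setminus U$, a contradiction. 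Once $\cc^\lam\subset\inte(N_\pi)$ for $\lam$ near $\lam_0$, the invariance of $\cc^\lam$ under $\phi_t^\lam$ (from Lemma \ref{invariantofconnection} applied slicewise, which has just been noted in the preceding proposition) together with the isolation of $M_\pi^\lam$ by $N_\pi$ forces $\cc^\lam\subset M_\pi^\lam$, hence $\lam\in\Lam_\pi^L$.

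The only genuine subtlety is the upper-semicontinuity transfer from $\cc$ as a Hausdorff-metric limit in $\pspa$ down to its $\lam$-slices, together with the realization that this one property simultaneously controls both directions: it prevents $\wt{\Lam}_\pi^L$ from gaining boundary points and prevents $\Lam_\pi^L$ from shedding its interior. Once that is cleanly formulated, both halves of the lemma reduce to the standard isolation-plus-invariance pattern already deployed throughout Section \ref{CO}, and I do not anticipate further obstacles.
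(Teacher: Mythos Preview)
Your argument is correct. For the closedness of $\wt{\Lam}_\pi^L$ you do essentially what the paper does, only phrased sequentially: the paper writes $\wt{\Lam}_\pi^L = \Pi_\Lam\bigl((\cc\cap(N\times L))\cap\bigcup_{\lam\in L}M_\pi^\lam\bigr)$ and invokes that $\Pi_\Lam$ is a closed map (since $N$ is compact), together with the relative closedness of $\bigcup_{\lam\in L}M_\pi^\lam$ in $N\times L$. Your sequence-plus-subsequential-limit argument is the same content unpacked. One small overstatement: Definition~\ref{continuation} only furnishes \emph{local} isolating neighborhoods $N_\pi(O_\gamma)$, not a single $N_\pi$ valid on all of $L$; but since both your closedness and openness arguments only use $N_\pi$ near a fixed limit parameter, this does not affect the proof.

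Where you genuinely diverge from the paper is in the openness of $\Lam_\pi^L$. The paper observes the identity
\[
\Lam_\pi^L \;=\; L \setminus \bigcup_{\rho\in\DP_L,\;\rho\neq\pi}\wt{\Lam}_\rho^L,
\]
which follows because each $\cc^\lam$ is nonempty, compact, and $\phi_t^\lam$-invariant, hence lies in $M_\pi^\lam$ exactly when it meets no other Morse set (the Morse decomposition forbids $\alpha$- and $\omega$-limits in the same $M_\pi^\lam$ for a point outside it). Openness then comes for free from the closedness of the $\wt{\Lam}_\rho^L$. Your route instead traps $\cc^\lam$ inside $\inte(N_\pi)$ by upper semicontinuity and then uses invariance plus isolation to force $\cc^\lam\subset M_\pi^\lam$. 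The paper's identity is slicker and reuses the closedness half; your argument is more self-contained and makes the role of the isolating neighborhood explicit. Both are short and valid.
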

\begin{proof}
	We define the projection map $\Pi_\Lam:N\times \Lam \to \Lam$, as $\Pi_\Lam:(x,\lam) \mapsto \lam$. Then, because $N$ is compact, the projection map $\Pi_\Lam$ is a closed map. 
\par We take the relative topology of $N\times L$ and $L$, then the restriction map $\Pi_\Lam \mid _{N \times L}: N\times L \to L$ is also a closed map. Since for any closed set $K\subset N\times L$, there is a $K^{'}\subset N\times \Lam$ such that $K^{'}\cap (N\times L) = K$, and $\Pi_\Lam(K^{'})$ is closed in $\Lam$. Because $\Pi_\Lam\! \mid _{N \times L}(K) = \Pi_\Lam (K) =\Pi_\Lam (K^{'}) \cap L$, we have that $\Pi_\Lam \mid _{N \times L}(K)$ is also closed. Also, $\cc$ is compact in the $N\times [0,1]$, then for the relative topology, $\cc \cap (N\times L)$ is also closed. 
\par For any $\pi \in \DP_L$, we consider the set $\cup_{\lam \in L} M_\pi^\lam$. Take the closure of  $\cup_{\lam \in L} M_\pi^\lam$ in $N\times[0,1]$, and denote it as $\cl(\cup_{\lam \in L} M_\pi^\lam)$. By Lemma $3.10$ in \cite{Reineck1998connection}, for any $[l_1,l_2]\subset L, \cup_{\lam_\in[l_1,l_2]}M_\pi^\lambda$ is compact in $N\times [0,1]$. We have $\cup_{\lam \in L} M_\pi^\lam=\cl(\cup_{\lam \in L} M_\pi^\lam) \cap (N\times L)$, therefore $\cup_{\lam \in L} M_\pi^\lam$ is relatively closed in $N\times L$.
\par Therefore, $\wt{\Lam}_\pi^L= \Pi_\Lam((\cc \cap (N\times L))\cap  \cup_{\lam \in L} M_\pi^\lam )$ is also closed in $L$. Then, we have $\Lam_\pi^L= L - \cup_{\rho\in \DP_L, \rho \neq \pi} \wt{\Lam}_\rho^L$ and thus $\Lam_\pi^L$ is open in $L$.
\end{proof}
\par Now, we present the property of the limit orbit $\cc$ in the situation that $\cc^\lam$ intersects more than one Morse set in $\MD{\lam}$.
\begin{prop}\label{twoside}
	Let $\lam \in \inte{L}$ with the topology of $\Lam$, and we let $p= \inf I^\lam, q=\sup I^\lam$. Then there is an $\veps>0$ such that $(\lam -\veps, \lam )\subset \Lam_p^L$ and $(\lam, \lam +\veps)\subset \Lam_q^L$, with $(\lam-\veps, \lam +\veps)\subset L$.
\end{prop}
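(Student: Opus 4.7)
\noindent\emph{Proof plan.}

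I will establish $(\lam,\lam+\veps)\subset\Lam_q^L$ in detail; the inclusion $(\lam-\veps,\lam)\subset\Lam_p^L$ follows by the mirror-image argument, applying the forward-time version of Lemma~\ref{orderpreserve} with the roles of $q=\sup I^\lam$ and $p=\inf I^\lam$ swapped and of $\lam+\delta_k$ and $\lam-\delta_k$ exchanged.

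The approach uses two preliminary observations. First (upper semicontinuity), for $\lam'$ sufficiently close to $\lam$ in $L$, $I^{\lam'}\subset I^\lam$: by the continuation hypothesis each $M_r^\lam$ has an isolating neighborhood $N_r$ that also isolates $M_r^{\lam'}$ on an open subinterval of $L$ containing $\lam$, and if $r\in I^{\lam_k}$ with $\lam_k\to\lam$ witnessed by points $y_k\in\cc^{\lam_k}\cap M_r^{\lam_k}\subset N_r$, then any subsequential limit $y^\ast$ of $y_k$ lies in $\cc^\lam\cap N_r$ by closedness of $\cc$, hence in $\inv(N_r,\phi_t^\lam)=M_r^\lam$ by $\phi_t^\lam$-invariance of $\cc^\lam$, so $r\in I^\lam$. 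Second, if $I^{\lam'}=\{q\}$, every orbit of the compact invariant set $\cc^{\lam'}$ has both $\omega$- and $\alpha$-limits in $M_q^{\lam'}$; the Morse decomposition of $S^{\lam'}$ then forces each such orbit, and hence $\cc^{\lam'}$ itself, into $M_q^{\lam'}$. It therefore suffices to show $I^{\lam+\delta}=\{q\}$ for all sufficiently small $\delta>0$.

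Assume for contradiction that $\delta_k\downarrow 0$ exists with $I^{\lam+\delta_k}\ne\{q\}$. By the first observation and the finiteness of the admissible order, after a subsequence one may fix $r\in I^\lam\setminus\{q\}$ with $r\in I^{\lam+\delta_k}$ for every $k$; by Lemma~\ref{totallyordered}, $r<_\lam^\Gf q$. Pick $y_k\in\cc^{\lam+\delta_k}\cap M_r^{\lam+\delta_k}$ with subsequential limit $y^\ast\in M_r^\lam\cap\cc^\lam$, and $y'\in\cc^\lam\cap M_q^\lam$, nonempty since $q\in I^\lam$. Choose approximating sequences $(a_n,\mu_n),(b_{n,k},\nu_{n,k})\in\cc_n$ with $(a_n,\mu_n)\to(y',\lam)$ as $n\to\infty$ and $(b_{n,k},\nu_{n,k})\to(y_k,\lam+\delta_k)$ as $n\to\infty$, both on the same curve $\cc_n$. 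Because $\dot\lam=\eps\lam(\lam-1)<0$ on $(0,1)$ makes $\lam$ strictly monotone along each $\cc_n$, the two points satisfy $\Phi^{\eps_n}_{t_{n,k}}(a_n,\mu_n)=(b_{n,k},\nu_{n,k})$ with $|t_{n,k}|\approx|\mu_n-\nu_{n,k}|/(\eps_n|\lam(\lam-1)|)$, and $t_{n,k}<0$ once $\mu_n<\nu_{n,k}$, which holds for $n$ large.

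A diagonal selection $n=n_k\to\infty$ with $\eps_{n_k}$, $|\mu_{n_k}-\lam|$, and $|\nu_{n_k,k}-(\lam+\delta_k)|$ all much smaller than $\delta_k$ ensures $|t_{n_k,k}|\to\infty$ with consistent backward sign, while the endpoints $(a_{n_k},\mu_{n_k})\to(y',\lam)\in M_q^\lam$ and $(b_{n_k,k},\nu_{n_k,k})\to(y^\ast,\lam)\in M_r^\lam$ converge into the common slice $\lam$. Applying the backward version of Lemma~\ref{orderpreserve} yields $q\le_\lam^\Gf r$, which contradicts $r<_\lam^\Gf q$. The main delicate point is precisely this diagonal construction, which must realize both approximating sequences on the same $\cc_{n_k}$ with consistent backward time sign, divergent time magnitude, and distinct Morse set limits in the common slice $\lam$; it is arranged by the freedom to take $n_k$ large relative to $k$, the strict monotonicity of $\lam$ along $\cc_n$, and the continuation of disjoint isolating neighborhoods for $M_r^\lam$ and $M_q^\lam$.
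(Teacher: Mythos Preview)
Your argument is correct and follows essentially the same route as the paper's own proof: assume the inclusion fails along a sequence of slices approaching $\lam$, extract a fixed index $r\neq q$ (respectively $r\neq p$) appearing in $I^{\lam+\delta_k}$, use closedness to get $r\in I^\lam$, and then run a diagonal argument on the approximating orbits $\cc_n$ so that Lemma~\ref{orderpreserve} yields an order relation contradicting $q=\sup I^\lam$ (respectively $p=\inf I^\lam$). The paper organizes the same steps slightly differently---it proves the $\Lam_p^L$ inclusion first, and it invokes Lemma~\ref{openclosed} (closedness of $\wt\Lam_r^L$) rather than stating upper semicontinuity of $\lam'\mapsto I^{\lam'}$ explicitly---but these are equivalent formulations of the same fact.

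One remark: your insistence on $|t_{n_k,k}|\to\infty$ via the estimate $|t|\approx|\Delta\lam|/(\eps_n|\lam(\lam-1)|)$ is harmless but unnecessary. Lemma~\ref{orderpreserve} only requires a consistent sign of $t_n$; in the bounded-time case it already forces the two limit Morse sets to coincide, which is itself a contradiction since $r\neq q$. So the diagonal selection need only guarantee $\mu_{n_k}<\nu_{n_k,k}$ (hence $t_{n_k,k}<0$) and convergence of both endpoints into the slice $\lam$, which is easier to arrange than divergence of the time.
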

\begin{proof}
	We prove the claim for $\Lam_p^L$ by contradiction. The proof for $\Lam_q^L$ is the same by considering the backward flow.
\par We suppose that if the statement for $\Lam_p^L$ is not true, then there is a monotone increasing sequence of $\sset{\lam_n}_{n\in \NN}$, with $\lam_n\to \lam_-$, such that $\lam_n \in \wt{\Lam}_{r_n}^L$ for some $r_n\in \DP_L-\sset{p}$. Because $\DP_L$ is finite, we have a subsequence of $\sset{\lam_n}_{n\in \NN}$, for simplicity we also denote it as $\sset{\lam_n}_{n\in \NN}$, such that $\lam_n \in \wt{\Lam}_r^L$ for some $r\in \DP_L-\sset{p}$ for all $\for{n}$. Since $\lam \in \inte{L}$ with the topology of $\Lam$, and $\wt{\Lam}_r^L$ is closed with respect to the relative topology of $L$, we have that the limit $\lam\in \wt{\Lam}_r^L$. Therefore $r\in I^\lam$.
\par We consider the $\cc_n \to \cc$ as $n \to \infty$. We take the sequences $\sset{(x_n, \mu_n)}_{n\in \NN}$ with $(x_n, \mu_n) \in  \cc_n$ and $\sset{(y_n^m,\nu_n^m)}_{n\in \NN}$ with $(y_n^m,\nu_n^m)\in \cc_n$ for each $m\in \NN$, such that $(x_n, \mu_n) \to (x,\lam)\in M_p^\lambda$ as $n\to \infty$, $(y_n^m, \nu_n^m)\to (y^m, \lam^m)\in M_r^{\lam_m}$, as $n\to \infty$ for each $m\in \NN$. We note that $(x_n, \mu_n)\in \cc_n$ converges to $(x,\lam)\in M_p^\lambda$ in the slice $\lam$ as $n\to \infty$. And for a fixed $m\in \NN$,  $(y_n^m, \nu_n^m)\in \cc_n$ converges to $(y^m, \lam^m)\in M_r^{\lam_m}$ in the slice $\lam_m$ as $n\to \infty$. Then, we are going to find a subsequence of $\NN$, denoted as $\sset{n_k}_{k\in \NN}$ such that,
\begin{enumerate}[label=\textnormal{(\arabic*).}]
	\item $n_{k+1} > n_k$,
	\item $d((y_{n_k}^{m_k},\nu_{n_k}^{m_k}),M_r^{\lam_{m_k}})<\frac{1}{m_k}$,
	\item $\mu_{n_k}<\nu_{n_k}^{m_k}$.
\end{enumerate}
We can choose such a subsequence of $\NN$. Because the sequence $(y_{n}^{m},\nu_{n}^{m}) \to M_r^{\lam_{m}}$ as $n\to \infty$, we firstly take a subsequence such that for each $m\in \NN$,  $d((y_{n_v}^{m},\nu_{n_v}^{m}),M_r^{\lam_{m}})<\frac{1}{m}$ for all $v\in \NN$, and relabel them as before $\sset{(y_{v}^{m},\nu_{v}^{m})}_{v\in \NN}$ for each $m\in \NN$. Then, no matter which $v$ we choose, the requirement $(2)$ is automatically satisfied if we consider the new subsequence. For the requirement $(3)$, because $\lam_m\to \lam_-$ and we choose a sufficient large $m=D$, we can get $\mu_1< \lam_D<\lam$, and consider a subsequence $\sset{(y_{v_l}^{D},\nu_{v_l}^{D})}_{l\in \NN}$ of the new $\sset{(y_{v}^{D},\nu_{v}^{D})}_{v\in \NN}$ such that $\mu_1<\nu_{v_1}^{D}$, and we relabel the $\sset{(y_{v_l}^{D},\nu_{v_l}^{D})}_{l\in \NN}$ as $\sset{(y_{l}^{D},\nu_{l}^{D})}_{l\in \NN}$ for $m=D$, then we can let $n_1=1$ and $m_k =D$. We can continue to do this for $n_2=2$, and then we get the subsequence of $\NN$.
\par Now we have two subsequences $\sset{(x_{n_k}, \mu_{n_k})}_{k\in \NN}$ and $(y_{n_k}^{m_k},\nu_{n_k}^{m_k})_{k\in \NN}$. And we find a convergent subsequence of $(y_{n_k}^{m_k},\nu_{n_k}^{m_k})_{k\in \NN}$, such that $(y_{n_{k_d}}^{m_{k_d}},\nu_{n_{k_d}}^{m_{k_d}}) \to (y, \lambda)\in M_r^\lambda$ as $d\to \infty$, by there is a compact $L_0\subset \inte{L}$ and the $\cup_{{\lam}\in L_0}M_r^\lambda$ is compact in $N\times \Lam$. 
\par We relabel the two sequences $\sset{(x_{n_{k_d}}, \mu_{n_{k_d}})}_{d\in \NN}$ and $\sset{(y_{n_{k_d}}^{m_{k_d}},\nu_{n_{k_d}}^{m_{k_d}})}_{d\in \NN}$ as $\sset{(x_{d}, \mu_{d})}_{d\in \NN}$ and $\sset{(y_{d}^{d},\nu_{d}^{d})}_{d\in \NN}$ respectively. Because $(x_{d}, \mu_{d})\in \cc_d$ and $(y_{d}^{d},\nu_{d}^{d})\in \cc_d$, and $\mu_d<\nu_t^d$, there is a time $T_d=\nu_d^d - \mu_d>0$ such that $\Phi_{T_d}^n((x_d,\mu_d))=(y_d,\nu_d)$ for all $d\in \NN$. Since $T_d >0$ for all $\for{d}$, by Lemma \ref{orderpreserve}, we have $r\leq_\lam^\Gf p$, and $r\neq p$, we have $r<_\lam^\Gf p$. But considering $r\in I^\lam$ and $p=\inf I^\lam$, there is a contradiction. As a result, the proposition is proved.
\end{proof}
\begin{prop}\label{interval}
	 For any $\pi \in \DP_L$, $\wt{\Lam}_\pi^L$ and $\Lam_\pi^L$ are both intervals in $L$.
\end{prop}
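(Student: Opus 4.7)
The plan is to prove both parts by contradiction, exploiting Proposition \ref{twoside} to control the Morse-index function $\mu \mapsto I^\mu$ of $\cc^\mu$. I will present the argument for $\Lam_\pi^L$ in detail and indicate the minor modifications for $\wt{\Lam}_\pi^L$.

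Suppose $\Lam_\pi^L$ is not an interval. Then one can pick $\lam_1 < \lam_2$ in $\Lam_\pi^L \cap \inte L$ with some $\lam^* \in (\lam_1, \lam_2) \setminus \Lam_\pi^L$. Since $\Lam_\pi^L$ is open in $L$ (Lemma \ref{openclosed}), set $s := \sup\{\mu : [\lam_1, \mu] \subset \Lam_\pi^L\}$ and $s' := \inf\{\mu \in [\lam^*, \lam_2] : \mu \in \Lam_\pi^L\}$; both lie in $\inte L \setminus \Lam_\pi^L$. At $s$, Proposition \ref{twoside} with $p = \inf I^s$, $q = \sup I^s$ gives $(s-\veps, s) \subset \Lam_p^L$; but this same interval sits in $\Lam_\pi^L$, so, since distinct Morse sets are disjoint and $\cc^\mu \neq \OO$, we must have $p = \pi$. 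Combined with $s \notin \Lam_\pi^L$ (so $I^s \supsetneq \{\pi\}$), this forces $\pi <^\Gf_s q$ strictly. A mirror analysis at $s'$ yields $\sup I^{s'} = \pi$ and $p' := \inf I^{s'} <^\Gf_{s'} \pi$ strictly.

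The heart of the proof is a monotonicity statement for the index on $[s, s']$. Let $D := \{\mu \in [s, s'] : |I^\mu| \geq 2\}$. Proposition \ref{twoside} makes each point of $D$ locally isolated (nearby points have singleton $I^\mu$), and the same argument shows that $\inte L \setminus D$ is open, so $D$ is closed in $\inte L$; since $[s, s'] \subset \inte L$ is compact, $D$ is therefore finite, say $s = d_0 < d_1 < \cdots < d_k = s'$ (both endpoints lie in $D$ by the previous step). On each $(d_{j-1}, d_j)$ the singleton $I^\mu = \{r_j\}$ is locally and hence globally constant by Proposition \ref{twoside}. Taking one-sided limits at an interior $d_j$ via Proposition \ref{twoside} pins $r_j = \inf I^{d_j}$ and $r_{j+1} = \sup I^{d_j}$, so $r_j <^\Gf_{d_j} r_{j+1}$ strictly in the totally ordered set $I^{d_j}$ (Lemma \ref{totallyordered}); since the admissible order on $\DP_L$ extends $<^\Gf_\lam$ at every slice, this lifts to $r_j < r_{j+1}$ in $\DP_L$. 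Chaining, $q = r_1 < r_2 < \cdots < r_k = p'$, i.e., $q < p'$ in $\DP_L$. The endpoint analysis already gave $p' < \pi < q$, so $p' < q$ in $\DP_L$ as well, contradicting asymmetry of $<$.

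The argument for $\wt{\Lam}_\pi^L$ follows the same template: $\wt{\Lam}_\pi^L$ is closed in $L$ (Lemma \ref{openclosed}), so $s$ and $s'$ are obtained as the supremum and infimum of $\wt{\Lam}_\pi^L$ inside $[\lam_1, \lam^*]$ and $[\lam^*, \lam_2]$ respectively, and they lie in $\wt{\Lam}_\pi^L$ itself. On $(s, s')$ one now has $\pi \notin I^\mu$ entirely (rather than merely $I^\mu \neq \{\pi\}$), and the strict inequalities $\pi <^\Gf_s q$, $p' <^\Gf_{s'} \pi$ come from the adjacent singleton indices being different from $\pi$. The monotonicity chain then yields the same contradiction. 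The main technical obstacle is keeping straight the two orders in play, the slice-wise flow-defined $<^\Gf_\lam$ and the global admissible $\DP_L$, and promoting the pointwise isolation of $D$ to global finiteness; this is handled by the compactness of $[s, s']$ inside $\inte L$.
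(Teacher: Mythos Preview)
Your proof is correct and follows essentially the same strategy as the paper: both arguments use Proposition \ref{twoside} to build a finite monotone chain of indices across the gap and derive a cycle in the partial order, contradicting antisymmetry. Your version is somewhat more explicit---you isolate the finite set $D$, lift each slice-wise inequality $r_j <^\Gf_{d_j} r_{j+1}$ to the common admissible order $\DP_L$, and handle the edge cases ($k=1$, endpoints in $\inte L$) carefully---whereas the paper compresses the chain into a single line and treats $\wt{\Lam}_\pi^L$ first, but the underlying mechanism is identical.
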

\begin{proof}
	To prove $\wt{\Lam}_\pi^L$ is an interval, we need to prove that $\wt{\Lam}_\pi^L$ is connected for any $\pi \in \DP_L$. We suppose that $\wt{\Lam}_\pi^L$ is not connected in the interval $L$ for a $\pi \in \DP_L$. Then there is a $x\not \in \wt{\Lam}_\pi^L$, and two closed connected components $K_1, K_2$ of $\wt{\Lam}_\pi^L$ in the topology of $L$, such that  $\sup K_1<x$ and $x<\inf K_2$. Because $L$ is an interval, $\sup K_1=\max K_1$ and $\inf K_2= \min K_2$. Let $\eta_1:=\max K_1$ and $\eta_2 :=\min K_2$ then $\eta_1, \eta_2\notin \Lam_\pi^L$. We have $\pi = \inf I^{\eta_1}$ and $\pi = \sup I^{\eta_2}$. And by Proposition \ref{twoside}, there are finite mutually disjoint open intervals $L_1, L_2, \cdots, L_m$ with relative topology in $L$ between $K_1$ and $K_2$, such that $\cup_{i=1}^m \cl(L_i) = [\eta_1, \eta_2]$ with $\sup L_i = \inf L_{i+1}$ for all $i = 1,2,\cdots,m-1$  and $\sup I^{\inf L_{i}} = \inf I^{\sup L_i}$ for all $i = 1,2,\cdots,m$. Thus, we have there is a $\rho\in \DP_L$ such that $\pi <_{\eta_1}^\Gf \rho <_{\eta_2}^\Gf \pi$, which is a contradiction. As a result, we have $\wt{\Lam}_\pi^L$ is connected for any $\pi \in \DP_L$, and therefore $\wt{\Lam}_\pi^L$ is an interval for any $\pi \in \DP_L$.
	\par Similarly, we can use Proposition \ref{twoside} to get the connectedness of $\Lam_\pi^L$, and therefore $\Lam_\pi^L \subset \wt{\Lam}_\pi^L$ is an interval in $\wt{\Lam}_\pi^L$.
\end{proof}
\begin{prop}
	 For any $\pi \in \DP_L$, $\wt{\Lam}^L_\pi-\Lam_\pi^L$ consists of at most two points in the interval $L$.
\end{prop}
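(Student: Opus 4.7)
The plan is to combine the two preceding structural results: $\wt{\Lam}_\pi^L$ is closed in $L$ by Lemma \ref{openclosed}, while both $\wt{\Lam}_\pi^L$ and $\Lam_\pi^L$ are intervals by Proposition \ref{interval}, with $\Lam_\pi^L \subset \wt{\Lam}_\pi^L$ and $\Lam_\pi^L$ open in $L$. I will write $\wt{\Lam}_\pi^L = [a,b] \subset L$ (possibly degenerate with $a = b$) and aim to show that every $\lam$ with $a < \lam < b$ lies in $\Lam_\pi^L$; this yields $\wt{\Lam}_\pi^L - \Lam_\pi^L \subset \{a,b\}$, which has at most two elements.

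To establish $(a,b) \subset \Lam_\pi^L$, I will fix such a $\lam$ and argue by contradiction, assuming $\lam \notin \Lam_\pi^L$. Convexity of $L$ together with $a,b \in L$ forces $\lam \in \inte(L)$ in the topology of $\Lam$, so Proposition \ref{twoside} is applicable at $\lam$. From $\lam \in \wt{\Lam}_\pi^L - \Lam_\pi^L$ we get $\pi \in I^\lam$ and $\cc^\lam \not\subset M_\pi$; hence $|I^\lam| \geq 2$, and Lemma \ref{totallyordered} produces $p := \inf I^\lam <_\lam^\Gf q := \sup I^\lam$. Proposition \ref{twoside} then supplies $\veps > 0$ with $(\lam - \veps, \lam) \subset \Lam_p^L$ and $(\lam, \lam + \veps) \subset \Lam_q^L$.

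Next comes a case analysis on the location of $\pi$ in $\{p,q\}$. If $\pi = p$, every $\mu \in (\lam, \lam + \veps)$ satisfies $\cc^\mu \subset M_q$ and so $I^\mu = \{q\}$; since $\pi \neq q$, this gives $\mu \notin \wt{\Lam}_\pi^L$, forcing $\lam = \sup \wt{\Lam}_\pi^L = b$ and contradicting $\lam < b$. By a symmetric argument on the other side, $\pi = q$ forces $\lam = a$. If instead $\pi$ is neither $p$ nor $q$, both one-sided neighborhoods are disjoint from $\wt{\Lam}_\pi^L$, so $\lam$ is isolated in $\wt{\Lam}_\pi^L$; since $\wt{\Lam}_\pi^L$ is an interval, this forces $\wt{\Lam}_\pi^L = \{\lam\}$, again contradicting $a < \lam < b$. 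Every alternative is impossible, so $\lam \in \Lam_\pi^L$ as required.

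The substantive work has all been done in Proposition \ref{twoside}, and the present statement reduces essentially to the packaging above; I do not anticipate a serious obstacle. The only fine point needing care is the verification that a strictly interior point of $[a,b]$ actually lies in $\inte(L)$ rather than merely in the relative interior of $L$, but this is immediate from $L$ being an interval containing both $a$ and $b$. With that observation and the three-way case split, the bound of at most two points follows cleanly.
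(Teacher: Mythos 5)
Your proposal is correct and uses essentially the same ingredients as the paper's proof: the interval structure of $\wt{\Lam}_\pi^L$, the observation that a point of $\wt{\Lam}_\pi^L - \Lam_\pi^L$ has $|I^\lam| \geq 2$, and Proposition \ref{twoside} to produce a one-sided neighborhood escaping $\wt{\Lam}_\pi^L$. The paper frames the contradiction by assuming three bad points and examining the middle one, while you show directly that every interior point of $\wt{\Lam}_\pi^L$ lies in $\Lam_\pi^L$ via a three-way case split on whether $\pi$ equals $\inf I^\lam$, $\sup I^\lam$, or neither; the paper's version is slightly tighter since it only needs one of $\inf I^{\gamma_2}, \sup I^{\gamma_2}$ to differ from $\pi$ rather than a full case analysis, but the two arguments are otherwise the same.
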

\begin{proof}
If not, there is a $\pi \in \DP_L$, and there are three points $\gamma_1,\gamma_2,\gamma_3\in \wt{\Lam}^L_\pi-\Lam_\pi^L$ with $\gamma_1<\gamma_2<\gamma_3$. By Proposition \ref{interval}, we have $(\gamma_1,\gamma_3)\subset \wt{\Lam}^L_\pi$. Considering $\inf I^{\gamma_2}$ and $\sup I^{\gamma_2}$, we have $\inf I^{\gamma_2} \neq \sup I^{\gamma_2}$. Therefore, either $\inf I^{\gamma_2}\neq \pi$ or $\sup I^{\gamma_2}\neq \pi$. We may assume $\sup I^{\gamma_2}=\xi \neq \pi$, then by Proposition \ref{twoside}, there is an $\veps>0$ such that $(\gamma_2,\gamma_2+\veps)\subset \Lam^L_\xi$. From the definition, we have $\Lam^L_\xi \cap \wt{\Lam}^L_\pi =\OO$, but $(\gamma_2,\gamma_2+\veps) \cap (\gamma_1,\gamma_3) \neq \OO$, which is a contradiction. Thus, the proposition is proved.
\end{proof}
\par Here we let the interval $L$ be the interval $[0,1]$ and assume Assumption \ref{connexist}. It is the same situation as in Reineck (\cite{Reineck1998connection}, Theorem $3.13$). There is a little mistake in Theorem $3.13$ in Reineck \cite{Reineck1998connection}. Here, we present its correction and give a proof in the following.

\begin{thm}[\cite{Reineck1998connection}, Theorem $3.13$]\label{Reineck}
	Let $L=[0,1]$. Under Assumption \ref{connexist}, there exist a finite sequence of parameters $1=\gam_0 > \gamma_1 >\gamma_2 >\cdots > \gamma_{k-1}>\gamma_{k} > \gam_{k+1} =0$ and a sequence of indices $ \rho = \xi_0, \xi_1, \xi_2,\cdots,$ $\xi_{k}, \xi_{k+1} , \xi_{k+2}= \pi$ such that $M_{\xi_{i+1}}^{\gamma_i} <_{\gamma_i}^\Gf M_{\xi_i}^{\gamma_i}$, for all $i\in \sset{1,\cdots,k}$, where  $<_{\gamma_i}^\Gf$ is the flow-defined order on $\DP_{\gam_i}^\Gf$ for the flow $\phi_t^{\gamma_i}$. At the slice $\lam = 1$, we have $\rho =\xi_0 = \sup I^1$, $\xi_1 = \inf I^1$, and at the slice $\lam = 0$, we have $\xi_{k+1} = \sup I^0$, $\pi = \xi_{k+2} = \inf I^0$.
\end{thm}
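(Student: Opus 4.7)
The plan is to extract the sequence $\gamma_1 > \cdots > \gamma_k$ and the indices $\xi_i$ from the partition of $[0,1]$ induced by the intervals $\{\Lam_\pi^L\}_{\pi \in \DP_L}$. By Proposition \ref{interval} each $\Lam_\pi^L$ is an interval, and by Lemma \ref{openclosed} it is open in $[0,1]$. These intervals are pairwise disjoint (if $\lam$ lies in both $\Lam_\pi^L$ and $\Lam_{\pi'}^L$, then $\cc^\lam \subset M_\pi^\lam \cap M_{\pi'}^\lam = \OO$), and since $\DP_L$ is finite only finitely many are nonempty. I would enumerate the nonempty ones from right to left as $\Lam_{\xi_1}^L, \Lam_{\xi_2}^L, \ldots, \Lam_{\xi_{k+1}}^L$, and call $\gamma_1 > \gamma_2 > \cdots > \gamma_k$ the transition points separating consecutive enumerated intervals, with the conventions $\gamma_0 = 1$ and $\gamma_{k+1} = 0$. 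Then each $\Lam_{\xi_j}^L$ is the open interval with endpoints $\gamma_j$ and $\gamma_{j-1}$.

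Next, at each interior transition $\gamma_i$ with $1 \leq i \leq k$, Proposition \ref{twoside} applies directly: a left-neighborhood of $\gamma_i$ lies in $\Lam_{\inf I^{\gamma_i}}^L$ and a right-neighborhood in $\Lam_{\sup I^{\gamma_i}}^L$, forcing $\inf I^{\gamma_i} = \xi_{i+1}$ and $\sup I^{\gamma_i} = \xi_i$ by the labeling above. Since $\xi_i \neq \xi_{i+1}$ (adjacent intervals carry distinct indices by construction) and $I^{\gamma_i}$ is totally ordered in the flow-defined order by Lemma \ref{totallyordered}, I obtain $M_{\xi_{i+1}}^{\gamma_i} <_{\gamma_i}^\Gf M_{\xi_i}^{\gamma_i}$ for every $i \in \{1, \ldots, k\}$, which is the required chain relation.

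Setting $\xi_0 := \rho$ and $\xi_{k+2} := \pi$, the remaining task is the endpoint identification. For $\rho = \sup I^1$: each $\cc_n$ contains the nonempty $\alpha(c_{\eps_n}) \subset M_\rho^1 \times \{1\}$, so by Hausdorff convergence $\cc$ also meets the compact set $M_\rho^1 \times \{1\}$, giving $\rho \in I^1$. For any other $r \in I^1$, I would pick $(x,1) \in M_r^1 \cap \cc^1$ realized as a limit of $(x_n, \lam_n) \in c_n$ with $\lam_n \to 1^-$ (using $\dot\lam = \eps\lam(\lam - 1) < 0$ on $(0,1)$, so $\lam$ tends to $1$ from below along $c_n$ in backward time), then flow backward along $c_n$ toward the alpha-limit until reaching $(z_n, \mu_n)$ with $\mu_n \to 1^-$ and, after extraction, $z_n \to w \in M_\rho^1$. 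The connecting times being negative, the backward analog of Lemma \ref{orderpreserve} yields $r \leq_1^\Gf \rho$, whence $\rho = \sup I^1$. The identification $\xi_1 = \inf I^1$ then follows from the argument of Proposition \ref{twoside} applied one-sidedly at $\lam = 1$: a left-neighborhood $(1-\veps, 1) \subset \Lam_{\xi_1}^L$ together with Lemma \ref{orderpreserve} forces $\inf I^1 = \xi_1$. Symmetric arguments at $\lam = 0$, using $\omega(c_{\eps_n}) \subset M_\pi^0 \times \{0\}$ and forward time, yield $\pi = \inf I^0$ and $\xi_{k+1} = \sup I^0$.

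The main obstacle is the endpoint analysis, particularly $\rho = \sup I^1$. Proposition \ref{twoside} is formulated only for interior $\lam$, so its two-sided conclusion is unavailable at $\lam \in \{0,1\}$, and I must extract same-slice convergent sequences from the backward-time evolution along $c_n$ while ensuring that both $\lam_n$ and $\mu_n$ approach $1$ from below in the limit. This rests on the specific alpha-limit structure of $c_\eps$ combined with the monotonicity of $\lam(t)$ along each $c_{\eps_n}$ coming from $\dot\lam < 0$ on $(0,1)$. I expect this endpoint verification, and the corresponding care needed to ensure $\xi_0$ and $\xi_{k+2}$ are pinned at the \emph{extreme} elements of $I^1$ and $I^0$ rather than arbitrary ones, to be exactly what the ``little mistake'' in Reineck's original statement glosses over.
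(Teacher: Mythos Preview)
Your approach is essentially the same as the paper's: both extract the $\gamma_i$ as the finite set $[0,1]\setminus\bigcup_{p}\Lam_p^L$ (using Lemma~\ref{openclosed}, Proposition~\ref{twoside}, and Proposition~\ref{interval}), label the intervening intervals by indices $\xi_i$, and read off the flow-defined order relations from $\sup I^{\gamma_i}=\xi_i$, $\inf I^{\gamma_i}=\xi_{i+1}$ via Lemma~\ref{totallyordered}. Your treatment of the endpoint identifications $\rho=\sup I^1$ and $\pi=\inf I^0$ is in fact more detailed than the paper's own proof, which simply \emph{defines} $\xi_i=\inf I^{\gamma_{i-1}}=\sup I^{\gamma_i}$ and does not separately justify $\rho=\sup I^1$; your anticipation that this endpoint issue is the content of the ``little mistake'' is exactly right.
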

\begin{note}
	Reineck (\cite{Reineck1998connection}, Theorem $3.13$), mistakenly excluded the possibility that $1\not \in \Lam^L_\rho$ or $0\not \in \Lam^L_\pi$, that is, the limit $\ccl$ meets more than one Morse set at the slice $\lam=0,1$, as illustrated in Example \ref{ErrRec}.
\end{note}
\begin{proof}
By Lemma \ref{openclosed} and Proposition \ref{twoside}, we have $L-\cup_{p \in \DP_L} \Lam_p^L$ is a discrete set, and $L=[0,1]$ we have $L-\cup_{p \in \DP_L} \Lam_p^L$ is also compact. Therefore, $L-\cup_{p \in \DP_L} \Lam_p^L$ consists of finite points. We label the finite set $L-\cup_{p \in \DP_L} \Lam_p^L -\sset{0,1}$ by $\gam_i$, $i\in \sset{1,2,\cdots,k}$. By the connectedness of $\Lam_p^L$ and $\wt{\Lam}_p^L$ for all $p \in \DP_L$, we have that for all $i\in \sset{0,1,2,\cdots,k} $, $( \gamma_{i+1}, \gamma_i)=\Lam_{q(i)}^L$ with some $q(i)\in \DP_L$. Then we let $\xi_i=\inf I^{\gamma_{i-1}} =\sup I^{\gamma_{i}}$ for all $i\in \sset{1,2, \cdots,k,k+1}$, then we have $\xi_1 = \inf I^1$ and $\xi_{k+1} = \sup I^0$. Because for any $\lam \in L$, $I^\lam$ is totally ordered by the flow-defined order from Lemma \ref{totallyordered}, we have that $M_{\xi_{i+1}}^{\gamma_i} <_{\gamma_i}^\Gf M_{\xi_i}^{\gamma_i}$, for all $i\in \sset{1,\cdots,k}$.
\end{proof}
\par Now, we consider the situation that there is not a continuation. Under Assumption \ref{breakdown}, we get the following result.
\begin{thm}\label{withoutconn}
	Under Assumption \ref{breakdown} and Assumption \ref{connexist}, there exists a finite sequence of parameters 
	$$1=\gam_0 > \gamma_1 >\gamma_2 >\cdots > \gamma_{l-1}> \gamma_l=\lam_0 > \gamma_{l+1} > \cdots >\gamma_{k-1}>\gamma_{k} > \gam_{k+1} =0$$
	 and sequences of indices 
	$$\sset{ \rho = \xi_0, \xi_1, \xi_2,\cdots,\xi_{l}}\subset \DP_1, \sset{\xi_{l}^{'},\xi_{l+1}^{'}}\subset \DP_\lz, \sset{\xi_{l+1},\cdots, \xi_{k}, \xi_{k+1} , \pi=\xi_{k+2}}\subset \DP_0$$
	 such that 
	 $$M_{\xi_{i+1}}^{\gamma_i} <_{\gamma_i}^\Gf M_{\xi_i}^{\gamma_i} \quad \text{ for all } \quad i\in \sset{1,\cdots,k}-\sset{l}$$ 
	 and $M_{\xi_{l+1}^{'}}^{\lz} \leq_{\lz}^\Gf M_{\xi_{l}^{'}}^{\lz}$, where  $<_{\gamma_i}^\Gf$ and $\leq_{\lz}^\Gf$  are flow-defined orders on $\DP_{\gam_i}^\Gf$ and $\DP_{\lz}^\Gf$ for flows $\phi_t^{\gamma_i}$ and $\phi_t^\lz$, respectively. 
	 
	 \par At the slice $\lam = 1$, we have $\rho =\xi_0 = \sup I^1$, $\xi_1 = \inf I^1$, and at the slice $\lam = 0$, we have $\xi_{k+1} = \sup I^0$, $\pi = \xi_{k+2} = \inf I^0$. At the slice $\lam = \lam_0$, $M_{\xi_{l}^{'}}^\lz \cap \HL{\xi_{l}} \neq \OO $ and $M_{\xi_{l+1}^{'}}^\lz \cap \HL{\xi_{l+1}} \neq \OO$.
\end{thm}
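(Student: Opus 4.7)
The plan is to apply the analysis underlying Theorem \ref{Reineck} separately on the two half-intervals $L_+ := (\lz,1]$ and $L_- := [0,\lz)$, where continuation \emph{does} hold, and then to bridge the two sides at $\lam=\lz$ through the Hausdorff limits $\HL{\xi_l}$ and $\HL{\xi_{l+1}}$ developed in Subsection \ref{limit}. Once the two one-sided chains are built, the transition indices $\xi_l',\xi_{l+1}'$ are produced from the $\omega$- and $\alpha$-limits of suitably chosen points in $\cc^\lz$, and the order relation between them is extracted from Lemma \ref{orderpreserve}.

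On $L_+$ the Morse decomposition continues with admissible order $\DP_1$, so Lemma \ref{openclosed}, Proposition \ref{twoside} and Proposition \ref{interval} all apply (since $\cc^\lam \subset M_\rho^\lam$ for $\lam$ sufficiently close to $1$ by the isolating-neighborhood argument, the right endpoint of $L_+$ lies in the interior of $\Lam_\rho^{L_+}$, so no boundary pathology occurs in the proof of \ref{interval}). Because each $\Lam_\pi^{L_+}$ is an open interval and $\DP_1$ is finite, the transition set $T_+ := L_+ - \bigcup_{\pi \in \DP_1}\Lam_\pi^{L_+}$ contains at most $|\DP_1|-1$ points. Labeling $T_+$ in decreasing order as $\gam_1 > \cdots > \gam_{l-1}$ and setting $\gam_0 := 1$, $\gam_l := \lz$, $\xi_i := \inf I^{\gam_{i-1}} = \sup I^{\gam_i}$ for $1 \le i \le l$, the proof of Theorem \ref{Reineck} carries over and yields the sequence $\rho = \xi_0, \xi_1,\dots, \xi_l \in \DP_1$ with $\xi_{i+1} <^\Gf_{\gam_i} \xi_i$ for $1 \le i \le l-1$, together with $\cc^\lam \subset M_{\xi_l}^\lam$ on $(\lz, \gam_{l-1})$. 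The symmetric argument on $L_-$ produces $\gam_{l+1} > \cdots > \gam_k$ in $(0,\lz)$, indices $\xi_{l+1},\dots, \xi_{k+2} = \pi \in \DP_0$ with $\xi_{i+1} <^\Gf_{\gam_i} \xi_i$ for $l+1 \le i \le k$, and $\cc^\lam \subset M_{\xi_{l+1}}^\lam$ on $(\gam_{l+1}, \lz)$.

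To define $\xi_l', \xi_{l+1}' \in \DP_{\lz}$, fix sequences $\mu_n^+ \in (\lz, \gam_{l-1})$ and $\mu_n^- \in (\gam_{l+1}, \lz)$ with $\mu_n^\pm \to \lz$, and pick $x_n^+ \in \cc^{\mu_n^+} \subset M_{\xi_l}^{\mu_n^+}$ and $x_n^- \in \cc^{\mu_n^-} \subset M_{\xi_{l+1}}^{\mu_n^-}$. After passing to convergent subsequences, $x_n^\pm \to y_\pm \in \cc^\lz$, with $y_+ \in \HL{\xi_l}$ and $y_- \in \HL{\xi_{l+1}}$. Since $\cc^\lz$ is invariant under $\phi_t^\lz$ (Lemma \ref{invariantofconnection}), the omega limit $\omega(y_+)$ is a compact invariant subset of $\cc^\lz$ and hence lies in a single Morse set $M_{\xi_l'}^\lz$; by Theorem \ref{limitofset}, $\HL{\xi_l}$ is also invariant under $\phi_t^\lz$, so it contains $\omega(y_+)$ as well. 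Therefore $\OO \neq \omega(y_+) \subset M_{\xi_l'}^\lz \cap \HL{\xi_l}$. Defining $\xi_{l+1}'$ as the Morse index containing $\alpha(y_-)$ gives symmetrically $\OO \neq \alpha(y_-) \subset M_{\xi_{l+1}'}^\lz \cap \HL{\xi_{l+1}}$.

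It remains to verify $\xi_{l+1}' \leq^\Gf_\lz \xi_l'$, which I plan to obtain from Lemma \ref{orderpreserve}. Since $\dot{\lam} = \eps_n \lam(\lam-1) < 0$ along $\cc_n$ on $\RR^d \times (0,1)$, each $\cc_n$ is a graph over its $\lam$-range, so I can pick $z_n^\pm \in \cc_n$ at heights $\mu_n^\pm$ with $\Phi_{t_n}^{\eps_n}(z_n^+, \mu_n^+) = (z_n^-, \mu_n^-)$ for some $t_n > 0$. The main obstacle is that $y_\pm$ themselves may lie on connecting orbits of $\phi_t^\lz$ rather than in the target Morse sets, so Lemma \ref{orderpreserve} does not apply to $(z_n^\pm)$ directly. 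To resolve this I would refine the approximations: flow $z_n^+$ forward along $\cc_n$ by times $T_k \to \infty$ so that the new images limit to a chosen $y_+^\omega \in \omega(y_+) \subset M_{\xi_l'}^\lz$, and flow $z_n^-$ backward along $\cc_n$ by times $S_k \to \infty$ so that the new images limit to a chosen $y_-^\alpha \in \alpha(y_-) \subset M_{\xi_{l+1}'}^\lz$. A diagonal choice $n = n(k)$ with $\eps_{n(k)} T_k \to 0$ and $\eps_{n(k)} S_k \to 0$ keeps the refined $\lam$-heights strictly above $\lz$ on the right and strictly below $\lz$ on the left, so the forward-flow time from the refined right point to the refined left point along $\cc_{n(k)}$ remains positive. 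Lemma \ref{orderpreserve} then gives $\xi_{l+1}' \leq^\Gf_\lz \xi_l'$, completing the proof. The hard step is precisely this diagonal interchange of limits, where one must simultaneously control the slow-drift error $\eps \cdot t$ on both sides of $\lz$ while preserving the convergence of the spatial coordinate to the prescribed Morse-set points.
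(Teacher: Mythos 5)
Your overall strategy matches the paper's: apply Theorem~\ref{Reineck} separately on $[0,\lz)$ and $(\lz,1]$ to obtain the one-sided chains, and then splice them at $\lz$. The divergence is entirely in how you produce $\xi_l',\xi_{l+1}'\in\DP_\lz$ and the order relation $M_{\xi_{l+1}'}^\lz\leq_\lz^\Gf M_{\xi_l'}^\lz$. The paper simply sets $\xi_l':=\sup I^\lz$ and $\xi_{l+1}':=\inf I^\lz$, so the inequality is automatic from the total ordering of $I^\lz$ (Lemma~\ref{totallyordered}); the remaining work is then the intersection claims $M_{\xi_l'}^\lz\cap\HL{\xi_l}\neq\OO$ and $M_{\xi_{l+1}'}^\lz\cap\HL{\xi_{l+1}}\neq\OO$, which the paper disposes of with a (rather terse) connectedness argument on $\cc$. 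You instead define $\xi_l'$ and $\xi_{l+1}'$ via $\omega(y_+)$ and $\alpha(y_-)$; this makes the intersection claims clean and immediate (using invariance of both $\cc^\lz$ and the Hausdorff limits under $\phi_t^\lz$), which is arguably a cleaner treatment of that half of the statement than the paper gives.

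The trade-off is that with your choice the order relation is no longer free, and this is where the proposal has a genuine gap. Since $\xi_l',\xi_{l+1}'\in I^\lz$ and $I^\lz$ is totally ordered, they are comparable, but nothing rules out a priori that $\xi_l'<_\lz^\Gf\xi_{l+1}'$. Your diagonal argument attempting to invoke Lemma~\ref{orderpreserve} is only a plan, and the constraints are not reconciled: the starting point $\Phi_{T_k}^{\eps_{n(k)}}(z_{n(k)}^+,\mu_{n(k)}^+)$ and endpoint $\Phi_{-S_k}^{\eps_{n(k)}}(z_{n(k)}^-,\mu_{n(k)}^-)$ are separated along $\cc_{n(k)}$ by forward time $t_{n(k)}-T_k-S_k$, and you never verify $t_{n(k)}>T_k+S_k$; this requires coordinating $\mu_{n(k)}^\pm-\lz$, $\eps_{n(k)}$, $T_k$ and $S_k$ so that simultaneously $\eps_{n(k)}(T_k+S_k)\to 0$, $(\mu_{n(k)}^+-\mu_{n(k)}^-)/\eps_{n(k)}\to\infty$, and the spatial coordinates track $\phi_{T_k}^\lz(y_+)$ and $\phi_{-S_k}^\lz(y_-)$ closely enough that they converge to the chosen $y_+^\omega$ and $y_-^\alpha$. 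You flag this yourself as ``the hard step,'' and it remains unresolved. A more economical route is to keep your clean derivation of the intersection conditions from $\omega(y_+)$ and $\alpha(y_-)$ but not insist on those particular indices: once you have exhibited nonempty intersections $M_p^\lz\cap\HL{\xi_l}$ and $M_q^\lz\cap\HL{\xi_{l+1}}$ for some $p,q\in I^\lz$, the paper's choice $\xi_l'=\sup I^\lz$, $\xi_{l+1}'=\inf I^\lz$ together with the same invariance observations shows those extremal indices also satisfy the intersection conditions (the orbits through $y_+$ and $y_-$ already exhibit $I^\lz$-chains passing through $\sup I^\lz$ and $\inf I^\lz$), and then the order relation is trivial. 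As written, however, the order relation is not established and constitutes the missing piece of the proof.
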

\begin{proof}
Firstly, we apply Theorem \ref{Reineck} for intervals $[0,\lz)$ and $(\lz,1]$, and we get the 	$1=\gam_0 > \gamma_1 >\gamma_2 >\cdots > \gamma_{l-1} > \gamma_{l+1} > \cdots >\gamma_{k-1}>\gamma_{k} > \gam_{k+1} =0$ and $ \rho = \xi_0, \xi_1, \xi_2,\cdots,\xi_{l}\in \DP_1$, $\xi_{l+1},\cdots, \xi_{k}, \xi_{k+1} , \pi=\xi_{k+2}\in \DP_0$ such that $M_{\xi_{i+1}}^{\gamma_i} <_{\gamma_i}^\Gf M_{\xi_i}^{\gamma_i}$, for all $i\in \sset{1,\cdots,l-1}\cup \sset{l+1,\cdots,k}$, and where  $<_{\gamma_i}^\Gf$ is the flow-defined order on $\DP_{\gam_i}^\Gf$ for the flow $\phi_t^{\gamma_i}$. We have $(\lz, \gamma_{l-1})=\Lam_{\xi_{l}}^{(\lz,1]}$, and $(\gamma_{l+1},\lz)=\Lam_{\xi_{l+1}}^{[0,\lz)}$. Let $\xi_{l}^{'}=\sup I^\lz$ and $\xi_{l+1}^{'}=\inf I^\lz$, then because $I^\lz$ is totally ordered, we have $M_{\xi_{l+1}^{'}}^{\lz} \leq_{\lz}^\Gf M_{\xi_{l}^{'}}^{\lz}$. 
\par Let $K=I^\lz\subset \DP_\lz^\Gf$, and by the invariance of $\cc^\lz$ under the flow $\phi_t^\lz$, we have $\cc^\lz\subset M_K^\lz$. At last, by the connectedness of $\cc$, we have $\HL{\xi_{l}}$ and $\HL{\xi_{l+1}}$ connects to $M_K^\lz$, with $M_{\xi_{l}^{'}}^\lz \cap \HL{\xi_{l}} \neq \OO $ and $M_{\xi_{l+1}^{'}}^\lz \cap \HL{\xi_{l+1}} \neq \OO$.
\end{proof}

\par Notice that $\HL{\xi_{l}}\subset M_{\xi_{l}^{'}}^\lz$ and $ \HL{\xi_{l+1}}\subset M_{\xi_{l+1}^{'}}^\lz$ are not always true, but if we consider the finest decomposition of $(\DP_0, \DP_1)$, we can apply Theorem \ref{Reineck} to these indecomposable continuable interval pairs, even if the continuation of Morse decompositions over $[0,1]$ may not exist.
\begin{cor}
	Under Assumption \ref{breakdown} and Assumption \ref{connexist}, we consider the finest decomposition $\LTD{J}{A}$ of the continuable interval pair $(\DP_0, \DP_1)$, also the Morse decomposition 
	$$(\DM(S^{\lam};\{J_i\}),\DP(\{J_i\})):=(\{M^{\lam}_{I} \mid I\in \{J_i\}_{i\in A}\},\DP(\{J_i\}))$$
	 with $\DP(\{J_i\})$ the admissible order of $\{M^{\lam}_{I} \mid I\in \{J_i\}_{i\in A}\}$ continues over $[0,\lz)$, and the Morse decomposition 
	 $$(\DM(S^{\lam};\{J_i^{'}\}),\DP(\{J_i^{'}\})):=(\{M^{\lam}_{I} \mid I\in \{J_i^{'}\}_{i\in A}\},\DP(\{J_i^{'}\}))$$ 
	 with $\DP(\{J_i^{'}\})$ the admissible order of $\{M^{\lam}_{I} \mid I\in \{J_i^{'}\}_{i\in A}\}$ continues over $(\lz,1]$.
	\par Then, there exists a finite sequence of parameters 
	$$1=\gam_0 > \gamma_1 >\gamma_2 >\cdots > \gamma_{l-1}> \gamma_l=\lam_0 > \gamma_{l+1} > \cdots >\gamma_{k-1}>\gamma_{k} > \gam_{k+1} =0$$
	 and sequences of indices 
	 $$ \sset{\rho = \xi_0, \xi_1, \xi_2,\cdots,\xi_{l}}\subset \DP_1, \sset{\xi_{l}^{'},\xi_{l+1}^{'}}\subset \DP_\lz, \sset{\xi_{l+1},\cdots, \xi_{k}, \xi_{k+1} , \pi=\xi_{k+2}}\subset \DP_0$$
	  such that 
	  $$M_{\xi_{i+1}}^{\gamma_i} <_{\gamma_i}^\Gf M_{\xi_i}^{\gamma_i} \quad \text{ for all } \quad i\in \sset{1,\cdots,k}-\sset{l}$$
	  and $M_{\xi_{l+1}^{'}}^{\lz} \leq_{\lz}^\Gf M_{\xi_{l}^{'}}^{\lz}$ where  $<_{\gamma_i}^\Gf$ and $\leq_{\lz}^\Gf$  are flow-defined orders on $\DP_{\gam_i}^\Gf$ and $\DP_{\lz}^\Gf$ for flows $\phi_t^{\gamma_i}$ and $\phi_t^\lz$, respectively.
	  \par At the slice $\lam = 1$, we have $\rho =\xi_0 = \sup I^1$, $\xi_1 = \inf I^1$, and at the slice $\lam = 0$, we have $\xi_{k+1} = \sup I^0$, $\pi = \xi_{k+2} = \inf I^0$. At the slice 
	$\lam = \lam_0$, 
	$M^\lz_{\xi_{l}^{'}} \subset  \GM^{\lam_0}_{\DP_{\lam_0}(N(M_{\xi_{l}}))}$,
	 $M^\lz_{\xi_{l+1}^{'}} \subset  \GM^{\lam_0}_{\DP_{\lam_0}(N(M_{\xi_{l+1}}))}$,
	  $\HL{\xi_{l}}\subset  \GM^{\lam_0}_{\DP_{\lam_0}(N(M_{\xi_{l}}))}$
	   and
	    $\HL{\xi_{l+1}} \subset  \GM^{\lam_0}_{\DP_{\lam_0}(N(M_{\xi_{l+1}}))}$.
\end{cor}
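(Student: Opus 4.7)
The plan is to first invoke Theorem \ref{withoutconn} to obtain the sequences of parameters $1=\gam_0 > \cdots > \gamma_l=\lz > \cdots > \gam_{k+1}=0$, the indices $\xi_i$ and $\xi_{l}^{'},\xi_{l+1}^{'}\in\DP_{\lz}$, the flow-defined order relations $M^{\gamma_i}_{\xi_{i+1}}<_{\gamma_i}^{\Gf}M^{\gamma_i}_{\xi_i}$ and $M^{\lz}_{\xi_{l+1}^{'}}\leq_{\lz}^{\Gf}M^{\lz}_{\xi_{l}^{'}}$, and the two nonempty intersections $M^{\lz}_{\xi_{l}^{'}}\cap\HL{\xi_l}\neq\OO$, $M^{\lz}_{\xi_{l+1}^{'}}\cap\HL{\xi_{l+1}}\neq\OO$. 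All assertions involving parameters strictly between $0$ and $\lz$ or between $\lz$ and $1$, as well as $\rho=\xi_0=\sup I^1$, $\xi_1=\inf I^1$, $\xi_{k+1}=\sup I^0$, $\pi=\xi_{k+2}=\inf I^0$, transfer verbatim from Theorem \ref{withoutconn}; the only new content to establish is the set-theoretic strengthening at the bifurcation slice $\lam=\lz$.

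Next, I would locate the finest-decomposition pair that houses each of $\xi_l$ and $\xi_{l+1}$: since $\sset{J_i^{'}}_{i\in A}$ partitions $\DP_1$ and $\sset{J_i}_{i\in A}$ partitions $\DP_0$ by Definition \ref{decomposition}, there are unique $i^{*},i^{**}\in A$ with $\xi_l\in J_{i^{*}}^{'}$ and $\xi_{l+1}\in J_{i^{**}}$. By the local-continuation property, $M^{\lam}_{\xi_l}\subset M^{\lam}_{J_{i^{*}}^{'}}\subset N(J_{i^{*}},J_{i^{*}}^{'})$ for every $\lam\in(\lz,b_{i^{*}}]$, so Proposition \ref{HausdorffLimit} together with the closedness of $N(J_{i^{*}},J_{i^{*}}^{'})$ gives $\HL{\xi_l}\subset N(J_{i^{*}},J_{i^{*}}^{'})$. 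Invariance of $\HL{\xi_l}$ under $\phi_t^{\lz}$ from Theorem \ref{limitofset} then places it inside the maximal invariant subset, and Proposition \ref{decompositionlimit} yields $\HL{\xi_l}\subset\GM^{\lz}_{\DP_{\lz}(N(J_{i^{*}},J_{i^{*}}^{'}))}$. The mirror-image argument on $[0,\lz)$ produces $\HL{\xi_{l+1}}\subset\GM^{\lz}_{\DP_{\lz}(N(J_{i^{**}},J_{i^{**}}^{'}))}$.

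To promote the nonempty intersection $M^{\lz}_{\xi_{l}^{'}}\cap\HL{\xi_l}\neq\OO$ to the full containment $M^{\lz}_{\xi_{l}^{'}}\subset\GM^{\lz}_{\DP_{\lz}(N(J_{i^{*}},J_{i^{*}}^{'}))}$, I would pick any $z\in M^{\lz}_{\xi_{l}^{'}}\cap\HL{\xi_l}$. By the previous step $z$ lies in $\GM^{\lz}_{\DP_{\lz}(N(J_{i^{*}},J_{i^{*}}^{'}))}$, which by definition is the disjoint union of the Morse sets $\mset{M^{\lz}_{\pi}}{\pi\in\DP_{\lz}(N(J_{i^{*}},J_{i^{*}}^{'}))}$ with the connecting orbits between them. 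Since Morse sets in a Morse decomposition are pairwise disjoint and are disjoint from every connecting orbit, the fact that $z$ already lies in the Morse set $M^{\lz}_{\xi_{l}^{'}}$ forces $\xi_{l}^{'}\in\DP_{\lz}(N(J_{i^{*}},J_{i^{*}}^{'}))$, and consequently $M^{\lz}_{\xi_{l}^{'}}\subset N(J_{i^{*}},J_{i^{*}}^{'})$, which is the required containment. The argument for $M^{\lz}_{\xi_{l+1}^{'}}$ is identical.

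The main obstacle is the bookkeeping at the breakdown slice itself: the family $\sset{\GM^{\lz}_{\DP_{\lz}(N(J_i,J_i^{'}))}}_{i\in A}$ is in general not a Morse decomposition of $S^{\lz}$, so one cannot simply apply Theorem \ref{Reineck} across $\lz$ to a global continuation. The argument above sidesteps this by never demanding that the family be a Morse decomposition at $\lz$; it uses only the isolation property of each $N(J_i,J_i^{'})$ at $\lz$ inherited from the finest decomposition, combined with the pairwise disjointness axiom for Morse decompositions, to channel the limit orbit through a unique continuable pair on each side of $\lz$.
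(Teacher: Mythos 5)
Your proof is correct and follows the same outline as the paper's (invoke Theorem \ref{withoutconn}, then reduce the slice-$\lz$ claims to the machinery of Proposition \ref{decompositionlimit}), but it is more careful at the final step. The paper's own proof attributes all four inclusions at $\lam=\lz$ to Proposition \ref{decompositionlimit}, yet that proposition only controls the Hausdorff limits $\HL{\xi_l}$ and $\HL{\xi_{l+1}}$; it says nothing directly about the Morse sets $M^\lz_{\xi_l^{'}}$ and $M^\lz_{\xi_{l+1}^{'}}$. Your promotion argument --- take $z\in M^\lz_{\xi_l^{'}}\cap\HL{\xi_l}\neq\OO$ from Theorem \ref{withoutconn}, observe $z\in\GM^\lz_{\DP_\lz(N(J_{i^*},J_{i^*}^{'}))}$, and use that Morse sets and connecting orbits inside $\GM_\DQ$ are pairwise disjoint to force $\xi_l^{'}\in\DP_\lz(N(J_{i^*},J_{i^*}^{'}))$, hence $M^\lz_{\xi_l^{'}}\subset N(J_{i^*},J_{i^*}^{'})$ --- supplies exactly the step the paper's proof glosses over, and your explicit identification of $(J_{i^*},J_{i^*}^{'})$ as the continuable interval pair housing $\xi_l$ is the natural reading of the paper's informal notation $N(M_{\xi_l})$. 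One minor simplification: once $\xi_l^{'}\in\DP_\lz(N(J_{i^*},J_{i^*}^{'}))$, the containment $M^\lz_{\xi_l^{'}}\subset\GM^\lz_{\DP_\lz(N(J_{i^*},J_{i^*}^{'}))}$ is immediate from the definition of $\GM_\DQ$ as a union over $\pi\in\DQ$, so the extra appeal to invariance and the maximal invariant set is unnecessary at that point.
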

\begin{proof}
From Theorem \ref{withoutconn}, we have the sequences of $(\gamma_i)$s, $(\xi_i)$s and $(\xi_i^{'})$s; also from Proposition \ref{decompositionlimit} we have these inclusions $\HL{\xi_{l}}\subset  \GM^{\lam_0}_{\DP_{\lam_0}(N(M_{\xi_{l}}))}$, $\HL{\xi_{l+1}} \subset  \GM^{\lam_0}_{\DP_{\lam_0}(N(M_{\xi_{l+1}}))}$ and $M^\lz_{\xi_{l}^{'}} \subset  \GM^{\lam_0}_{\DP_{\lam_0}(N(M_{\xi_{l}}))}$,
	 $M^\lz_{\xi_{l+1}^{'}} \subset  \GM^{\lam_0}_{\DP_{\lam_0}(N(M_{\xi_{l+1}}))}$.
\end{proof}
\begin{ex}[Hausdorff limit of connecting orbits]\label{Connecingorbitexample}
\par We study the pitchfork bifurcation from the point of view of this paper. Let $f(x,\lam)$ in (\ref{ODEn}) be $f(x,\lam):=(\lam - \lz)x-x^3$ which undergoes the pitchfork bifurcation at $\lz\in[0,1]$ and add the slow-drift $\dot{\lambda}=\eps \lambda (\lambda-1)$ on the parameter interval $\Lam$. Taking a sequence $\sset{\eps_n}_{\for{n}}$ with $\eps_n\to 0$, we consider the connection orbit $\cc_n$ in the extended slow-fast flow $\Phi_t^{\eps_n}$ and its Hausdorff limit $\cc$ (see Figure \ref{Connecting orbits in slow-drift for the pitchfork bifurcation and its Hausdorff limit}). This dynamical system satisfies Assumption \ref{breakdown} with the Morse decompositions as follows:

\begin{enumerate}
	\item  $(\MD{\zeta},\DP_\zeta)=(\sset{M_1^\zeta},\sset{1})$ for all $\zeta \in [0,\lz)$;
	\item $(\MD{\eta},\DP_\eta)=(\sset{M_1^\eta,M_2^\eta,M_3^\eta},\sset{1<_\eta 2<_\eta 3})$ for all $\eta \in (\lz,1]$, in which $M_1^\eta$, $M_2^\eta$ denote sinks under the flow $\phi_t^\eta$, and $M_3^\eta$ denotes the source under the flow $\phi_t^\eta$ for all $\eta\in (\lz,1]$ (see Figure \ref{Connecting orbits in slow-drift for the pitchfork bifurcation and its Hausdorff limit});
	\item $(\MD{\lz},\DP_\lz)=(\sset{M_1^\lz},\sset{1})$.
\end{enumerate}

\par In Figure \ref{Connecting orbits in slow-drift for the pitchfork bifurcation and its Hausdorff limit}, the bifurcation diagram over parameter interval $\Lambda$ and phase portraits of parameterized flow $\phi_t^\lam$ at some slices are denoted in the light grey. The dotted line denotes the unstable fixed point for $\phi_t^\lambda$, and the solid line denotes the stable ones. For a suitable choice of $\sset{\epsilon_n}_{\for{n}}$ with $\eps_n\to 0$, there is a connecting orbit $\cc_n$ from $M_2^1$ to $M_1^0$ in the flow $\Phi_t^{\eps_n}$ on the left side of \ref{COf2t1} in Figure \ref{Connecting orbits in slow-drift for the pitchfork bifurcation and its Hausdorff limit} and a connecting orbit $\cc_n$ from $M_3^1$ to $M_1^0$ in the flow $\Phi_t^{\eps_n}$ on the left side of \ref{COf3t1} in Figure \ref{Connecting orbits in slow-drift for the pitchfork bifurcation and its Hausdorff limit} for each $\for{n}$ and the two sequences $\sset{\cc_n}_{\for{n}}$ converge in Hausdorff metric as $\eps_n \to 0$. The Hausdorff limit $\cc$ of $\sset{\cc_n}_{\for{n}}$ is illustrated on the right side of \ref{COf2t1} and \ref{COf3t1} in Figure \ref{Connecting orbits in slow-drift for the pitchfork bifurcation and its Hausdorff limit}. 
\par On the right side of \ref{COf3t1} in Figure \ref{Connecting orbits in slow-drift for the pitchfork bifurcation and its Hausdorff limit}, there is a $\gamma\in(\lz,1]$ such that $\cc^{\gamma}$ intersects with two Morse sets $M_3^\gamma$, $M_1^\gamma$ and connecting orbits between them. 
The set $I^\gamma$ discussed in Lemma \ref{totallyordered} is given by $I^\gamma = \sset{M_3^\gamma,M_1^\gamma}$ which is a totally ordered set in $\DP_\gamma^\Gf$.  
\begin{figure}[H]
\centering  
\subfigure[A Connecting orbit from $M_2^1$ to $M_1^0$]{
\label{COf2t1}
\includegraphics[width=0.45\textwidth]{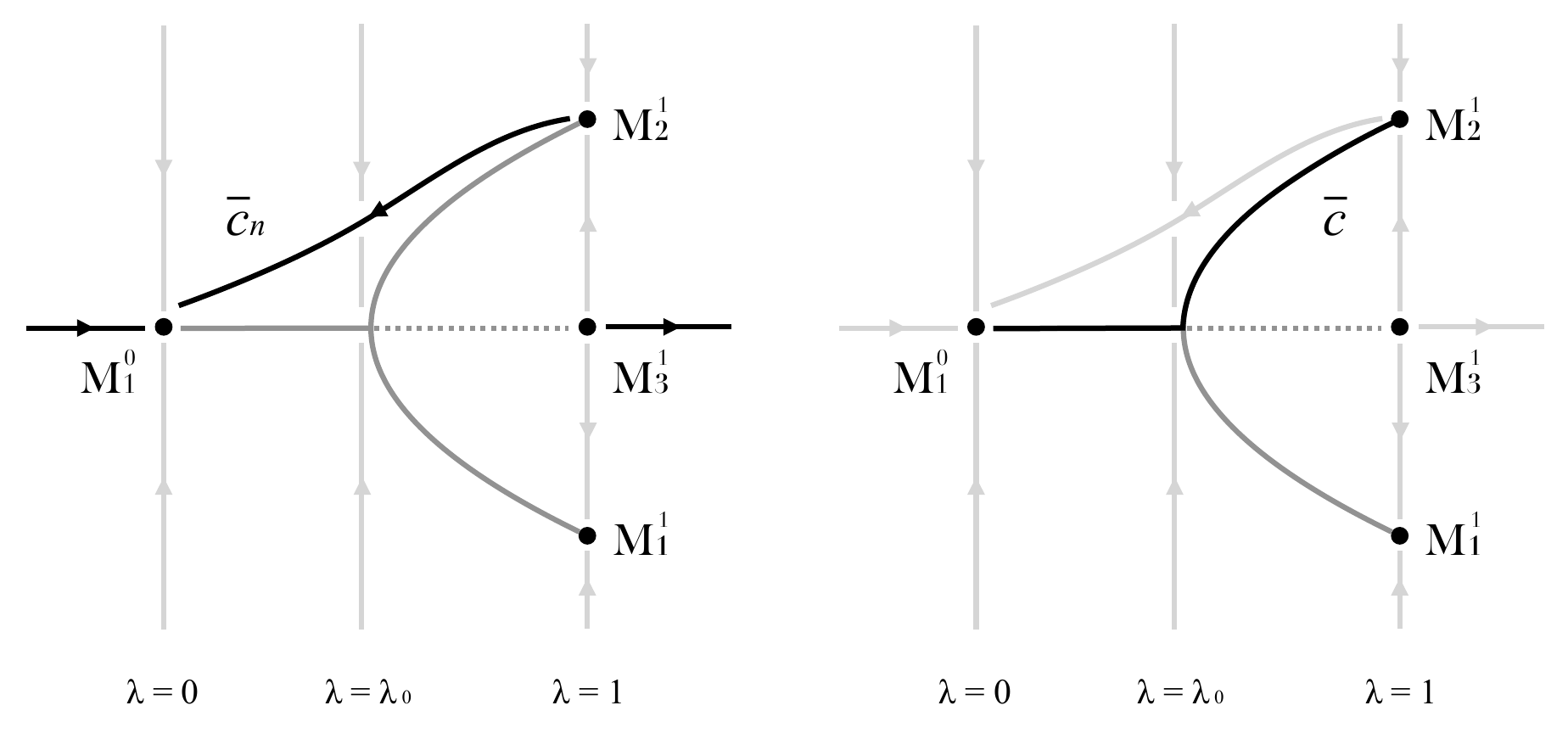}}
\subfigure[A Connecting orbit from $M_3^1$ to $M_1^0$]{
\label{COf3t1}
\includegraphics[width=0.45\textwidth]{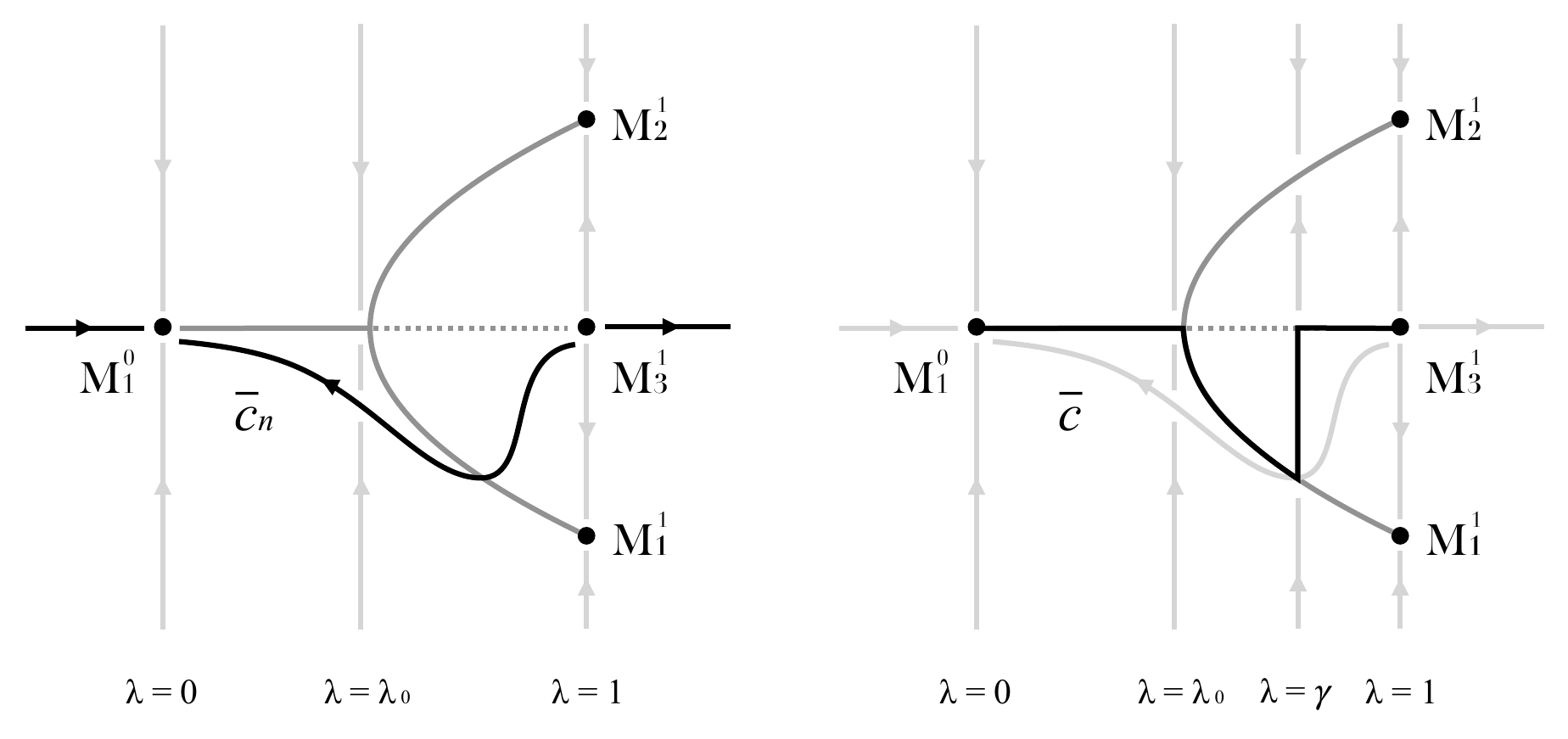}}
\caption{Connecting orbits and their Hausdorff limits (Pitchfork bif.)}
\label{Connecting orbits in slow-drift for the pitchfork bifurcation and its Hausdorff limit}
\end{figure}

\par We then consider a perturbed pitchfork bifurcation consisting of a saddle-node bifurcation and a stable fixed point continuing over $[0,1]$ (see Figure \ref{Connecting orbits and their Hausdorff limits (Perturbed Pitchfork Bifur.)}, in which grey and black lines denoted in the same as in Figure \ref{Connecting orbits in slow-drift for the pitchfork bifurcation and its Hausdorff limit}). This parameterized flow satisfies Assumption \ref{breakdown} with the Morse decompositions as follows:
\begin{enumerate}
	\item  $(\MD{\zeta},\DP_\zeta)=(\sset{M_1^\zeta},\sset{1})$ for all $\zeta \in [0,\lz)$;
	\item $(\MD{\eta},\DP_\eta)=(\sset{M_1^\eta,M_2^\eta,M_3^\eta},\sset{1<_\eta 2<_\eta 3})$ for all $\eta\in (\lz,1]$, in which $M_1^\eta$, $M_2^\eta$ denote sinks under the flow $\phi_t^\eta$, and $M_3^\eta$ denotes the source under the flow $\phi_t^\eta$ for all $\eta\in (\lz,1]$ (see Figure \ref{Connecting orbits and their Hausdorff limits (Perturbed Pitchfork Bifur.)});
	\item $(\MD{\lz},\DP_\lz)=(\sset{M_1^\lz, M_2^\lz},\sset{1<_\lz 2})$, in which $ M_1^\lz=\HLL{1}{-}, M_2^\lz=\HLL{2}{+}=\HLL{3}{+}$.
\end{enumerate}

\par Similarly to the above, we consider the suitable sequence $\sset{\eps_n}_{\for{n}}$ such that there is a connecting orbit $\cc_n$ from $M_3^1$ to $M_1^0$ in the flow $\Phi_t^{\eps_n}$ on the left side of \ref{pCOf2t1} in Figure \ref{Connecting orbits and their Hausdorff limits (Perturbed Pitchfork Bifur.)} and a connecting orbit $\cc_n$ from $M_3^1$ to $M_1^0$ in the flow $\Phi_t^{\eps_n}$ on the left side of \ref{pCOf3t1} in Figure \ref{Connecting orbits and their Hausdorff limits (Perturbed Pitchfork Bifur.)} for each $\for{n}$ and the two sequences $\sset{\cc_n}_{\for{n}}$ converge in Hausdorff metric as $\eps_n \to 0$. The Hausdorff limit $\cc$ of $\sset{\cc_n}_{\for{n}}$ is illustrated on the right side of \ref{pCOf2t1} and \ref{pCOf3t1} in Figure \ref{Connecting orbits and their Hausdorff limits (Perturbed Pitchfork Bifur.)}. 
\par On the right side of \ref{pCOf2t1} in Figure \ref{Connecting orbits and their Hausdorff limits (Perturbed Pitchfork Bifur.)}, there is a $\gamma\in(\lz,1]$ such that $\cc^{\gamma}$ intersects with two Morse sets $M_3^\gamma$, $M_2^\gamma$ and connecting orbits between them. Also, on the right side of \ref{pCOf2t1} in Figure \ref{Connecting orbits and their Hausdorff limits (Perturbed Pitchfork Bifur.)}, $\cc^{\lz}$ intersects with two invariant sets $\HLL{2}{+}=M_2^\lz$, $\HLL{1}{+}=M_1^\lz$ and connecting orbits between them. The sets $I^\gamma, I^\lz$ discussed in Lemma \ref{totallyordered} are totally ordered set in $\DP_\gamma^\Gf$ and $\DP_\lz^\Gf$, respectively.
\par  On the right side of \ref{pCOf3t1} in Figure \ref{Connecting orbits and their Hausdorff limits (Perturbed Pitchfork Bifur.)}, there is a $\gamma\in(\lz,1]$ such that $\cc^{\gamma}$ intersects with two Morse sets $M_3^\gamma$, $M_1^\gamma$ and connecting orbits between them. Also, the set $I^\gamma$ is a totally ordered set in $\DP_\gamma^\Gf$.
\begin{figure}[H]
\centering  
\subfigure[A Connecting orbit from $M_2^1$ to $M_1^0$]{
\label{pCOf2t1}
\includegraphics[width=0.45\textwidth]{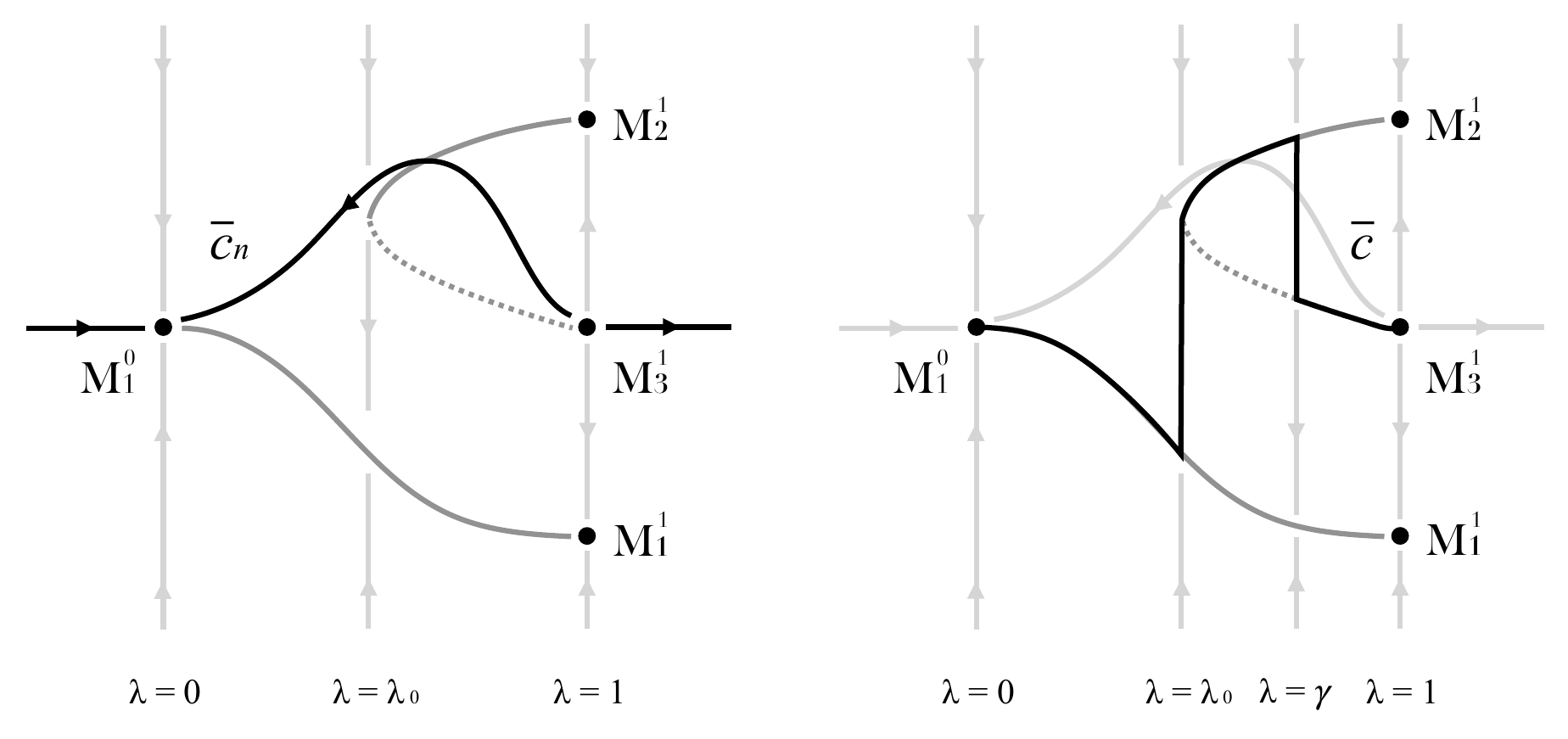}}
\subfigure[A Connecting orbit from $M_3^1$ to $M_1^0$]{
\label{pCOf3t1}
\includegraphics[width=0.45\textwidth]{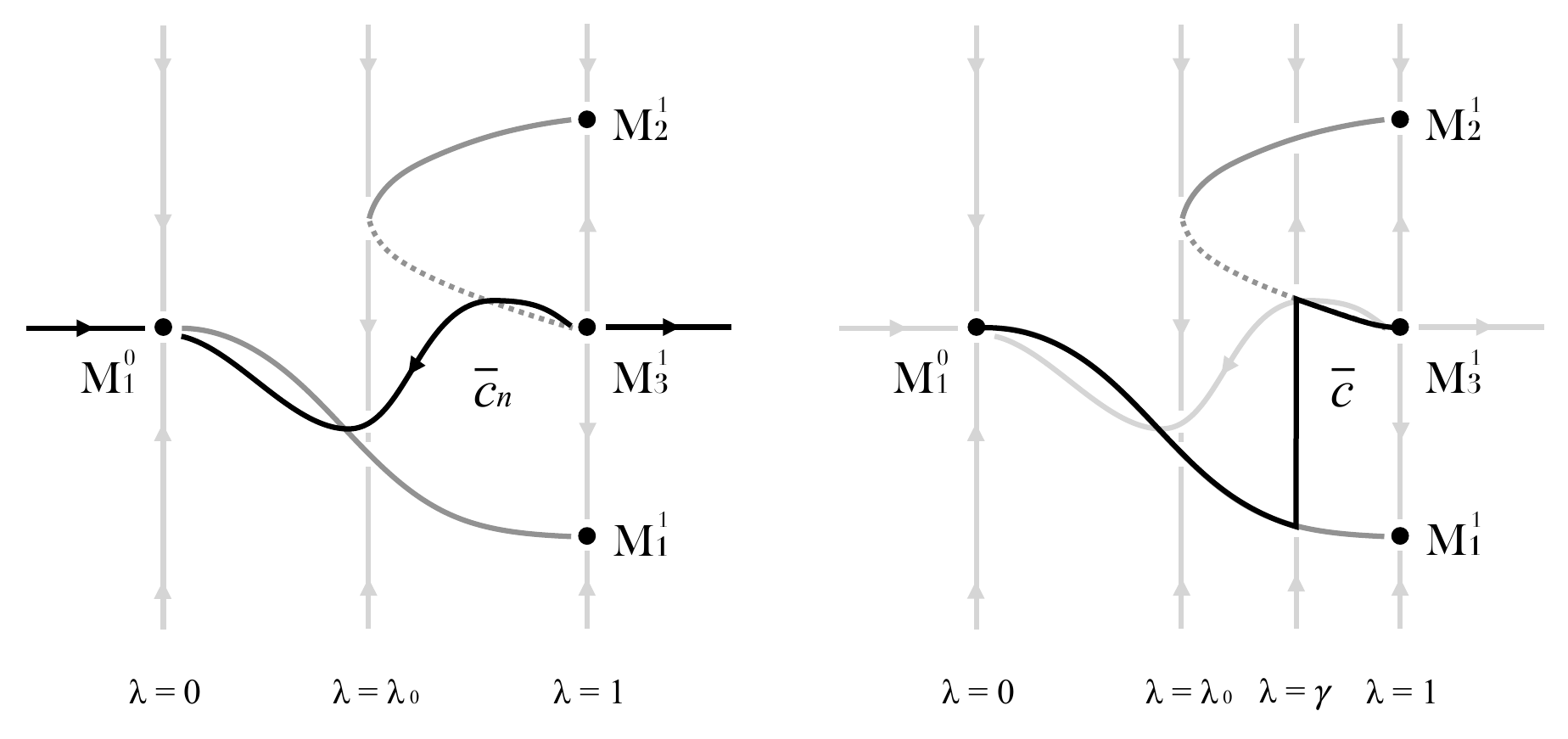}}
\caption{Connecting orbits and their Hausdorff limits (Perturbed pitchfork bif.)}
\label{Connecting orbits and their Hausdorff limits (Perturbed Pitchfork Bifur.)}
\end{figure}
\end{ex}
\begin{ex}\label{ErrRec}
We consider the same pitchfork bifurcation and perturbed pitchfork bifurcation with a slow drift on parameter space as in Example \ref{Connecingorbitexample}. This time, we choose a suitable sequence $\sset{\cc_n}_{\for{n}}$ such that there is a point $(x_n,y_n)\in \cc_n$ with $x_n\to 1$ as $n\to \infty$, and the sequence $\sset{\cc_n}_{\for{n}}$ converges in Hausdorff metric as $n \to \infty$. The Hausdorff limit $\cc$ meets more than one Morse set at the slice $\lam=1$, namely, in \ref{meets3,1} of Figure \ref{meets more than one Morse set}, $\cc^1=M_3^1\cup M_1^1 \cup \DC(M_1^1,M_3^1)$, and in \ref{meets3,2} of Figure \ref{meets more than one Morse set}, $\cc^1=M_3^1\cup M_2^1 \cup \DC(M_2^1,M_3^1)$.
\begin{figure}[H]
\centering  
\subfigure[$\cc^1$ meets $M_3^1$ and $M_1^1$]{
\label{meets3,1}
\includegraphics[width=0.45\textwidth]{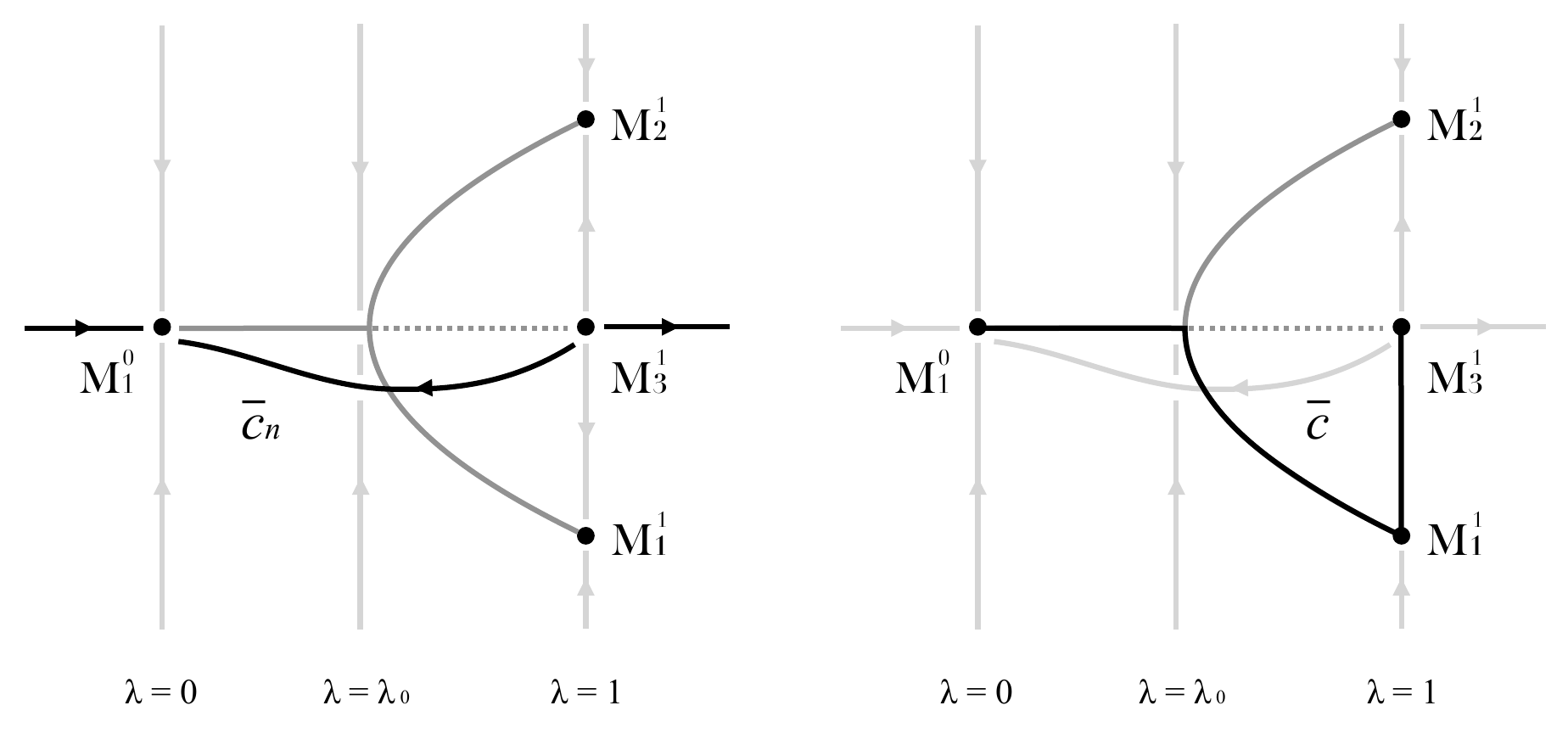}}
\subfigure[$\cc^1$ meets $M_3^1$ and $M_2^1$]{
\label{meets3,2}
\includegraphics[width=0.45\textwidth]{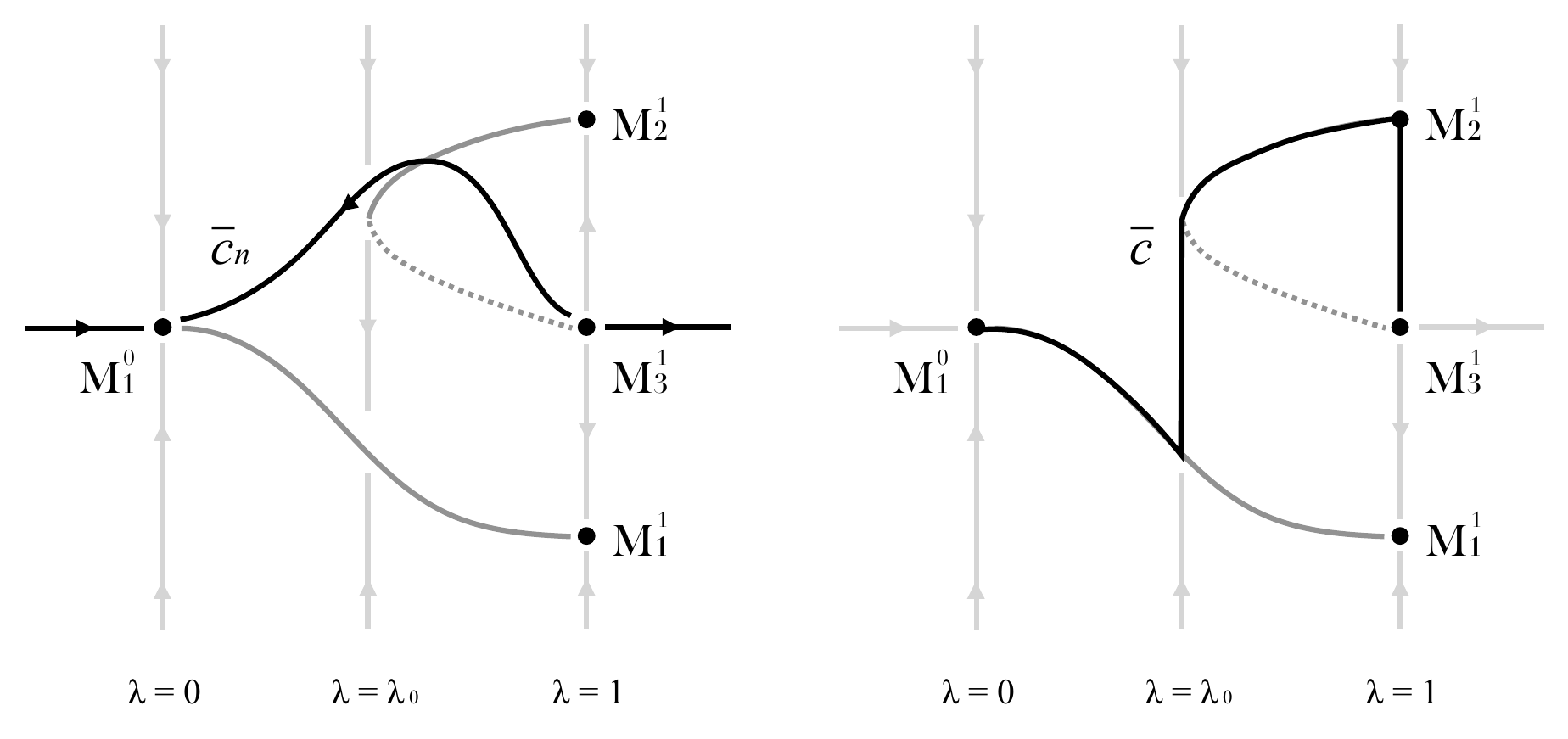}}
\caption{The Hausdorff limit $\cc$ meets more than one Morse set at $\lam=1$}
\label{meets more than one Morse set}
\end{figure}
\end{ex}

\par From the preceding discussion, we see several properties of the Hausdorff limit assuming the existence of connecting orbits for some choice of $\sset{\eps_n}_\for{n}$ with $\eps_n\to 0$.
In the next section, we use the singular transition matrix to show the existence of connecting orbits for a sequence $\sset{\epsilon_n}_{\for{n}}$ with $\eps_n \to 0$.

\subsection{Singular Transition Matrix}\label{STM}

\par Reineck \cite{Reineck1998connection} constructed the singular transition matrix in the situation where the Morse decomposition continues over the parameter interval $[0,1]$. In this section, we construct a singular transition matrix under Assumption \ref{breakdown} and show that it is a chain map between the chain complexes obtained by the homological Conley indices of the Morse sets. We then present an example to illustrate how our singular transition matrix can show the emergence of bifurcation that may not be detected by the conventional transition matrix at some parameter point under Assumption \ref{breakdown}.
\par Throughout this section, we consider the homological Conley index with a finite field coefficient, and we use the field coefficient $\ZZ_2$ in the examples below.
\par Consider the flow $\Phi_t^\eps$ for (\ref{ODEn}) with $\eps>0$ in the product space $N\times \Lam$ and let $\hse=\text{Inv}(N\times \Lam, \Phi_t^\epsilon)$. We have the following lemma of a Morse decomposition of the invariant set $\hse$ for the flow $\Phi_t^\epsilon$. 
\begin{lma}[\cite{Reineck1998connection}, Lemma $3.3$] Let $(\MD{0}, \DP_0)$ be a Morse decomposition of $S^0$ for the flow $\phi_t^0$, and $(\MD{1},\DP_1)$ be a Morse decomposition of $S^1$ for the flow $\phi_t^1$. Then the collection $\MD{0} \sqcup \MD{1}$ is a Morse decomposition of $\hse$ with any small positive $\eps$ for the flow $\Phi_t^\eps$, with index set $\h{\DP}:=\DP_0 \sqcup \DP_1$ equipped with the following admissible order:
\begin{align*}
&\pi<\rho \qquad \text{for all $\pi\in \DP_0$, $\rho\in \DP_1$};\\
&\pi<\pi^{'} \qquad \text{if $\pi<_{\DP_1}\pi^{'}$}; \\
&\rho<\rho^{'} \qquad \text{if $\rho<_{\DP_0}\rho^{'}$}.
\end{align*}
\end{lma}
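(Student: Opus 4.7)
The plan is to exploit the explicit form of the slow parameter drift $\dot\lam = \eps\lam(\lam-1)$ to describe $\hat{S}_\eps$ slice by slice, then to verify that each $M_p^0\times\{0\}$ and each $M_\rho^1\times\{1\}$ is isolated invariant under $\Phi_t^\eps$, and finally to check the Morse-decomposition axiom with respect to the prescribed order on $\h{\DP}$.

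The decisive observation is parameter-dynamical: in $\Lam = (-2\delta,1+2\delta)$ the one-dimensional ODE has only $\lam = 0$ and $\lam = 1$ as fixed points; initial values in $(-2\delta,0)$ escape $\Lam$ in backward time, values in $(1,1+2\delta)$ escape in forward time, and values in $(0,1)$ satisfy $\lam(t)\to 0$ as $t\to+\infty$ and $\lam(t)\to 1$ as $t\to-\infty$. Consequently every complete orbit of $\Phi_t^\eps$ contained in $N\times\Lam$ is of one of three types: (i) $\lam\equiv 0$, whose $x$-component is an orbit of $\phi_t^0$ in $S^0$; (ii) $\lam\equiv 1$, whose $x$-component is an orbit of $\phi_t^1$ in $S^1$; or (iii) a drifting orbit with $\lam(t)\in(0,1)$, monotonically transporting $\lam\to 1$ backward to $\lam\to 0$ forward. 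Thus $\hat{S}_\eps = (S^0\times\{0\})\cup(S^1\times\{1\})\cup D$, with $D$ the set of drifting orbits. For the isolation step, take an isolating neighborhood $U_p\subset N$ of $M_p^0$ under $\phi_t^0$ and form $\hat{U}_p := U_p\times[-\delta',\delta']$ with $\delta'>0$ small enough that $[-\delta',\delta']\subset\Lam$; by the parameter analysis any complete orbit in $\hat{U}_p$ satisfies $\lam\equiv 0$, and on the invariant slice $\{\lam = 0\}$ the flow $\Phi_t^\eps$ reduces to $\phi_t^0$, so $\inv(\hat{U}_p,\Phi_t^\eps)=\inv(U_p,\phi_t^0)\times\{0\}=M_p^0\times\{0\}$, which lies in the interior of $\hat{U}_p$. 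A symmetric neighborhood near $\lam = 1$ handles $M_\rho^1\times\{1\}$, and disjointness of all candidate Morse sets is inherited from the disjointness inside $\MD{0}$, inside $\MD{1}$, and between the slices.

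To verify the Morse-decomposition axiom, take $x\in\hat{S}_\eps$ outside every candidate Morse set. If $x\in S^0\times\{0\}$ or $x\in S^1\times\{1\}$, the conclusion follows at once from the Morse decomposition of $S^0$ or $S^1$, since the order on $\h{\DP}$ extends both $\DP_0$ and $\DP_1$. The interesting case is $x\in D$: the drift $\lam(t)\to 0$ forces $\omega_{\Phi^\eps}(x)\subset S^0\times\{0\}$, and asymptotic autonomy of the $x$-equation, namely $f(\cdot,\lam(t))\to f(\cdot,0)$ uniformly on compacta, lets us pass to limits in $\Phi_s^\eps(x(t_n),\lam(t_n))=(x(t_n+s),\lam(t_n+s))$ and conclude that $\omega_{\Phi^\eps}(x)$ is $\phi_t^0$-invariant; the backward analogue gives $\alpha_{\Phi^\eps}(x)\subset S^1\times\{1\}$ invariant under $\phi_t^1$. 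Since each limit set is compact, connected, and internally chain recurrent, and the chain recurrent set of $\phi_t^0|_{S^0}$ (resp.\ $\phi_t^1|_{S^1}$) is contained in the disjoint union of its Morse sets (which are pairwise disjoint closed subsets, hence clopen in that union), connectedness forces $\omega_{\Phi^\eps}(x)\subset M_p^0\times\{0\}$ and $\alpha_{\Phi^\eps}(x)\subset M_\rho^1\times\{1\}$ for unique $p\in\DP_0$ and $\rho\in\DP_1$. By construction $p<\rho$ in $\h{\DP}$, completing the axiom.

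The step I expect to be the main obstacle is showing that the $\omega$- and $\alpha$-limits of a drifting orbit each sit in a single Morse set rather than spreading across several Morse sets with intermediate connecting orbits. The resolution relies on three ingredients that are standard in Conley-index theory but nontrivial to assemble: asymptotic autonomy of the $x$-equation as $\lam(t)\to 0,1$, to transfer invariance of the limit sets to $\phi_t^0$ and $\phi_t^1$; internal chain recurrence of $\omega$- and $\alpha$-limit sets of a single orbit; and the containment of the chain recurrent set of a Morse-decomposable flow in the disjoint union of its Morse sets, so that a connected chain recurrent subset sits in exactly one Morse component.
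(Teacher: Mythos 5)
The paper does not supply a proof of this lemma; it is cited verbatim from Reineck. So I can only assess your argument on its own terms.

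Your proof is correct, and its overall shape (classify complete orbits of $\Phi_t^\eps$ via the monotone drift of $\lam$, use small cylinders $U_p\times[-\delta',\delta']$ and $U_\rho\times[1-\delta',1+\delta']$ for isolation, then verify the Morse axiom by examining the three orbit types) is exactly the natural route and, as far as I can tell, mirrors Reineck's. One simplification: the ``asymptotic autonomy'' detour is unnecessary. The extended flow $\Phi_t^\eps$ is itself autonomous on $N\times\Lam$, so $\omega_{\Phi^\eps}(x)$ is automatically $\Phi_t^\eps$-invariant; since the monotone drift forces $\omega_{\Phi^\eps}(x)\subset N\times\{0\}$ and that slice is invariant with $\Phi_t^\eps|_{\{\lam=0\}}=(\phi_t^0,\mathrm{id})$, invariance under $\phi_t^0$ is immediate without any uniform-convergence argument on $f(\cdot,\lam(t))$. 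Once that is in place, your final step is exactly right and is the genuine crux: the $\omega$-limit of a single orbit of an autonomous flow is compact, connected, and internally chain transitive (and those chains, being in $N\times\{0\}$, are chains for $\phi_t^0$), while for any Morse decomposition of $S^0$ the chain recurrent set of $\phi_t^0|_{S^0}$ is contained in $\bigcup_p M_p^0$; connectedness then forces $\omega_{\Phi^\eps}(x)$ into a single $M_p^0\times\{0\}$, and likewise for $\alpha_{\Phi^\eps}(x)$ into a single $M_\rho^1\times\{1\}$. The ordering $p<\rho$ in $\h{\DP}$ is then automatic, which closes the Morse axiom for the drifting case; the two slice cases follow directly because $\h{\DP}$ restricts to the original orders on $\DP_0$ and $\DP_1$.
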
  
\par We denote the Morse decomposition of the invariant set $\hse$ as $(\DM(\hse),\h{\DP})$ for the flow $\Phi_t^\eps$. Let $\h{\DP}_i$ be the restriction of $\h{\DP}$ on $\DP_i$, for $i=0$ or $1$, then $\DP_i \cong \h{\DP}_i$, and under the trivial identification we do not distinguish $\DP_i$ and $\h{\DP}_i$ unless it causes confusions. We write those Morse sets for the flow $\Phi_t^\eps$ in the $\DM(\hse)$ as $M_p^0\times \{0\}$ if $p\in {\DP}_0$, and $M_q^1\times \{1\}$ if $q\in {\DP}_1$. As a result, $\DM(\hse)=\mset{M_p^0\times \{0\}}{p\in {\DP}_0}\cup \mset{M_q^1\times \{1\}}{q\in {\DP}_1}$.

\par We notice that this Morse decomposition is independent of the $\eps$. For each small positive $\eps$, Theorem \ref{existenceofconnectionmatrix} shows the existence of a connection matrix of the Morse decomposition $\MDp$. 
With the Morse decomposition $(\MD{0}, \DP_0)$ for the flow $\phi_t^0$ and the Morse decomposition $(\MD{1},\DP_1)$ for the flow $\phi_t^1$, we shall take a connection matrix for the flow $\Phi_t^\eps$, and define the singular transition matrix as follows. 
Let $\Delta^\eps_*$ be a connection matrix for the flow $\Phi_t^\eps$ with the Morse decomposition $\MDp$ and $\Delta_n^\eps$ be this connection matrix at the degree $n$ of the homological Conley index, which takes the following form:
\begin{equation}\label{connectionmatrix}\tag{$\star$}
\Delta_n^{\eps} =
\begin{bNiceMatrix}[first-row, first-col]
      &\DP_0             &\DP_1           \\
\DP_0 & \h{X}_n^\eps(0)  &\h{T}_n^\eps    \\
\DP_1 & 0                &\h{X}_n^\eps(1) \\
\end{bNiceMatrix}
\end{equation}
Here, we have the following three submatrices:
\begin{enumerate}
	\item The submatrix $\h{X}_n^\eps(0)$, a $|\DP_0|\times |\DP_0|$ matrix, is a connection matrix of the Morse decomposition $(\mset{M_p^0\times \sset{0}}{p\in \DP_0},\h{\DP}_0)$ for the flow $\Phi_t^\eps$, 
	\item The submatrix $\h{X}_n^\eps(1)$, a $|\DP_1|\times |\DP_1|$ matrix, is a connection matrix of the Morse decomposition $(\mset{M_q^1\times \sset{1}}{q\in \DP_1},\h{\DP}_1)$ for the flow $\Phi_t^\eps$, 
	\item The submatrix $\h{T}_n^\eps$ is a $|\DP_0|\times |\DP_1|$ matrix.
\end{enumerate} 

\par In application, we are only interested in the connection matrics $\Delta^\eps_*$ as $\eps \to 0$. Since we use a finite field coefficient to compute the homological Conley index, there are only finitely many possible connection matrices for each $\eps$ under the flow $\Phi_t^\eps$. By pigeonhole principle, there is a sequence $\sset{\eps_m}_\for{m}$ with $\eps_m \to 0$ as $m \to \infty$, such that these connection matrices $\Delta^{\eps_m}_*$ are identical, namely $\Delta^{\eps_m}_*=\Delta_*^{\eps_{m+1}}$ for all $\for{m}$. 
From now on, we only consider such a sequence $\sset{\eps_m}_\for{m}$. 
Let $\Delta_*:=\Delta^{\eps_m}_*$ for all $\for{m}$.  As a result, we can omit the dependence on $\eps$ and rewrite the connection matrix at the degree $n$ as:
\begin{equation}	\label{connectionmatrixinproductflow}\tag{$\star\star$}
\Delta_n=
\begin{bmatrix}
\h{X}_n(0)  &\h{T}_n  \\
0           &\h{X}_n(1)
\end{bmatrix}
\end{equation}
The submatrix $\h{T}_n$ in the above $($\ref{connectionmatrixinproductflow}$)$:
$$
\h{T}_n: \bigoplus_{q\in \DP_1}C\!H_n(M_q^1\times \sset{1}) \to \bigoplus_{p\in \DP_0}C\!H_{n-1}(M_p^0\times \sset{0})
$$
is a degree $-1$ map. We show that there is a natural corresponding degree $0$ map $T_n$ to $\h{T}_{n+1}$ for all $n\in \NN$.


\begin{lma}[\cite{Reineck1998connection}, Theorem $5.4$]\label{homologybraidisomorphism}
For $\lam=0,1$	, there is a $X_n(\lam)$ which is a connection matrix for $(\MD{\lambda},\DP_\lambda)$ under the flow $\phi_t^\lambda$ and a natural homology braid isomorphism $\Theta_\lam=\mset{\theta^\lam_n(I)}{I\in \DI(\DP_\lambda), n\in \NN}$ of degree $\lambda$, such that 
$$
\h{X}_n(\lam)(i,j)= (\theta_n^\lam(i))^{-1} \circ X_{n-\lambda}(\lam)(i,j)\circ \theta^\lam_{n+1}(j), \qquad \text{for any } i,j\in \DP_{\lam}.
$$
\end{lma}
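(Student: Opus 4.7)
The plan is to exploit the local product structure of the extended slow-fast flow $\Phi_t^\eps$ near the endpoints $\lambda = 0$ and $\lambda = 1$. Since $g_\eps(\lambda) = \eps\lambda(\lambda - 1)$ has $\lambda = 0$ as an attracting fixed point of the drift and $\lambda = 1$ as a repelling fixed point, on neighborhoods of the form $N \times [-\delta, \delta]$ and $N \times [1-\delta, 1+\delta]$ the flow $\Phi_t^\eps$ (for sufficiently small $\eps > 0$) is continuation-equivalent to the product flows $\phi_t^0 \times \psi_t^{\mathrm{a}}$ and $\phi_t^1 \times \psi_t^{\mathrm{r}}$, respectively, where $\psi_t^{\mathrm{a}}$ has an attracting fixed point and $\psi_t^{\mathrm{r}}$ has a repelling fixed point. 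This reduces the Conley-theoretic data of $\Phi_t^\eps$ at $\lambda \in \{0,1\}$ to a product computation in $\phi_t^\lambda$ and a one-dimensional flow.

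First I would construct the degree-$\lambda$ isomorphism $\theta^\lambda_n(I)$ for each $I \in \DI(\DP_\lambda)$. The product theorem for the Conley index (combined with the continuation above) yields
\[
h(M_I^\lambda \times \{\lambda\},\, \Phi_t^\eps) \simeq h(M_I^\lambda, \phi_t^\lambda) \wedge h(\{\lambda\}, \psi_t^\lambda),
\]
with $\psi_t^0 := \psi_t^{\mathrm{a}}$ and $\psi_t^1 := \psi_t^{\mathrm{r}}$. Passing to homology with field coefficients, $\CH_*(\{0\}, \psi_t^{\mathrm{a}})$ is $\mathbb{F}$ concentrated in degree $0$ while $\CH_*(\{1\}, \psi_t^{\mathrm{r}})$ is $\mathbb{F}$ concentrated in degree $1$, so the K\"unneth isomorphism produces $\theta^\lambda_n(I)$ with the claimed $\lambda$-shift in degree.

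Next I would verify that $\Theta_\lambda = \{\theta^\lambda_n(I)\}$ is a homology braid isomorphism in the sense of Definition \ref{HomologyBraid}, i.e., that it commutes with $i^\sharp$, $j^\sharp$, and $\tilde{\partial}$ for every adjacent pair $(I, J) \in \DI_2(\DP_\lambda)$. Fix an index filtration $\{N^\lambda(K)\}_{K \in \DA(\DP_\lambda)}$ for $\phi_t^\lambda$ and an index pair $(L_1, L_0)$ for the attracting or repelling fixed point of $\psi_t^\lambda$; then $\{N^\lambda(K) \times L_1\}$ is an index filtration for the product Morse decomposition under $\phi_t^\lambda \times \psi_t^\lambda$, and the braid short exact sequences split as tensor products of the $\phi_t^\lambda$-sequences with the single free summand $C_*(L_1, L_0)$ sitting in degree $\lambda$. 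Naturality of the Eilenberg-Zilber/K\"unneth map then identifies the induced $i^\sharp, j^\sharp, \tilde{\partial}$ maps with their $\phi_t^\lambda$-counterparts up to the $\lambda$-degree shift, and Theorem \ref{Continuation Property} transports the braid isomorphism from $\phi_t^\lambda \times \psi_t^\lambda$ to $\Phi_t^\eps$.

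With $\Theta_\lambda$ in hand, the formula
\[
X_{n-\lambda}(\lambda)(i,j) \;:=\; \theta^\lambda_n(i) \circ \h{X}_n(\lambda)(i,j) \circ (\theta^\lambda_{n+1}(j))^{-1}
\]
defines an upper-triangular degree-$(-1)$ endomorphism of $\bigoplus_{p \in \DP_\lambda} \CH_*(M_p^\lambda, \phi_t^\lambda)$, and the connection-matrix diagrams of Definition \ref{dfnofConnectionMatrix} for $\h{X}_n(\lambda)$ transport through $\Theta_\lambda$ to the diagrams for $X_n(\lambda)$ with respect to $\SH(\MD{\lambda}, \DP_\lambda)$, showing that $X_n(\lambda)$ is a connection matrix for $(\MD{\lambda}, \DP_\lambda)$ under $\phi_t^\lambda$. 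Rearranging yields the stated formula. The main obstacle is the simultaneous compatibility of index filtrations: one must produce a product index filtration for $\phi_t^\lambda \times \psi_t^\lambda$ that is indexed consistently over all $K \in \DA(\DP_\lambda)$ and continuation-equivalent to one chosen intrinsically for $\Phi_t^\eps$, and then verify that the K\"unneth map genuinely intertwines the connecting homomorphisms $\tilde{\partial}$ (and not merely the Conley indices themselves), with the degree shift arising from the nontrivial top cell of $h(\{1\}, \psi_t^{\mathrm{r}})$. Once this naturality is established, the remainder is formal bookkeeping.
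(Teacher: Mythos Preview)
The paper does not supply its own proof of this lemma; it is quoted verbatim as \cite{Reineck1998connection}, Theorem~5.4, and used as a black box. Your proposal is therefore not competing with an argument in the paper but rather reconstructing the content of the cited reference.

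Your reconstruction is correct and is essentially the argument Reineck gives: localize near $\lambda=0$ (attracting drift) and $\lambda=1$ (repelling drift), continue $\Phi_t^\eps$ to the product flow $\phi_t^\lambda \times \psi_t^\lambda$, invoke the product/K\"unneth theorem for the Conley index to obtain the degree-$\lambda$ shift (since $h(\{0\},\psi_t^{\mathrm a})\simeq S^0$ and $h(\{1\},\psi_t^{\mathrm r})\simeq S^1$), and check naturality with respect to the attractor-repeller long exact sequences so that the $\theta^\lambda_n(I)$ assemble into a homology braid isomorphism. Transporting $\h X_n(\lambda)$ through $\Theta_\lambda$ then yields a connection matrix $X_{n-\lambda}(\lambda)$ for $\phi_t^\lambda$, exactly as you describe. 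The only point worth tightening is the continuation step: you should make explicit that the homotopy from $\Phi_t^\eps$ to the product flow can be chosen so that the \emph{entire} index filtration $\{N^\lambda(K)\times L_1\}_{K\in\DA(\DP_\lambda)}$ remains an index filtration throughout the homotopy, not merely that each individual index pair survives; this is what guarantees the braid maps (and in particular the connecting homomorphisms $\tilde\partial$) are carried to one another, and it is the substance behind the ``main obstacle'' you flag.
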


\par In other words, the preceding lemma shows the following diagram commutes:
$$\begin{CD}
C\!H_{n+1}(M_j^\lambda\times \sset{\lambda}) @>{\h{X}_n(\lambda)(i,j)}>> C\!H_{n}(M_i^\lambda\times\sset{\lambda}) \\
	     @V{\theta_{n+1}^\lambda(j)}VV       @V{\theta_{n}^\lambda(i)}VV\\
C\!H_{n+1-\lam}(M_j^\lambda) @>{X_{n-\lambda}(\lambda)(i,j)}>> C\!H_{n-\lambda}(M_i^\lambda)		
\end{CD}$$
\begin{note}
	There is an extension of the preceding lemma by K.Mischaikow (\cite{Mischaikow1989Transitionsystems}, Theorem $2.10$), and the idea is that if the dynamics in the parameter space added a $k$-dimensional unstable manifold to each Morse set, then the homology braid isomorphism is of degree $k$.
\end{note}
\par Now there is a degree $0$ map $T_n$ corresponding to $\h{T}_{n+1}$ in a natural way. Using the $\theta^\lambda_*$ in the preceding lemma, we define the degree $0$ map $T_*$.
\begin{dfn}\label{defofchainmap}
	Define $ T_n:\oplus_{p\in\DP_1}C\!H_n(M_p^1)\to \oplus_{q\in\DP_0}C\!H_n(M_q^0) $ by
	$$
	T_n(i,j):=\theta^0_n (i)\circ\h{T}_{n+1}(i,j)\circ{(\theta^1_{n+1}(j))}^{-1}, \qquad \text{for any } i\in\DP_0, j\in \DP_1.
	$$
\end{dfn}
\begin{note}
This means the map $T$ makes the following diagram commutes:
$$\begin{CD}
	C\!H_{n+1}(M_j^1\times \sset{1}) @>{\h{T}_{n+1}(i,j)}>> C\!H_{n}(M_i^0\times\sset{0}) \\
	     @V{\theta_{n+1}^1(j)}VV       @V{\theta_n^0(i)}VV\\
C\!H_n(M_j^1) @>{T_n(i,j)}>> C\!H_n(M_i^0)	
\end{CD}$$
\end{note}

\begin{dfn}[Singular transition matrix] Under the above situation, we call the homomorphism $T_n$: 
$$ 
T_n:\bigoplus_{p\in\DP_1}C\!H_n(M_p^1)\to \bigoplus_{q\in\DP_0}C\!H_n(M_q^0) 
$$
\textit{\textbf{a singular transition matrix}} from $(\MD{1},\DP_1)$ to $(\MD{0}, \DP_0)$ at the degree $n$.
\end{dfn}
\par By the existence of connection matrix $\Delta_*$, we get the existence of the singular transition matrix as below.
\begin{thm}[Existence of singular transition matrix]
The set of singular transition matrices from $(\MD{1},\DP_1)$ to $(\MD{0}, \DP_0)$  is not empty.
\end{thm}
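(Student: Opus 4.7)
The plan is to assemble the pieces already established in this section and apply Definition~\ref{defofchainmap} to exhibit a singular transition matrix.

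First, I would invoke Theorem~\ref{existenceofconnectionmatrix} for the Morse decomposition $(\DM(\hse), \h{\DP})$ under $\Phi_t^\eps$: for each sufficiently small $\eps > 0$ there exists a connection matrix $\Delta_*^\eps$. Because the admissible order on $\h{\DP} = \DP_0 \sqcup \DP_1$ places every $\pi \in \DP_0$ strictly below every $\rho \in \DP_1$, and because a connection matrix is upper triangular with respect to any admissible order, every entry $\Delta_n^\eps(\rho, \pi)$ with $\rho \in \DP_1$ and $\pi \in \DP_0$ must vanish. This gives the block-triangular form ($\star$) automatically.

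Second, I would extract an $\eps$-independent connection matrix via the pigeonhole principle. Working over a finite field (taken to be $\ZZ_2$), each entry of $\Delta_n^\eps$ is a homomorphism between finite-dimensional $\ZZ_2$-vector spaces, hence lies in a finite set; moreover, the homological Conley indices of the Morse sets are nonzero in only finitely many degrees, so $\Delta_*^\eps$ as a whole takes only finitely many values as $\eps$ varies. Consequently some value is attained along a sequence $\sset{\eps_m}_{\for{m}}$ with $\eps_m \to 0$, and we may set $\Delta_* := \Delta_*^{\eps_m}$, obtaining the form ($\star\star$). Its upper-right block $\h{T}_{n+1}$ is then a well-defined degree $-1$ homomorphism from $\bigoplus_{q \in \DP_1} \CH_{n+1}(M_q^1 \times \sset{1})$ to $\bigoplus_{p \in \DP_0} \CH_n(M_p^0 \times \sset{0})$.

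Finally, I would apply Lemma~\ref{homologybraidisomorphism} at $\lam = 0$ and $\lam = 1$, which supplies connection matrices $X_*(0)$ and $X_*(1)$ of the Morse decompositions $(\MD{0}, \DP_0)$ and $(\MD{1}, \DP_1)$ for the original flows $\phi_t^0$ and $\phi_t^1$, together with the degree-$\lam$ homology braid isomorphisms $\theta^\lam_*$. Definition~\ref{defofchainmap} then produces
\[
T_n(i,j) := \theta^0_n(i) \circ \h{T}_{n+1}(i,j) \circ \bigl(\theta^1_{n+1}(j)\bigr)^{-1}
\qquad (i \in \DP_0,\ j \in \DP_1),
\]
which is a degree $0$ homomorphism with the required source and target, and hence by definition a singular transition matrix. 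The substantive input is Lemma~\ref{homologybraidisomorphism}; once the homology braid isomorphisms are in hand, the construction of $T_*$ is purely formal and no further obstacle to existence remains.
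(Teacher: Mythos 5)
Your argument is correct and follows exactly the same route as the paper: Theorem~\ref{existenceofconnectionmatrix} provides connection matrices $\Delta_*^\eps$ for $\Phi_t^\eps$, the pigeonhole argument (over a finite field, between the $\eps$-independent Conley indices of the fixed Morse decomposition $(\DM(\hse),\h{\DP})$) gives a sequence $\eps_m \to 0$ with a constant connection matrix, and Lemma~\ref{homologybraidisomorphism} together with Definition~\ref{defofchainmap} yields $T_*$. The paper states the theorem as an immediate consequence of this preceding construction, and you have reproduced precisely that chain of reasoning, just spelling out the steps the paper leaves implicit.
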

\begin{note}
The singular transition matrix ${T}_n$ from $(\MD{1},\DP_1)$ to $(\MD{0}, \DP_0)$ might not be unique because of the choice of the sequence $\sset{\eps_m}_\for{m}$. 
\end{note}

\par From now on, we go back to the Morse decomposition of the invariant set $\hse$ as $(\DM(\hse),\h{\DP})$ for the flow $\Phi_t^\eps$. From the submatrix $\h{T}_n^\epsilon$ in (\ref{connectionmatrix}), we can get the information of connecting orbits for any adjacent pair $(p,q)$ in $\h{\DP}$ with $p\in\DP_0$, $q\in \DP_1$ and combining with Proposition \ref{Existenceofaconnectingorbit}, we can restate the existence of a connecting orbit using the submatrix $\h{T}_n^\epsilon$.
\begin{prop} If there is an adjacent pair $(p,q)$ with $p\in\DP_0$, $q\in \DP_1$ such that the $(p,q)$ element in the submatrix $\h{T}_n^\epsilon(p,q)\neq 0$ at a degree $n$, then there is a connecting orbit from $M_q^1\times\sset{1}$ to $M_p^0\times\sset{0}$ under the flow $\Phi_t^{\eps}$.
\end{prop}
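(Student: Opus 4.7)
The statement is essentially a direct application of Proposition \ref{Existenceofaconnectingorbit} to the connection matrix $\Delta^\eps_*$ for the Morse decomposition $(\DM(\hse),\h\DP)$ of the invariant set $\hse$ under the flow $\Phi_t^\eps$. My plan is to unpack the block structure of $\Delta^\eps_n$ in $(\ref{connectionmatrix})$ and identify the $(p,q)$ entry of $\h{T}^\eps_n$ with the $(p,q)$ entry of $\Delta^\eps_n$ viewed as an endomorphism of $\bigoplus_{r\in\h\DP}\CH_*(M_r)$.

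First I would note that by construction $\Delta^\eps_n$ is a connection matrix for the Morse decomposition $(\DM(\hse),\h\DP)$ of $\hse$ under $\Phi_t^\eps$, where $\h\DP=\DP_0\sqcup\DP_1$ is equipped with the admissible order making every element of $\DP_0$ smaller than every element of $\DP_1$. In particular, for $p\in\DP_0$ and $q\in\DP_1$, the block decomposition $(\ref{connectionmatrix})$ shows that the $(p,q)$-entry of $\Delta^\eps_n$ is precisely the $(p,q)$-entry $\h{T}^\eps_n(p,q)\colon \CH_n(M_q^1\times\{1\})\to \CH_{n-1}(M_p^0\times\{0\})$ of the off-diagonal submatrix.

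Next, I would verify that $(p,q)$ is an adjacent pair in $\DI_2(\h\DP)$ in the sense required by Proposition \ref{Existenceofaconnectingorbit}. By the hypothesis of the statement we already have $(p,q)\in\DI_2(\h\DP)$: the singletons $\{p\}$ and $\{q\}$ are intervals, $\{p,q\}$ is an interval (since $p<q$ and there is nothing strictly between them in $\h\DP$ once we pass to the subposet $\{p,q\}$), and $p<q$ rules out $p>q$. Hence $(\{p\},\{q\})$ is an adjacent pair in $\h\DP$.

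Now the hypothesis $\h{T}^\eps_n(p,q)\neq 0$ means $\Delta^\eps_n(p,q)\neq 0$, so Proposition \ref{Existenceofaconnectingorbit} applied to the Morse decomposition $(\DM(\hse),\h\DP)$ and the adjacent pair $(\{p\},\{q\})$ immediately yields $\DC(M_p^0\times\{0\},M_q^1\times\{1\})\neq\OO$ under $\Phi_t^\eps$. By definition of $\DC$, this is precisely the existence of a connecting orbit from $M_q^1\times\{1\}$ to $M_p^0\times\{0\}$. No genuine obstacle arises here; the main step is simply the bookkeeping identification between the off-diagonal block $\h{T}^\eps_n$ and the corresponding entries of the full connection matrix, together with the observation that $(\{p\},\{q\})$ qualifies as an adjacent pair in $\h\DP$.
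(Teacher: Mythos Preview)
Your proposal is correct and matches the paper's approach exactly: the paper does not give a separate proof but simply introduces the proposition as a restatement of Proposition \ref{Existenceofaconnectingorbit} applied to the connection matrix $\Delta^\eps_*$ for $(\DM(\hse),\h\DP)$, which is precisely the identification and application you carry out.
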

\par From the preceding proposition and the definition of the singular transition matrix, we have the following theorem.
\begin{thm}
If there is an adjacent pair $(p,q)$ with $p\in\DP_0$, $q\in \DP_1$ such that the $(p,q)$ element in the singular transition matrix ${T}_n(p,q)\neq 0$ at a degree $n$, then there is a sequence $\sset{\eps_m}_\for{m}$ with $\eps_m \to 0$ as $m\to \infty$, such that there is a connecting orbit from $M_q^1\times\sset{1}$ to $M_p^0\times\sset{0}$ under the flow $\Phi_t^{\eps_m}$.
\end{thm}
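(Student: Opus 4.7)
The plan is to reduce the statement to the previous proposition about connecting orbits detected by entries of the connection matrix $\Delta_*$ for $\Phi_t^\eps$, via an unfolding of the definition of the singular transition matrix.

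First, I would unwind Definition \ref{defofchainmap}, which expresses
$$T_n(p,q)=\theta^0_n(p)\circ \hat T_{n+1}(p,q)\circ (\theta^1_{n+1}(q))^{-1}.$$
Since both $\theta^0_n(p):\CH_n(M_p^0\times\sset{0})\to\CH_n(M_p^0)$ and $\theta^1_{n+1}(q):\CH_{n+1}(M_q^1\times\sset{1})\to\CH_n(M_q^1)$ are components of homology braid isomorphisms supplied by Lemma \ref{homologybraidisomorphism}, they are isomorphisms of the underlying vector spaces. Therefore the hypothesis $T_n(p,q)\neq 0$ is equivalent to $\hat T_{n+1}(p,q)\neq 0$, where $\hat T_{n+1}$ is the off-diagonal block of the connection matrix $\Delta_{n+1}$ in (\ref{connectionmatrixinproductflow}).

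Next, recall how $\Delta_*$ was produced. Working with a finite field coefficient ring and using the pigeonhole principle, a sequence $\sset{\eps_m}_{\for{m}}$ with $\eps_m\to 0$ was selected along which the connection matrices $\Delta_*^{\eps_m}$ of the Morse decomposition $\MDp$ for $\Phi_t^{\eps_m}$ are all equal to a common matrix $\Delta_*$. Consequently $\hat T_{n+1}^{\eps_m}(p,q)=\hat T_{n+1}(p,q)\neq 0$ for every $m\in\NN$.

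Now I would verify that $(p,q)$ is an adjacent pair in $\hat{\DP}$: since $p\in\DP_0$ and $q\in\DP_1$, the admissible order on $\hat\DP$ places every element of $\DP_0$ below every element of $\DP_1$, so $\sset{p}\cup\sset{q}$ is trivially an interval and $(p,q)\in\DI_2(\hat\DP)$, with $p$ strictly below $q$. By the proposition preceding the statement (the reformulation of Proposition \ref{Existenceofaconnectingorbit} in terms of $\hat T_n^\eps$), the non-vanishing of $\hat T_{n+1}^{\eps_m}(p,q)$ at degree $n+1$ yields a connecting orbit from $M_q^1\times\sset{1}$ to $M_p^0\times\sset{0}$ under $\Phi_t^{\eps_m}$ for each $m$, which gives the desired sequence of connecting orbits along $\eps_m\to 0$. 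The only genuine point of care is the bookkeeping of degrees: the singular transition matrix $T_n$ is defined from the degree-$(n+1)$ block $\hat T_{n+1}$, so one must confirm that the previous proposition, applied at degree $n+1$, delivers exactly the conclusion we want—this is the step I would double-check, but it is not a serious obstacle once the shift in indices is tracked consistently.
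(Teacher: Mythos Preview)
Your argument is correct and follows exactly the approach the paper intends: the paper itself gives no explicit proof, stating only that the theorem follows ``from the preceding proposition and the definition of the singular transition matrix,'' and your write-up spells this out faithfully (unwinding Definition \ref{defofchainmap} via the isomorphisms $\theta^\lambda_*$, recalling the pigeonhole choice of $\sset{\eps_m}$, and applying the preceding proposition at degree $n+1$).

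One small correction: your claim that $\sset{p}\cup\sset{q}$ is ``trivially an interval'' in $\hat\DP$ is not true in general---if there is any $r\in\DP_0$ with $p<_{\DP_0}r$, or any $r\in\DP_1$ with $r<_{\DP_1}q$, then $p<r<q$ in $\hat\DP$ and $\sset{p,q}$ fails to be an interval. Fortunately this step is unnecessary: the theorem already \emph{assumes} $(p,q)$ is an adjacent pair, so you may simply delete that paragraph.
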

\begin{ex}
\par We consider the pitchfork bifurcation happened at $\lz\in[0,1]$ and construct a singular transition matrix for it. Let $f(x,\lam)$ in (\ref{ODEn}) be $f(x,\lam):=(\lam - \lz)x-x^3$. 
\par On the left side of Figure \ref{TransitionMatricesforthePitchforkBifurcation}, the bifurcation diagram over parameter space $\Lam$ and phase portrait of $\phi_t^\lam$ at $\lam=0,\lz,1$ are denoted in the light grey. The dotted curves denote the unstable fixed point, and the solid curves denote the stable one. The phase portrait for $\Phi_t^{\eps_m}$ is denoted by black solid curves. The right side of Figure \ref{TransitionMatricesforthePitchforkBifurcation} illustrates a perturbed pitchfork bifurcation consisting of a saddle-node bifurcation and a stable fixed point continuing over $[0,1]$, in which grey and black curves are denoted in the same way as the left one. 

\begin{figure}[H]
\centering  
\subfigure[Pitchfork Bifurcation]{
\label{PitchforkBifurcation}
\includegraphics[width=0.45\textwidth]{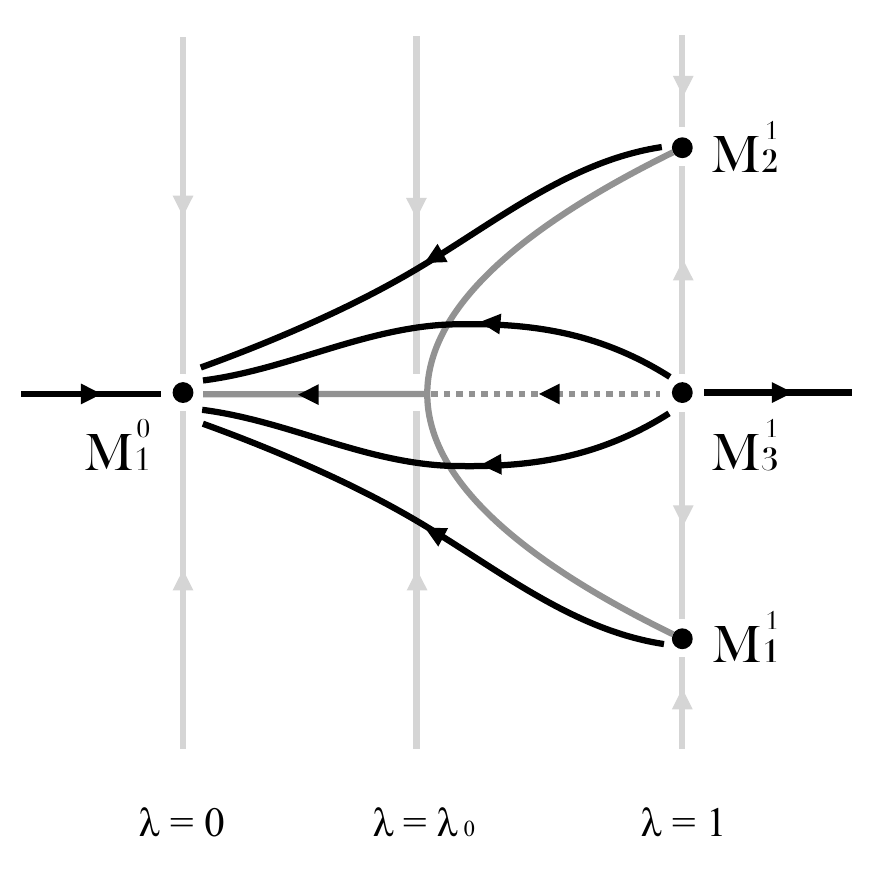}}
\subfigure[Perturbed Pitchfork Bifurcation]{
\label{PerturbedPitchforkBifurcation}
\includegraphics[width=0.45\textwidth]{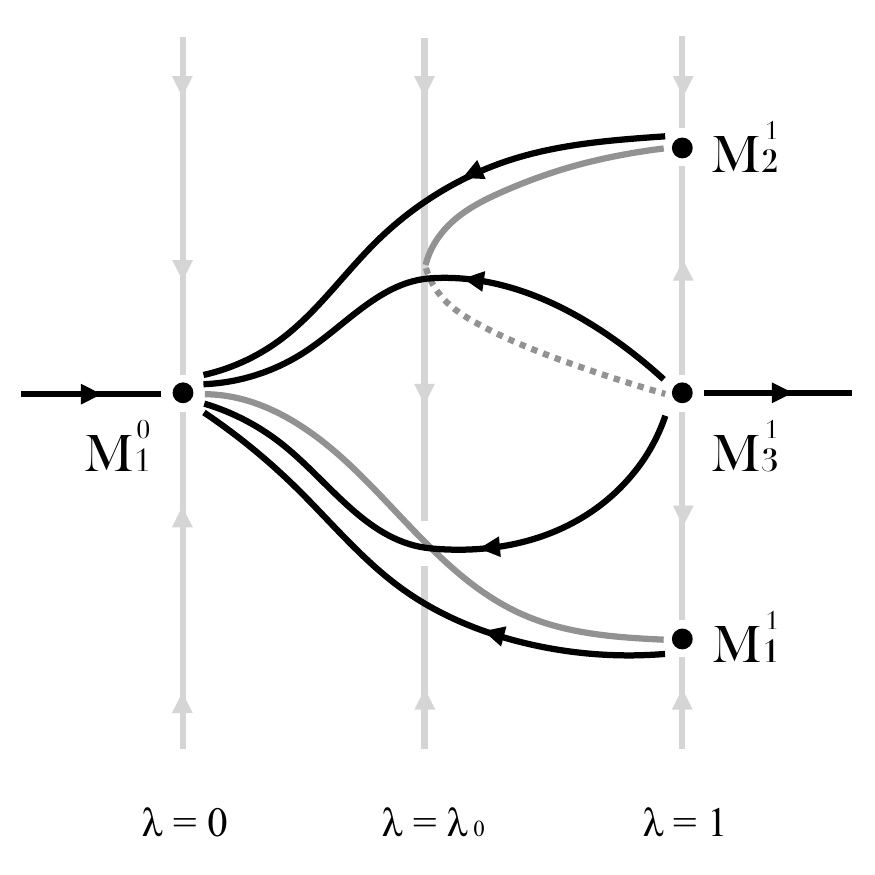}}
\caption{Slow-drift for the pitchfork bifurcation and its imperfection}
\label{TransitionMatricesforthePitchforkBifurcation}
\end{figure}
\par We take the same Morse decompositions as in Example \ref{Connecingorbitexample}. Then, in both situations $\DP_0=\sset{1^0}$ and $\DP_1=\sset{1^1,2^1,3^1}$. We compute the connection matrix $X_*(\lam)$ under the flow $\phi_t^\lam$ for $\lam=0,1$. Writing the isomorphism as $1$ and using the $\ZZ_2$ coefficient, we have the nonzero ones are:
$$X_1(0)=
\begin{bmatrix}
	0 \\
\end{bmatrix}\qquad \qquad
X_1(1)=
\begin{bmatrix}
	0 &0 &1 \\
	0 &0 &1 \\
	0 &0 &0 \\ 
\end{bmatrix}
$$
\par Considering, $(M_1^0,M_2^1)$ and $(M_1^0,M_1^1)$ are attractor-repeller pairs and the isolating neighbours $N(M_1^0,M_2^1)$ and $N(M_1^0,M_1^1)$ have trivial Conley index, we have that $\h{T}_1^{\eps_m}(M_1^0,M_2^1)=\h{T}_1^{\eps_m}(M_1^0,M_1^1)=1$. Considering the $\h{T}_1^{\eps_m}$ is a degree $-1$ map, the element $\h{T}_1^{\eps_m}(M_1^0,M_3^1)=0$. For all sufficiently small $\eps_m$, each element in $\h{T}_1^{\eps_m}$ stays the same, and we obtain the singular transition matrix:
$$
T_0= \h{T}_1=
\begin{bNiceMatrix}[first-row, first-col]
       &M_1^1  &M_2^1  &M_3^1\\
M_1^0  &1      &1      &0    \\
\end{bNiceMatrix},$$
and
$$
T_n=\begin{bNiceMatrix}[first-row, first-col]
       &M_1^1  &M_2^1  &M_3^1\\
M_1^0  &0      &0      &0    \\
\end{bNiceMatrix}\quad \text{for all $n\in \NN_+$}.
$$
\par From the singular transition matrix, we can read the information that there is a connecting orbit from $M_2^1$ to $M_1^0$ and $M_1^1$ to $M_1^0$ under the flow $\Phi_t^{\eps_m}$ for all $\for{m}$. Considering connecting orbits between Morse sets, we know the singular transition matrix of the two systems above is uniquely given by ${T}$ in this case.
\end{ex}

\par Furthermore, by taking the Hausdorff limit of the connecting orbits $\sset{\cc_{m}}_\for{m}$, and using Theorem \ref{withoutconn}, we can get the information of bifurcation between two slices at $\lam=0$ and $\lam=1$ from the singular transition matrix. This simplifies the computation of connecting orbits to a linear algebra problem. 
\par Sometimes, only by the information of the two slices can we infer the information of connecting orbits, which will be shown in the next example.
\begin{ex}\label{mainexample}
We consider the parameterized flow in Figure \ref{PhasePortraitsattwoslices}, the flow at parameter $\lam=0$ is shown on the left side, and the flow at parameter $\lam=1$ is shown on the right side in Figure \ref{PhasePortraitsattwoslices}. The shaded region $N$ in figure \ref{PhasePortraitsattwoslices} is an isolating neighborhood for all $\lam\in\Lam$. Let $S^\lambda=\inv(N,\phi_t^\lambda)$, at the two sildes $\lam=0,1$ there are two Morse decompositions $(\MD{0},\DP_0)$ and $(\MD{1},\DP_1)$ with $\MD{0}=\sset{M_1^0, M_2^0, M_5^0},(\DP_0,<_0)=(\sset{1^0,2^0,5^0},1^0<_0 2^0<_0 5^0)$ and $\MD{1}=\sset{M_1^1, M_2^1, M_3^1, M_4^1, M_5^1}, (\DP_1,<_1)=(\sset{1^1,2^1,3^1,4^1,5^1},1^1<_1 2^1<_1 3^1<_1 4^1<_1 5^1)$.
\begin{figure}[H]
\centering  
\subfigure[$\lam=0$]{
\label{lam_0}
\includegraphics[width=0.45\textwidth]{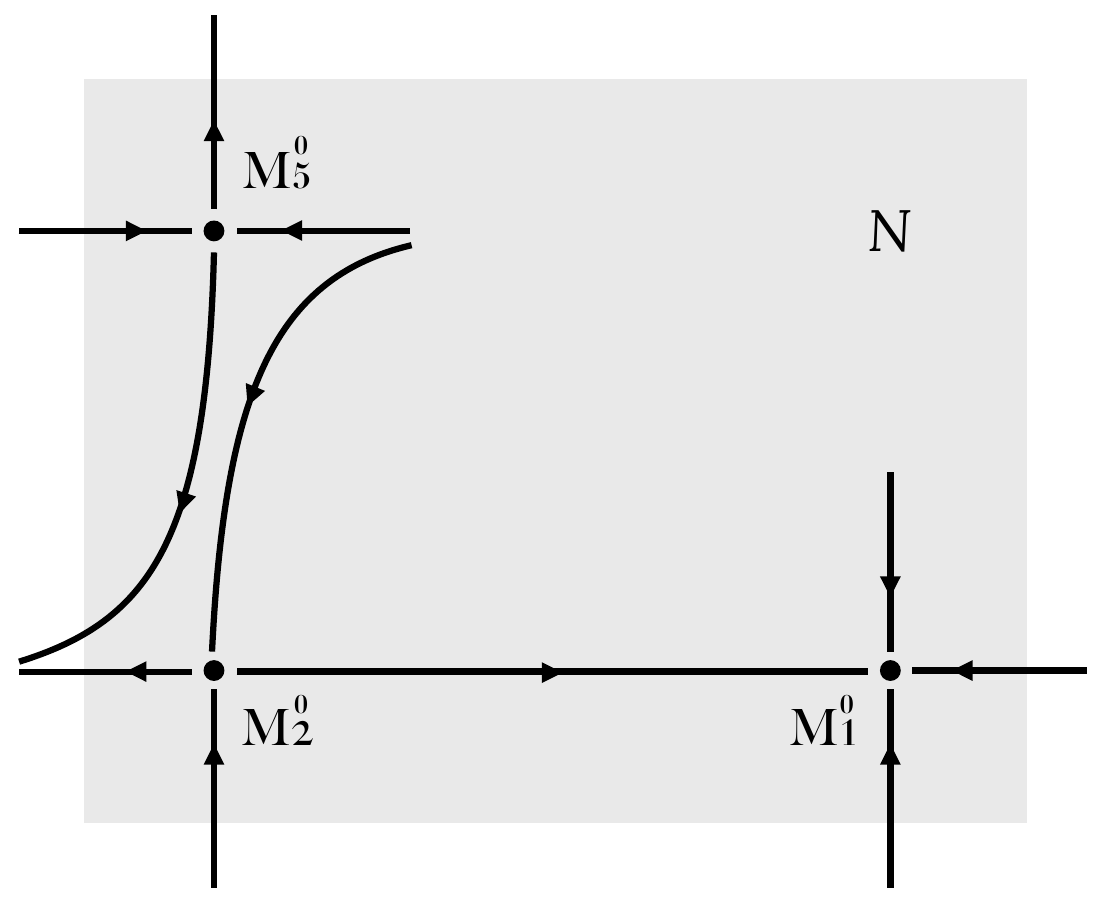}}
\subfigure[$\lam=1$]{
\label{lam_1}
\includegraphics[width=0.45\textwidth]{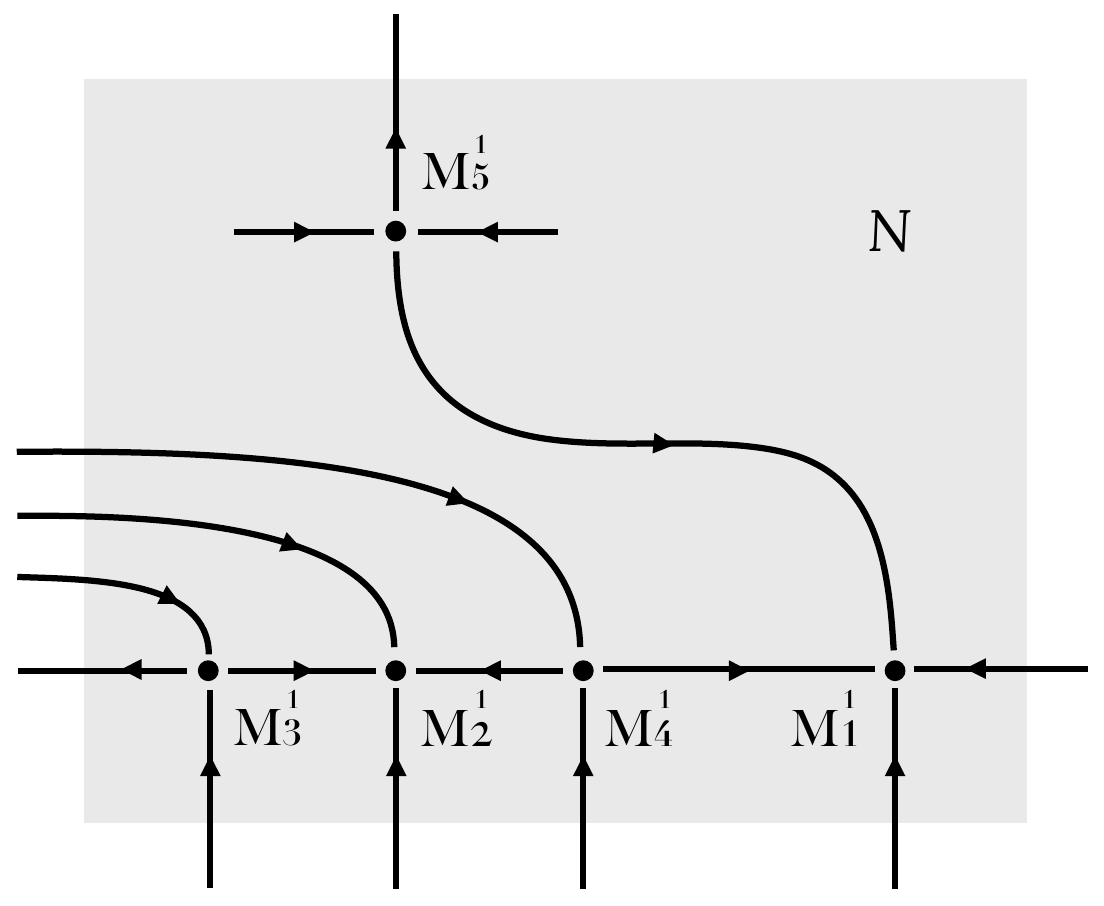}}
\caption{Phase portraits at two slices $\lam=0$ and $\lam=1$}
\label{PhasePortraitsattwoslices}
\end{figure}
\par We assume $(\MD{0},\DP_0)$ continues over $[0,\lam_0)$ and $(\MD{1},\DP_1)$ continues over $(\lam_0,1]$ with a breaking down of the continuation at $\lam=\lam_0$ and $N$ is an isolating neighborhood over the parameter space $\Lam$. Thus, it satisfies Assumption \ref{breakdown}. The finest decomposition of $(\DP_0, \DP_1)$ is given by: 
$$
\begin{aligned}
\sset{(J_i,J^{'}_i)}_{i\in\sset{1,2,3}}=\{(\sset{1^0}, \sset{1^1})&, (\sset{2^0}, \sset{2^1,3^1,4^1}), (\sset{5^0}, \sset{5^1}) \}; \\
(J_1,J_1^{'}) & = (\sset{1^0}, \sset{1^1});\\
(J_2, J_2^{'}) & = (\sset{2^0}, \sset{2^1,3^1,4^1});\\
(J_3, J_3^{'}) & = (\sset{5^0}, \sset{5^1}).
\end{aligned}
$$
We assume $M_1^\lambda$ is a sink for all $\lambda\in[0,1]$. We also assume there is no connecting orbit from $M^\lam_4$ to $M^\lam_3$ for all $\lam\in (\lam_0,1]$, and there are connecting orbits from $M^\lam_4$ to $M^\lam_2$ and from $M^\lam_3$ to $M^\lam_2$ for all $\lam\in (\lam_0,1]$.
\par By the information on the two slices, we can infer the information of connecting orbits by the singular transition matrix.
\par Firstly, we compute the connection matrix $X_*(\lam)$ under the flow $\phi_t^\lam$ for $\lam=0,1$. Writing the isomorphism as $1$ and using the $\ZZ_2$ coefficient, we have the nonzero ones are:
$$X_1(0)=
\begin{bmatrix}
	0 &1 &0\\
	0 &0 &0\\
	0 &0 &0\\
\end{bmatrix}\qquad \qquad
X_1(1)=
\begin{bmatrix}
	0 &0 &0 &1 &1\\
	0 &0 &1 &1 &0\\
	0 &0 &0 &0 &0\\
	0 &0 &0 &0 &0\\
	0 &0 &0 &0 &0\\ 
\end{bmatrix}
$$
By Lemma \ref{homologybraidisomorphism}, in the product space and under the flow $\Phi_t^{\eps_m}$, we have $\h{X}_1^{\eps_m}(0)=X_1(0)$ and $\h{X}_2^{\eps_m}(1)=X_1(1)$ for all $\for{m}$. Then, we consider the connection matrix $\Delta_{*}^{\eps_m}$ at the degree $1$ for the flow $\Phi_t^{\eps_m}$, in which we write the $\h{X}_2^{\eps_m}(1)$ and $\h{X}_1^{\eps_m}(0)$ in the same matrix: 
$$\Delta_{*}^{\eps_m} =
\begin{bNiceMatrix}[first-row, first-col]
      &\DP_0             &\DP_1           \\
\DP_0 & \h{X}_1^{\eps_m}(0)  &\h{T}_1^{\eps_m}    \\
\DP_1 & 0                &\h{X}_2^{\eps_m}(1) \\
\end{bNiceMatrix}
$$
For each $\for{m}$, because $M_1^\lambda$ is a sink for all $\lambda\in[0,1]$, by Lemma $5.8$ in \cite{Reineck1998connection}, the element $\Delta_{*}^{\eps_m}(M_1^0,M_1^1)=1$. For each $\for{m}$, $\CH_*(M_j^1)$ is nonzero only at degree $2$ for $j=3,4,5,$ $\CH_*(M_1^0)$ is nonzero only at degree $0$, and $\h{T}_1^{\eps_m}$ is a degree $-1$ map, we have $\Delta_{*}^{\eps_m}(M_1^0,M_j^1)=0$ for $j=3,4,5.$ Now, we can omit $\eps_m$ and get the following connection matrix:
\renewcommand{\arraystretch}{1.25}
$$
\Delta_{*}=
\begin{bNiceMatrix}[left-margin][first-row, first-col]
        &M_1^0  &M_2^0  &M_5^0  &M_1^1  &M_2^1  &M_3^1  &M_4^1  &M_5^1 \\
M_1^0   &0      &1      &0      &1      &       &0      &0      &0     \\   
M_2^0   &0      &0      &0      &       &       &       &       &?     \\
M_5^0   &0      &0      &0      &       &       &       &       &      \\
        
M_1^1   &\Block{5-3}<\Huge>{0}       &       &       &0      &0      &0      &1      &1\\
M_2^1   &       &       &       &0      &0      &1      &1      &0     \\
M_3^1   &       &       &       &0      &0      &0      &0      &0     \\
M_4^1   &       &       &       &0      &0      &0      &0      &0     \\
M_5^1   &       &       &       &0      &0      &0      &0      &0     \\
\end{bNiceMatrix}
$$
\renewcommand{\arraystretch}{1}
Because the connection matrix is a boundary map $\Delta_1\circ\Delta_2=0$, we have $\h{X}_1(0)\h{T}_{2}+\h{T}_1\h{X}_2(1)=0$ which implies $X_1(0)\h{T}_{2}+\h{T}_1X_1(1)=0$. Thus, it gives 
$$\h{T}_2(M_2^0,M_5^1)=1.$$
Because $M_1^\lambda$ is a sink for all $\lambda\in[0,1]$, we can conclude that there is no connecting orbit from $M_1^1$ to $M_2^0$ under the flow $\Phi_t^{\eps_m}$ for all $m\in\NN_+$. If not, we can take a Hausdorff limit of the connecting orbits and get a connecting orbit that goes from $M_1^\gamma$ at some parameter $\gamma\in [0,1]$ by the Theorem \ref{withoutconn}. But this contradicts that $M_1^\gamma$ is a sink. Since there is no connecting orbit from $M_1^1$ to $M_2^0$, $(M_2^0, M_5^1)$ is an adjacent pair.

\par  $\h{T}_2(M_2^0,M_5^1)=1$ means that $M_2^0<^{\Gf} M_5^1$ in the flow-defined order in each flow $\Phi_t^{\eps_m}$ and there is a connecting orbit from $M_5^1$ to $M_2^0$ as $\eps_m\to 0$.
\par If we only consider the finest decomposition $\sset{({J_i},{J^{'}_i})}_{i\in\sset{1,2,3}}$, we can conclude that there is a parameter $\gamma\in[0,1]$ such that there is a connecting orbit from $\GM^\gamma_{\DP_\gamma(N(M_{J_3}))}$ to $\GM^\gamma_{\DP_\gamma(N(M_{J_2}))}$.
\par There are four possibilities of the connecting orbits between these Morse sets. Let $\GM_{P\!F}^\lz:=\GM^\lz_{\DP_\lz(N(M_{J_2}))}, \GM_{S\!I}^\lz:=\GM^\lz_{\DP_\lz(N(M_{J_1}))}$ and $\GM_{S\!A}^\lz:=\GM^\lz_{\DP_\lz(N(M_{J_3}))}$ be the maximal invariant set in each isolating neighborhood of continuable interval pairs.
\begin{enumerate}[label=\textnormal{\roman*).}]
	\item $M_5^1 \to \GM_{S\!A}^{\lz}>_\lz \GM_{P\!F}^{\lz} \gets M_2^0$: there is a connecting orbit from $\GM_{S\!A}^\lz$ to $\GM_{P\!F}^\lz$ under the flow $\phi_t^\lz$.
	\item $M_5^1 \to \GM_{S\!A}^{\lz}\gets M_5^0>_{\gamma} M_2^0$: there is a $\gamma\in [0,\lz)$ such that there is a connecting orbit from $M_5^\gamma$ to $M_2^\gamma$ under the flow $\phi_t^\gamma$, $\HLL{5}{+}\subset \GM_{S\!A}^{\lz}$ and $\HLL{5}{-}\subset \GM_{S\!A}^{\lz}$. This means the two saddles $M_5^0$ and $M_2^0$ are connected before $\lz$.
	\item $M_5^1>_{\gamma} M_4^1 (\text{or } M_3^1)  \to \GM_{P\!F}^{\lz}\gets M_2^0$: there is a $\gamma\in (\lz,1]$ such that there is a connecting orbit from $M_5^\gamma$ to $ M_4^\gamma (\text{or } M_3^\gamma)$ under the flow $\phi_t^\gamma$, $\HLL{4}{+}$ $(\text{or }\HLL{3}{+}) \subset \GM_{P\!F}^{\lz}$ and $\HLL{2}{-}\subset \GM_{P\!F}^{\lz}$. This means the two saddles $M_5^1$ and $M_4^1$ (or $M_3^1$) are connected after $\lz$.
	\item $M_5^1>_{\gamma_1} M_4^1 (\text{or } M_3^1) >_{\gamma_2} M_2^1 \to \GM_{P\!F}^{\lz} \gets M_2^0$: there are $\gamma_1,\gamma_2\in (\lz,1]$ with $\gamma_1\geq \gamma_2$ such that there are connecting orbits from $M_5^{\gamma_1}$ to $ M_4^{\gamma_1} (\text{or } M_3^{\gamma_1})$ under the flow $\phi_t^{\gamma_1}$, and from $ M_4^{\gamma_2}(\text{or } M_3^{\gamma_2})$ to $ M_2^{\gamma_2}$ under the flow $\phi_t^{\gamma_2}$. $\HLL{2}{+}\subset \GM_{P\!F}^{\lz}$ and $\HLL{2}{-}\subset \GM_{P\!F}^{\lz}$. This also means that the two saddles $M_5^1$ and $M_4^1$ (or $M_3^1$) are connected after $\lz$.
\end{enumerate}
\par We can conclude that the saddles are connected at some parameter, but we should note here only from the $\h{T}_2(M_2^0, M_5^1)\neq 0$ we still do not know if the connection is before the pitchfork-like bifurcation or after it.
\end{ex}

\subsection{An Axiomatic Definition of the Transition Matrix}\label{An AxiomaticExtensionApproachtotheSingularity}
\par In this section, we show that the map $T_*$ defined by Definition \ref{defofchainmap} is a chain map between the chain complexes consisting of the direct sum of the homological Conley index of each Morse set and their connection matrices. Since the connection matrix is not always easy to compute, the method we got the non-zero element in the example \ref{mainexample} motivates us to an axiomatic definition of the transition matrix. After defining the axiomatic transition matrix, we end by discussing some open problems.  
\par Assume that we have a collection of Morse decompositions $\sset{(\MD{\lambda},\DP_\lambda)}_{\lam\in\Lam}$ under Assumption \ref{breakdown}. 
\par Firstly, let $(\oplus_{j\in\DP_1}C\!H_*(M_j^1),X(1))$ and $(\oplus_{i\in \DP_0}C\!H_*(M_i^0),X(0))$ be two chain complexes,  with $C\!H_*(M_k^\lam)$ the homological Conley index for the Morse set $M_k^\lambda$ and $X(\lambda):=\sset{X_n(\lambda)}_\for{n}$ the connection matrix under the flow $\phi_t^\lam$ for $(\MD{\lambda},\DP_\lambda)$. Let $Y_*(1):=X_*(1)$ and $Y_*(0)=-X_*(0)$. From the definition of the connection matrix, $Y_*(0)$ and $Y_*(1)$ are also connection matrices for the flow $\phi_t^0$ of $(\MD{0},\DP_0)$ and  for the flow $\phi_t^1$ of $(\MD{1},\DP_1)$, respectively. Let $Y(\lambda):=\sset{Y_n(\lambda)}_\for{n}$ for $\lam=0,1$, we have two new chain complexes 
$$(\oplus_{j\in\DP_1}C\!H_*(M_j^1),Y(1)) \qquad \text{and} \qquad (\oplus_{i\in \DP_0}C\!H_*(M_i^0),Y(0)).$$
Let $T:=\sset{T_n}_{n\in\NN}$, in which $T_n$ are degree $0$ maps defined by Definition \ref{defofchainmap}. Then $T$ is a chain map between the two chain complexes, as shown in the next proposition.
\begin{prop}\label{propofchainmap}
	$T$ is a chain map between the chain complexes $(\oplus_{j\in\DP_1}C\!H_*(M_j^1),Y(1))$ and $(\oplus_{i\in \DP_0}C\!H_*(M_i^0),Y(0))$.
\end{prop}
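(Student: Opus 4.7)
The plan is to derive the chain map identity $Y_n(0) \circ T_n = T_{n-1} \circ Y_n(1)$ directly from the boundary condition $\Delta_n \circ \Delta_{n+1} = 0$ that the connection matrix of the flow $\Phi_t^{\eps_m}$ must satisfy, and then translate the resulting product-space identity back to the slice data via the isomorphisms of Lemma \ref{homologybraidisomorphism} and the defining formula in Definition \ref{defofchainmap}. First I would write $\Delta_n$ in its block form and expand $\Delta_n \circ \Delta_{n+1} = 0$. Two of the three block identities merely restate that $\h{X}(0)$ and $\h{X}(1)$ are themselves boundary operators on their respective complexes; the crucial off-diagonal block yields
\[
\h{X}_n(0) \circ \h{T}_{n+1} + \h{T}_n \circ \h{X}_{n+1}(1) = 0.
\]

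Next, at each entry $(i,j)$ with $i \in \DP_0$ and $j \in \DP_1$, I would substitute $\h{T}_{n+1}(k,j) = (\theta^0_n(k))^{-1} \circ T_n(k,j) \circ \theta^1_{n+1}(j)$ and $\h{T}_n(i,l) = (\theta^0_{n-1}(i))^{-1} \circ T_{n-1}(i,l) \circ \theta^1_n(l)$ coming from Definition \ref{defofchainmap}, and rewrite $\h{X}_n(0)(i,k)$ and $\h{X}_{n+1}(1)(l,j)$ using Lemma \ref{homologybraidisomorphism} in terms of the slice-level $X_n(0)(i,k)$ and $X_n(1)(l,j)$ conjugated by $\theta^0$ and $\theta^1$. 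In each summand the intermediate isomorphisms attached to the internal Morse sets $M_k^0$ or $M_l^1$ cancel with their inverses, leaving a common outer conjugation by $(\theta^0_{n-1}(i))^{-1}$ on the left and $\theta^1_{n+1}(j)$ on the right in both terms. Since these outer isomorphisms can be stripped off simultaneously, the identity descends to
\[
X_n(0) \circ T_n + T_{n-1} \circ X_n(1) = 0,
\]
which rearranges to $Y_n(0) \circ T_n = (-X_n(0)) \circ T_n = T_{n-1} \circ X_n(1) = T_{n-1} \circ Y_n(1)$, completing the chain map verification.

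The main obstacle is the bookkeeping of the degree-$1$ shift carried by $\theta^1$ versus the degree-$0$ shift carried by $\theta^0$: one must verify that after substitution, the $\theta$-factors on the internal Morse sets genuinely share the same degree in both summands so that the cancellation is legitimate, and that the two terms of the off-diagonal relation actually do factor through identical outer conjugators before they can be dropped. This is a routine diagram chase using the commutative square displayed immediately after Lemma \ref{homologybraidisomorphism}, but the index-tracking is finicky because $\DP_0$ and $\DP_1$ need not have matching cardinalities or degree-profiles. This same sign accounting also explains the definition $Y(0) = -X(0)$: the identity $\Delta^2 = 0$ produces the symmetric form $X_n(0) T_n + T_{n-1} X_n(1) = 0$, so $T$ is naturally an anti-chain-map between the raw complexes $(\bigoplus CH_*(M^1), X(1))$ and $(\bigoplus CH_*(M^0), X(0))$, and the sign flip in $Y(0)$ is exactly the adjustment that turns it into an honest chain map.
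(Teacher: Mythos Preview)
Your proposal is correct and follows essentially the same approach as the paper: expand the block identity $\Delta_n\circ\Delta_{n+1}=0$ to extract the off-diagonal relation $\h{X}_n(0)\circ\h{T}_{n+1}+\h{T}_n\circ\h{X}_{n+1}(1)=0$, then conjugate by the $\theta$-isomorphisms from Lemma~\ref{homologybraidisomorphism} and Definition~\ref{defofchainmap} to obtain $X_n(0)\circ T_n+T_{n-1}\circ X_n(1)=0$, and finally absorb the sign into $Y(0)=-X(0)$. Your entry-wise discussion of the internal $\theta$-cancellations is a slightly more explicit version of what the paper compresses into ``multiply $\theta^0_{n-1}$ from the left and $(\theta^1_{n+1})^{-1}$ from the right,'' but the argument is the same.
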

\begin{proof}
Since the connection matrix $\Delta_*$ for $\Phi_t^{\eps_m}$ is a boundary operator, namely, the composition $\Delta_n\circ\Delta_{n+1}=0$, we have:
\begin{align*}
\begin{bmatrix}
\h{X}_n(0)  &\h{T}_n  \\
0           &\h{X}_n(1)
\end{bmatrix}
\circ &
\begin{bmatrix}
\h{X}_{n+1}(0)  &\h{T}_{n+1}  \\
0           &\h{X}_{n+1}(1)
\end{bmatrix}
=\\
& \quad
\begin{bmatrix}
\h{X}_n(0)\circ\h{X}_{n+1}(0)  &\h{X}_n(0)\circ\h{T}_{n+1}+\h{T}_n\circ\h{X}_{n+1}(1)  \\
0                              &\h{X}_n(1)\circ\h{X}_{n+1}(1)
\end{bmatrix}=
\begin{bmatrix}
0 &0\\
0 &0	
\end{bmatrix}
.
\end{align*}
Therefore, we have $\h{X}_n(0)\circ\h{T}_{n+1}+\h{T}_n\circ\h{X}_{n+1}(1)=0$.
\par For any $(i,j)$ element in $\h{X}_n(0)\circ\h{T}_{n+1}+\h{T}_n\circ\h{X}_{n+1}(1)$ we have, from Definition \ref{defofchainmap}: 
\begin{equation*}
	\h{X}_n(0)\circ(\theta^0_n)^{-1}\circ{T}_n\circ\theta^{1}_{n+1}+(\theta^0_{n-1})^{-1}\circ{T}_{n-1}\circ\theta^{1}_{n}\circ\h{X}_{n+1}(1)=0
\end{equation*}
and from Lemma \ref{homologybraidisomorphism}, we multiply $\theta^0_{n-1}$ from left and multiply $(\theta^1_{n+1})^{-1}$ from right then we get:
\begin{align*}
	\theta^0_{n-1}\circ\h{X}_n(0)\circ(\theta^0_n)^{-1}\circ{T}_n+{T}_{n-1}\circ\theta^{1}_{n}\circ \h{X}_{n+1}(&1)\circ (\theta^1_{n+1})^{-1}=\\
	{X}_n(&0)\circ{T}_n+{T}_{n-1}\circ{X}_{n}(1)=0	
\end{align*}
\par Finally from the defintion of $Y_*(0)$ and $Y_*(1)$, we have 
$$
{Y}_n(0)\circ{T}_n-{T}_{n-1}\circ{Y}_{n}(1)=0
$$
which makes the following diagram commutes:

$$\begin{CD}
\bigoplus\limits_{j\in \DP_1} C\!H_{n}(M_j^1)      @>{T_n}>>       \bigoplus\limits_{i\in \DP_o} C\!H_{n}(M_i^0) \\
@V{{Y}_n(1)}VV                            @V{Y_n(0)}VV \\
\bigoplus\limits_{j\in \DP_1} C\!H_{n-1}(M_j^1)    @>{T_{n-1}}>> \bigoplus\limits_{i\in \DP_0} C\!H_{n-1}(M_i^0)\\
\end{CD}$$
This shows that the degree 0 map 
$T$
 is a chain map between $(\oplus_{j\in\DP_1}C\!H_*(M_j^1),Y(1))$ and $(\oplus_{i\in \DP_0}C\!H_*(M_i^0),Y(0))$.
\end{proof}

\par Enlightened by the preceding properties of the chain map and the way we got the non-zero element in the example \ref{mainexample}, we try to give an axiomatic definition of the transition matrix without a continuation based on Kokubu \cite{Kokubu2000ontransitionmatrices}, which gave an axiomatic transition matrix assuming continuation.
\par Considering that the homological Conley index $\CH_* (\h{S})=0$, and the $(S^0\times\sset{0},S^1\times\sset{1})$ is an attractor-repeller pair in $\h{S}$, we have the connection matrix $\Delta_n$ induces the isomorphism from $\CH_{n}(S^1\times\sset{1})$ to $\CH_{n-1}(S^0\times\sset{0})$ in the long exact sequence and we call it the \textit{\textbf{global continuation isomorphism}}. When without a continuation, this property can be extended to each continuable interval pair by Proposition \ref{TrivialConleyIndexofEachContinuableIntervalPair}, and we call this isomorphism between a continuable interval pair the \textit{\textbf{continuable interval continuation isomorphism}}.
\par Let $X_*(k)$ be connection matrices at slices $k=0,1$ under the flow $\phi_t^k$. Let 
$$\DC X(k)=(\oplus_{p\in\DP_k}C\!H_{*}(M^k_p),X(k))$$
be the chain complex with its boundary map $X_*(k)$. Under Assumption \ref{breakdown}, a decomposition of the pair $(\DP_0,\DP_1)$ is given by $\sset{(J_i,J_i^{'})}_{i\in A}$. 
For each continuable interval pair $(J_i,J_i^{'})$ in the decomposition $\sset{(J_i,J_i^{'})}_{i\in A}$, the restriction of the connection matrices $X_*(0)$ and $X_*(1)$ on the intervals $J_i$ and $J_i^{'}$ in the continuable interval pair $(J_i,J_i^{'})$ are denoted as $X^{J_i}_*(0)$ and $X^{J_i^{'}}_*(1)$ respectively. 
Then $(-X_*^{J_i}(0))$ and $X_*^{J_i^{'}}(1)$ are connection matrices for Morse decompositions $\DM(M^0_{J_i})=\sset{M_p^0}_{p\in J_i}$ and $\DM(M^1_{J_i^{'}})=\sset{M_q^1}_{q\in J_i^{'}}$, respectively. 
Thus, for the continuable interval pair $(J_i,J_i^{'})$ we have two chain complexes 
$$\DC X^{J_i}(0)=(\oplus_{p\in {J_i}}C\!H_{*}(M^0_p), -X^{J_i}(0)) \text{ with its boundary map $-X^{J_i}_*(0)$},$$ 
and 
$$\DC X^{J_i^{'}}(1)=(\oplus_{q\in {J_i^{'}}}C\!H_{*}(M^1_q),X^{J_i^{'}}(1)) \text{ with its boundary map $X^{J_i^{'}}_*(1)$}.$$


\begin{dfn}[Axiomatic transition matrix]\label{AxiomaicTransitionMatrix}
Under the above situation, an \textit{\textbf{axiomatic transition matrix}} is a map $T:\DC X(1) \to \DC X(0)$ satisfying the following conditions:
\begin{enumerate}[label=\textnormal{(A\arabic*).}]
    \item $T$ is a chain map from $\DC X(1)$ to $\DC X(0)$.
	\item For each continuable interval pair $(J_i,J_i^{'})$ of any decomposition $\sset{(J_i,J_i^{'})}_{i\in A}$ of the pair $(\DP_0,\DP_1)$, let $T(J_i,J^{'}_i)$ be the restriction of $T$ on the continuable interval pair $(J_i,J_i^{'})$:
  $$
  T_*(J_i,J^{'}_i):\oplus_{q\in{J_i^{'}}}\CH_*(M_q^1) \to  \oplus_{p\in{J_i}}\CH_*(M_p^0).
  $$
 We require that $T(J_i,J^{'}_i)$ is a chain map between $\DC X^{J_i^{'}}(1)$ and $\DC X^{J_i}(0)$, and the homomorphism $T^{\sharp}_*{(J_i,J_i^{'})}$ induced by $T_*{(J_i,J_i^{'})}$ at the homology level:
	 $$
	 T^{\sharp}_*{(J_i,J_i^{'})}:\CH_*(M^1_{J_{i}^{'}})\to \CH_*(M^0_{J_{i}})
	 $$
	  is an isomorphism.
	
\end{enumerate}
\par The set of such chain maps, if exists, is denoted as $\DT(X(0),X(1))$ and the union of $\DT(X(0),X(1))$ over all possible boundary connection matrices at slices $\lam=0,1$ is denoted as $\DT(0,1)$.
\end{dfn}
\par Observing that any singular transition matrix is in the $\DT(0,1)$, and singular transition matrices always exist, we have the following existence theorem.
\begin{thm}[Existence of axiomatic transition matrix]\label{ExistenceofAxiomaticTransitionMatrix}
	$\DT(0,1)$ is not empty.
\end{thm}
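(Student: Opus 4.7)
The plan is to show that every singular transition matrix $T$ constructed in Subsection~\ref{STM} already satisfies axioms (A1) and (A2); combined with the existence of singular transition matrices established earlier, this immediately gives $\DT(0,1) \neq \OO$.

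First I would verify (A1). Let $T = \sset{T_n}_{\for{n}}$ be a singular transition matrix arising from a connection matrix $\Delta_*$ for the Morse decomposition $\MDp$ in the block form $(\star\star)$. Proposition~\ref{propofchainmap} already shows that $T$ is a chain map between $\DC X(1)$ and $\DC X(0)$: the identity $\h{X}_n(0)\circ\h{T}_{n+1} + \h{T}_n\circ\h{X}_{n+1}(1) = 0$, which follows from $\Delta_n\circ\Delta_{n+1} = 0$, is transferred via the homology braid isomorphisms $\theta^0$ and $\theta^1$ of Lemma~\ref{homologybraidisomorphism} into $Y_n(0)\circ T_n - T_{n-1}\circ Y_n(1) = 0$, which is precisely (A1).

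The main work is to verify (A2). Fix a decomposition $\sset{(J_i, J_i')}_{i \in A}$ of $(\DP_0,\DP_1)$ and a continuable interval pair $(J_i, J_i')$. Because of the order structure $p < q$ for all $p \in \DP_0$ and $q \in \DP_1$, the set $J_i \sqcup J_i'$ is an interval in $\h{\DP}$. Hence the restriction of $\Delta_*$ to this interval retains the upper-triangular block form
\[
\Delta_n(J_i, J_i') = \begin{bmatrix} \h{X}_n^{J_i}(0) & \h{T}_n(J_i, J_i') \\ 0 & \h{X}_n^{J_i'}(1) \end{bmatrix},
\]
and it is itself a connection matrix for the Morse decomposition induced on $\hse(J_i, J_i')$. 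By Proposition~\ref{TrivialConleyIndexofEachContinuableIntervalPair}, $\CH_*(\hse(J_i, J_i')) = 0$, so the total homology $H_*\Delta(J_i \cup J_i')$ vanishes. The long exact sequence~(\ref{longexactseqconnection}) for the adjacent pair $(J_i, J_i')$ then forces the connecting map $\ti{\Delta}_n : H_n\Delta(J_i') \to H_{n-1}\Delta(J_i)$ to be an isomorphism. This connecting map is exactly the homology-level map induced by the off-diagonal block $\h{T}_n(J_i, J_i')$, so the continuable interval continuation isomorphism $\h{T}^\sharp_n(J_i, J_i') : \CH_n(M^1_{J_i'} \times \sset{1}) \to \CH_{n-1}(M^0_{J_i} \times \sset{0})$ is established.

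Finally, transferring this to the fiber level, Lemma~\ref{homologybraidisomorphism} provides isomorphisms $\theta^0$ and $\theta^1$ intertwining $\h{X}^{J_i}_*(0)$ with $-X^{J_i}_*(0)$ and $\h{X}^{J_i'}_*(1)$ with $X^{J_i'}_*(1)$. The same chain-map argument used for (A1), applied in restriction, then shows that $T(J_i, J_i')$ is a chain map between $\DC X^{J_i'}(1)$ and $\DC X^{J_i}(0)$. Moreover, by Definition~\ref{defofchainmap},
\[
T^\sharp_n(J_i, J_i') = \theta^0_n(J_i) \circ \h{T}^\sharp_{n+1}(J_i, J_i') \circ (\theta^1_{n+1}(J_i'))^{-1},
\]
a composition of isomorphisms, hence an isomorphism $\CH_n(M^1_{J_i'}) \to \CH_n(M^0_{J_i})$. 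This verifies (A2). The main obstacle in the plan is the degree bookkeeping: the submatrix $\h{T}$ has degree $-1$ as a part of the slow-fast connection matrix, while the axiomatic $T$ must be a degree $0$ chain map, and one must verify carefully that the degree-$\lambda$ homology braid isomorphisms absorb this shift consistently when restricted to each continuable interval pair and not merely on the global pair $(\DP_0,\DP_1)$.
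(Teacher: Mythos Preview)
Your proposal is correct and follows exactly the paper's approach: the paper's proof is the one-line observation that any singular transition matrix lies in $\DT(0,1)$ and singular transition matrices exist. You have simply filled in the verification of (A1) via Proposition~\ref{propofchainmap} and of (A2) via Proposition~\ref{TrivialConleyIndexofEachContinuableIntervalPair} and the long exact sequence argument, which the paper leaves implicit in the text preceding Definition~\ref{AxiomaicTransitionMatrix}.
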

\begin{note}
In Example \ref{mainexample}, we only used the (A1) and (A2) in the Definition \ref{AxiomaicTransitionMatrix} to conclude the non-zero adjacent pair element. Therefore, let $(p,q)$ be an adjacent pair of a Morse decomposition for the extended slow-fast flow $\Phi_t^\epsilon$. If for any $T\in\DT(X(0),X(1))$, there is a degree $n$ such that $T_n(p,q)\neq 0$, then there is a connecting orbit from $M_q^1\times\sset{1}$ to $M_p^0\times\sset{0}$ under the extended slow-fast flow $\Phi_t^\epsilon$.	
\end{note}


However, the following problems are still unsolved:
\begin{enumerate}
	\item Although we have provided a definition of the axiomatic transition matrix, we still need a method for determining all possible transition matrices. 
	\item If we use a finite field as the coefficient for the homological Conley indices, then there are only finitely many possible axiomatic transition matrices in $\DT(X(0), X(1))$. We want to know if an axiomatic transition matrix is also a singular transition matrix. Namely, we want to know if we can construct a parameterized flow $f^{T}(x,\lam)$ for each possible axiomatic transition matrix $T$ such that $T$ is a singular transition matrix for the extended slow-fast flow of the ODE $\dot{x}=f^{T}(x,\lam)$, to classify the possible bifurcations that happened in $(0,1)$.
\end{enumerate}

\subsection*{\small Acknowledgments}
I would like to express my deepest gratitude to my advisor, Professor Hiroshi Kokubu, not only for his invaluable guidance, support, and encouragement throughout the entire research process but also for the profound impact he has had on both my academic and personal growth.

I am also deeply grateful to Professor Zin Arai for his invaluable guidance, constructive feedback, and continuous support throughout this research.
\bibliographystyle{siam}
\bibliography{ref}

\begin{thebibliography}{10}

\bibitem{Conley1978isolated}
{\sc C.~C. Conley}, {\em Isolated invariant sets and the Morse index}, no.~38, American Mathematical Soc., 1978.

\bibitem{Franzosa1986IndexFiltration}
{\sc R.~Franzosa}, {\em Index filtrations and the homology index braid for partially ordered morse decompositions}, Transactions of the American Mathematical Society, 298 (1986), pp.~193--213.

\bibitem{FranzosaMischai1998}
{\sc R.~Franzosa and K.~Mischaikow}, {\em Algebraic transition matrices in the conley index theory}, Transactions of the American Mathematical Society, 350 (1998), pp.~889--912.

\bibitem{Franzosa1989connectionmatrixtheory}
{\sc R.~D. Franzosa}, {\em The connection matrix theory for morse decompositions}, Transactions of the American Mathematical Society, 311 (1989), pp.~561--592.

\bibitem{Kokubu2000ontransitionmatrices}
{\sc H.~Kokubu}, {\em On transition matrices}, EQUADIFF99, Proceedings of the International Conference on Differential Equations, Berlin, Germany 1-7 August 1999, World Scientific,  (2000).

\bibitem{C.McCordK.Mischaikow1992Connectedsimple}
{\sc C.~McCord and K.~Mischaikow}, {\em Connected simple systems, transition matrices and heteroclinic bifurcations}, Transactions of the American Mathematical Society, 333 (1992), pp.~397--422.

\bibitem{Mischaikow1988existenceofgeneralized}
{\sc K.~Mischaikow}, {\em Existence of generalized homoclinic orbits for one parameter families of flows}, Proceedings of the American Mathematical Society, 103 (1988), pp.~59--68.

\bibitem{Mischaikow1989Transitionsystems}
{\sc K.~Mischaikow}, {\em Transition systems}, Proceedings of the Royal Society of Edinburgh: Section A Mathematics, 112 (1989), pp.~155--175.

\bibitem{Mischaikow2002ConleyIndex}
{\sc K.~Mischaikow and M.~Mrozek}, {\em Chapter 9 - conley index}, in Handbook of Dynamical Systems, B.~Fiedler, ed., vol.~2 of Handbook of Dynamical Systems, Elsevier Science, 2002, pp.~393--460.

\bibitem{Reineck1998connection}
{\sc J.~F. Reineck}, {\em Connecting orbits in one-parameter families of flows}, Ergodic Theory and Dynamical Systems, 8 (1988), pp.~359--374.

\bibitem{Salamon1985ConnectedSimpleSys}
{\sc D.~Salamon}, {\em Connected simple systems and the conley index of isolated invariant sets}, Transactions of the American Mathematical Society, 291 (1985), pp.~1--41.

\end{thebibliography}
 
\end{document}